\newcommand{\someothergreekletter}{\alpha}
\newcommand{\somegreekletter}{\alpha}
\newcommand{\somediagonalgreekletter}{\gamma}
\newcommand{\overunder}[2]{
\!\begin{array}{c}
\scriptstyle{#1}\\[-.1in]
-\!\!\!-\!\!\!-\\[-.1in]
\scriptstyle{#2}
\end{array}
\!
}
\def\sgn{\operatorname{sgn}}
\def\TT{\mathbb{T}}
\def\Trop{\operatorname{Trop}}
\def\QQ{\mathbb{Q}}
\newcommand{\za}{\alpha}
\newcommand{\zD}{\Delta}
\newcommand{\zg}{\gamma}
\DeclareMathOperator{\Frac}{Frac}
\DeclareMathOperator{\Match}{Match}
\newcommand{\Sbar}{\widetilde{S}}
\newcommand{\Bbar}{\widetilde{B}}
\newcommand{\Mbar}{\widetilde{M}}
\newcommand{\Tbar}{\widetilde{T}}
\newcommand{\Rbar}{\widetilde{R}}
\newcommand{\Qbar}{\widetilde{Q}}
\newcommand{\Dbar}{\widetilde{\Delta}}
\newcommand{\Abar}{\widetilde{\A}}
\newcommand{\A}{\mathcal{A}}
\newcommand{\F}{\mathcal{F}}
\newcommand{\PP}{\mathbb{P}}
\newcommand{\x}{\mathbf{x}}
\def\Aprin{\Acal_\bullet}
\def\Xcal{\mathcal{X}}
\def\Acal{\mathcal{A}}
\def\Fcal{\mathcal{F}}
\def\yy{\mathbf{y}}
\def\xx{\mathbf{x}}
\def\ZZ{\mathbb{Z}}
\newcommand{\zLp}{\ell_p}
\newcommand{\zLq}{\ell_q}
\newtheorem{theorem}{Theorem}[section]
\newtheorem{lemma}[theorem]{Lemma}
\newtheorem{question}[theorem]{Question}
\newtheorem{prop}[theorem]{Proposition}
\newtheorem{conjecture}[theorem]{Conjecture}
\newtheorem{cor}[theorem]{Corollary}
\newtheorem{Cor}[theorem]{Corollary}
\theoremstyle{definition}
\newtheorem{definition}[theorem]{Definition}
\newtheorem{Def}[theorem]{Definition}
\theoremstyle{remark}
\newtheorem{remark}[theorem]{Remark}
\numberwithin{equation}{section}
\begin{document}
\title{Positivity for cluster algebras from surfaces}
\author{Gregg Musiker}
\address{Department of Mathematics, MIT, Cambridge, MA 02139}
\email{musiker@math.mit.edu}
\author{Ralf Schiffler} 
\address{Department of Mathematics, University of Connecticut, 
Storrs, CT 06269-3009}
\email{schiffler@math.uconn.edu}
\thanks{{The first and third authors are partially supported by  NSF Postdoctoral Fellowships.
The second author is partially
   supported by the NSF grant DMS-0700358.}}
\author{Lauren Williams}
\address{Department of Mathematics, Harvard University,
Cambridge, MA 02138}
\email{lauren@math.harvard.edu}

\subjclass[2000]{16S99, 05C70, 05E15}
\date{}
\dedicatory{}

\begin{abstract}  We give combinatorial formulas for the Laurent expansion of 
any cluster variable in any cluster algebra
coming from a triangulated surface
(with or without punctures), with respect to an arbitrary seed.  Moreover,
we work in the generality of {\it principal coefficients}.
An immediate corollary of our formulas is a proof of 
the positivity conjecture of Fomin and Zelevinsky for cluster
algebras from surfaces, in geometric type.
\end{abstract}

\maketitle

\setcounter{tocdepth}{1}
\tableofcontents

\section{Introduction}\label{intro}
Since their introduction by Fomin and Zelevinsky in \cite{FZ1}, cluster
algebras have been shown to be related to diverse areas of mathematics
such as total positivity, quiver representations, Teichm\"uller theory,
tropical geometry, Lie theory, and Poisson geometry.  One of the
main outstanding conjectures about cluster algebras is the
{\it positivity conjecture}, which says that if one
fixes a cluster algebra $\A$ and
an {\it arbitrary} cluster $\x$, one can express each
cluster variable $x\in \A$
as a Laurent polynomial with {\it positive coefficients} in the variables of
$\x$.

There is a class of cluster algebras arising from {\it surfaces with marked
points}, introduced by Fomin, Shapiro, and Thurston in \cite{FST} 
(generalizing work of Fock and Goncharov \cite{FG1, FG2} and Gekhtman, Shapiro, and 
Vainshtein \cite{GSV}), 
and further developed  in \cite{FT}.  This class is quite large:
(assuming rank at least three) it has been shown \cite{FeSTu} that all but finitely many
skew-symmetric cluster algebras of {\it finite mutation type} come from this
construction.  Note that the class of cluster algebras of finite mutation type
in particular contains those of finite type.

The goal of this paper is to give a combinatorial expression
for the Laurent polynomial which expresses any cluster variable
in terms of any cluster, for any cluster algebra arising from a
surface.
An immediate corollary is the proof of the positivity conjecture
for all cluster algebras arising from surfaces.

A {\it cluster algebra} $\A$ of rank $n$ is a subalgebra of an
{\it ambient field} $\F$ isomorphic to a field of rational functions
in $n$ variables.  Each cluster algebra has a distinguished set of
generators called {\it cluster variables}; this set is a union of overlapping
algebraically independent $n$-subsets of $\F$ called {\it clusters},
which together have the structure of a simplicial complex called the {\it cluster
complex}.  See Definition \ref{def:cluster-algebra} for precise details.
The clusters are related to each other by birational transformations
of the following kind: for every cluster $\x$
and every cluster variable $x\in \x$, there is another cluster
$\x' = \x - \{x\} \cup \{x'\}$, with the new cluster variable
$x'$ determined by an {\it exchange relation} of the form
\begin{equation*}
x x' = y^+ M^+ + y^- M^-.
\end{equation*}
Here $y^+$ and $y^-$ belong to a {\it coefficient semifield} $\PP$,
while $M^+$ and $M^-$ are two monomials in the elements of
$\mathbf x - \{x\}$.  There are two dynamics at play in the exchange relations:
that of the monomials $M^+$ and $M^-$, which is encoded in the exchange matrix,
and that of the coefficients.

A classification of finite type cluster algebras -- those with finitely
many clusters -- was given by Fomin and Zelevinksy in \cite{FZ2}.  They
showed that this classification is parallel to the famous Cartan-Killing
classification of complex simple Lie algebras, i.e.\ finite type cluster
algebras either fall into  one of the infinite families
$A_n$, $B_n$, $C_n$, $D_n$, or are of one of the exceptional types
$E_6, E_7, E_8, F_4$, or $G_2$.  Furthermore, the {\it type} of a finite
type cluster algebra depends only on the dynamics of the corresponding
exchange matrices, and not on the coefficients.
However, there are many examples
of cluster algebras of geometric origin which -- despite having the same {\it type} --
have totally different systems of coefficients.
This motivated Fomin and Zelevinsky's work in \cite{FZ4}, in which they
studied the dependence of a cluster algebra structure on the choice of
coefficients.
One surprising result of \cite{FZ4} was that there is a special
choice of coefficients, the {\it principal coefficients}, which
have the property that computation of explicit expansion formulas
for the cluster variables in arbitrary cluster algebras can
be reduced to computation of explicit expansion formulas
in cluster algebras with principal coefficients.
A corollary of this work is that to prove
the positivity conjecture in geometric type, it suffices to prove
the positivity conjecture for the class of principal coefficients.

This takes us to the topic of the present work.  Our main results
are combinatorial formulas for cluster expansions of cluster variables
with respect to any seed, 
in any cluster algebra coming from a surface.
Our formulas are manifestly positive,
so as a consequence  
we obtain the following result.

\begin{theorem}\label{main}
Let $\mathcal A$ be any cluster algebra arising from a surface,
where the coefficient system is of geometric type, and
let $\xx$ be any choice of initial cluster.
Then the Laurent expansion of every cluster variable with respect
to the cluster $\xx$ has non-negative coefficients.
\end{theorem}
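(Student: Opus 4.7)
The plan is to reduce the theorem to principal coefficients and then exhibit, for each cluster variable, a manifestly positive combinatorial expansion. By the separation-of-additions results of Fomin and Zelevinsky in \cite{FZ4}, any cluster algebra of geometric type is a specialization of one with principal coefficients at the chosen initial seed, and this specialization carries the Laurent expansion of a cluster variable to the desired Laurent expansion; moreover, non-negativity is preserved under the specialization. Thus I fix a triangulation $T$ of the marked surface $(S,M)$, pass to the cluster algebra $\A$ with principal coefficients at $T$, and reduce Theorem~\ref{main} to producing, for each arc (or tagged arc) $\gamma$ of $(S,M)$, a Laurent expansion of the associated cluster variable $x_\gamma$ in the initial cluster $\xx$ as a sum of monomials with positive integer coefficients.

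The natural combinatorial model is built from the sequence of triangles of $T$ that $\gamma$ passes through. To an ordinary arc $\gamma$ crossing the edges $\tau_{i_1},\dots,\tau_{i_d}$ of $T$, I would associate a planar \emph{snake graph} $G_{T,\gamma}$ obtained by gluing the tiles dual to these crossings along the shared edges, decorated so that each edge carries a label among the initial cluster variables $x_i$ and the coefficient variables $y_i$. I then propose the expansion
\begin{equation*}
x_\gamma \;=\; \frac{1}{\,\prod_{j=1}^{d} x_{i_j}\,}\sum_{P}\,x(P)\,y(P),
\end{equation*}
where $P$ runs over perfect matchings of $G_{T,\gamma}$ and $x(P),y(P)$ are the products of the $x$- and $y$-labels on the edges used by $P$. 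For closed loops and for tagged arcs at punctures the analogous object is a \emph{band graph} or a suitably doubled snake graph associated with a small loop around the puncture, with an analogous matching formula. Because every summand is a monomial with coefficient $+1$, positivity of the expansion is immediate once the formula is proved.

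To prove the formula itself, I would argue by induction on the number $d$ of crossings of $\gamma$ with $T$. The base cases $d\le 1$ reduce to arcs already in $T$ (where the formula is trivial) or to the defining exchange relation of the cluster algebra. For the inductive step, pick a crossing of $\gamma$ with some edge $\tau\in T$ and resolve it into two shorter arcs $\gamma',\gamma''$ via the skein-type identity that governs cluster exchange in surface cluster algebras \cite{FST,FT}; then verify that cutting $G_{T,\gamma}$ at the corresponding tile yields matchings of $G_{T,\gamma'}$ and $G_{T,\gamma''}$ exactly matching the two terms of the exchange relation, with the $y$-monomials reproducing the principal coefficients dictated by the signed adjacency matrix of $T$.

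The main obstacle, I expect, is the treatment of punctures and tagged arcs. Unpunctured surfaces are by now well understood via $T$-paths and snake-graph matchings, but when $\gamma$ ends at a puncture (so that one must work with tagged arcs in the sense of \cite{FT}) the combinatorial object, the matching enumeration, and the compatibility with the exchange relations for two tagged arcs at the same puncture all become subtler; in particular, one must show that the two differently tagged versions of $\gamma$ give compatible positive expansions related by the correct exchange. Establishing the matching formula uniformly across ordinary arcs, arcs at punctures, and closed loops, and checking that the $y$-monomials correctly track the principal coefficients through each mutation, is where most of the technical work will lie.
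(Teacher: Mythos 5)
Your high-level strategy---reduce to principal coefficients via \cite{FZ4}, associate a snake graph $G_{T,\gamma}$ to each arc $\gamma$, and express $x_\gamma$ as a weighted sum over perfect matchings divided by the crossing monomial---matches the paper's target formula exactly, and your observation that the punctured/tagged cases are the crux is correct. But the proof you sketch has two genuine gaps.

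First, your inductive step is not the right relation. You propose to ``resolve a crossing of $\gamma$ with $\tau$ into two shorter arcs $\gamma',\gamma''$'' and then cut $G_{T,\gamma}$ at the corresponding tile. There is no such three-term skein relation in the surface cluster algebra: cutting $\gamma$ at the crossing point produces two curve segments with an endpoint in the interior of $S$, which are not arcs and have no cluster variable. The relation that actually governs the induction is the generalized Ptolemy relation of Proposition~\ref{Ptolemy}, which requires identifying a quadrilateral with simply connected interior having $\gamma$ and a second diagonal $\gamma'$ as diagonals and four sides $\alpha_1,\dots,\alpha_4$, all six curves crossing $T$ fewer than $d$ times; this is precisely the content of Lemma~\ref{lem exchange}, whose existence is itself not obvious and requires a careful ``middle-crossing'' argument. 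Moreover, once one has the Ptolemy relation $x_\gamma x_{\gamma'} = Y^+ x_{\alpha_1}x_{\alpha_3} + Y^- x_{\alpha_2}x_{\alpha_4}$, the induction is not a matter of cutting one snake graph into two: one must prove a multiplicative identity among six matching generating functions and verify that the coefficient monomials $Y^\pm$ are reproduced correctly. The paper sidesteps this by lifting $\gamma$ to an arc $\tilde\gamma$ in a triangulated polygon $\Sbar_\gamma$, constructing a type $A$ cluster algebra $\Abar_\gamma$ and a homomorphism $\phi_\gamma:\Abar_\gamma\to\Frac(\Acal)$, proving $\phi_\gamma(x_{\tilde\gamma})=x_\gamma$ (this is where the Quadrilateral Lemma is used), and then pushing the already-known type $A$ matching formula of \cite{MS} through $\phi_\gamma$. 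Your direct induction would have to do the coefficient bookkeeping that the lift is designed to avoid, and that bookkeeping is subtle because the lift of a self-folded triangle produces several arcs whose $y$-variables must be specialized nontrivially (Lemma~\ref{lemma-project}).

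Second, for tagged arcs your ``band graph / doubled snake graph'' suggestion does not describe the construction that works. A band graph corresponds to a closed curve, not a tagged arc. The paper handles a singly-notched arc $\gamma^{(p)}$ by taking the loop $\ell_p$ with $\iota(\ell_p)=\gamma^{(p)}$, forming the ordinary snake graph $G_{T^\circ,\ell_p}$, and summing over its \emph{$\gamma$-symmetric} matchings; the proof uses the identity $x_{\ell_p}=x_\gamma x_{\gamma^{(p)}}$ and a weight- and height-preserving bijection between perfect matchings of $G_{T^\circ,\ell_p}$ and pairs consisting of a matching of $G_{T^\circ,\gamma}$ and a $\gamma$-symmetric matching. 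For a doubly-notched arc $\gamma^{(pq)}$ there is no such product identity; the paper needs a new algebraic identity (Theorem~\ref{double-identity}) of the form
\[
x_\gamma x_{\gamma^{(pq)}} - x_{\gamma^{(p)}}x_{\gamma^{(q)}}\,y_\gamma^{\chi(\gamma\in T)}
=\Bigl(1-\prod_\tau y_\tau^{e_p(\tau)}\Bigr)\Bigl(1-\prod_\tau y_\tau^{e_q(\tau)}\Bigr)\prod_\tau y_\tau^{e(\tau,\gamma)},
\]
proved via four Ptolemy relations for the four taggings of a quadrilateral, and a matching-combinatorics counterpart of it for $\gamma$-compatible pairs. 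Neither of these steps is anticipated in your sketch, and it is precisely these steps---not merely routine technicalities---that make the principal-coefficient positivity result for punctured surfaces substantially harder than the coefficient-free case (compare Section~\ref{sec quick}, where the coefficient-free tagged case follows quickly from Proposition~\ref{coeff-free}).
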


Our results generalize those obtained in \cite{S2}, 
where cluster algebras from the (much more restrictive) case of 
surfaces without punctures were considered.  This work  in turn  generalized
\cite{ST}, which treated
cluster algebras from unpunctured surfaces with a very limited coefficient system 
that was associated to the boundary of the surface. 
The very special case where the surface is a polygon and 
coefficients arise from the boundary was covered in \cite{S},
and also in unpublished work \cite{CP, FZ5}.  See also \cite{Propp}.
The recent paper \cite{MS} gave an alternative formulation of 
the results of \cite{S2}, using the language of perfect matchings
as opposed to {\it $T$-paths}.

Many other people have worked on the problem of finding
Laurent expansions of cluster variables,
and on the positivity conjecture.  
However, most of the results
so far obtained have strong restrictions on the 
cluster algebra, the choice of 
initial cluster (equivalently, initial seed), 
or on the system of coefficients.

In the case of rank 2 cluster algebras, 
the papers \cite{SZ,Z,MP} gave  
cluster expansion formulas in affine types.  Positivity in 
these cases was subsequently generalized to 
the coefficient-free rank 2 case in \cite{Dup}, using results of \cite{CalRein}.
In the case of finite type cluster algebras, the positivity conjecture 
with respect to a  bipartite seed follows from \cite[Corollary 11.7]{FZ4}.
Other work \cite{M} gave cluster expansions for coefficient-free
cluster algebras of
finite classical types with respect to a  bipartite seed.

A recent tool in understanding Laurent expansions of cluster variables
is the connection to quiver representations 
and the introduction
of the cluster category \cite{BMRRT} (see also \cite{CCS1} in type $A$).
More specifically, there is a geometric interpretation 
(found in \cite{CC} and generalized in \cite{CK})
of coefficients in Laurent expansions as 
Euler-Poincar\'e characteristics of appropriate Grassmannians of quiver 
representations.  
Using this approach, the works \cite{CC,CK,CK2} gave an expansion formula in the
case where the cluster algebra is acyclic and the initial cluster lies in an
acyclic seed (see also \cite{CZ} in rank 2); this was subsequenty generalized to arbitrary
clusters in an acyclic cluster algebra \cite{Palu}.  Note that these formulas
do not give information about the coefficients.
Later,  \cite{FK} generalized these results to cluster algebras  
with principal coefficients that admit a categorification by a 2-Calabi-Yau category \cite{FK};
by  \cite{A} and \cite{ABCP,LF}, such a categorification exists in the case of 
cluster algebras associated to surfaces with non-empty boundary.  
Recently \cite{DWZ} gave expressions for the $F$-polynomials in 
any skew-symmetric cluster algebra.   
However, since all of the above formulas are in terms of Euler-Poincar\'e characteristics
(which can be negative), they do not immediately imply the positivity conjecture.

The  work \cite{CalRein} used the above approach to prove the positivity conjecture
for coefficient-free acyclic cluster algebras, with respect to an acyclic seed.
Building on 
results of \cite{HL},  Nakajima recently  
used quiver varieties to prove the 
positivity conjecture for cluster algebras 
that have at least one bipartite seed, with respect to any cluster \cite{Nak}.
This is a very strong result, but it does not overlap very much
with our Theorem \ref{main}.
Note that a bipartite seed is in particular acyclic, 
but not every acyclic type 
has a bipartite seed; the affine type $\tilde A_2$, for example, does not.
And the only surfaces that give rise to acyclic cluster algebras
are the polygon, the polygon with one puncture, the annulus, and the polygon with two punctures 
(corresponding to the finite types $A$ and $D$, and 
the affine types $\tilde A$ and $\tilde D$).
All other surfaces yield non-acyclic cluster algebras, see \cite[Corollary 12.4]{FST}.

The paper is organized as follows. We give background on cluster algebras and cluster algebras from surfaces
in Sections \ref{sect cluster algebras} and \ref{sect surfaces}.  In Section \ref{sect main} we present our 
formulas for  Laurent expansions of cluster variables, and in Section \ref{ClusterExamples}
we give examples, as well as identities in the coefficient-free case.  As the proofs 
of our main results are 
rather involved, we give a detailed outline of 
the main argument in Section \ref{sec outline}, before giving the proofs themselves in Sections 
\ref{sect Sbar} to \ref{sec plain} and \ref{sec Finish}.    
In Section \ref{sect applications}, we combine our results with those 
of \cite{FK} and \cite{DWZ} to give formulas for 
F-polynomials, g-vectors, and 
Euler-Poincar\'e characteristics of quiver Grassmannians.

Recall that cluster variables in cluster algebras from surfaces correspond to
{\it ordinary arcs} as well as arcs with {\it notches} at one or two ends.  
We remark that working in the generality of principal coefficients
is much more difficult than working in the coefficient-free case.
Indeed, once we have proved positivity for cluster variables corresponding
to ordinary arcs, the proof of positivity for cluster variables corresponding
to tagged arcs {\it in the coefficient-free case} follows easily, see 
Proposition \ref{coeff-free} and Section \ref{sec quick}.  
Putting back principal coefficients requires much more elaborate arguments, see
Section \ref{sec Finish}.  A crucial tool here is the connection to 
laminations \cite{FT}.

Note that all cluster algebras coming from surfaces are skew-symmetric.  In a sequel
to this paper we will explain how to use folding arguments to give positivity results
for cluster algebras with principal coefficients which are not skew-symmetric.

\textsc{Acknowledgements:}
We are grateful to the organizers of the workshop on cluster
algebras in Morelia, Mexico, where we benefited from Dylan Thurston's
lectures.  We would also like to thank Sergey Fomin and Bernard Leclerc for 
useful discussions.

\section{Cluster algebras}\label{sect cluster algebras}

We begin by reviewing the definition of cluster algebra,
first introduced by Fomin and Zelevinsky in \cite{FZ1}.
Our definition follows the exposition in \cite{FZ4}.

\subsection{What is a cluster algebra?}

To define  a cluster algebra~$\Acal$ we must first fix its
ground ring.
Let $(\PP,\oplus, \cdot)$ be a \emph{semifield}, i.e.,
an abelian multiplicative group endowed with a binary operation of
\emph{(auxiliary) addition}~$\oplus$ which is commutative, associative, and
distributive with respect to the multiplication in~$\PP$.
The group ring~$\ZZ\PP$ will be
used as a \emph{ground ring} for~$\Acal$.
One important choice for $\PP$ is the tropical semifield; in this case we say that the
corresponding cluster algebra is of {\it geometric type}.
\begin{Def}
[\emph{Tropical semifield}]
\label{def:semifield-tropical}
Let $\Trop (u_1, \dots, u_{m})$ be an abelian group (written
multiplicatively) freely generated by the $u_j$.
We define  $\oplus$ in $\Trop (u_1,\dots, u_{m})$ by
\begin{equation}
\label{eq:tropical-addition}
\prod_j u_j^{a_j} \oplus \prod_j u_j^{b_j} =
\prod_j u_j^{\min (a_j, b_j)} \,,
\end{equation}
and call $(\Trop (u_1,\dots,u_{m}),\oplus,\cdot)$ a \emph{tropical
 semifield}.
Note that the group ring of $\Trop (u_1,\dots,u_{m})$ is the ring of Laurent
polynomials in the variables~$u_j\,$.
\end{Def}

As an \emph{ambient field} for
$\Acal$, we take a field $\Fcal$
isomorphic to the field of rational functions in $n$ independent
variables (here $n$ is the \emph{rank} of~$\Acal$),
with coefficients in~$\QQ \PP$.
Note that the definition of $\Fcal$ does not involve
the auxiliary addition
in~$\PP$.

\begin{Def}
[\emph{Labeled seeds}]
\label{def:seed}
A \emph{labeled seed} in~$\Fcal$ is
a triple $(\xx, \yy, B)$, where
\begin{itemize}
\item
$\xx = (x_1, \dots, x_n)$ is an $n$-tuple of elements of~$\Fcal$
forming a \emph{free generating set} over $\QQ \PP$,
\item
$\yy = (y_1, \dots, y_n)$ is an $n$-tuple
of elements of $\PP$, and
\item
$B = (b_{ij})$ is an $n\!\times\! n$ integer matrix
which is \emph{skew-symmetrizable}.
\end{itemize}
That is, $x_1, \dots, x_n$
are algebraically independent over~$\QQ \PP$, and
$\Fcal = \QQ \PP(x_1, \dots, x_n)$.
We refer to~$\xx$ as the (labeled)
\emph{cluster} of a labeled seed $(\xx, \yy, B)$,
to the tuple~$\yy$ as the \emph{coefficient tuple}, and to the
matrix~$B$ as the \emph{exchange matrix}.
\end{Def}

We obtain ({\it unlabeled}) {\it seeds} from labeled seeds
by identifying labeled seeds that differ from
each other by simultaneous permutations of
the components in $\xx$ and~$\yy$, and of the rows and columns of~$B$.

We  use the notation
$[x]_+ = \max(x,0)$,
$[1,n]=\{1, \dots, n\}$, and
\begin{align*}
\sgn(x) &=
\begin{cases}
-1 & \text{if $x<0$;}\\
0  & \text{if $x=0$;}\\
 1 & \text{if $x>0$.}
\end{cases}
\end{align*}

\begin{Def}
[\emph{Seed mutations}]
\label{def:seed-mutation}
Let $(\xx, \yy, B)$ be a labeled seed in $\Fcal$,
and let $k \in [1,n]$.
The \emph{seed mutation} $\mu_k$ in direction~$k$ transforms
$(\xx, \yy, B)$ into the labeled seed
$\mu_k(\xx, \yy, B)=(\xx', \yy', B')$ defined as follows:
\begin{itemize}
\item
The entries of $B'=(b'_{ij})$ are given by
\begin{equation}
\label{eq:matrix-mutation}
b'_{ij} =
\begin{cases}
-b_{ij} & \text{if $i=k$ or $j=k$;} \\[.05in]
b_{ij} + \sgn(b_{ik}) \ [b_{ik}b_{kj}]_+
 & \text{otherwise.}
\end{cases}
\end{equation}
\item
The coefficient tuple $\yy'=(y_1',\dots,y_n')$ is given by
\begin{equation}
\label{eq:y-mutation}
y'_j =
\begin{cases}
y_k^{-1} & \text{if $j = k$};\\[.05in]
y_j y_k^{[b_{kj}]_+}
(y_k \oplus 1)^{- b_{kj}} & \text{if $j \neq k$}.
\end{cases}
\end{equation}
\item
The cluster $\xx'=(x_1',\dots,x_n')$ is given by
$x_j'=x_j$ for $j\neq k$,
whereas $x'_k \in \Fcal$ is determined
by the \emph{exchange relation}
\begin{equation}
\label{exchange relation}
x'_k = \frac
{y_k \ \prod x_i^{[b_{ik}]_+}
+ \ \prod x_i^{[-b_{ik}]_+}}{(y_k \oplus 1) x_k} \, .
\end{equation}
\end{itemize}
\end{Def}

We say that two exchange matrices $B$ and $B'$ are {\it mutation-equivalent}
if one can get from $B$ to $B'$ by a sequence of mutations.
\begin{Def}
[\emph{Patterns}]
\label{def:patterns}
Consider the \emph{$n$-regular tree}~$\TT_n$
whose edges are labeled by the numbers $1, \dots, n$,
so that the $n$ edges emanating from each vertex receive
different labels.
A \emph{cluster pattern}  is an assignment
of a labeled seed $\Sigma_t=(\xx_t, \yy_t, B_t)$
to every vertex $t \in \TT_n$, such that the seeds assigned to the
endpoints of any edge $t \overunder{k}{} t'$ are obtained from each
other by the seed mutation in direction~$k$.
The components of $\Sigma_t$ are written as:
\begin{equation}
\label{eq:seed-labeling}
\xx_t = (x_{1;t}\,,\dots,x_{n;t})\,,\quad
\yy_t = (y_{1;t}\,,\dots,y_{n;t})\,,\quad
B_t = (b^t_{ij})\,.
\end{equation}
\end{Def}

Clearly, a cluster pattern  is uniquely determined
by an arbitrary  seed.

\begin{Def}
[\emph{Cluster algebra}]
\label{def:cluster-algebra}
Given a cluster pattern, we denote
\begin{equation}
\label{eq:cluster-variables}
\Xcal
= \bigcup_{t \in \TT_n} \xx_t
= \{ x_{i,t}\,:\, t \in \TT_n\,,\ 1\leq i\leq n \} \ ,
\end{equation}
the union of clusters of all the seeds in the pattern.
The elements $x_{i,t}\in \Xcal$ are called \emph{cluster variables}.
The 
\emph{cluster algebra} $\Acal$ associated with a
given pattern is the $\ZZ \PP$-subalgebra of the ambient field $\Fcal$
generated by all cluster variables: $\Acal = \ZZ \PP[\Xcal]$.
We denote $\Acal = \Acal(\xx, \yy, B)$, where
$(\xx,\yy,B)$
is any seed in the underlying cluster pattern.
\end{Def}

The remarkable {\it Laurent phenomenon} asserts the following.

\begin{theorem} \cite[Theorem 3.1]{FZ1}
\label{Laurent}
The cluster algebra $\Acal$ associated with a seed
$(\xx,\yy,B)$ is contained in the Laurent polynomial ring
$\ZZ \PP [\xx^{\pm 1}]$, i.e.\ every element of $\Acal$ is a
Laurent polynomial over $\ZZ \PP$ in the cluster variables
from $\xx=(x_1,\dots,x_n)$.
\end{theorem}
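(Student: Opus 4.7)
The plan is to prove the Laurent phenomenon by induction on the graph distance $d = d(t_0, t)$ in the $n$-regular tree $\TT_n$, where $t_0$ is the vertex carrying the initial seed $(\xx,\yy,B)$. Since $\Acal$ is generated over $\ZZ\PP$ by the cluster variables $\{x_{i;t}\}$, it suffices to show that every such $x_{i;t}$ belongs to $\ZZ\PP[\xx^{\pm 1}]$. The case $d=0$ is trivial, and the case $d=1$ can be read off directly from the exchange relation~\eqref{exchange relation}, which expresses the unique new variable $x_k'$ as a Laurent polynomial in $\xx$ over $\ZZ\PP$.

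The inductive step is where the real work lies. The naive strategy — substituting a Laurent expression valid at distance $d$ into the next exchange relation at distance $d+1$ — yields a rational expression in $\xx$ whose denominator a priori contains extraneous factors of the initial cluster variables, and one must show these cancel. To force these cancellations, I would follow the \emph{Caterpillar Lemma} strategy of Fomin and Zelevinsky: instead of inducting along an arbitrary path in $\TT_n$, one works with ``caterpillar'' subtrees built by repeated alternation of two mutation directions $j$ and $k$ along a spine, and strengthens the induction hypothesis to assert, at every spine vertex, that each cluster variable is not only Laurent in the cluster at that vertex but also has a reduced denominator that is a pure monomial in that cluster. The three-term identity relating the exchange relations at three consecutive spine vertices then reduces the propagation step to an explicit computation combining the exchange relation~\eqref{exchange relation} with the matrix and coefficient mutation rules~\eqref{eq:matrix-mutation} and~\eqref{eq:y-mutation}. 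Concatenating caterpillars, one reaches every vertex of $\TT_n$.

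The main obstacle is verifying the cancellation at the heart of this three-term identity. The key algebraic fact is that the two summands in the exchange relation involve complementary exponents $[b_{ik}]_+$ and $[-b_{ik}]_+$, and that the matrix mutation rule~\eqref{eq:matrix-mutation} is precisely designed so that these exponents dovetail to annihilate any would-be parasitic denominator when one substitutes the Laurent expressions obtained at the neighbouring seed. One must also track the coefficients carefully, using~\eqref{eq:y-mutation}, to confirm that no spurious factor of $y_k \oplus 1$ survives in the denominator either. Once this single cancellation is pinned down, the induction propagates across all of $\TT_n$ and the theorem follows, with $\Acal \subseteq \ZZ\PP[\xx^{\pm 1}]$ as asserted.
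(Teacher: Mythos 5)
The paper does not prove this theorem; it simply cites \cite[Theorem 3.1]{FZ1}, so there is no in-paper proof to compare against. Your sketch correctly identifies the Fomin--Zelevinsky strategy --- induction along $\TT_n$, a Caterpillar-Lemma-style strengthening of the hypothesis to control reduced denominators at each seed, and the decisive three-seed cancellation arising from the complementary exponents $[b_{ik}]_+$ and $[-b_{ik}]_+$ together with the matrix mutation rule \eqref{eq:matrix-mutation} --- though one small imprecision is that the caterpillar spine need not alternate between only two fixed mutation directions $j$ and $k$; arbitrary spine directions are permitted, and the crucial coprimality/cancellation is verified for each pair of consecutive spine edges, which is what lets the argument cover all of $\TT_n$ by concatenation.
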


\begin{definition}
Let $\Acal$ be a cluster algebra, 
$\Sigma$ be a seed, and $x$
be a cluster variable of $\Acal$.  We denote by 
$[x]_{\Sigma}^{\Acal}$ the Laurent polynomial given by 
Theorem \ref{Laurent} which expresses $x$ in 
terms of the cluster variables from $\Sigma$.  We  refer to this
as the {\it cluster expansion} of $x$ in terms of $\Sigma$.
\end{definition}

The longstanding {\it positivity conjecture} \cite{FZ1} says that 
even more is true.

\begin{conjecture} {\em (Positivity Conjecture)}
For any cluster algebra $\Acal$, any seed $\Sigma$, and any cluster
variable $x$, the Laurent polynomial
$[x]_{\Sigma}^{\Acal}$ has coefficients which are 
non-negative 
integer linear combinations of elements in $\PP$.

\end{conjecture}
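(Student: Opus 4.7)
The plan is to attack the conjecture through a sequence of reductions before confronting the combinatorial core. First, I would invoke the separation-of-additions formula of Fomin and Zelevinsky \cite{FZ4}: every cluster variable $[x]_{\Sigma}^{\Acal}$ in a cluster algebra of geometric type can be written as a specialization of an F-polynomial (computed in a principal-coefficients model $\Acal_\bullet$ having the same initial exchange matrix) times an explicit monomial. This reduces the Positivity Conjecture in geometric type to showing that every cluster variable in $\Acal_\bullet$ has a Laurent expansion in $\ZZ_{\geq 0}[y_1,\dots,y_n][x_1^{\pm 1},\dots,x_n^{\pm 1}]$, which in turn specializes to positivity for arbitrary coefficients $\PP$.

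Second, for skew-symmetrizable but not skew-symmetric exchange matrices, I would attempt the folding technique: unfold $B$ to a skew-symmetric $\widetilde B$ equipped with a finite group action whose invariants recover the original seed pattern. Positivity of Laurent expansions in $\Acal(\widetilde B)$ would then descend to positivity for $\Acal(B)$. This (when applicable) reduces the problem further to the skew-symmetric, principal-coefficients case.

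Third, in the skew-symmetric case the natural strategy is categorification: realize $\Acal_\bullet$ via a Caldero-Chapoton type character on a 2-Calabi-Yau triangulated category, so that Laurent coefficients become Euler characteristics of quiver Grassmannians, and then extract positivity from an appropriate geometric structure (for instance, a cell decomposition with only even-dimensional cells, as in Nakajima's \cite{Nak} bipartite-seed case). For the subclass treated in this paper --- cluster algebras from a marked bordered surface --- I would bypass the general categorification and build an explicit combinatorial model: to each cluster variable, associate a ``snake graph'' arising from the intersections of the corresponding arc with the initial triangulation, and express the Laurent expansion as a sum over perfect matchings of that snake graph with each term a positive monomial in the $x_i$ and $y_i$. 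This is exactly Theorem \ref{main}.

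The main obstacle is the third step in full generality. For an arbitrary skew-symmetric cluster algebra there is no known 2-Calabi-Yau categorification, no guaranteed bipartite seed, and no uniform combinatorial model; the Euler-characteristic formulas coming from \cite{DWZ} are known to compute the right numbers but can a priori be negative. Consequently, the honest outcome of this plan is a proof of the conjecture restricted to cluster algebras from surfaces (Theorem \ref{main}), obtained by developing the snake-graph/perfect-matching combinatorics, lifting it to principal coefficients via an analysis of laminations \cite{FT}, and extending it from ordinary to tagged arcs; the general case would require a replacement, valid outside the surface world, for this explicit positive combinatorial expansion.
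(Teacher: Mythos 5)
Your proposal does not (and candidly admits it cannot) prove the statement as posed: the Positivity Conjecture for \emph{all} cluster algebras. That is not a defect of your reasoning so much as a fact about the state of the art reflected in the paper itself, which states positivity only as a conjecture and proves it (Theorem \ref{main}) solely for cluster algebras arising from surfaces with geometric-type coefficients. Your two reductions are sound and are exactly the ones the paper relies on or defers: the separation formula of \cite{FZ4} (Theorem \ref{th:reduction-principal}) reduces geometric type to principal coefficients, and folding for skew-symmetrizable matrices is explicitly postponed by the authors to a sequel. Your third step, however, is where the general case genuinely breaks down, for the reasons you name: the \cite{DWZ} Euler-characteristic formulas are not manifestly positive, and no uniform categorical or combinatorial positivity mechanism was available outside special classes (acyclic/bipartite seeds, surfaces). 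So as a proof of the conjecture there is a real gap; as a proof of the surface case there is not.

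For the part you do deliver, your route is essentially the paper's: build the snake graph from the crossings of the arc with an ideal triangulation, express the Laurent expansion as a positive sum over perfect matchings with principal coefficients tracked via laminations, and extend to tagged arcs. Be aware that the paper's execution of this outline is substantially more involved than the sketch suggests: it first reduces arbitrary tagged triangulations to ideal ones (Proposition \ref{tag-change}, Corollary \ref{reduce-to-ideal}); it proves the matching formula for ordinary arcs not directly but by lifting $\gamma$ to a triangulated polygon $\Sbar_{\gamma}$, constructing a type $A$ cluster algebra $\Abar_{\gamma}$ and a homomorphism $\phi_{\gamma}$ into $\Frac(\Acal)$, and running an induction on crossing number via the quadrilateral lemma (Lemma \ref{lem exchange}) together with a careful comparison of shear coordinates under the lift; and the notched-arc cases require the algebraic identities $x_{\ell_p}=x_{\gamma}x_{\gamma^{(p)}}$ and Theorem \ref{double-identity}, matched by nontrivial bijective/cancellation arguments on $\gamma$-symmetric and $\gamma$-compatible matchings (including the delicate doubly-notched loop case). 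Any write-up along your lines would need to supply these ingredients; with them, you recover Theorem \ref{main}, but not the conjecture in the generality stated.
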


\begin{remark}\label{rectangular}
In cluster algebras whose ground ring is 
$\Trop(u_1,\dots, u_{m})$ (the tropical semifield), it is convenient to replace the
matrix $B$ by an $(n+m)\times n$ matrix $\tilde B=(b_{ij})$ whose upper part
is the $n\times n$ matrix $B$ and whose lower part is an $m\times
n$ matrix that encodes the coefficient tuple via
\begin{equation}\label{eq 20}
y_k = \prod_{i=1}^{m} u_i^{b_{(n+i)k}}.
\end{equation}  
Then the mutation of the coefficient tuple in equation (\ref{eq:y-mutation}) 
is determined by the mutation
of the matrix $\tilde B$ in equation (\ref{eq:matrix-mutation}) and the formula (\ref{eq 20}); and the
exchange relation (\ref{exchange relation}) becomes
\begin{equation}\label{geometric exchange}
 x_k'=x_k^{-1} \left( \prod_{i=1}^n x_i^{[b_{ik}]_+}
\prod_{i=1}^{m} u_i^{[b_{(n+i)k}]_+} 
+\prod_{i=1}^n x_i^{[-b_{ik}]_+}
\prod_{i=1}^{m} u_i^{[-b_{(n+i)k}]_+}
\right).
\end{equation}  
\end{remark}

\subsection{Finite type and finite mutation type classification}

We say that a cluster algebra is of {\it finite type} if it has finitely many seeds.
Fomin and Zelevinsky \cite{FZ2} showed that the classification of finite type cluster algebras
is parallel to the Cartan-Killing classification of complex simple Lie algebras.

More specifically, define the \emph{diagram} $\Gamma(B)$
associated to an $n \times n$ exchange matrix $B$ to
be a weighted directed graph on $n$ nodes $v_1,\dots,v_n$,
where $v_i$ is directed towards $v_j$ if and only if
$b_{ij}>0$.  In that case, we label this edge  by
the weight $| b_{ij} b_{ji}|.$ 
It was shown in \cite{FZ2} that  a cluster algebra
$\Acal = \Acal(\xx, \yy, B)$ is of finite type if and only
$\Gamma(B)$ is mutation-equivalent to an orientation of a finite type Dynkin diagram.
In this case, we say that $B$ and $\Gamma(B)$ are of {\it finite type.}

We say that a matrix $B$ (and the corresponding cluster algebra) has {\it finite mutation type} if its mutation
equivalence class is finite, i.e. only finitely many matrices can be obtained
from $B$ by repeated matrix mutations.
A classification of all cluster algebras of finite mutation 
type with skew-symmetric exchange matrices was recently given by Felikson, Shapiro, and Tumarkin \cite{FeSTu}.
In particular, all but 11 of them come from either cluster algebras of rank 2 or 
cluster algebras associated with {\it triangulations of surfaces} (see Section \ref{sect surfaces}).

\subsection{Cluster algebras with principal
    coefficients}\label{sect principal coefficients}

Fomin and Zelevinsky introduced in \cite{FZ4} a special type of
coefficients, called \emph{principal coefficients}.

\begin{Def}
[\emph{Principal coefficients}]
\label{def:principal-coeffs}
We say that a cluster pattern $t \mapsto (\xx_t, \yy_t,B_t)$ on $\TT_n$
(or the corresponding cluster algebra~$\Acal$)  has
\emph{principal coefficients at a vertex~$t_0$} if
$\PP= \Trop(y_1, \dots, y_n)$ and
$\yy_{t_0}= (y_1, \dots, y_n)$. \linebreak[3]
In this case, we denote $\Acal=\Aprin(B_{t_0})$.
\end{Def}

\begin{remark}
\label{rem:principal-tildeB}
Definition~\ref{def:principal-coeffs} can be rephrased
as follows: a cluster algebra~$\Acal$ has principal coefficients at a
vertex~$t_0$ if $\Acal$ is of geometric type,
and is associated with the matrix $\tilde B_{t_0}$ of order $2n \times n$
whose upper part is $B_{t_0}$,
and whose complementary (i.e., bottom) part is
the $n \times n$ identity matrix (cf.\ \cite[Corollary~5.9]{FZ1}).
\end{remark}
 
\begin{Def}
[\emph{The functions $X_{\ell; t}$ and  $F_{\ell,t}$}]
\label{def:Aprin}
Let~$\Acal$ be the cluster \linebreak[3]
algebra with principal coefficients at 
$t_0$, defined by the initial seed
$\Sigma_{t_0}=(\xx_{t_0}\,,\yy_{t_0}\,,B_{t_0})$ with
\begin{equation}
\label{eq:initial-seed}
\xx_{t_0} = (x_1, \dots, x_n), \quad
\yy_{t_0} = (y_1, \dots, y_n), \quad
B_{t_0} = B^0 = (b^0_{ij})\,.
\end{equation}
By the Laurent phenomenon, we
can express every cluster variable $x_{\ell;t}$ as a (unique)
Laurent polynomial in $x_1, \dots, x_n, y_1, \dots, y_n$; 
we denote this by 
\begin{equation}
\label{eq:X-sf}
X_{\ell;t} 
= X_{\ell;t}^{B^0;t_0}.
\end{equation}
Let $F_{\ell;t} =
F_{\ell;t}^{B^0;t_0}$
denote the Laurent polynomial obtained from $X_{\ell;t}$ by
\begin{equation}
\label{eq:F-def}
F_{\ell;t}(y_1, \dots, y_n) = X_{\ell;t}(1, \dots, 1; y_1, \dots, y_n).
\end{equation}
$F_{\ell;t}(y_1,\dots,y_n)$ turns out to be
a polynomial \cite{FZ4} and is called an \emph{F-polynomial}.
\end{Def}

Knowing the cluster expansions for a cluster algebra with principal
coefficients allows one to compute the cluster expansions for the
``same'' cluster algebra with an arbitrary coefficient system. 
To explain this, we need an additional notation.
If $F$ is a subtraction-free rational expression over $\QQ$
in several variables, $R$ a semifield,
and $u_1,\dots,u_r$ some elements of~$R$,
then we denote by $F|_R(u_1,\dots,u_r)$ the evaluation of $F$ at
$u_1,\dots,u_r\,$.

\begin{theorem}\cite[Theorem 3.7]{FZ4}
\label{th:reduction-principal}
Let $\Acal$ be a cluster algebra over an arbitrary semifield $\PP$
and contained in the ambient field $\Fcal$,
with a seed at an initial vertex $t_0$ given by 
$$((x_1, \dots, x_n), (y_1^*, \dots, y_n^*), B^0).$$
Then the cluster variables in~$\Acal$ can be expressed as follows:
\begin{equation}
\label{eq:xjt-reduction-principal}
x_{\ell;t} = \frac{X_{\ell;t}^{B^0;t_0}|_\Fcal (x_1, \dots, x_n;y_1^*, \dots, y_n^*)}
{F_{\ell;t}^{B^0;t_0}|_\PP (y_1^*, \dots, y_n^*)} \, .
\end{equation}
\end{theorem}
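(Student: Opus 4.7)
The plan is to argue by induction on the distance $d(t_0,t)$ in $\TT_n$, comparing both sides of (\ref{eq:xjt-reduction-principal}). Write $\widehat{x}_{\ell;t}$ for the right-hand side. The base case $t=t_0$ is immediate: by definition of $\Aprin(B^0)$ we have $X_{\ell;t_0}^{B^0;t_0}=x_\ell$ and $F_{\ell;t_0}^{B^0;t_0}=1$, so $\widehat{x}_{\ell;t_0}=x_\ell=x_{\ell;t_0}$.

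For the inductive step, let $t$ and $t'$ be joined by an edge labeled $k$, and assume the formula holds at every $\ell$ for the vertex $t$. The only new cluster variable at $t'$ is $x_{k;t'}$, which is uniquely determined by the exchange relation (\ref{exchange relation}) in $\Acal$ applied to $x_{k;t}$, the $x_{i;t}$ with $i\neq k$, the coefficient $y_{k;t}^*\in\PP$ dictated by the $\yy$-dynamics starting from $\yy_{t_0}=(y_1^*,\dots,y_n^*)$, and the matrix $B_t$. So it suffices to verify that $\widehat{x}_{k;t'}$ obeys the same exchange relation with $\widehat{x}_{k;t}=x_{k;t}$. In $\Aprin(B^0)$, the variables $X_{k;t'}^{B^0;t_0}$ and $X_{k;t}^{B^0;t_0}$ satisfy their own exchange relation with tropical coefficients: because $\PP=\Trop(y_1,\dots,y_n)$, the expression ``$\widehat{Y}_{k;t}\oplus 1$'' that appears is itself a Laurent monomial in $y_1,\dots,y_n$, so the relation has a clean monomial structure. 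Setting $x_i=1$ in this relation produces a parallel recursion for the $F$-polynomials $F_{\ell;t}^{B^0;t_0}$. Substituting $y_i\mapsto y_i^*$ in the principal exchange relation and dividing through by the product of $F$-polynomials that appears in the denominator of $\widehat{x}_{k;t'}$, the identity one must check reduces to matching ``$\widehat{Y}_{k;t}\oplus 1$'' (computed tropically, then evaluated at $y_i=y_i^*$) against the corresponding quantity ``$y_{k;t}^*\oplus 1$'' computed directly in $\PP$.

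The main obstacle, and where I expect most of the technical work to go, is precisely this comparison. The two quantities are \emph{not} related by the naive substitution $y_i\mapsto y_i^*$: tropical addition keeps only the minimum-exponent monomial, while addition in an arbitrary semifield $\PP$ need not simplify. The resolution is a \emph{separation-of-additions} principle, which I would establish by an auxiliary induction parallel to the main one, asserting that the $\yy$-dynamics coefficients in $\Acal$ can be written as
\[
y_{k;t}^* \;=\; \widehat{Y}_{k;t}\big|_{y_i=y_i^*}\,\cdot\,\prod_{\ell} \big(F_{\ell;t}^{B^0;t_0}|_\PP(y_1^*,\dots,y_n^*)\big)^{c_{\ell}}
\]
for explicit integer exponents $c_\ell$ governed by $B_t$, with an analogous formula for $y_{k;t}^*\oplus 1$. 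Plugging these identities into the substituted principal exchange relation, the $F$-polynomial correction factors cancel against those introduced by the substitution into the numerators $X_{k;t'}^{B^0;t_0}|_\Fcal$, and what remains is exactly the exchange relation (\ref{exchange relation}) of $\Acal$ for $\widehat{x}_{k;t'}$ and $x_{k;t}$. This closes the induction and establishes the theorem.
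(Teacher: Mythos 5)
This theorem is stated in the paper with a citation to \cite[Theorem 3.7]{FZ4}; the paper offers no proof of its own, so there is no in-text argument to compare against. Your sketch nonetheless follows the strategy used by Fomin and Zelevinsky in the cited source: induction on the distance $d(t_0,t)$ in $\TT_n$, with the observation that $\PP_{\mathrm{prin}}=\Trop(y_1,\dots,y_n)$ makes $\widehat Y_{k;t}\oplus 1$ a monomial, and with the crucial ``separation of additions'' identity relating $y_{k;t}^*$ to the tropical $Y$-coefficient and a product of $F$-polynomial factors. That identity is precisely FZ4's Proposition 3.13, and its companion formula for $y_{k;t}^*\oplus 1$ is what lets the exchange relations on the two sides line up after dividing by the appropriate $F$-polynomials.

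The one thing to flag: you correctly locate where the work lies, but the separation-of-additions formula is the substantive content of the theorem, not a routine side lemma. In FZ4 its proof is entangled with the $\hat y$-variables, the $F$-polynomial recursion, and the $c$-vector entries that supply your exponents $c_\ell$ (they are the bottom rows $b^t_{n+\ell,k}$ of the principal extended matrix, with the $F$-exponents given by $b^t_{\ell k}$). Your ``auxiliary induction parallel to the main one'' is the right plan, but as written it asserts the identity rather than establishing it, so the argument is an outline of the FZ4 proof rather than a self-contained derivation. There is no wrong step and no misdirected idea; the gap is that the pivotal Proposition 3.13--type statement would need its own careful inductive verification, including the nontrivial companion formula for $y^*_{k;t}\oplus 1$, before the main induction closes.
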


When $\PP$ is a tropical semifield,  the denominator of equation 
(\ref{eq:xjt-reduction-principal}) is a monomial.  
Therefore if 
the Laurent polynomial $X_{\ell;t}$ has positive coefficients, so does
$x_{\ell;t}$.

\begin{cor}
Let~$\Acal$ be the cluster 
algebra with principal coefficients at a
vertex~$t_0$, defined by the initial seed
$\Sigma_{t_0}=(\xx_{t_0}\,,\yy_{t_0}\,,B_{t_0})$.
Let $\hat{\Acal}$ be any cluster algebra of geometric type
defined by the 
same  exchange matrix $B_{t_0}$.
If the positivity conjecture
holds for $\Acal$, then it also holds for $\hat{\Acal}$.
\end{cor}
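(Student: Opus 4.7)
The plan is to deduce this corollary as a direct consequence of Theorem \ref{th:reduction-principal}, using the special structure of tropical semifields. The key observation the proof will exploit is that evaluating a polynomial (as opposed to a subtraction-free rational function with denominators) in a tropical semifield always produces a single Laurent monomial, because tropical addition is the minimum operation.

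First I would set up the notation: since $\hat{\Acal}$ is of geometric type, its coefficient semifield is $\PP = \Trop(u_1, \dots, u_m)$ for some $m$, and its initial seed has the form $((x_1, \dots, x_n), (y_1^*, \dots, y_n^*), B_{t_0})$ where each $y_j^*$ is a Laurent monomial in the $u_i$. Let $x_{\ell;t}$ denote an arbitrary cluster variable of $\hat{\Acal}$, expanded with respect to the initial cluster $\xx_{t_0}$. By Theorem \ref{th:reduction-principal}, we have the identity
\begin{equation*}
x_{\ell;t} = \frac{X_{\ell;t}^{B_{t_0};t_0}|_\Fcal(x_1,\dots,x_n;\, y_1^*,\dots,y_n^*)}{F_{\ell;t}^{B_{t_0};t_0}|_\PP(y_1^*,\dots,y_n^*)}.
\end{equation*}

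Next I would analyze numerator and denominator separately. For the numerator, the hypothesis is that positivity holds for $\Acal$, so $X_{\ell;t}^{B_{t_0};t_0}$ is a Laurent polynomial in $x_1,\dots,x_n,y_1,\dots,y_n$ with non-negative integer coefficients. Substituting each $y_j$ by the Laurent monomial $y_j^*$ in the $u_i$ gives a Laurent polynomial in $x_1,\dots,x_n,u_1,\dots,u_m$ whose coefficients remain non-negative integers, since we are merely substituting monomials with coefficient $1$ into a polynomial with non-negative coefficients. For the denominator, $F_{\ell;t}^{B_{t_0};t_0}$ is a genuine polynomial in $y_1,\dots,y_n$ by \cite{FZ4}, and its evaluation in the tropical semifield $\Trop(u_1,\dots,u_m)$ replaces each addition by a componentwise minimum of exponent vectors; the output is therefore a single Laurent monomial in $u_1,\dots,u_m$.

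To finish, I would observe that dividing a Laurent polynomial with non-negative coefficients by a Laurent monomial again yields a Laurent polynomial with non-negative coefficients. Hence $x_{\ell;t}$, when expanded in the initial cluster of $\hat{\Acal}$, has non-negative integer coefficients as an element of $\ZZ\PP[x_1^{\pm 1},\dots,x_n^{\pm 1}]$, which is exactly the positivity statement for $\hat{\Acal}$ with respect to the initial seed $\Sigma_{t_0}$. To obtain positivity with respect to an arbitrary seed of $\hat{\Acal}$, one simply applies the same argument after relabeling that seed as the initial one, using that principal coefficients can be placed at any vertex. There is no real obstacle here; the only delicate point is the distinction that $F_{\ell;t}$ is a polynomial (so its tropical evaluation is a monomial rather than a sum of Laurent monomials), and this is exactly what makes the reduction from arbitrary geometric coefficients to principal coefficients work.
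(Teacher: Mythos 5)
Your proof is correct and takes essentially the same approach as the paper: the corollary follows immediately from Theorem~\ref{th:reduction-principal} together with the observation (made in the remark just before the corollary in the paper) that the denominator $F_{\ell;t}^{B^0;t_0}|_\PP$ evaluates to a single Laurent monomial in a tropical semifield, so dividing the positive numerator by it preserves positivity. Your extra discussion of the numerator (substituting Laurent monomials $y_j^*$ preserves non-negativity of coefficients) and of why the tropical evaluation of a polynomial is a monomial is a useful unpacking of what the paper leaves implicit.

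One small caveat worth being aware of, though it does not affect the way the corollary is actually deployed in the paper: your final paragraph asserts that positivity for $\hat{\Acal}$ at an arbitrary seed $\Sigma_{t_1}$ follows by ``relabeling'' $t_1$ as the initial vertex. Doing so and invoking Theorem~\ref{th:reduction-principal} centered at $t_1$ requires positivity of $X_{\ell;t}^{B_{t_1};t_1}$, i.e.\ positivity in the principal-coefficients algebra $\Aprin(B_{t_1})$ with respect to its own principal seed. This is a priori a different statement from ``positivity holds for $\Acal=\Aprin(B_{t_0})$ at the seed $\Sigma_{t_1}$,'' since the latter concerns expansions in $\Acal$, whose coefficient semifield is $\Trop(y_1,\dots,y_n)$ with generators tied to $t_0$. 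The paper sidesteps this because its main theorems directly establish positivity of $X^{B_T;T}$ for every triangulation $T$, which is exactly the stronger input needed; the paper's own ``proof'' of the corollary (the remark preceding it) only addresses the initial seed. So your treatment is at least as complete as the paper's, but ``there is no real obstacle here'' slightly overstates the case.
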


\section{Cluster algebras arising from 
    surfaces}\label{sect surfaces} 

Building on work of Fock and Goncharov \cite{FG1, FG2}, and of 
Gekhtman, Shapiro and Vainshtein \cite{GSV}, 
Fomin, Shapiro and Thurston \cite{FST} associated a cluster algebra
to any {\it bordered surface with marked points}.  In 
this section we will recall that construction, as well
as further results of Fomin and Thurston \cite{FT}.

\begin{Def}
[\emph{Bordered surface with marked points}]
Let $S$ be a connected oriented 2-dimensional Riemann surface with
(possibly empty)
boundary.  Fix a nonempty set $M$ of {\it marked points} in the closure of
$S$ with at least one marked point on each boundary component. The
pair $(S,M)$ is called a \emph{bordered surface with marked points}. Marked
points in the interior of $S$ are called \emph{punctures}.  
\end{Def}
 
Table \ref{table 1} gives examples of surfaces
(using notation of Lemma \ref{Ideal-Tri}).
For technical reasons, we require that $(S,M)$ is not
a sphere with one, two or three punctures;
a monogon with zero or one puncture; 
or a bigon or triangle without punctures.
\begin{table}
\begin{center}
  \begin{tabular}{ c | c | c | c | c || l  }
  \  b\ \  &\ \  g \ \   & \ \  c \ \ &\ \ p\ \ &n\ \    &\  surface \\ \hline
    0 & 1 & 0 & 1 &3&\ punctured torus\\
    0 & 0 & 0 & 4 & 6&\ sphere with 4 punctures\\\hline
    1 & 0 & n+3 &0& c-3& \ polygon \\ 
    1 & 0 & n &1& c&\ once punctured polygon \\ 
    1 & 0 & n-3 &2& c+3&\ twice punctured polygon \\ 
    1 & 1 & n-3 & 0&c+3&\ torus with disk removed \\
\hline 
    2 & 0 & n & 0&c&\ annulus\\
    2 & 0 & n-3 & 1& c+3&\ punctured annulus\\
    2 & 1 & n-6 & 0& c+6& \ torus with 2 disks removed \\ 
     \hline
    3 & 0 & n-3 & 0& c+3&\ pair of pants \\ \\
  \end{tabular}
\end{center}
\caption{Examples of  surfaces}\label{table 1}
\end{table}
 
\subsection{Ideal triangulations and tagged triangulations}

\begin{definition}
[\emph{Ordinary arcs}]
An \emph{arc} $\zg$ in $(S,M)$ is a curve in $S$, considered up
to isotopy, such that 
\begin{itemize}
\item[(a)] the endpoints of $\zg$ are in $M$;
\item[(b)] $\zg$ does not cross itself, except that its endpoints may coincide;
\item[(c)] except for the endpoints, $\zg$ is disjoint from $M$ and
  from the boundary of $S$,
\item[(d)] $\zg$ does not cut out an unpunctured monogon or an unpunctured bigon. 
\end{itemize}   
\end{definition}     

An arc whose endpoints coincide is called a {\it loop}.
Curves that connect two
marked points and lie entirely on the boundary of $S$ without passing
through a third marked point are \emph{boundary segments}.
By (c), boundary segments are not ordinary arcs.

\begin{Def}
[\emph{Crossing numbers and compatibility of ordinary arcs}]
For any two arcs $\zg,\zg'$ in $S$, let $e(\zg,\zg')$ be the minimal
number of crossings of 
arcs $\za$ and $\za'$, where $\za$ 
and $\za'$ range over all arcs isotopic to 
$\zg$ and $\zg'$, respectively.
We say that arcs $\zg$ and $\zg'$ are  \emph{compatible} if $e(\zg,\zg')=0$. 
\end{Def}

\begin{Def}
[\emph{Ideal triangulations}]
An \emph{ideal triangulation} is a maximal collection of
pairwise compatible arcs (together with all boundary segments). 
The arcs of a 
triangulation cut the surface into \emph{ideal triangles}. 
\end{Def}

\begin{lemma}\cite[Section 2]{FG3}
\label{Ideal-Tri}
Each ideal triangulation consists of 
$n=6g+3b+3p+c-6$ arcs,  where $g$ is the
genus of $S$, $b$ is the number of boundary components, $p$ is the number of punctures and $c=|M|-p$ is the
number of marked points on the boundary of $S$. 
The number $n$ is called the \emph{rank} of $(S,M)$.
The number of triangles in any triangulation is 
$4g+2b+2p+c-4.$
\end{lemma}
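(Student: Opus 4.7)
The plan is a direct Euler characteristic calculation applied to the CW structure on $S$ induced by the ideal triangulation. Let $T$ be any ideal triangulation, write $n$ for the number of arcs, and let $F$ denote the number of ideal triangles. The boundary segments of $(S,M)$ are precisely the arcs between consecutive marked points along $\partial S$, so their number equals $c$, the number of marked points on the boundary.

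First I would set up the face-edge incidence. Every ideal triangle has three sides; every arc bounds two triangle-sides (this is what a triangulation means, and it remains true even for a self-folded triangle, since both sides of the triangle are incident to the loop); every boundary segment bounds exactly one. This gives the incidence identity
\begin{equation*}
3F = 2n + c.
\end{equation*}
Next, I would compute the Euler characteristic of $S$ in two ways. As an oriented surface of genus $g$ with $b$ boundary components, $\chi(S) = 2 - 2g - b$. Using the CW decomposition with vertices the marked points, edges the arcs together with the boundary segments, and 2-cells the ideal triangles, the vertex count is $V = p + c$, the edge count is $E = n + c$, and the face count is $F$, giving
\begin{equation*}
(p+c) - (n+c) + F = 2 - 2g - b.
\end{equation*}
This simplifies to $F = n - p + 2 - 2g - b$.

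Finally I would combine the two relations. Substituting $F = (2n+c)/3$ into $F = n - p + 2 - 2g - b$ and solving for $n$ yields
\begin{equation*}
n = 6g + 3b + 3p + c - 6,
\end{equation*}
and plugging this back into $F = (2n+c)/3$ gives $F = 4g + 2b + 2p + c - 4$, as required. The main point requiring care is the first step, the incidence identity $3F = 2n + c$, which must be verified for ideal triangulations that contain self-folded triangles (where the loop-edge occurs twice as a triangle-side of a single triangle). I would note that the exclusion of the small degenerate cases listed in the definition of $(S,M)$ ensures that an ideal triangulation exists at all; beyond this the argument is purely combinatorial.
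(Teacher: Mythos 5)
Your argument is correct. Note that the paper does not actually prove this lemma — it cites it to Fock--Goncharov (Section 2 of \cite{FG3}) — so there is no proof in the paper to compare against; but your Euler-characteristic computation combined with the face-edge incidence count $3F = 2n + c$ is exactly the standard way this identity is established, and both conclusions ($n = 6g+3b+3p+c-6$ and $F = 4g+2b+2p+c-4$) follow from the two displayed relations by routine algebra.

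One small imprecision worth tightening: in your parenthetical about self-folded triangles you single out the loop, but the subtle arc is actually the radius $r$, which is the arc appearing \emph{twice} as a side of the same (self-folded) triangle. The loop $\ell$ borders the self-folded triangle on one side and the adjacent triangle on the other, so it behaves like an ordinary arc. In both cases each arc is incident to exactly two triangle-sides, so the identity $3F = 2n + c$ stands; just rephrase the remark so it makes clear that it is the radius whose two sides lie in a single triangle. It is also worth stating explicitly that the CW decomposition is the one whose $0$-cells are the $p+c$ marked points, whose $1$-cells are the $n$ arcs and $c$ boundary segments, and whose $2$-cells are the $F$ ideal triangles (self-folded triangles are still $2$-cells attached along their boundary word), which is what justifies $V-E+F = \chi(S) = 2 - 2g - b$.
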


There are two types of ideal triangles: triangles that have three distinct sides and triangles that have only two. The latter are called \emph{self-folded} triangles.  Note that a self-folded triangle consists of 
a loop $\ell$, together with an arc $r$ to an enclosed puncture which we dub a 
\emph{radius}, see the left side of Figure \ref{figtags}.

\begin{Def}
[\emph{Ordinary flips}]
Ideal triangulations are connected to each other by sequences of 
{\it flips}.  Each flip replaces a single arc $\gamma$ 
in a triangulation $T$ by a (unique) arc $\gamma' \neq \gamma$
that, together with the remaining arcs in $T$, forms a new ideal
triangulation.
\end{Def}
Note that an arc $\gamma$ that lies inside a self-folded triangle
in $T$ cannot be flipped.

In \cite{FST}, the authors associated a cluster algebra to 
any bordered surface with marked points.  Roughly speaking,
the cluster variables correspond to arcs, the clusters
to triangulations, and the mutations to flips.  However,
because arcs inside self-folded triangles cannot be flipped,
the authors were led to introduce the slightly more general notion 
of {\it tagged arcs}.  They showed that ordinary arcs can
all be represented by tagged arcs and gave a notion of flip 
that applies to all tagged arcs.

\begin{Def}
[\emph{Tagged arcs}]
A {\it tagged arc} is obtained by taking an arc that does not 
cut out a once-punctured monogon and marking (``tagging")
each of its ends in one of two ways, {\it plain} or {\it notched},
so that the following conditions are satisfied:
\begin{itemize}
\item an endpoint lying on the boundary of $S$ must be tagged plain
\item both ends of a loop must be tagged in the same way.
\end{itemize}
\end{Def}

\begin{Def}
[\emph{Representing ordinary arcs by tagged arcs}]
One can represent an ordinary arc $\beta$ by 
a tagged arc $\iota(\beta)$ as follows.  If $\beta$ 
does not cut out a once-punctured monogon, then $\iota(\beta)$
is simply $\beta$ with both ends tagged plain.
Otherwise, $\beta$ is a loop based at some marked point $a$
and cutting out
a punctured monogon with the sole puncture $b$ inside it.
Let $\alpha$ be the unique arc connecting $a$ and $b$ and compatible
with $\beta$.  Then $\iota(\beta)$
is obtained by tagging $\alpha$ plain at $a$ and notched at $b$.
\end{Def}

\begin{Def}
[\emph{Compatibility of tagged arcs}]  \label{compatible}
Tagged arcs $\alpha$ and
$\beta$ are called {\it compatible} if and only if the following 
properties hold:
\begin{itemize}
\item the arcs $\alpha^0$ and $\beta^0$ obtained from 
   $\alpha$ and $\beta$ by forgetting the taggings are compatible; 
\item if $\alpha^0=\beta^0$ then at least one end of $\alpha$
  must be tagged in the same way as the corresponding end of $\beta$;
\item $\alpha^0\neq \beta^0$ but they share an endpoint $a$, 
 then the ends of $\alpha$ and $\beta$ connecting to $a$ must be tagged in the 
same way.
\end{itemize}
\end{Def}

\begin{Def}
[\emph{Tagged triangulations}]  A maximal (by inclusion) collection
of pairwise compatible tagged arcs is called a {\it tagged triangulation}.
\end{Def}

Figure \ref{figtags} gives an example of an ideal triangulation 
$T$ and the corresponding tagged triangulation $\iota(T)$. The notching is 
indicated by a 
bow tie.

\begin{figure}
\input{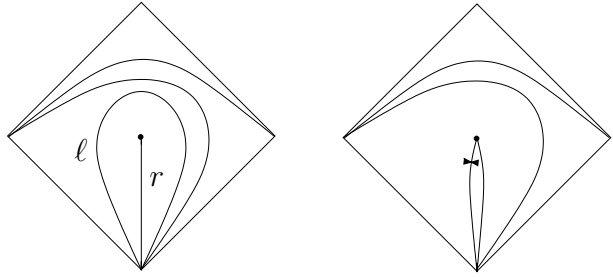}
\caption{Example of an ideal triangulation on the left and the corresponding  tagged triangulation on the right}\label{figtags}
\end{figure}

\subsection{From  surfaces to cluster algebras}
One can  associate an exchange
matrix and
hence a cluster algebra to any bordered  surface $(S,M)$
\cite{FST}.

\begin{Def}
[\emph{Signed adjacency matrix of an ideal triangulation}]
Choose any ideal triangulation
$T$, and let $\tau_1,\tau_2,\ldots,\tau_n$ be the $n$ arcs of
$T$.
For any triangle $\Delta$ in $T$ which is not self-folded, we define a matrix 
$B^\Delta=(b^\Delta_{ij})_{1\le i\le n, 1\le j\le n}$  as follows.
\begin{itemize}
\item $b_{ij}^\Delta=1$ and $b_{ji}^{\Delta}=-1$ in the following cases:
\begin{itemize}
\item[(a)] $\tau_i$ and $\tau_j$ are sides of 
  $\Delta$ with  $\tau_j$ following $\tau_i$  in the 
  clockwise order;
\item[(b)] $\tau_j$ is a radius in a self-folded triangle enclosed by a loop $\tau_\ell$, and $\tau_i$ and $\tau_\ell$ are sides of 
  $\Delta$ with  $\tau_\ell$ following $\tau_i$  in the 
clockwise order;
\item[(c)] $\tau_i$ is a radius in a self-folded triangle enclosed by a loop $\tau_\ell$, and $\tau_\ell$ and $\tau_j$ are sides of 
  $\Delta$ with  $\tau_j$ following $\tau_\ell$  in the 
clockwise order;
\end{itemize}
\item $b_{ij}^\Delta=0$ otherwise.
\end{itemize}
 
Then define the matrix 
$ B_{T}=(b_{ij})_{1\le i\le n, 1\le j\le n}$  by
$b_{ij}=\sum_\Delta b_{ij}^\Delta$, where the sum is taken over all
triangles in $T$ that are not self-folded. 
\end{Def}

Note that  $B_{T}$ is skew-symmetric and each entry  $b_{ij}$ is either
$0,\pm 1$, or $\pm 2$, since every arc $\tau$ is in at most two triangles. 

\begin{remark}\label{tagged-ideal}
As noted in \cite[Definition 9.2]{FST}, compatibility of tagged arcs is 
invariant with respect to a simultaneous change of all tags at a given
puncture.  So given a tagged triangulation $T$, let us perform such changes
at every puncture where all ends of $T$ are notched.  The resulting 
tagged triangulation $\hat{T}$ represents an ideal triangulation $T^0$
(possibly containing self-folded triangles): $\hat{T} = \iota(T^0)$.  This is
because the only way for a puncture $p$ to have two incident
arcs with two different taggings at $p$ is 
for those two arcs to be homotopic, see Definition \ref{compatible}.
But then for this to lie in some tagged triangulation,
it follows that $p$ must be a puncture in the interior of a bigon.  
See Figure \ref{figtags}.
\end{remark}

\begin{Def}
[\emph{Signed adjacency matrix of a tagged triangulation}] \label{GeneralB}
The signed adjacency matrix $B_T$ of a tagged triangulation $T$
is defined to be the signed adjacency matrix $B_{T^0}$, where 
$T^0$ is obtained from $T$ as in Remark \ref{tagged-ideal}.  
The index sets of the matrices (which correspond to tagged and ideal arcs,
respectively) are identified in the obvious way.
\end{Def}

\begin{theorem} \cite[Theorem 7.11]{FST} and \cite[Theorem 5.1]{FT}
\label{clust-surface}
Fix a bordered surface $(S,M)$ and let $\Acal$ be the cluster algebra associated to 
the signed adjacency matrix of a tagged triangulation as in Definition
\ref{GeneralB}.
Then the (unlabeled) seeds $\Sigma_{T}$ of $\Acal$ are in bijection 
with tagged triangulations $T$ of $(S,M)$, and 
the cluster variables are  in bijection
with the tagged arcs of $(S,M)$ (so we can denote each by 
$x_{\gamma}$, where $\gamma$ is a tagged arc).  Furthermore,
if a tagged triangulation $T'$ is obtained from another
tagged triangulation $T$ by flipping a tagged arc $\gamma\in T$
and obtaining $\gamma'$,
then $\Sigma_{T'}$ is obtained from $\Sigma_{T}$ by the seed mutation
replacing $x_{\gamma}$ by $x_{\gamma'}$.
\end{theorem}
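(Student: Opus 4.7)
The plan is to establish the correspondence in three stages: (i) a local compatibility check that a single flip induces matrix mutation at the level of signed adjacency matrices, (ii) a global well-definedness argument using the topology of the tagged flip graph, and (iii) an injectivity argument distinguishing cluster variables attached to distinct arcs.

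First I would verify the local statement: if $T'=T-\{\gamma\}\cup\{\gamma'\}$ is a flip at the arc indexed by $k$, then $B_{T'}=\mu_k(B_T)$. By the reduction in Remark \ref{tagged-ideal}, it suffices to check this for flips of ordinary arcs in ideal triangulations (arcs trapped inside self-folded triangles are flipped by first toggling the tagging at the enclosed puncture). One then enumerates the possible local configurations of the two ideal triangles adjacent to $\gamma$: both non-self-folded, one of them self-folded with $\gamma$ as a side but not the radius, $\gamma$ a loop enclosing a puncture, and so on. For each configuration the entries of $B^\Delta$ before and after the flip are read off from the definition and compared to the formula $b'_{ij}=-b_{ij}$ when $i=k$ or $j=k$ and $b'_{ij}=b_{ij}+\sgn(b_{ik})[b_{ik}b_{kj}]_+$ otherwise. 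The case analysis is finite but delicate, chiefly because the convention that assigns $B^\Delta$-contributions through the loop of a self-folded triangle to the radius inside must be tracked carefully when that radius is created or destroyed by the flip.

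Second I would assemble the bijections. Fix a base tagged triangulation $T_0$, declare $\Sigma_{T_0}=(\xx_{T_0},\yy_{T_0},B_{T_0})$ to be the initial seed of $\Acal$, and for any tagged triangulation $T$ define $\Sigma_T$ by applying the sequence of mutations corresponding to any sequence of flips from $T_0$ to $T$. Two inputs are needed. First, connectedness of the tagged flip graph, which is a topological fact that can be invoked from \cite{FST}. Second, and the main obstacle, independence of the chosen flip path. By standard presentations of such flip complexes the relations are generated by two families of $2$-cells: commuting squares, where two flipped arcs lie in no common triangle, and pentagons arising in subsurfaces where two flippable arcs share a triangle (e.g.\ inside a once-punctured digon). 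For a commuting square the two arcs have $b_{jk}=0$, so the mutations $\mu_j$ and $\mu_k$ commute; the pentagon reduces to the classical $A_2$ periodicity identity $\mu_k\mu_j\mu_k\mu_j\mu_k=(jk)$. Hence the assignment $T\mapsto\Sigma_T$ is well-defined, and by construction flips correspond to seed mutations.

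Third I would prove injectivity. For triangulations, distinctness follows from the fact that distinct tagged triangulations have different sets of arcs and mutation is involutive, so once the cluster variable bijection is in hand, different $T$'s produce different clusters. For tagged arcs, I would argue via the denominator vector of the Laurent expansion $[x_\gamma]_{T_0}^\Acal$: by induction along a flip sequence from an arc of $T_0$ to $\gamma$, using the exchange relation \eqref{exchange relation}, one shows that the exponent of $x_{\tau_i}$ in the denominator equals the geometric crossing number $e(\gamma,\tau_i)$. Since a tagged arc is determined up to isotopy by its crossing numbers against $T_0$ together with the tags at its endpoints shared with arcs of $T_0$, distinct tagged arcs yield distinct denominators and therefore distinct cluster variables. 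The most delicate steps overall are the pentagon-and-square verification in stage two and the inductive denominator bookkeeping in stage three, as they are what tie the intrinsic topology of the tagged flip graph to the algebraic dynamics of mutation.
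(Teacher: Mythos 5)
This theorem is not proved in the paper; it is cited from \cite[Theorem 7.11]{FST} and \cite[Theorem 5.1]{FT}, so the comparison must be against those sources rather than against any argument internal to this paper.

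Your stage (i) is the right starting point and is essentially the local computation carried out in \cite{FST}. The gaps are in stages (ii) and (iii). In stage (ii) you assert that the tagged flip groupoid is presented by commuting squares and pentagons and then invoke the $A_2$ periodicity relation. For ordinary ideal triangulations this is the classical Ptolemy groupoid presentation, but for tagged flips the analogous simple connectivity of the flip complex, with exactly those 2-cells, is itself the hard global input, and you assert it with no justification: ``standard presentations of such flip complexes'' is carrying the full weight of the argument. Neither cited proof obviously proceeds this way; in particular \cite{FT} circumvents the issue entirely by realizing $x_\gamma$ as the lambda length of $\gamma$ on decorated Teichm\"uller space, so that the exchange relations become identities between positive real-valued functions and well-definedness of $T\mapsto\Sigma_T$ comes for free.

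Stage (iii) identifies the denominator exponent of $x_{\tau_i}$ with the geometric crossing number $e(\gamma,\tau_i)$. As the paper itself flags in the remark immediately following Theorem \ref{thm main}, the matching-sum expansion does not yield a reduced fraction precisely because crossing numbers differ from the intersection numbers $(\gamma\,|\,\tau_i)$ of \cite[Section 8]{FST}; the latter, not the former, are the correct denominator-vector entries in the tagged setting, and they can be negative when $\gamma$ shares an underlying ideal arc with $T_0$ but carries different tags. Your parenthetical about tags ``shared with arcs of $T_0$'' does not explain how that tagging data is actually recovered from $[x_\gamma]_{\Sigma_{T_0}}^{\Acal}$, which is exactly what the injectivity claim requires. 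Both gaps are genuine; filling them amounts to importing the real machinery of \cite{FST,FT} rather than reconstructing it from the square/pentagon heuristic.
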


\begin{remark}\label{ordinary-tagged}
By a slight abuse of notation, if $\gamma$ is an ordinary arc
which does not cut out a once-punctured monogon
(so that the tagged arc $\iota(\gamma)$ is obtained
from $\gamma$ by tagging both ends plain),  we will 
often write $x_{\gamma}$ instead of $x_{\iota(\gamma)}$.
\end{remark}

Given a surface $(S,M)$ with a puncture $p$ and a tagged arc $\gamma$,
we let both $\gamma^{(p)}$ and $\gamma^{p}$ 
denote the arc obtained from $\gamma$
by changing its notching at $p$.  (So if $\gamma$ is not incident to 
$p$, $\gamma^{(p)} = \gamma$.)  
If $p$ and $q$ are two punctures, we let 
$\gamma^{(pq)}$ denote the arc obtained from $\gamma$ by changing
its notching at both $p$ and $q$.
Given a tagged triangulation $T$
of $S$, we let $T^{p}$ denote the tagged triangulation obtained
from $T$ by replacing each $\gamma \in T$ by $\gamma^{(p)}$.

Besides labeling cluster variables of 
$\Acal(B_T)$ by $x_{\tau}$, where $\tau$ is a tagged arc of 
$(S,M)$, we will also make the following conventions:
\begin{itemize}
\item If $\ell$ 
is an unnotched loop with endpoints at $q$ cutting out a once-punctured monogon containing 
puncture $p$ and radius $r$, 
then we set 
$x_{\ell}= 
x_{r}x_{r^{(p)}}$.\footnote{There
is a corresponding 
statement on the level of {\it lambda lengths} of these three
arcs, see \cite[Lemma 7.2]{FT};  these conventions are compatible with both the Ptolemy
relations and the 
exchange relations among 
cluster variables \cite[Theorem 7.5]{FT}.}  
\item If $\beta$ is a boundary
segment, we set $x_{\beta} = 1$.
\end{itemize}

To prove the positivity conjecture for a cluster algebra associated 
to a surface, we must show that the Laurent expansion of each 
cluster variable with respect to {\it any cluster}
is positive.  The next result will allow us to restrict our
attention to clusters that correspond to 
ideal triangulations.

\begin{prop}\label{tag-change}
Fix $(S,M)$, $p$, $\gamma$,  $T=(\tau_1,\dots,\tau_n)$,
and 
$T^p = (\tau_1^p,\dots,\tau_n^p)$ as above.
Let 
$\Acal = \Aprin(B_T)$, and $\Acal^p = \Aprin(B_{T^p})$ be the cluster
algebras with principal coefficients  at the vertices
$\Sigma_T = (\xx,\yy, B_T)$ and 
$\Sigma_{T^p}=(\xx^p,\yy^p,B_{T^p})$, where 
$\xx = 
\{x_{\tau_i}\}$, $\yy=\{y_{\tau_i}\}$,
$\xx^p = \{x_{\tau_i^p}\}$, and $\yy^p = \{y_{\tau_i^p}\}.$
Then 
$$[x_{\gamma^{p}}]_{\Sigma_{T^p}}^{\Acal^p} = 
[x_{\gamma}]_{\Sigma}^{\Acal}\mid_{x_{\tau_i} \leftarrow x_{\tau_i^{p}},\ y_{\tau_i} \leftarrow y_{\tau_i^p}}.$$
That is, the cluster expansion of 
$x_{\gamma^{p}}$ with respect to  
$\xx^{p}$ in $\Acal^p$ is obtained from 
the cluster expansion of $x_{\gamma}$ with respect to $\xx$ in $\Acal$ by 
substituting 
 $x_{\tau_i}=
x_{\tau_i^{p}}$ and
$y_{\tau_i} = 
y_{\tau_i^{p}}.$ 
\end{prop}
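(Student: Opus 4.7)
\medskip

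\noindent\textbf{Proof proposal.} The plan is to exhibit a cluster-algebra isomorphism $\Phi:\Acal\to\Acal^p$ induced by the global symmetry ``flip all taggings at $p$'' on the set of tagged arcs of $(S,M)$, and then apply $\Phi$ to the Laurent expansion of $x_\gamma$ in $\Acal$ to read off the Laurent expansion of $x_{\gamma^{p}}$ in $\Acal^p$.

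The first key step is to verify that the signed adjacency matrices satisfy $B_T=B_{T^p}$ under the natural index bijection $\tau_i\leftrightarrow\tau_i^p$. Recall that $B_T$ is defined as $B_{T^0}$ for the ideal triangulation $T^0$ associated to $T$ by Remark~\ref{tagged-ideal}. I would check case-by-case on the tagging of arcs incident to $p$: (i) if every end at $p$ in $T$ is plain, then every end at $p$ in $T^p$ is notched, and one flips them back in the Remark~\ref{tagged-ideal} process, recovering the same ideal triangulation $T^0$; (ii) the fully-notched-at-$p$ case is symmetric; (iii) the mixed case, which by Definition~\ref{compatible} forces $p$ to lie in a bigon with two homotopic incident tagged arcs $\alpha$ (plain at $p$) and $\alpha^{(p)}$ (notched at $p$), corresponds under $\iota$ to an ideal configuration with a self-folded triangle having loop $\ell$ and radius $r$; applying $(-)^p$ simply swaps which of the two tagged arcs plays the role of the radius and which plays the role of the loop, yielding the same ideal triangulation $T^0$ with labels relabeled by $\tau_i\leftrightarrow\tau_i^p$. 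In all three cases $B_{T^0}=B_{(T^p)^0}$ after reindexing.

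The second step is to extend this observation globally. The involution $\sigma_p$ on all tagged arcs defined by $\gamma\mapsto\gamma^{(p)}$ preserves the compatibility relation (Definition~\ref{compatible}), because the three conditions are invariant under simultaneously switching all tags at $p$; hence $\sigma_p$ sends tagged triangulations to tagged triangulations and commutes with ordinary flips of tagged arcs. Combined with step 1 applied to every tagged triangulation encountered, this means that the assignment $x_{\tau_i}\mapsto x_{\tau_i^p}$, $y_{\tau_i}\mapsto y_{\tau_i^p}$ extends to an isomorphism $\Phi:\Acal\to\Acal^p$ of cluster algebras with principal coefficients, intertwining seed mutation on both sides because the exchange relations~\eqref{exchange relation} depend only on the exchange matrix and the coefficient tuple. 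By Theorem~\ref{clust-surface}, the cluster variable $x_\gamma\in\Acal$ is obtained by any sequence of mutations from $\Sigma_T$ realizing the flip sequence from $T$ to a triangulation containing $\gamma$; applying $\sigma_p$ to that entire sequence realizes the corresponding flip sequence from $T^p$ to a triangulation containing $\gamma^{(p)}=\gamma^p$. Therefore $\Phi(x_\gamma)=x_{\gamma^p}$.

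The conclusion then follows by applying $\Phi$ to the Laurent expansion guaranteed by Theorem~\ref{Laurent}: expanding $x_\gamma$ as a Laurent polynomial in the $x_{\tau_i}$ with coefficients in $\ZZ[y_{\tau_1},\dots,y_{\tau_n}]$ and then substituting $x_{\tau_i}\mapsto x_{\tau_i^p}$, $y_{\tau_i}\mapsto y_{\tau_i^p}$ produces a Laurent expression for $\Phi(x_\gamma)=x_{\gamma^p}$ in the cluster $\xx^p$, which by uniqueness of such expansions must be $[x_{\gamma^p}]^{\Acal^p}_{\Sigma_{T^p}}$. The main technical obstacle, and really the only nontrivial point, is the careful verification of $B_T=B_{T^p}$ in the mixed-tagging case (iii) above, since there the conversion $T\mapsto T^0$ of Remark~\ref{tagged-ideal} does \emph{not} simply flip tags at $p$, and one must chase the roles of ``loop'' and ``radius'' carefully to see that the signed adjacency contributions (a), (b), (c) in Definition~\ref{GeneralB} end up matching after the $\tau_i\leftrightarrow\tau_i^p$ relabeling.
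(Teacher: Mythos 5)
Your proposal is correct and follows essentially the same strategy as the paper's proof: both rest on the two observations that (1) the extended exchange matrices $\tilde{B}_T$ and $\tilde{B}_{T^p}$ agree under the index bijection $\tau_i \leftrightarrow \tau_i^p$, and (2) the ``flip all tags at $p$'' involution $\sigma_p$ commutes with tagged flips, so the same mutation sequence that reaches $\gamma$ from $\Sigma_T$ reaches $\gamma^p$ from $\Sigma_{T^p}$. The paper argues directly about the mutation sequence without explicitly packaging the two facts into an isomorphism $\Phi\colon\Acal\to\Acal^p$, whereas you do so; this is a stylistic difference and not a different proof. Your case analysis of the equality $B_T = B_{T^p}$ (especially the mixed-tagging case, where the roles of loop and radius swap under $\iota^{-1}$) is a reasonable elaboration of the paper's terse appeal to Definition~\ref{GeneralB}, and your explicit appeal to the invariance of Definition~\ref{compatible} under simultaneous tag changes matches what the paper invokes via its references to \cite{FST} and \cite{FT}.
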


\begin{proof}
By Definition \ref{GeneralB}, the rectangular exchange matrix 
$\tilde{B}_T$ is equal to $\tilde{B}_{T^{(p)}}$.
The columns of $\tilde{B}_T$ are indexed by $\{x_{\tau_i}\}$
and the columns of $\tilde{B}_T^p$ are indexed by 
$\{x_{\tau_i^p}\}$; the rows of $\tilde{B}_T$ are indexed by 
$\{x_{\tau_i}\} \cup \{y_{\tau_i}\}$ and the rows of $\tilde{B}_T^p$ are indexed by 
$\{x_{\tau_i^p}\} \cup \{y_{\tau_i^p}\}$. 

To compute the $\xx$-expansion of $x_{\gamma}$,  
we write down a sequence of flips $(i_1,\dots, i_r)$
(here $1 \leq i_j \leq n$) which transforms 
$T$ into a tagged triangulation $T'$
containing $\gamma$.
Applying the corresponding exchange relations then 
gives the $\xx$-expansion of $x_{\gamma}$ in $\Acal$.
By the description of tagged flips (\cite[Remark 4.13]{FT}),
performing the same sequence of flips on $T^p$  transforms 
$T^p$ into the tagged triangulation $T'^{p}$, which in particular
contains $\gamma^p$.  Therefore applying the corresponding 
exchange relations gives the $\xx^p$-expansion of 
$x_{\gamma^p}$ in $\Acal^p$.  

Since in both cases we start from the same exchange matrix and apply
the same sequence of mutations, 
the $\xx^p$-expansion of 
$x_{\gamma^p}$ in $\Acal^p$ will be equal to the  
$\xx$-expansion of $x_{\gamma}$ in $\Acal$ after relabeling variables, 
i.e. after substituting
 $x_{\tau_i}=
x_{\tau_i^{p}}$ and
$y_{\tau_i} = 
y_{\tau_i^{p}}.$ 

\end{proof}

\begin{cor}\label{reduce-to-ideal}
Fix a bordered surface $(S,M)$ and let $\Acal$ be the corresponding
cluster algebra.  Let $T$ be an arbitrary tagged triangulation.
To prove the positivity conjecture for 
$\Acal$ with respect to $\xx_{T}$,
it suffices to prove
positivity with respect to clusters of the form $\xx_{\iota(T^0)}$,
where $T^0$ is an ideal triangulation.
\end{cor}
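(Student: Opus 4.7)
The plan is to reduce the case of an arbitrary tagged triangulation $T$ to the case of one of the form $\iota(T^0)$, using Remark \ref{tagged-ideal} together with iterated applications of Proposition \ref{tag-change}.

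First, I would invoke Remark \ref{tagged-ideal} to produce a (possibly empty) set of punctures $P=\{p_1,\ldots,p_k\}$ such that simultaneously changing the tags of $T$ at each puncture in $P$ yields a tagged triangulation $\hat{T}=\iota(T^0)$ for some ideal triangulation $T^0$. By Definition \ref{GeneralB}, this tag-change does not alter the signed adjacency matrix, so $B_T=B_{\hat{T}}$.

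Next, by the corollary at the end of Section \ref{sect principal coefficients}, it suffices to prove positivity for the principal-coefficient cluster algebra $\Aprin(B_T)$ with respect to its initial cluster $\xx_T$, since any cluster algebra of geometric type sharing the exchange matrix $B_T$ inherits positivity from $\Aprin(B_T)$. I would then apply Proposition \ref{tag-change} successively to $p_1,p_2,\ldots,p_k$. Setting $T_0:=T$ and $T_j:=T_{j-1}^{(p_j)}$, each application produces a variable-relabeling identity between the cluster expansions in $\Aprin(B_{T_{j-1}})$ and those in $\Aprin(B_{T_j})$. Composing these identities, for any tagged arc $\gamma$,
\[
[x_{\gamma^{(p_1\cdots p_k)}}]_{\Sigma_{\hat{T}}}^{\Aprin(B_{\hat{T}})} = [x_\gamma]_{\Sigma_T}^{\Aprin(B_T)}\Big|_{x_{\tau_i}\leftarrow x_{\tau_i^{(p_1\cdots p_k)}},\; y_{\tau_i}\leftarrow y_{\tau_i^{(p_1\cdots p_k)}}}.
\]

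Finally, since the map $\gamma\mapsto \gamma^{(p_1\cdots p_k)}$ is an involution on the tagged arcs of $(S,M)$, the left-hand side above ranges over all cluster expansions in $\Aprin(B_{\hat{T}})$ with respect to $\xx_{\hat{T}}=\xx_{\iota(T^0)}$ as $\gamma$ varies. Positivity of all such expansions is precisely the hypothesis of the corollary, and since the right-hand side is obtained from the left by a pure variable relabeling (which trivially preserves positivity of coefficients), positivity follows for $\Aprin(B_T)$ with respect to $\xx_T$, and hence for $\Acal$ with respect to $\xx_T$. The argument is essentially formal, with the technical content already packaged in Proposition \ref{tag-change}; the only real obstacle is careful bookkeeping of the tag-change involutions.
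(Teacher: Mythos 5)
Your proof is correct and follows essentially the same route as the paper's: pass from $T$ to $\hat{T}=\iota(T^0)$ via simultaneous tag-changes (Remark \ref{tagged-ideal}), then iterate Proposition \ref{tag-change}, noting that the resulting substitution of variables preserves positivity. Your additional explicit reduction to the principal-coefficient case via the corollary at the end of Section \ref{sect principal coefficients} is a reasonable elaboration of a step the paper leaves implicit (since Proposition \ref{tag-change} is stated only for principal coefficients), but it does not change the underlying argument.
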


\begin{proof}
As in Remark \ref{tagged-ideal},
we can perform simultaneous tag-changes at punctures to pass
from an arbitrary tagged triangulation $T$ to a tagged triangulation
$\hat{T}$ representing an ideal triangulation.  By a repeated application
of Proposition \ref{tag-change} -- which preserves positivity 
because it just involves a substitution of variables -- 
we can then express cluster expansions with respect to 
$\xx_T$ in terms of cluster expansions with respect to $\xx_{\hat{T}}$.
\end{proof}

The exchange relation corresponding to a flip in an ideal triangulation
is called 
a {\it generalized Ptolemy relation}.  It can be described as 
follows.
\begin{prop}\cite{FT}\label{Ptolemy}
Let $\alpha, \beta, \gamma, \delta$ be arcs (including loops) 
or boundary segments 
of $(S,M)$ which cut out a quadrilateral; we assume that the sides
of the quadrilateral, listed in cyclic order, are
$\alpha, \beta, \gamma, \delta$.  Let $\eta$ and $\theta$ 
be the two diagonals of this quadrilateral; see the left-hand-side of
Figure \ref{figflip}.
Then 
$$x_{\eta} x_{\theta} = Y x_{\alpha} x_{\gamma} + Y' x_{\beta} x_{\delta}$$
for some coefficients $Y$ and $Y'$.
\end{prop}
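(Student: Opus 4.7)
The plan is to realize the Ptolemy relation as the exchange relation attached to a single flip in an appropriate seed. First I would extend the given configuration to a full tagged triangulation $T$ of $(S,M)$ that contains the diagonal $\eta$, together with all four of $\alpha,\beta,\gamma,\delta$ either as arcs of $T$ or as boundary segments of $(S,M)$. Since $\eta$ lies in the interior of the quadrilateral with sides $\alpha,\beta,\gamma,\delta$, performing the tagged flip at $\eta$ in $T$ replaces $\eta$ by the other diagonal $\theta$ and produces a new tagged triangulation $T'$. By Theorem \ref{clust-surface}, the seeds $\Sigma_T$ and $\Sigma_{T'}$ are related by the seed mutation $\mu_\eta$, so $x_\eta$ and $x_\theta$ are related by the exchange relation (\ref{exchange relation}).

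Next I would read off that exchange relation from the signed adjacency matrix $B_T$. The only triangles of $T$ meeting $\eta$ are the two triangles of the quadrilateral, namely (up to cyclic labelling) $(\alpha,\beta,\eta)$ and $(\gamma,\delta,\eta)$. Consequently, in the $\eta$-row of $B_T$ only the four entries indexed by $\alpha,\beta,\gamma,\delta$ are nonzero, and from Definition of the signed adjacency matrix one checks that $\alpha,\gamma$ contribute with one sign and $\beta,\delta$ with the opposite sign (each with magnitude $1$, since each appears in exactly one of the two triangles adjacent to $\eta$). Plugging these entries into the geometric-type exchange relation (\ref{geometric exchange}) gives
\[
x_\eta x_\theta \;=\; Y\, x_\alpha x_\gamma \;+\; Y'\, x_\beta x_\delta,
\]
where $Y$ and $Y'$ are monomials in the coefficient variables $y_{\tau_i}$ (together with the variables attached to the other arcs of $T$, which do not appear because their $b_{\eta,\cdot}$ entries vanish), exactly as claimed.

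Finally I would handle the degenerate geometric cases: a side of the quadrilateral may be a boundary segment, in which case $x_{\text{side}}=1$ by convention and drops out automatically; two of the sides may coincide (e.g.\ when the quadrilateral is glued to itself along an edge); or one of the two triangles adjacent to $\eta$ may be self-folded, in which case one of $\alpha,\beta,\gamma,\delta$ is a loop enclosing a puncture. For the self-folded configurations one passes to the corresponding tagged triangulation via Remark \ref{tagged-ideal}, uses Definition \ref{GeneralB} of $B_T$, and invokes the convention $x_\ell = x_r x_{r^{(p)}}$ that precisely converts the tagged exchange relation into the stated Ptolemy form.

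The main obstacle is the case-analysis of these degenerate configurations: ensuring that in each one the signed adjacency matrix entries $b_{\eta,\alpha}, b_{\eta,\beta}, b_{\eta,\gamma}, b_{\eta,\delta}$ line up so that the resulting exchange relation has exactly the two monomials $x_\alpha x_\gamma$ and $x_\beta x_\delta$ (with the remaining factors absorbed into $Y,Y'$). Once the bookkeeping is set up, this is essentially mechanical, and is exactly the content of \cite{FT} that we are citing.
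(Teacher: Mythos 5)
Your proof is correct in outline, but it takes a genuinely different route from the paper's. The paper's proof is a one-line citation: $x_\eta$, $x_\theta$ etc.\ are identified with lambda lengths, and the relation is read off from the lambda-length Ptolemy identity of \cite[Proposition 6.5]{FT} together with the fact that exchange relations in $\Acal(S,M)$ coincide with Ptolemy relations, \cite[Theorem 7.5]{FT}. Your route instead extends $\{\eta,\alpha,\beta,\gamma,\delta\}$ to a full tagged triangulation $T$, computes the column of $B_T$ indexed by $\eta$ directly from the signed-adjacency definition, and reads the relation off as the single exchange relation for the flip $\eta\mapsto\theta$ via Theorem \ref{clust-surface}; once this is verified at one seed it propagates to an identity in the ambient field. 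Your column computation is right: the two triangles adjacent to $\eta$ are exactly the two halves of the quadrilateral, so (after checking orientations) $b_{\alpha\eta}, b_{\gamma\eta}$ share one sign and $b_{\beta\eta}, b_{\delta\eta}$ the other, with all other entries zero, giving $x_\alpha x_\gamma$ and $x_\beta x_\delta$ as the two cluster monomials. What the paper's approach buys is uniformity: the lambda-length interpretation handles boundary sides, coincident sides, and self-folded configurations all at once, with no need to extend to a triangulation or re-examine the adjacency matrix case by case. What your approach buys is a more explicit, seed-level derivation that also shows exactly how $Y$ and $Y'$ arise as monomials in the bottom rows of $\tilde B_T$ via (\ref{geometric exchange}). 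Note, though, that your approach is not more elementary in any foundational sense: you lean on Theorem \ref{clust-surface} (from \cite[Theorem 5.1]{FT}), so both proofs ultimately rest on \cite{FT}. The one thing you should state explicitly rather than implicitly is that the extension step is always possible, i.e.\ the arcs among $\{\eta,\alpha,\beta,\gamma,\delta\}$ are pairwise compatible (they bound a quadrilateral with $\eta$ as a diagonal) and hence the corresponding set of tagged arcs under $\iota$ extends to a maximal compatible collection; and that the degenerate configurations you defer (self-folded triangles adjacent to the quadrilateral, and cases where $\eta$ or a side cuts out a once-punctured monogon so that $x_\eta=x_r x_{r^{(p)}}$) do require the generalized tagged exchange relations of \cite[Definition 7.4]{FT}, which is precisely what the paper cites.
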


\begin{proof}
This follows from the interpretation of cluster variables as 
{\it lambda lengths}  and the 
Ptolemy relations for lambda lengths \cite[Theorem 7.5 and Proposition 6.5]{FT}.
\end{proof}

Note that 
some sides of the quadrilateral in Proposition \ref{Ptolemy}
may be glued to each other, changing the appearance of the relation.
There are also generalized Ptolemy relations for tagged triangulations,
see \cite[Definition 7.4]{FT}.
\begin{figure} \begin{center}
\scalebox{.8}{\input{figflip.pstex_t}} \hspace{6em} \scalebox{.8}{\input{figsz.pstex_t}}
\end{center}
\caption{}
\label{figflip}
\end{figure}

\subsection{Keeping track of coefficients using laminations}

So far we have not addressed the topic of 
coefficients for cluster algebras arising from bordered surfaces.
It turns out that 
W. Thurston's theory of measured laminations gives a 
concrete way to think about coefficients, 
as described in \cite{FT} (see also \cite{FG3}).

\begin{Def}
[\emph{Laminations}] 
A {\it lamination} on a bordered surface $(S,M)$ is a finite collection
of non-self-intersecting and pairwise non-intersecting curves in $S$,
modulo isotopy relative to $M$, subject to the following restrictions.
Each curve must be one of the following:
\begin{itemize}
\item a closed curve;
\item a curve connecting two unmarked points on the boundary of $S$;
\item a curve starting at an unmarked point on the boundary and,
at its other end, spiraling into a puncture (either clockwise
or counterclockwise);
\item a curve whose ends both spiral into punctures (not necessarily
distinct).
\end{itemize}
Also, we forbid curves that bound an unpunctured or once-punctured
disk, and curves with two endpoints on the boundary of $S$ which
are isotopic to a piece of boundary containing zero or one
 marked point.
\end{Def}

In \cite[Definitions 12.1 and 12.3]{FT}, the authors define shear coordinates and 
extended exchange matrices, with 
respect to a tagged triangulation.  For our purposes, it will be enough 
to make these definitions with respect to an ideal triangulation.

\begin{Def}
[\emph{Shear coordinates}]
Let $L$ be a lamination, and let $T$ be an ideal triangulation.
For each arc $\gamma \in T$, 
the corresponding {\it shear coordinate} of $L$ with respect to $T$, 
denoted by $b_{\gamma}(T,L)$, is defined as a sum of contributions
from all intersections of curves in $L$ with $\gamma$.
Specifically, such an intersection contributes $+1$ (resp., $-1$)
to $b_{\gamma}(T,L)$ if the corresponding segment of a curve in 
$L$ cuts through the quadrilateral surrounding $\gamma$
as shown in Figure \ref{figflip} in the middle (resp., right).
\end{Def}

\begin{Def}
[\emph{Multi-laminations and associated extended exchange matrices}]
A {\it multi-lamination} is a finite family of laminations.
Fix a multi-lamination $\mathbf{L}=(L_{n+1}, \dots, L_{n+m})$.  
For an ideal triangulation $T$ of $(S,M)$,
define the matrix $\tilde{B} = \tilde{B}(T,\mathbf{L}) = (b_{ij})$
as follows.  The top $n \times n$ part of $\tilde{B}$ is the signed
adjacency matrix $B(T)$, with rows and columns indexed by arcs
$\gamma \in T$ (or equivalently, by the tagged arcs
$\iota(\gamma) \in \iota(T)$).  The bottom $m$ rows
are formed by the shear coordinates of the laminations $L_i$
with respect to $T$:
$$b_{n+i,\gamma} = b_{\gamma}(T, L_{n+i}) \text{ if }1 \leq i \leq m.$$
\end{Def}

By  \cite[Theorem 11.6]{FT},
the matrices
$\tilde{B}(T)$ transform compatibly with mutation.

\begin{figure} \begin{center}
\input{figlaminate.pstex_t}
\end{center}
\caption{}
\label{laminate}
\end{figure}

\begin{Def}
[\emph{Elementary lamination associated with a tagged arc}]
Let $\gamma$ be a tagged arc in $(S,M)$.  Denote by 
$L_{\gamma}$ a lamination consisting of a single curve
defined as follows.  The curve $L_{\gamma}$ runs along 
$\gamma$ within a small neighborhood of it.  If $\gamma$
has an endpoint $a$ on a (circular) component $C$ of the boundary of $S$,
then $L_{\gamma}$ begins at a point $a'\in C$ located near $a$ in 
the counterclockwise direction, and proceeds along $\gamma$
as shown in Figure \ref{laminate} on the left.
If $\gamma$ has an endpoint at a puncture, then $L_{\gamma}$
spirals into $a$: counterclockwise if $\gamma$ is tagged plain
at $a$, and clockwise if it is notched.  
\end{Def}

The following result comes from \cite[Proposition 16.3]{FT}.

\begin{prop}
Let $T$ be an ideal triangulation with a signed adjacency matrix
$B(T)$. Recall that we can view $T$  as a tagged triangulation $\iota(T)$. 
Let $L_T = (L_{\gamma})_{\gamma \in \iota(T)}$
be the multi-lamination consisting of elementary laminations associated with 
the tagged arcs in $\iota(T)$.
Then the cluster algebra with principal coefficients
$\Aprin(B(T))$ is isomorphic to 
$\mathcal A(\tilde{B}(T, L_T))$.
\end{prop}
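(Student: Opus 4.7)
By Remark \ref{rem:principal-tildeB}, the cluster algebra $\Aprin(B(T))$ coincides with the geometric-type cluster algebra attached to the $2n \times n$ extended exchange matrix whose top $n\times n$ block is $B(T)$ and whose bottom $n\times n$ block is the identity. Since the top block of $\tilde B(T, L_T)$ is $B(T)$ by construction, the proposition reduces to the shear-coordinate identity
$$b_\gamma(T, L_\delta) \;=\; \delta_{\gamma\delta} \qquad (\gamma,\delta \in \iota(T)),$$
where on the right $\delta_{\gamma\delta}$ is the Kronecker symbol.

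The plan is a direct local computation. For the diagonal entries $b_\gamma(T, L_\gamma) = 1$, I would use that $L_\gamma$ lies in an arbitrarily thin tubular neighborhood of the underlying arc of $\gamma$, with its two ends placed either counterclockwise along the boundary from an endpoint of $\gamma$, or spiraling into a puncture (CCW for a plain tag, CW for a notched tag). In every case the two ends of $L_\gamma$ sit on \emph{opposite} sides of $\gamma$, so $L_\gamma$ crosses $\gamma$ an odd number of times after cancellation; inspection of the quadrilateral surrounding $\gamma$ in $T$ shows that there is exactly one such essential crossing, oriented as the $+1$ Z-shape in the middle of Figure \ref{figflip}.

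For the off-diagonal entries $b_\gamma(T, L_\delta) = 0$ with $\gamma \neq \delta$, the argument is again local. If $\gamma$ and $\delta$ share no endpoint, the thin neighborhood of $\delta$ containing $L_\delta$ is disjoint from $\gamma$. If they share an endpoint $a$, any intersection of $L_\delta$ with $\gamma$ occurs in an arbitrarily small disk around $a$; the corresponding arc of $L_\delta$ enters and exits the quadrilateral surrounding $\gamma$ through two sides meeting at the corner $a$, a configuration which is neither of the $\pm 1$ Z-shapes of Figure \ref{figflip} and hence contributes $0$ to $b_\gamma(T, L_\delta)$.

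The main obstacle is the case in which $\delta$ (or $\gamma$) has an endpoint at a puncture $p$: then $L_\delta$ spirals at $p$ and formally meets every $T$-arc incident to $p$ infinitely many times, so one must check that successive crossings along the spiral cancel in pairs and leave only the effective contribution described above. I would finish the argument by enumerating the finitely many local pictures around a puncture in an ideal triangulation --- the ordinary case of a fan of triangles meeting at $p$, and the self-folded case in which a radius is enclosed by a loop and the corresponding tagged arc of $\iota(T)$ carries a notched tag at the interior puncture --- and verifying the pairwise cancellation of spiral crossings in each. Once this local case analysis is complete, $\tilde B(T, L_T)$ has the principal-coefficient shape of Remark \ref{rem:principal-tildeB}, and the isomorphism $\Aprin(B(T)) \cong \mathcal A(\tilde B(T, L_T))$ follows at once.
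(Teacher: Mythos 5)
The paper does not actually prove this proposition; it cites \cite[Proposition 16.3]{FT}, so your proposal is an independent attempt at a direct argument rather than a reconstruction. Your reduction to the shear-coordinate identity $b_\gamma(T,L_\delta)=\delta_{\gamma\delta}$ is the right starting point, and the tubular-neighborhood heuristic plausibly handles the diagonal entries and the off-diagonal entries whose endpoints are boundary marked points. But the two cases you explicitly defer contain essentially all of the content, and one of them is not simply one more ``local picture'' to check.

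When $T$ contains a self-folded triangle with loop $\ell$ and radius $r$ to an interior puncture $p$, the tagged arcs $\iota(\ell)$ and $\iota(r)$ share the underlying ordinary arc $r$ (one plain at $p$, the other notched at $p$), so $L_{\iota(\ell)}$ lives in a thin neighborhood of $r$, \emph{not} of $\ell$, spiraling clockwise at $p$. Your tubular-neighborhood heuristic would predict that $L_{\iota(\ell)}$ registers only against the quadrilateral surrounding $r$; but the entries of $\tilde B(T,L_T)$ you must produce are $b_\ell(T,L_{\iota(\ell)})=1$ and $b_r(T,L_{\iota(\ell)})=0$, i.e.\ the opposite pattern, so the heuristic leads you astray precisely where you need it most. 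Compounding this, the shear-coordinate recipe reproduced in Section 3.3 presupposes a nondegenerate quadrilateral around $\gamma$, which fails for $\gamma\in\{r,\ell\}$: Fomin--Thurston treat this configuration with a modified rule (reversing the spiral direction of the lamination at $p$ and computing against $\ell$), not by the same $Z$-shape count. Until this self-folded computation is actually carried out with the correct definition, the argument does not establish that $\tilde B(T,L_T)$ has the principal-coefficient shape of Remark \ref{rem:principal-tildeB}. By comparison, the spiral cancellation for an ordinary fan of triangles around a puncture, which you also defer, is routine.
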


\section{Main results: cluster expansion formulas}\label{sect main}

In this section we present cluster expansion formulas 
for all cluster variables in a cluster algebra associated to a bordered
surface, with respect to a seed corresponding to an ideal triangulation; by
Proposition \ref{tag-change} and Corollary \ref{reduce-to-ideal}, 
one can use these formulas to compute cluster expansion formulas with respect to an {\it arbitrary} 
seed by an appropriate substitution of variables.
All of our formulas are manifestly positive, so this proves the positivity conjecture
for any cluster algebras associated to a bordered surface.  Moreover, since our 
formulas involve the system of principal coefficients, this proves positivity for
any such cluster algebra of geometric type.
 
We present three slightly different formulas, based on whether the 
cluster variable corresponds to a tagged arc with $0$, $1$, or $2$
notched ends.
More specifically, fix an ordinary arc $\gamma$ and 
a tagged triangulation $T=\iota(T^\circ)$ of $(S,M)$, where $T^{\circ}$ is an ideal triangulation. 
We recursively construct an edge-weighted graph $G_{T^{\circ},\gamma}$ 
by glueing together {\it tiles} based on the local configuration of the intersections between  $\gamma$ and $T^{\circ}$.
Our formula 
(Theorem \ref{thm main}) for $x_{\gamma}$ with respect to  $\Sigma_{T}$ 
is given in terms of perfect matchings of $G_{T^{\circ},\gamma}$.
This formula also holds for the cluster algebra element 
$x_{\ell} = x_r x_{r^{(p)}}$, where 
$\ell$ is a loop  cutting out a once-punctured monogon 
enclosing the puncture $p$ and radius $r$.  
In the case of $\gamma^{(p)}$, an arc between points $p$ and $q$ with a single notch at $p$, 
we build the graph $G_{T^{\circ}, \ell_p}$ associated to the loop
$\ell_p$  
such that $\iota(\ell_p) = \gamma^{(p)}$.  Our combinatorial
formula for $x_{\gamma^{(p)}}$ is then in terms of 
the so-called {\it $\gamma$-symmetric} matchings
of $G_{T^{\circ}, \ell_p}$.
In the case of $\gamma^{(pq)}$, an arc between points $p$ and $q$ which is notched
at both $p$ and $q$, we build the two graphs 
$G_{T^{\circ}, \ell_p}$ and 
$G_{T^{\circ}, \ell_q}$ associated to $\ell_p$ and 
$\ell_q$.  Our combinatorial 
formula for $x_{\gamma^{(pq)}}$ is then in terms of 
the  {\it $\gamma$-compatible pairs} of matchings
of $G_{T^{\circ}, \ell_p}$.
and $G_{T^{\circ}, \ell_q}$.

\subsection{Tiles} \label{sect tiles}
Let $T^\circ$ be an ideal triangulation of a bordered surface $(S,M)$ and let
$\zg$ be an ordinary arc in $(S,M)$ which is not in $T^\circ$. 
Choose an orientation on $\zg$, let $s\in M$ be its starting point, and let $t\in M$ be its endpoint.
We denote by
\[ s=p_0, p_1, p_2, \ldots, p_{d+1}=t
\]
the points of intersection of $\zg$ and $T^\circ$ in order.  
Let $\tau_{i_j}$ be the arc of $T^{\circ}$ containing $p_j$, and let 
$\zD_{j-1}$ and 
$\zD_{j}$ be the two ideal triangles in $T^{\circ}$ 
on either side of 
$\tau_{i_j}$. 

To each $p_j$ we associate a  \emph{tile} $G_j$,  
an edge-labeled triangulated quadrilateral (see the right-hand-side of Figure \ref{digoncross}), 
which is defined to be
the union of two edge-labeled triangles $\zD_1^j$ and $\zD_2^j$ glued at an edge labeled $\tau_{i_j}$. 
The triangles $\zD_1^j$ and $\zD_2^j$ are determined by  
$\zD_{j-1}$ and 
$\zD_{j}$ as follows.

If neither $\zD_{j-1}$ nor $\zD_{j}$ is self-folded, then they each have three distinct sides 
(though possibly fewer than three vertices), and we define $\zD_1^j$ and $\zD_2^j$ to be the ordinary triangles with edges labeled as in $\zD_{j-1}$ and $\zD_{j}$.  We glue $\zD_1^j$ and $\zD_2^j$ at the edge labeled $\tau_{i_j}$, so that the orientations of $\zD_1^j$ and $\zD_2^j$ both either agree or disagree with those of $\zD_{j-1}$ and $\zD_j$; this gives two possible planar embeddings
of a graph $G_j$ which we call an \emph{ordinary tile}.
\begin{figure}
\input{figTileEgs.pstex_t}
\caption{}
\label{digoncross}
\end{figure}

If one of $\zD_{j-1}$ or $\zD_{j}$ is  self-folded, then in fact $T^\circ$ must have a local configuration of a bigon (with sides $a$ and $b$) containing a radius $r$ incident to a puncture $p$ inscribed inside a loop $\ell$, see Figure \ref{triptile}.
Moreover, $\zg$ must either

\begin{itemize}
\item[(1)] intersect the loop $\ell$ and terminate at puncture $p$, or

\item[(2)] intersect the loop $\ell$, radius $r$ and then $\ell$ again.
\end{itemize}

In case (1), we associate to $p_j$ (the intersection point with  $\ell$) 
an {\it ordinary tile} $G_j$ consisting of a triangle with sides $\{a,b,\ell\}$ which 
is glued along diagonal $\ell$ to a triangle with sides $\{\ell,r,r\}$.
As before there are two possible planar embeddings of $G_j$.

In case (2), we associate to the triple of intersection points 
$p_{j-1}, p_j, p_{j+1}$ a union of tiles $G_{j-1} \cup G_j \cup G_{j+1}$,
which we call a \emph{triple tile},
based on whether $\zg$ enters and exits
through different sides of the bigon or through the same side.
These graphs are defined by Figure \ref{triptile} (each possibility
is denoted in boldface within a concatenation of five tiles).  Note that 
in each case there are two possible planar embeddings of the triple tile.
We call
the tiles $G_{j-1}$ and $G_{j+1}$ within the triple tile {\it ordinary tiles}.

\begin{figure}
\input{TripleTile.pstex_t}
\caption{}
\label{triptile}
\end{figure}

\begin{definition} [\emph{Relative orientation}]
Given a planar embedding $\tilde G_j$ 
of an ordinary tile $G_j$, we define the \emph{relative orientation} 
$\mathrm{rel}(\tilde G_j, T^\circ)$ 
of $\tilde G_j$ with respect to $T^\circ$ 
to be $\pm 1$, based on whether its triangles agree or disagree in orientation with those of $T^\circ$.  
\end{definition}
Note that in Figure \ref{triptile}, the lowest tile in each of the three graphs in the middle
(respectively, rightmost) column has relative orientation $+1$ (respectively, $-1$).
Also note that 
by construction, the planar embedding of a triple tile $\tilde G_{j-1} \cup \tilde G_j \cup \tilde G_{j+1}$ satisfies $\mathrm{rel}(\tilde G_{j-1},T^\circ) = \mathrm{rel}(\tilde G_{j+1},T^\circ)$.

\begin{definition}
Using the notation above, 
the arcs $\tau_{i_j}$ and $\tau_{i_{j+1}}$ form two edges of a triangle $\zD_j$ in $T^\circ$.  Define $\tau_{[\zg_j]}$ to be the third arc in this triangle if $\zD_j$ is not self-folded, and to be the radius in $\zD_j$ otherwise.
\end{definition}

\subsection{The graph ${G}_{T^\circ,\zg}$}\label{sect graph}

We now recursively glue together the tiles $G_1,\dots,G_d$
in order from $1$ to $d$, subject to the following conditions.
\begin{figure}
\input{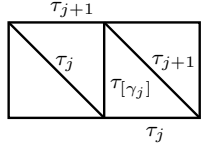}
\caption{Glueing tiles $\tilde G_j$ and $\tilde G_{j+1}$ along the edge labeled  $\tau_{[\zg_j]}$.}
\label{figglue}
\end{figure}
\begin{enumerate}
\item Triple tiles must stay glued together as  in Figure \ref{triptile}.
\item For two adjacent ordinary tiles, 
each of which may be an exterior tile of a triple tile, 
we glue  $G_{j+1}$ to $\tilde G_j$ along the edges 
labeled $\tau_{[\zg_j]}$, choosing a planar embedding $\tilde G_{j+1}$ for $G_{j+1}$
so that $\mathrm{rel}(\tilde G_{j+1},T^\circ) \not= \mathrm{rel}(\tilde G_j,T^\circ).$  See Figure \ref{figglue}.
\end{enumerate}

After glueing together the $d$ tiles, we obtain a graph (embedded in 
the plane),
which we denote 
$\overline{G}_{T^\circ,\zg}$.  Let $G_{T^\circ,\zg}$ be the graph obtained 
from $\overline{G}_{T^\circ,\zg}$ by removing the diagonal in each tile.
Figure \ref{triptile} gives examples of a 
dotted arc $\gamma$ and the corresponding graph 
$\overline{G}_{T^\circ,\zg}$.  Each $\gamma$ intersects $T^\circ$
five times,  so each 
$\overline{G}_{T^\circ,\zg}$ has five tiles. 

\begin{remark}
Even if $\gamma$ is a curve with self-intersections, 
our definition of $\overline{G}_{T^\circ,\gamma}$ makes sense.  
This is relevant to  our formula for the doubly-notched loop,
see Remark \ref{rem double}.
\end{remark}

\subsection{Cluster expansion formula for ordinary arcs}\label{sect cluster expansion formula}

Recall that if $\tau$ is a boundary segment then $x_{\tau} = 1$,
and if $\tau$ is a loop  cutting out a once-punctured
monogon with radius $r$ and puncture $p$, then $x_{\tau} = x_r x_{r^{(p)}}$.
Also see Remark \ref{ordinary-tagged}.
Before giving the next result, we need to introduce some
notation.

\begin{definition} [\emph{Crossing Monomial}]
If $\zg$ is an ordinary arc and  $\tau_{i_1}, \tau_{i_2},\dots, \tau_{i_d}$
is the sequence of arcs in $T^\circ$ which $\zg$ crosses, we define the \emph{crossing monomial} of $\gamma$ with respect to $T^\circ$ to be
$$\mathrm{cross}(T^\circ, \gamma) = \prod_{j=1}^d x_{\tau_{i_j}}.$$
\end{definition}

\begin{definition} [\emph{Perfect matchings and weights}]
A \emph{perfect matching} of a graph $G$ is a subset $P$ of the 
edges of $G$ such that
each vertex of $G$ is incident to exactly one edge of $P$. 
If the edges of  a perfect matching $P$ of  
$G_{T^\circ,\zg}$ are labeled $\tau_{j_1},\dots,\tau_{j_r}$, then 
we define the {\it weight} $x(P)$ of $P$ to be 
$x_{\tau_{j_1}} \dots x_{\tau_{j_r}}$.
\end{definition}

\begin{definition} [\emph{Minimal and Maximal Matchings}]  
By induction on the number of tiles it is easy to see that 
$G_{T^\circ,\zg}$  
has  precisely two perfect matchings which we call
the {\it minimal matching} $P_-=P_-(G_{T^\circ,\zg})$ and 
the {\it maximal matching} $P_+
=P_+(G_{T^\circ,\zg})$, 
which contain only boundary edges.
To distinguish them, 
if  $\mathrm{rel}(\tilde G_1,T^\circ)=1$ (respectively, $-1$),
we define 
$e_1$ and $e_2$ to be the two edges of 
$\overline{G}_{T^\circ,\zg}$ which lie in the counterclockwise 
(respectively, clockwise) direction from 
the diagonal of $\tilde G_1$.  Then  $P_-$ is defined as
the unique matching which contains only boundary 
edges and does not contain edges $e_1$ or $e_2$.  $P_+$
is the other matching with only boundary edges.
\end{definition}

For an arbitrary perfect matching $P$ of $G_{T^\circ,\gamma}$, we let
$P_-\ominus P$ denote the symmetric difference, defined as $P_-\ominus P =(P_-\cup P)\setminus (P_-\cap P)$.

\begin{lemma}\cite[Theorem 5.1]{MS}
\label{thm y}
The set $P_-\ominus P$ is the set of boundary edges of a 
(possibly disconnected) subgraph $G_P$ of $G_{T^\circ,\zg}$, which is a union of 
cycles.  These cycles enclose a set of tiles 
$\cup_{j\in J} G_{i_j}$,  where $J$ is a finite index set.
\end{lemma}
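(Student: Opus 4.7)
The plan is to proceed in two stages. First I would show that $P_-\ominus P$, viewed as an edge subgraph $H$ of $G_{T^\circ,\zg}$, is $2$-regular on its support, and hence a disjoint union of cycles. The argument is the standard degree count for symmetric differences of perfect matchings: at every vertex $v$, both $P_-$ and $P$ contribute exactly one incident edge; either these coincide, in which case $v$ is isolated in $H$, or they differ, in which case $v$ has degree exactly $2$ in $H$. A connected component of a simple $2$-regular graph is a cycle, so $H$ decomposes as a disjoint union of cycles, each of even length because its edges alternate between $P_-$ and $P$.

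Next I would invoke the explicit planar embedding of $G_{T^\circ,\zg}$ constructed in Section \ref{sect graph} by successively gluing the tiles $\tilde G_1, \dots, \tilde G_d$ with alternating relative orientations. In this embedding, after removing the diagonals, the bounded faces are precisely the tiles $G_1, \dots, G_d$, each of which is a quadrilateral. One must also check this for triple tiles: inspecting Figure \ref{triptile}, a triple tile is three quadrilateral faces glued along shared boundary edges, so the bounded-face description still holds there. Applying the Jordan curve theorem to each cycle of $H$ separates the plane into bounded and unbounded regions, and the bounded region, being a union of bounded faces of $G_{T^\circ,\zg}$, is necessarily a union of tiles.

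I would then define $G_P$ to be the union of all tiles enclosed by the cycles of $H$, and let $J \subseteq \{1,\dots,d\}$ index this set, so that $G_P = \bigcup_{j \in J} G_{i_j}$. The topological boundary of $G_P$ coincides with the union of the cycles of $H$: an edge of $G_{T^\circ,\zg}$ lies on the boundary of $G_P$ iff it borders exactly one enclosed tile, and by the face identification above this happens precisely for edges in $P_-\ominus P$. This yields the claimed identification of $P_-\ominus P$ with the boundary of $G_P$.

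The main obstacle I expect is the careful handling of the triple-tile configurations coming from self-folded triangles, where the local combinatorics in Figure \ref{triptile} is more delicate and multiple edges may carry the same label. Verifying both that the triple tile is embedded planarly in the way required, and that any cycle of $H$ restricts compatibly inside such a $3$-tile unit, reduces to a finite case analysis of how $P_-\ominus P$ can look locally on the boldface subgraphs in Figure \ref{triptile}. Once that local check is in place, the global argument via planarity and the Jordan curve theorem is purely topological.
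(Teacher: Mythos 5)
The paper itself does not prove this lemma; it cites it directly as Theorem 5.1 of \cite{MS}. So there is no internal proof to compare against, and your attempt should be judged as a self-contained argument.

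Your two-stage argument is essentially the right one, and the first stage is airtight: the symmetric difference of two perfect matchings is a $2$-regular (or isolated) subgraph, hence a disjoint union of even cycles, and the bounded faces of the planar embedding of $G_{T^\circ,\gamma}$ are precisely the tiles $G_1,\dots,G_d$, so each cycle bounds a union of tiles by the Jordan curve theorem. The identification of the triple tile as three quadrilateral faces glued along edges is also correct, and your worry there is more caution than genuine difficulty.

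The real gap is in the boundary identification at the end, where you write that an edge lies on $\partial G_P$ iff it borders exactly one enclosed tile, and that ``by the face identification above'' this happens exactly for the edges of $P_-\ominus P$. This would fail if two cycles $C_1, C_2$ of $P_-\ominus P$ were nested, with $C_1$ inside $C_2$: then $G_P$ (as you define it, the union of all enclosed tiles) would equal the interior of $C_2$, its topological boundary would be $C_2$ alone, and the edges of $C_1$ would each border two enclosed tiles, so $P_-\ominus P \ne \partial G_P$. To close the gap you must rule out nesting. This follows from the global shape of $G_{T^\circ,\gamma}$: the tiles, read in the dual graph, form a path (this remains true through the triple-tile configurations of Figure~\ref{triptile}, whose three constituent tiles appear consecutively along that path), so any cycle in the graph must enclose a contiguous interval of tiles, and two vertex-disjoint cycles must enclose disjoint intervals. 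With that observation in hand, your argument that each edge of each $C_i$ borders exactly one enclosed tile (and conversely) becomes correct, and the lemma follows.
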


We use this decomposition to define \emph{height monomials} for
perfect matchings.
Note that the exponents in the height monomials defined below coincide
with the definiton of height functions given in
\cite{ProppLattice} for perfect matchings of bipartite
graphs, based on earlier work of \cite{ConwayLagarias}, 
\cite{EKLP}, and \cite{Thurston} for domino tilings. 

\begin{definition} [\emph{Height Monomial and Specialized Height Monomial}] \label{height} Let $T^\circ = \{\tau_1,\tau_2,\dots, \tau_n\}$ be an ideal triangulation of $(S,M)$ and $\gamma$ be an ordinary arc of $(S,M)$.  By Lemma \ref{thm y}, for any perfect matching $P$
of $G_{T^\circ,\zg}$, $P \ominus P_-$ encloses the union of tiles $\cup_{j\in J} G_{i_j}$.  
We  define the \emph{height monomial} $h(P)$ of $P$ by
\begin{equation*}
h(P) = \prod_{k=1}^n h_{\tau_{k}}^{m_k},
\end{equation*}
where $m_k$ is the number of tiles in $\cup_{j\in J} G_{i_j}$ whose
diagonal is labeled $\tau_k$.

We  define the \emph{specialized height monomial} $y(P)$ of $P$ to be
the specialization $\Phi(h(P))$, where $\Phi$ is defined below. 
\begin{eqnarray*} 
\label{eqn yspec} \Phi({h}_{\tau_i}) &=& \left\{\begin{array}{ll}
y_{\tau_i}\
&\textup{if $\tau_i$ is not a side of a self-folded triangle;}\\ \\
\dfrac{y_{r}}{y_{r^{(p)}}}
&\textup{if $\tau_i$ is a radius $r$ to puncture $p$ in a self-folded triangle;}\\ \\
y_{r^{(p)}}
&\textup{if $\tau_i$ is a loop in a self-folded triangle with radius $r$ to puncture $p$.}
\end{array}\right.
\end{eqnarray*}
\end{definition}

\begin{theorem}\label{thm main}
Let $(S,M)$ be a bordered surface with an ideal triangulation
$T^\circ$, and 
let $T=\{\tau_1,\tau_2,\dots, \tau_n\}= \iota(T^\circ)$ be the 
corresponding tagged triangulation.
Let $\mathcal{A}$ be
the corresponding cluster algebra with principal coefficients with respect to  $\Sigma_T=(\mathbf{x}_T,\mathbf{y}_T,B_T)$, and let
$\zg$ 
be an ordinary arc in $S$ (this may include a loop cutting out a once-punctured monogon).
Let $G_{T^\circ,\zg}$ be the graph constructed in Section \ref{sect graph}.  Then the Laurent expansion of $x_{\zg}$ with respect to $\Sigma_T$ is given by
$$[x_{\gamma}]_{\Sigma_{T}}^{\Acal} = \frac{1}{\mathrm{cross}(T^\circ,\zg)} \sum_P 
x(P) y(P),$$ where the sum is over all perfect matchings $P$ of $G_{T^\circ,\zg}$.
\end{theorem}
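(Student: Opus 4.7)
The plan is to induct on the number $d$ of crossings of $\gamma$ with $T^\circ$. The base case $d=0$ is handled by direct inspection: $\gamma$ either coincides with some arc in $T^\circ$, is a boundary segment, or is a loop enclosing a once-punctured monogon with radius $r$ and puncture $p$. In each case $G_{T^\circ,\gamma}$ has at most one tile, and the formula reproduces $x_\gamma$, relying in the loop case on the convention $x_\ell = x_r x_{r^{(p)}}$ together with the height specialization $\Phi(h_r) = y_r/y_{r^{(p)}}$ from Definition \ref{height}.

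For the inductive step, I would focus on the last tile $\tilde G_d$ of $G_{T^\circ,\gamma}$, associated with the final crossing $p_d$ of $\gamma$ with $\tau_{i_d}$. Flipping $\tau_{i_d}$ to a new arc $\tau_{i_d}'$ produces a triangulation with respect to which $\gamma$ has strictly fewer crossings, and the generalized Ptolemy relation (Proposition \ref{Ptolemy}) applied in the surrounding quadrilateral expresses $x_\gamma$ as a rational combination of cluster variables for shorter arcs. Invoking the Laurent phenomenon (Theorem \ref{Laurent}) together with the inductive hypothesis then yields a matching-sum expansion for $x_\gamma$ that one can compare term-by-term with the claimed formula.

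The combinatorial heart of the proof is to show that the perfect matchings of $G_{T^\circ,\gamma}$ partition, according to their restriction to $\tilde G_d$, into classes that are in weight- and height-preserving bijection with the matchings of the smaller graphs attached to the shorter arcs on the right-hand side of the Ptolemy recursion. The crossing monomial $\mathrm{cross}(T^\circ,\gamma)$ updates correctly because $\tau_{i_d}$ is removed from the list of crossings; the specialized height monomial $y(P)$ updates correctly because the local contribution of $\tilde G_d$ to the symmetric difference $P \ominus P_-$ matches the coefficient update rule \eqref{eq:y-mutation}. I would verify this latter point via the lamination interpretation of principal coefficients from Section \ref{sect surfaces}: the elementary lamination $L_{\tau_{i_d}}$ contributes shear coordinates along $\gamma$ that precisely track the height monomial recursion, so the bottom rows of $\tilde B$ transform compatibly with the matching statistics.

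The principal obstacle is the case where $\tau_{i_d}$ is a loop or a radius in a self-folded triangle: $\gamma$ may traverse the entire self-folded triangle, so the relevant portion of $G_{T^\circ,\gamma}$ is a triple tile (Figure \ref{triptile}) and the Ptolemy relation takes a modified form reflecting the conventions $x_\ell = x_r x_{r^{(p)}}$ and $\Phi(h_r) = y_r/y_{r^{(p)}}$. Showing that matching enumeration on a triple tile correctly reproduces this three-term recursion — with the right ratios of $y$-variables rather than single $y$-variables — is the most technically involved part, requiring separate analyses for the ``same-side'' and ``different-side'' entry/exit configurations of Figure \ref{triptile}. Once established, Proposition \ref{tag-change} extends the formula to any tagged triangulation, and the cases of singly and doubly notched arcs will follow from the parallel constructions promised in Section \ref{sect main}.
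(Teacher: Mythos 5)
Your inductive scheme resembles Steps 5--7 of the paper's outline (Section~\ref{sec outline}) in spirit, but there are two concrete gaps that prevent it from working as stated, and the paper's actual proof takes a different route precisely to avoid them.

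First, the claim that flipping $\tau_{i_d}$ (the arc at the final crossing $p_d$) ``produces a triangulation with respect to which $\gamma$ has strictly fewer crossings'' is false in general. On a surface with nontrivial topology, $\gamma$ may cross $\tau_{i_d}$ at several of the points $p_1,\dots,p_d$, and all of those crossings are affected by the flip; moreover $\gamma$ may cross the new arc $\tau_{i_d}'$ many times. There is also the structural obstruction that the radius of a self-folded triangle is not flippable at all (see the remark after the definition of ordinary flips). The paper's Lemma~\ref{lem exchange} handles this by choosing the \emph{middle} crossing $h=\lceil k/2\rceil$ rather than the last one, and the inequalities that force the five arcs $\gamma',\alpha_1,\dots,\alpha_4$ to cross $T$ strictly fewer than $k$ times genuinely require that choice: with $j_\ell=k$ one would get bounds like $e(\alpha_1,T)=(j_{\ell-1}-1)+j_\ell\approx 2k$, which is not smaller than $k$.

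Second, the generalized Ptolemy relation ``in the surrounding quadrilateral'' of $\tau_{i_d}$ has diagonals $\tau_{i_d}$ and $\tau_{i_d}'$, and so does not involve $x_\gamma$ at all. To get a Ptolemy relation featuring $x_\gamma$, one needs a quadrilateral with simply-connected interior having $\gamma$ as a diagonal, which is exactly what Lemma~\ref{lem exchange} produces; but such a quadrilateral is \emph{not} obtained by flipping the last crossed arc, and its sides are arcs that need not lie in $T$ at all. Once one has this quadrilateral, there is still the question of how to compare the Ptolemy relation on the surface with the matching formula. The paper does not attempt a direct ``Kuo-condensation'' style comparison of matchings across the recursion; instead it lifts everything into a triangulated polygon $\widehat S$, constructs a homomorphism $\phi_\gamma:\Abar_\gamma\to\Frac(\Acal)$ from a type~$A$ cluster algebra, proves $\phi_\gamma(x_{\tilde\gamma})=x_\gamma$ by comparing the surface Ptolemy relation with its lift (Theorem~\ref{phi-map}, Lemma~\ref{lemma-project}), and then applies the already-known matching formula in type~$A$ from \cite{MS}. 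That reduction is the mechanism that tames all the self-folded triangle bookkeeping you correctly flag as ``the most technically involved part''; without it, your approach would require a new combinatorial condensation identity on the snake graphs, which you have not sketched. So the proposal as written is not a complete alternative: the quadrilateral lemma (with the middle-crossing choice) and the $\phi_\gamma$-lift to type~$A$ are both missing ingredients that you would need to supply.
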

Sections \ref{sect Sbar} - \ref{sec phi-map} set up the auxiliary results which are used for the proof of Theorem \ref{thm main}, which is given in Section \ref{sec plain}.  See Section \ref{sec outline} for an outline of the proof.

\begin{remark}
This expansion as a Laurent polynomial does not necessarily yield a reduced fraction, which is why our denominators are defined in terms of crossing numbers as opposed to the intersection numbers $(\alpha|\beta)$ defined in Section 8 of \cite{FST}.
\end{remark}

\subsection{Cluster expansion formulas for tagged arcs with notches}\label{sect notched}

We now consider cluster variables of tagged arcs which 
have a notched end.  The following remark shows that if we want to compute
the Laurent expansion of a cluster variable associated to a tagged arc notched at $p$,
with respect to a tagged triangulation $T$,
there is 
no loss of generality in assuming that all arcs in $T$ are tagged plain at $p$.

\begin{remark} \label{assumption}
Fix a tagged triangulation $T$ of $(S,M)$  such that $T = \iota(T^\circ)$, where $T^\circ$ is an ideal triangulation. 
Let $p$ and $q$ (possibly $p=q$) be two marked points, and let 
$\zg$ denote an ordinary  arc between $p$ and $q$.
If $p$ is a puncture and we are interested in computing the Laurent expansion of 
of $x_{\zg^{(p)}}$ with respect to $T$, we may assume that no
tagged arc in $T$ is notched at $p$. 
Otherwise, 
by changing the tagging of $T$ and $\zg^{(p)}$ at $p$, and applying Proposition \ref{tag-change},
we could 
reduce the computation of the Laurent expansion of $x_{\zg^{(p)}}$
to our formula for cluster variables 
corresponding to ordinary arcs.  Note that if there is no tagged arc in $T$ which 
is notched at $p$, then there is no loop in $T^\circ$ cutting out a once-punctured
monogon around $p$.
Similarly, if $p$ and $q$ are punctures
and we are interested in computing the Laurent expansion of 
 $x_{\zg^{(pq)}}$ with respect to $T$, we may assume that no
tagged arc in $T$ is notched at either $p$ or $q$ (equivalently, there
are no loops in $T^\circ$ cutting out once-punctured monogons around 
$p$ or $q$).
We will make these assumptions throughout this section.
\end{remark}

Before giving our formulas, we must introduce some notation.
\begin{definition} [\emph{Crossing monomials for tagged arcs with notches}]\label{crossnotch}
If $p$ is a puncture, and $\zg^{(p)}$ is a tagged arc with a notch at $p$ but tagged plain at its other end, we define the associated crossing monomial as $$\mathrm{cross}(T^\circ,\zg^{(p)}) =\frac{\mathrm{cross}(T^\circ,\zLp)}{\mathrm{cross}(T^\circ,\zg)} = 
\mathrm{cross}(T^\circ,\gamma) \prod_\tau x_\tau,$$
where the product is over all ends of arcs $\tau$ of  $T^\circ$ that are incident to $p$.
If $p$ and $q$ are punctures and $\zg^{(pq)}$ is a tagged arc with a notch at $p$ and $q$, we define the associated crossing monomial as
$$\mathrm{cross}(T^\circ,\zg^{(pq)}) =\frac{\mathrm{cross}(T^\circ,\zLp) \,
\mathrm{cross}(T^\circ,\zLq) }{\mathrm{cross}(T^\circ,\zg)^3} = 
\mathrm{cross}(T^\circ,\gamma) \prod_\tau x_\tau,$$ where the 
product is
over all ends of arcs $\tau$  that are incident to $p$ or $q$.
\end{definition}

Our formula computing the Laurent expansion of 
a cluster variable $x_{\zg^{(p)}}$ with exactly one notched end (at the puncture $p$)
involves {\it $\gamma$-symmetric} matchings of the graph associated to the ideal arc
$\zLp$
corresponding to $\zg^{(p)}$ (so $\iota(\zLp)=\zg^{(p)}$). 
Note that $\zLp$ is a loop cutting out a 
once-punctured monogon around $p$.  

Our goal now is to define {\it $\gamma$-symmetric} matchings.
For a tagged arc $\tau \in T$ and a puncture $p$, 
let $e_p(\tau)$ denote the number of ends of $\tau$ incident to $p$ (so if $\tau$ is a loop with 
its ends at $p$, $e_p(\tau) = 2$).  
We let $e_p = e_p(T) = \sum_{\tau \in T} e_p(\tau)$.

Keeping the notation of Section \ref{sect tiles}, 
orient $\zg$ from $q$ to $p$, let $\tau_{i_1}, \tau_{i_2},\dots, \tau_{i_d}$ denote the arcs 
crossed by $\zg$ in order, and let 
$\Delta_0,\dots,\Delta_{d+1}$ be the sequence of ideal triangles in $T^\circ$ which $\zg$ passes through.
We let $\zeta_1$ and $\zeta_{e_p}$ denote the sides of 
triangle $\zD_{d+1}$ not crossed by $\zg$ (by Remark \ref{assumption},
$\zeta_1 \neq \zeta_{e_p}$), so that $\tau_{i_d}$ follows $\zeta_{e_p}$ and 
$\zeta_{e_p}$ follows $\zeta_1$ in clockwise order around $\Delta_{d+1}$.  
Let $\zeta_2$ through $\zeta_{e_p-1}$ denote the labels of the other arcs incident to 
puncture $p$ in order as we follow $\ell_p$ clockwise around $p$.
Note that if  $T^\circ$ contains a loop $\tau$ based at a puncture $p$, 
then $\tau$ appears twice in the multiset $\{\zeta_1,\dots, \zeta_{e_p}\}$.  
Figure \ref{Bicycle} shows some possible local configurations around a puncture.

\begin{figure}
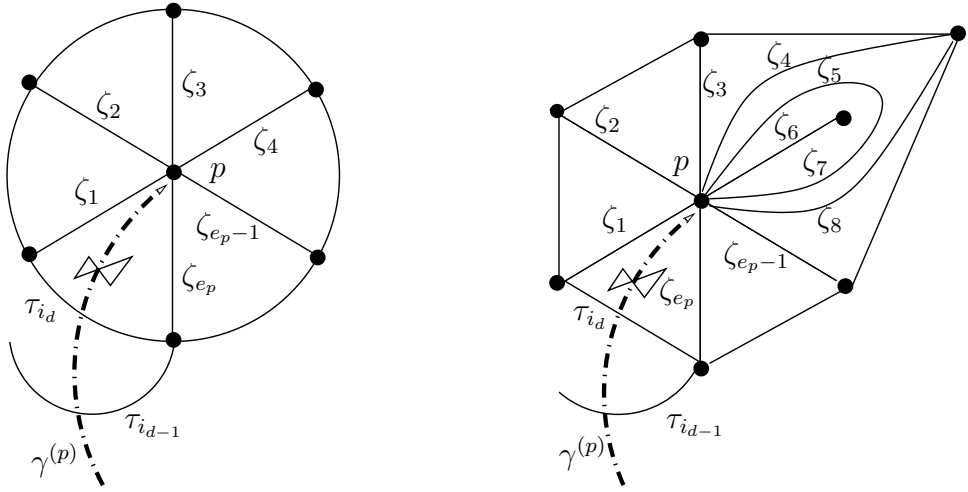

\input{figBicyclenew.pstex_t} \hspace{6em}  \input{figBicycle2new.pstex_t}
\caption{Possible local configurations around a puncture}
\label{Bicycle}
\end{figure}

\begin{definition} [\emph{Subgraphs $G_{T^\circ,\zg_p,1}$, $G_{T^\circ,\zg_p,2}$,
$H_{T^\circ,\zg_p,1}$, and $H_{T^\circ,\zg_p,2}$ of $G_{T^\circ, \zLp}$}]
Since $\zLp$ is a loop cutting out a once-punctured monogon with radius $\zg$ and puncture $p$,
the graph $G_{T^\circ,\zLp}$  contains two disjoint connected subgraphs, one on each end, 
both of which are isomorphic to $G_{T^\circ,\zg}$.  Therefore each  
subgraph consists of a union of tiles $G_{\tau_{i_1}}$ through $G_{\tau_{i_d}}$;
we let $G_{T^\circ,\zg_p,1}$ and $G_{T^\circ,\zg_p,2}$ denote these two subgraphs.

Let $v_1$ and $v_2$ be the two vertices of tiles $G_{\tau_{i_d}}$ in $G_{T^\circ, \zLp}$ incident to the edges labeled $\zeta_1$ and $\zeta_{e_p}$.  For $i\in \{1,2\}$, we let $H_{T^\circ,\gamma_p,i}$ be the connected subgraph of $G_{T^\circ,\gamma_p,i}$ which is obtained by deleting $v_i$ and the two edges incident to $v_i$.  See Figure \ref{LoopSnakeGraph}.
\end{definition}

\begin{figure}
\input{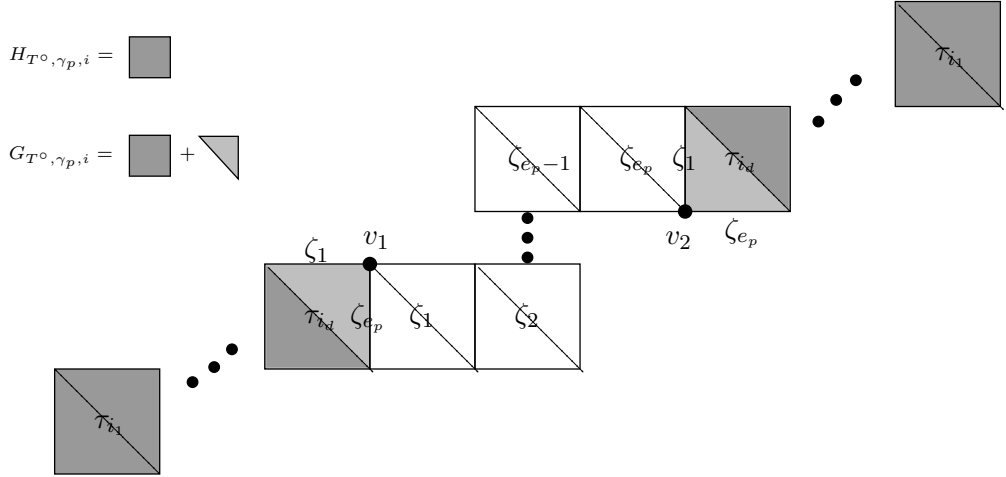}
\caption{$\overline{G}_{T^\circ,\zLp}$, with subgraphs $G_{T^\circ,\gamma_p,i}$ and
$H_{T^\circ,\gamma_p,i}$ shaded as indicated.}
\label{LoopSnakeGraph}
\end{figure}

\begin{definition} [\emph{$\gamma$-symmetric matching}]
Having fixed an ideal triangulation $T^\circ$ and an ordinary arc $\gamma$ between $p$ and $q$,
we call a perfect matching $P$ of $G_{T^\circ,\zLp}$ $\zg$-\emph{symmetric} 
if the restrictions of $P$ to the two ends satisfy $P|_{H_{T^\circ,\gamma_p,1}}
\cong P|_{H_{T^\circ,\gamma_p,2}}$.
\end{definition}

\begin{definition} [\emph{Weight and Height Monomials of a $\gamma$-symmetric matching}]
Fix a $\gamma$-symmetric matching $P$ of $G_{T^\circ,\zLp}$.  By Lemma \ref{twoends}, 
$P$ restricts to a perfect matching of (without loss of generality) $G_{T^\circ,\gamma_p,1}$.  
Therefore the following definitions of weight and (specialized) height monomials $\overline{x}(P)$ 
and $\overline{y}(P)$ are well-defined:
\begin{eqnarray*}\overline{x}(P) = \frac{x(P)}{x(P|_{G_{T^\circ,\zg,1}})},~~
\overline{y}(P) = \frac{y(P)}{y(P|_{G_{T^\circ,\zg,1}})}.
\end{eqnarray*}
\end{definition}

We are now  ready to state our result for tagged arcs with one notched end.
\begin{theorem} \label{thm single}
Let $(S,M)$ be a bordered surface with puncture $p$ and tagged triangulation $T= \{\tau_1,\tau_2,\dots, \tau_n\} = \iota(T^\circ)$ where  $T^\circ$ is an ideal triangulation.  
Let $\mathcal{A}$ be
the corresponding 
cluster algebra with principal coefficients with respect to  $\Sigma_T$.
Let $\zg$ be an ordinary arc  with one end incident to $p$,
and let $\zLp$ be the ordinary arc corresponding to 
$\zg^{(p)}$ (so $\iota(\zLp)=\zg^{(p)}$).  
Without loss of generality we can assume that $T$ contains no arc notched at $p$
and that $\zg \notin T$ (see Remarks \ref{assumption} and \ref{rem:enough}).
Let $G_{T^\circ,\zLp}$ be the graph constructed in Section \ref{sect graph}.  
Then the Laurent expansion of $x_{\zg^{(p)}}$ with respect to  $\Sigma_T$ is given by
$$[x_{\gamma^{(p)}}]_{\Sigma_{T}}^{\Acal} = \frac{1}{\mathrm{cross}(T^\circ,\zg^{(p)})} \sum_P \overline{x}(P)\, \overline{y}(P),$$ where the sum is over all $\zg$-symmetric matchings $P$ of $G_{T^\circ,\zLp}$.
\end{theorem}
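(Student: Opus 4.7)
The plan is to reduce Theorem~\ref{thm single} to Theorem~\ref{thm main} via the cluster algebra identity $x_{\ell_p} = x_\gamma \cdot x_{\gamma^{(p)}}$, which is the convention recorded in Section~\ref{sect surfaces} for the ordinary loop $\ell_p$ cutting out a once-punctured monogon with radius $\gamma$ and puncture $p$. Theorem~\ref{thm main} applies to both $\ell_p$ (an ordinary loop around a single puncture) and $\gamma$ (an ordinary arc), yielding perfect-matching formulas on $G_{T^\circ,\ell_p}$ and $G_{T^\circ,\gamma}$. Dividing and using the relation $\mathrm{cross}(T^\circ,\ell_p) = \mathrm{cross}(T^\circ,\gamma)\cdot\mathrm{cross}(T^\circ,\gamma^{(p)})$ from Definition~\ref{crossnotch}, the theorem reduces to the purely combinatorial identity
\[ \sum_{P\ \text{PM of}\ G_{T^\circ,\ell_p}} x(P)\,y(P) \;=\; \Bigl(\sum_{Q\ \text{PM of}\ G_{T^\circ,\gamma}} x(Q)\,y(Q)\Bigr) \cdot \Bigl(\sum_{P\ \gamma\text{-symmetric}} \overline{x}(P)\,\overline{y}(P)\Bigr). \]

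To attack this identity, I would exploit the structural feature that $G_{T^\circ,\ell_p}$ contains two disjoint copies $G_{T^\circ,\gamma_p,1}$ and $G_{T^\circ,\gamma_p,2}$ of $G_{T^\circ,\gamma}$, joined through a ``central'' region of tiles corresponding to $\ell_p$ wrapping around $p$ (the precise configuration depending on the local structure near $p$, as in Figure~\ref{Bicycle}). I expect the decisive step to be a weight-preserving bijection between perfect matchings of $G_{T^\circ,\ell_p}$ and pairs $(Q,P)$, where $Q$ is a PM of $G_{T^\circ,\gamma}$ and $P$ is a $\gamma$-symmetric matching of $G_{T^\circ,\ell_p}$. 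Given such a pair, the new perfect matching is obtained by replacing the restriction of $P$ to one end-copy $G_{T^\circ,\gamma_p,1}$ by $Q$; conversely, given a perfect matching of $G_{T^\circ,\ell_p}$, one extracts a $\gamma$-symmetric matching by symmetrizing the two ends and records the asymmetric data as $Q$. The weight factorization $x(P_{\text{new}}) y(P_{\text{new}}) = \overline{x}(P)\overline{y}(P)\cdot x(Q)y(Q)$ would then fall out of the definitions of $\overline{x}$ and $\overline{y}$.

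The main obstacle will be establishing this bijection rigorously. The restrictions of $P$ to the subgraphs $G_{T^\circ,\gamma_p,i}$ are not automatically perfect matchings because of the boundary vertices $v_1,v_2$ and how the subgraphs attach to the central region, so one must carefully classify how matchings in the central region interact with the two ends; this is where the case analysis based on Figure~\ref{Bicycle} enters. A second subtlety is controlling the specialized height monomial: one has to verify that the cycles of $P \ominus P_-$ on $G_{T^\circ,\ell_p}$ produced by Lemma~\ref{thm y} decompose cleanly under the bijection so that $y(P_{\text{new}}) = y(Q)\cdot \overline{y}(P)$, which is nontrivial when cycles straddle the boundary between the ends and the center. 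If some matchings fail to fall into this bijective picture, a local sign-reversing involution should cancel their contributions.
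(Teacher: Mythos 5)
Your proposal follows essentially the same strategy as the paper's proof: reduce to the identity $x_{\ell_p} = x_\gamma x_{\gamma^{(p)}}$, and establish a weight- and height-preserving bijection $\mathcal{P}(\gamma) \times \mathcal{SP}(\gamma^{(p)}) \to \mathcal{P}(\ell_p)$ by swapping one end of a $\gamma$-symmetric matching for an arbitrary perfect matching of $G_{T^\circ,\gamma}$. The two subtleties you flag are precisely the paper's two auxiliary lemmas --- Lemma~\ref{twoends} (every perfect matching of $G_{T^\circ,\ell_p}$ restricts to a perfect matching on at least one end, which makes the bijection well-defined) and Lemma~\ref{yheight} (the minimal matching $P_-$ of $G_{T^\circ,\ell_p}$ is itself $\gamma$-symmetric, which makes the height monomials factor cleanly) --- and once those are in hand the bijection works with no cancellation argument needed, so your fallback sign-reversing involution is superfluous.
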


\begin{remark}\label{rem:enough}
Note that if $\zg$ is in $T$ (so $x_{\zg}$ is an initial cluster variable), 
then since $x_{\zg} x_{\zg^{(p)}} = x_{\zLp}$, we obtain the Laurent expansion of 
$x_{\zg^{(p)}}$ with respect to $\Sigma_T$ by dividing the Laurent expansion of $x_{\zLp}$ 
(which is given by Theorem \ref{thm main}) by 
$x_{\zg}$.
\end{remark}

We prove Theorem \ref{thm single} in Section \ref{SingleNotchedProofs}.  For the case of a tagged arc with notches at both ends, we need two more definitions in the same spirit as the above notation.

\begin{definition} [$\gamma$-compatible pair of matchings]
Assume that the tagged triangulation $T$ does not contain either $\zg$, $\zg^{(p)}$, or $\zg^{(q)}$.  Let $P_p$ and $P_q$ be $\gamma$-symmetric matchings of  $G_{T^\circ,\zLp}$ and $G_{T^\circ,\zLq}$, respectively.  By Lemma \ref{twoends}, without loss of generality, $P_p|_{G_{T^\circ,\zg_p,1}}$ and
$P_q|_{G_{T^\circ,\zg_q,1}}$ are perfect matchings.  We say that $(P_p,P_q)$ is a $\zg$-\emph{compatible} pair if the restrictions satisfy $$P_p|_{G_{T^\circ,\gamma_p,1}}
\cong P_q|_{G_{T^\circ,\gamma_q,1}}.$$
\end{definition}

\begin{definition} [Weight and Height Monomials for $\gamma$-compatible  matchings]
Fix a $\gamma$-compatible pair of matchings $(P_p,P_q)$ of $G_{T^\circ,\zLp}$ and $G_{T^\circ,\zLq}$.
We define the weight and height monomial, respectively $\overline{\overline{x}}(P_p,P_q)$ and $\overline{\overline{y}}(P_p,P_q)$, as
\begin{eqnarray*}\overline{\overline{x}}(P_p,P_q) = \frac{x(P_p)\, x(P_q)}{x(P_p|_{G_{T^\circ,\zg,1}})^3},~~
\overline{\overline{y}}
(P_p,P_q) = \frac{y(P_p)\, y(P_q)}{y(P_p|_{G_{T^\circ,\zg,1}})^3}.
\end{eqnarray*}
\end{definition}

For technical reasons, we require the $(S,M)$ is not a closed surface with exactly $2$ marked points for Theorem \ref{thm double} and Proposition \ref{coeff-free}.

\begin{theorem} \label{thm double}
Let $(S,M)$ be a bordered surface with punctures $p$ and $q$ 
and tagged triangulation $T= \{\tau_1,\tau_2,\dots, \tau_n\} = \iota(T^\circ)$ where  $T^\circ$ is an ideal triangulation.
Let $\zg$ be an ordinary arc between $p$ and $q$.
Assume $\gamma \notin T$, and without loss of generality assume $T$ does not 
contain an arc notched at $p$ or $q$.
Let $\mathcal{A}$ be
the corresponding cluster algebra with principal coefficients with 
respect to $\Sigma_T$.
Let $\zLp$ and $\zLq$ be the two ideal arcs corresponding to 
$\zg^{(p)}$ and $\zg^{(q)}$.  
Let $G_{T^\circ,\zLp}$ and $G_{T^\circ,\zLq}$ be the graphs constructed in Section \ref{sect graph}.  Then the Laurent expansion of $x_{\zg^{(pq)}}$ with respect to $\Sigma_T$ is given by
$$[x_{\gamma^{(pq)}}]_{\Sigma_{T}}^{\Acal} = \frac{1}{\mathrm{cross}(T^\circ,\zg^{(pq)})} \sum_{(P_p,P_q)} \overline{\overline{x}}(P_p,P_q)\, \overline{\overline{y}}(P_p,P_q),$$ where the sum is over all 
$\zg$-compatible pairs of matchings $(P_p,P_q)$ of $(G_{T^\circ,\zLp}, G_{T^\circ,\zLq})$.
\end{theorem}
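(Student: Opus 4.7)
The plan is to derive Theorem~\ref{thm double} from Theorems~\ref{thm main} and \ref{thm single} together with a generalized Ptolemy-type relation for tagged arcs. The four tagged arcs $\gamma$, $\gamma^{(p)}$, $\gamma^{(q)}$, $\gamma^{(pq)}$ between the two punctures $p$ and $q$ form a ``tagged quadrilateral'' in the sense of \cite[Definition 7.4]{FT}, yielding a relation of the form
\begin{equation*}
x_{\gamma^{(p)}}\, x_{\gamma^{(q)}} \;=\; x_\gamma\, x_{\gamma^{(pq)}} \;+\; Y_{pq},
\end{equation*}
for a coefficient monomial $Y_{pq}$ determined by the principal coefficients of the arcs in $T^\circ$ incident to $p$ and $q$. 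Solving for $x_{\gamma^{(pq)}}$ and substituting the Laurent expansions supplied by the two earlier theorems reduces the claim to a combinatorial identity among matchings of $G_{T^\circ,\ell_p}$, $G_{T^\circ,\ell_q}$, and $G_{T^\circ,\gamma}$.

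The central combinatorial step is to expand the product $x_{\gamma^{(p)}} x_{\gamma^{(q)}}$ as a double sum over pairs of $\gamma$-symmetric matchings $(P_p,P_q)$ and to partition these pairs into $\gamma$-compatible ones (where $P_p|_{G_{T^\circ,\gamma,1}} \cong P_q|_{G_{T^\circ,\gamma,1}}$) and incompatible ones. For a compatible pair, the common restriction is itself a perfect matching $P$ of the snake graph $G_{T^\circ,\gamma}$; a direct comparison of $\overline{\overline{x}}(P_p,P_q)$ and $\overline{\overline{y}}(P_p,P_q)$ with the products $\overline{x}(P_p)\,\overline{x}(P_q)$ and $\overline{y}(P_p)\,\overline{y}(P_q)$, together with the factorization of crossing monomials implicit in Definition~\ref{crossnotch}, shows that the compatible contributions, divided by $x_\gamma$, yield exactly the claimed formula.

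The hard part is to verify that the incompatible pairs cancel to leave precisely $Y_{pq}$. The natural attack is a weight-preserving involution on incompatible pairs: given $(P_p,P_q)$ with disagreeing $\gamma$-restrictions, one swaps the disagreeing edges along the common ``$\gamma$-spine'' of the two graphs to produce a partner pair, and one checks that this pairing cancels all but a controlled residue whose total weight equals $Y_{pq}$, monomial by monomial. In the coefficient-free setting this swap-type involution can be carried out directly on the level of $x$-weights (cf.\ Section~\ref{sec quick} and Proposition~\ref{coeff-free}).

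What makes this the main obstacle is tracking the $y$-weights in the presence of principal coefficients, especially through the specialization $\Phi$ at radii and loops of self-folded triangles: the naive swap is no longer $y$-weight preserving, and the residue must be identified with $Y_{pq}$ including its $y$-dependence. As foreshadowed in the introduction, I expect that resolving this requires interpreting the $y_{\tau_i}$ as shear coordinates of the elementary laminations $L_{\tau_i}$, and recasting the cancellation as a shear-coordinate identity relating $L_\gamma$, $L_{\gamma^{(p)}}$, $L_{\gamma^{(q)}}$, and $L_{\gamma^{(pq)}}$ under the simultaneous tag-change at $p$ and $q$. This is presumably why the principal-coefficient arguments are deferred to Section~\ref{sec Finish}.
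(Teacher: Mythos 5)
Your high-level strategy matches Step 8 of the paper's outline — derive the double-notch expansion from Theorems~\ref{thm main} and \ref{thm single} via an algebraic identity relating $x_\gamma$, $x_{\gamma^{(p)}}$, $x_{\gamma^{(q)}}$, $x_{\gamma^{(pq)}}$, and then match generating functions. But there are two concrete problems in how you set this up.

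First, the algebraic identity is not of the form you posit. There is no tagged-quadrilateral exchange relation
$x_{\gamma^{(p)}} x_{\gamma^{(q)}} = x_\gamma x_{\gamma^{(pq)}} + Y_{pq}$ with $Y_{pq}$ a \emph{monomial}: $\{\gamma,\gamma^{(pq)}\}$ and $\{\gamma^{(p)},\gamma^{(q)}\}$ are two distinct exchange pairs, but these four arcs are not the diagonals and sides of any quadrilateral. The actual identity (Theorem~\ref{double-identity}), which the paper proves by embedding $\rho=\gamma$ as a diagonal of a quadrilateral with two auxiliary marked points $v,w$ and combining four Ptolemy relations obtained by toggling tags at $p$ and $q$, reads
\begin{equation*}
x_{\rho} x_{\rho^{(pq)}} - x_{\rho^{(p)}} x_{\rho^{(q)}}
\;=\;
\Bigl(1-\prod_{\tau} y_{\tau}^{e_p(\tau)}\Bigr)\Bigl(1-\prod_{\tau} y_{\tau}^{e_q(\tau)}\Bigr)\prod_{\tau} y_{\tau}^{e(\tau,\rho)},
\end{equation*}
whose right-hand side is a four-term polynomial with mixed signs, not a monomial and not obviously positive. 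Misidentifying it as a single monomial would send you down a dead end when trying to account for it combinatorially.

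Second, the ``compatible pairs give the formula, incompatible pairs telescope to the error term'' plan has a gap even before you get to $y$-weights. Expanding $\mathcal{SM}(\gamma^{(p)})\mathcal{SM}(\gamma^{(q)})$ and restricting to compatible pairs $(P_p,P_q)$ with common restriction $P$ gives $\sum_{(P_p,P_q)} x(P)\,y(P)\,\overline{\overline{x}}(P_p,P_q)\,\overline{\overline{y}}(P_p,P_q)$; the factor $x(P)y(P)$ remains coupled to the pair, so this sum is \emph{not} $\mathcal{M}(\gamma)\cdot\mathcal{CM}(\gamma^{(pq)})$. The paper instead proves that both products factor through a common partition by the ``$\zeta$-end'' and ``$\eta$-end'' edge types (Lemma~\ref{DisjointPartition}): writing $m_{ab}=\mathcal{M}_{a,b}(\gamma)$, $z_i=\mathcal{M}(\zeta^{(i)})$, $e_i=\mathcal{M}(\eta^{(i)})$, one gets $\mathcal{M}(\gamma)=\sum m_{ab}$, $\mathcal{SM}(\gamma^{(p)})=(m_{11}+m_{1e_q})z_1+(m_{e_p1}+m_{e_pe_q})z_2$, etc., whence the difference $\mathcal{CM}(\gamma^{(pq)})\mathcal{M}(\gamma)-\mathcal{SM}(\gamma^{(p)})\mathcal{SM}(\gamma^{(q)})$ collapses to a $2\times2$-determinant-style product $(m_{11}m_{e_pe_q}-m_{1e_q}m_{e_p1})(z_1-z_2)(e_1-e_2)$ (Lemma~\ref{mainLemma}). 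The only genuine cancellation argument is the superposition/swap on $G_{T^\circ,\gamma}$ used to compute the first factor (Lemma~\ref{firstfactor}); the $\zeta$ and $\eta$ factors reduce to a two-term difference directly (Lemma~\ref{secondthirdfactor}). This factorization is what lets you hit exactly the three-factor right-hand side of Theorem~\ref{double-identity}. Your proposed global involution on incompatible pairs could in principle be organized to do the same job, but you have not shown it preserves anything, and the correct target of the cancellation is the three-factor polynomial above, not a monomial — which is the central point you were missing.
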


\begin{prop} \label{prop double-special}
Let $(S,M)$, $p$, $q$, $T$, $\mathcal{A}$, $\gamma$ be as in Theorem \ref{thm double},
except that we assume that $\gamma \in T$.
Then $[x_{\gamma^{(pq)}}]_{\xx_{T}}^{\Acal}$ is a positive Laurent polynomial.
\end{prop}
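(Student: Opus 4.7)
The plan is to reduce Proposition \ref{prop double-special} to the already-proved single-notch case (Theorem \ref{thm single}) by means of the tag-change machinery of Proposition \ref{tag-change}. Since $\gamma\in T$ is plain at both $p$ and $q$, I would introduce $T^q$, the tagged triangulation obtained from $T$ by flipping all taggings at the puncture $q$, together with its principal-coefficient cluster algebra $\Acal^q=\Aprin(B_{T^q})$ at the seed $\Sigma_{T^q}$. Then $T^q$ contains $\gamma^{(q)}$ in place of $\gamma$; in particular $\gamma\notin T^q$, and $T^q$ still has no arc notched at $p$ (only tags at $q$ were altered). By Remark \ref{tagged-ideal}, $T^q$ represents the same ideal triangulation $T^\circ$ as $T$, so the underlying graph combinatorics is unchanged.

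Next, I would apply Theorem \ref{thm single} to $(T^q,\Acal^q)$ and the tagged arc $\gamma^{(p)}$ (notched at $p$, plain at $q$). Its underlying ordinary arc is the loop $\ell_p$ at $q$ enclosing the puncture $p$ with radius $\gamma$, so $\iota(\ell_p)=\gamma^{(p)}$; all hypotheses of Theorem \ref{thm single} are met by the previous paragraph. The theorem then delivers an explicitly positive Laurent expansion
$$
[x_{\gamma^{(p)}}]_{\xx_{T^q}}^{\Acal^q} \;=\; \frac{1}{\mathrm{cross}(T^\circ,\ell_p)} \sum_P \overline{x}(P)\,\overline{y}(P),
$$
summed over $\gamma$-symmetric matchings of $G_{T^\circ,\ell_p}$, in the variables $\xx_{T^q},\yy_{T^q}$.

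The final step is to apply Proposition \ref{tag-change} at the puncture $q$ to the tagged arc $\gamma^{(pq)}$. Since $(\gamma^{(pq)})^{(q)} = \gamma^{(p)}$, the proposition reads
$$
[x_{\gamma^{(p)}}]_{\xx_{T^q}}^{\Acal^q} \;=\; [x_{\gamma^{(pq)}}]_{\xx_T}^{\Acal}\Bigm|_{x_{\tau_i}\leftarrow x_{\tau_i^q},\;y_{\tau_i}\leftarrow y_{\tau_i^q}}.
$$
Inverting this invertible monomial relabeling of initial variables (which manifestly preserves positivity) transports the positive Laurent expansion above into a positive Laurent expansion of $[x_{\gamma^{(pq)}}]_{\xx_T}^{\Acal}$, as required.

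There is no genuine obstacle here: the argument is essentially bookkeeping once Theorem \ref{thm single} is available. The only points needing care are verifying that the hypotheses of Theorem \ref{thm single} really do transfer across to $(T^q,\Acal^q)$, and correctly tracking how the substitution $x_{\tau_i}\leftrightarrow x_{\tau_i^q}$ interacts with the tagging changes at $p$ and $q$ so that one really lands on $\gamma^{(pq)}$. The degenerate case $p=q$, in which $\gamma$ is a loop and the tagged arc $\gamma^{(p)}$ is not itself well-defined, would be treated by the same tag-change trick applied at $p=q$ directly, reducing instead to Theorem \ref{thm main} applied to the plain loop $\gamma$ with respect to $\xx_{T^p}$.
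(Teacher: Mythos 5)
Your reduction via Proposition \ref{tag-change} runs into a hypothesis of Theorem \ref{thm single} that you did not verify, and which in fact fails: that theorem requires the tagged triangulation to be of the form $\iota(T'^\circ)$ for some \emph{ideal} triangulation $T'^\circ$. By the hypotheses inherited from Theorem \ref{thm double}, every arc of $T$ is tagged plain at $q$, so $T^q$ has \emph{every} arc notched at $q$. But $\iota$ of an ideal triangulation never has all arcs at a puncture notched: at each puncture of $\iota(T'^\circ)$ the incident arcs are either all plain, or (in the self-folded case) there are exactly two, one plain and one notched. Hence $T^q \neq \iota(T'^\circ)$ for any $T'^\circ$, and the matching formula of Theorem \ref{thm single}, which is built from the graph $G_{T'^\circ,\ell_p}$ associated to an ideal triangulation, is not available at the seed $\Sigma_{T^q}$. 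The only way to bring $T^q$ into the required form via Remark \ref{tagged-ideal} is to flip all tags at $q$ a second time, which returns to $T$ and to the original task of expanding $x_{\gamma^{(pq)}}$ with respect to $\Sigma_T$; the argument is circular. The same obstruction blocks your $p=q$ variant, since $T^p$ likewise has all arcs at $p$ notched. This is precisely why the $\gamma\in T$ case has to be treated separately rather than dispatched by tag-change bookkeeping.

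The paper's proof instead works at the seed $\Sigma_T$ itself. It uses Lemma \ref{MinMaxProdInT} to evaluate the weight and height monomials of the minimal and maximal matchings of $G_{T^\circ,\ell_p}$ and $G_{T^\circ,\ell_q}$, observes that the product of the two matching generating functions contains terms cancelling the negative monomials in $(1-\prod_\tau y_\tau^{e_p(\tau)})(1-\prod_\tau y_\tau^{e_q(\tau)})$, and then combines the algebraic identity of Theorem \ref{double-identity} (in its $\rho\in T$ form, carrying the extra factor $y_\rho$) with Remark \ref{rem:enough} (that $x_{\gamma^{(p)}}=x_{\ell_p}/x_{\gamma}$ when $\gamma\in T$) to identify the resulting manifestly positive Laurent polynomial with $x_{\gamma^{(pq)}}$.
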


We prove this theorem and proposition in Section \ref{DoubleNotchedProofs}.

\begin{remark}\label{rem double}
If in Theorem \ref{thm double} the two endpoints $p$ and $q$ of $\gamma$ coincide, i.e. $\gamma$ is a loop, then we let $\ell_p$ and $\ell_q$ denote the loops (with self-intersections) displayed in Figure \ref{DNLoopReplacements} for the purpose of the formula for $[x_{\gamma^{(pp)}}]_{\Sigma_{T}}^{\Acal}$.
\end{remark}

\begin{figure}
\input{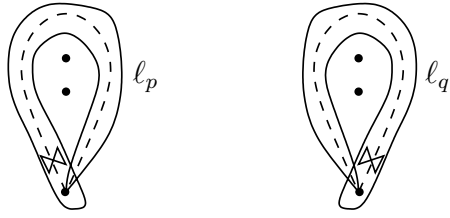} 
\caption{Analogues of $\ell_p$ and $\ell_q$ for a loop notched at its basepoint.}
\label{DNLoopReplacements}
\end{figure}

\section{Examples of results, and identities in the coefficient-free case} \label{ClusterExamples}

\subsection{Example of a Laurent expansion for an ordinary arc}

\begin{figure}
\input{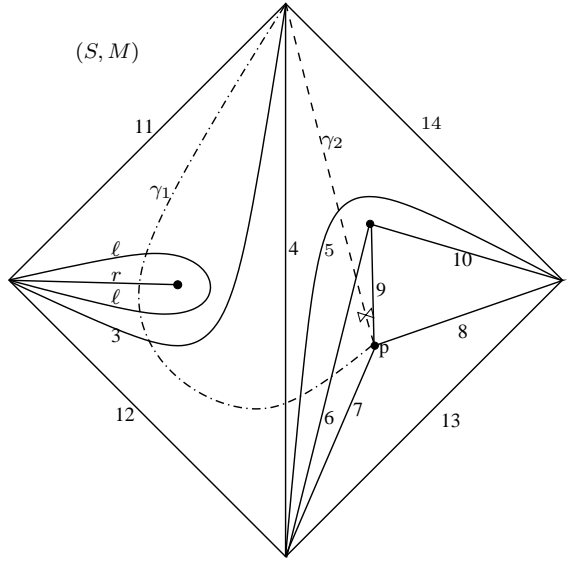}
\caption{Ideal Triangulation $T^\circ$ of $(S,M)$}
\label{fig ExT_0}
\end{figure}

Consider the ideal triangulation  in Figure \ref{fig ExT_0}.  
We have labeled the loop of the ideal triangulation $T^\circ$ as $\ell$ 
and the radius as $r$.  The corresponding tagged triangulation has two arcs, 
both homotopic to $r$: we denote by $\tau_1$ the one which is notched at the puncture,
and by $\tau_2$ the one which is tagged plain at the puncture. 
The graph $\overline{G}_{T^\circ,\zg_1}$ corresponding to the  arc $\zg_1$ is shown 
on the left of Figure \ref{matching graphs}.  It is drawn so that 
the relative orientation of the first tile $\mathrm{rel}(G_{\ell},T^\circ)$ is equal to $-1$.  
$G_{T^\circ,\zg_1}$  has 
$19$ perfect matchings.

\begin{figure}
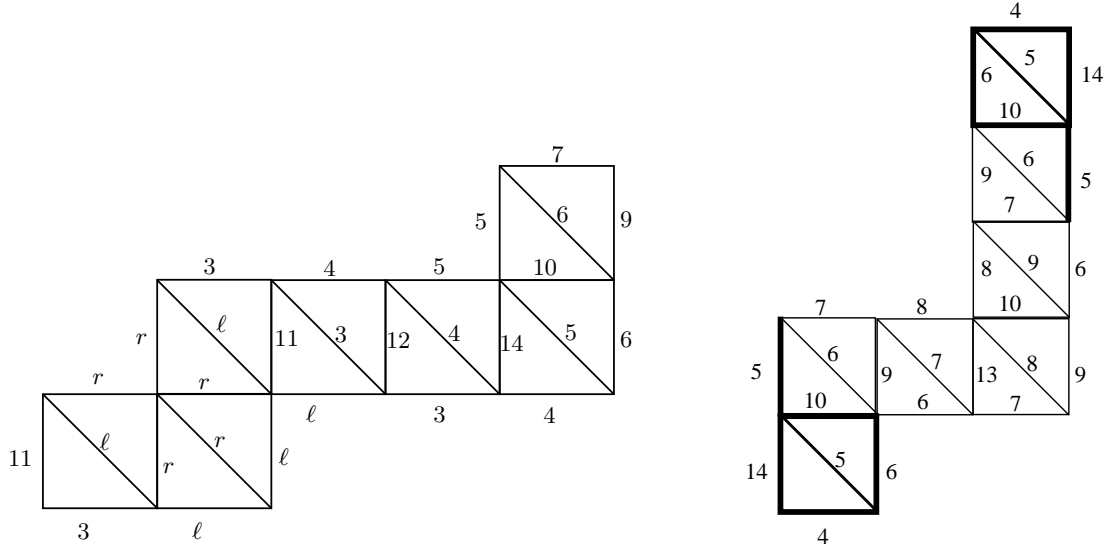

\input{FigSnGrTake4.pstex_t} \hspace{3em} \input{figSnGr2.pstex_t}
\caption{The graphs $\overline{G}_{T^\circ,\zg_1}$ and $\overline{G}_{T^\circ,\ell_p}$.}
\label{matching graphs}
\end{figure}

Applying Theorem \ref{thm main}, we make the specialization
$x_{\ell} = x_1x_2$, $x_r=x_2$, $y_{\ell} = y_1$, $y_r= y_2/y_1$, 
and $x_{11}=x_{12}=x_{13}=x_{14}=1$.  We find that $x_{\gamma_1}$ is equal to:

\begin{eqnarray*} 
 && \frac{1}{x_1 x_2 x_3 x_4 x_5 x_6} \big(
 x_1 x_2 x_4^2 x_5 x_9
+ {\color{red} y_3} ~~ x_4 x_5 x_9
+ {\color{red} y_6} ~~ x_1 x_2 x_4^2 x_7
+ {\color{red}y_1 y_3} ~~ x_3 x_4 x_5 x_9
+ {\color{red} y_3 y_6} ~~ x_4 x_{10} x_7
\\
&+& {\color{red} y_5 y_6} ~~x_1x_2 x_4 x_6 x_7
+ {\color{red} y_2 y_3} ~~ x_3 x_4 x_5 x_9
+ {\color{red} y_1 y_3 y_6} ~~ x_3 x_4 x_{10} x_7
+ {\color{red} y_3 y_5 y_6} ~~x_6 x_7
+ {\color{red} y_1 y_2 y_3} ~~ x_3^2 x_4 x_5 x_9
\\
&+& {\color{red} y_2 y_3 y_6} ~~ x_3 x_4 x_{10} x_7
+ {\color{red} y_1 y_3 y_5 y_6} ~~ x_3 x_6 x_7
+ {\color{red} y_3 y_4 y_5 y_6} ~~ x_3 x_5 x_6 x_7
+ {\color{red} y_1 y_2 y_3 y_6} ~~ x_3^2 x_4 x_{10} x_7
\\
&+& {\color{red} y_2 y_3 y_5 y_6} ~~x_3 x_6 x_7
+ {\color{red} y_1 y_3 y_4 y_5 y_6} ~~ x_3^2 x_5 x_6 x_7
+ {\color{red} y_1 y_2 y_3 y_5 y_6} ~~ x_3^2 x_6 x_7
+ {\color{red} y_2 y_3 y_4y_5 y_6} ~~ x_3^2 x_5 x_6 x_7 \\
&+& {\color{red} y_1 y_2 y_3 y_4 y_5 y_6} ~~ x_3^3 x_5 x_6 x_7
\big).
\end{eqnarray*}

\subsection{Example of a Laurent expansion for a singly-notched arc}\label{sec single}

To compute the Laurent expansion of $x_{\gamma_2}$ 
(the notched arc in 
Figure \ref{fig ExT_0}), we draw the graph $\overline{G}_{T^\circ,\ell_p}$ associated to the 
loop $\ell_p$, where $\ell_p$ is the ideal arc associated to $\gamma_2$.  
Figure \ref{matching graphs} depicts this graph, embedded so that 
the relative orientation of the tiles with diagonals labeled $5$ is $+1$.  
We need to enumerate $\gamma$-symmetric matchings of 
$\overline{G}_{T^\circ,\ell_p}$, 
i.e. those matchings which 
have isomorphic restrictions to the two bold subgraphs.

Splitting up the set of $\gamma$-symmetric matchings into three classes, 
corresponding to the configuration of the perfect matching on the restriction to $G_{\gamma}$, we obtain
\begin{eqnarray*} [x_{\gamma_2}]_{\xx_{T}}^{\Acal} &=& \frac{1}{x_5 x_6 x_7 x_8 x_9}
\big( x_4 x_5 ( x_9 x_6 x_8 + {\color{red} y_7} ~~x_9  x_9 + {\color{red} y_7 y_8}~~x_9 x_7 x_{10}) \\
&+& {\color{red} y_6y_7}~~x_4 x_{10}(x_9 x_7 + {\color{red} y_8} ~~x_7 x_{10} x_7 + {\color{red} y_8 y_9}~~x_7 x_8 x_6) \\
&+& {\color{red} y_5 y_6y_7}~~ x_6 (  x_9 x_7 + {\color{red} y_8} ~~x_7 x_{10} x_7 + {\color{red} y_8 y_9}~~x_7 x_8 x_6)\big).
\end{eqnarray*}
Since all the initial variables and coefficients appearing in this sum correspond to ordinary arcs, no 
specialization of $x$-weights or $y$-weights was necessary in this case 
(except for the boundaries $x_{10}=x_{11}=1$).

\subsection{Example of a Laurent expansion for a doubly-notched arc}\label{sec double}

We close with an example of a cluster expansion formula for a tagged arc with notches at both endpoints.  
\begin{figure}
\input{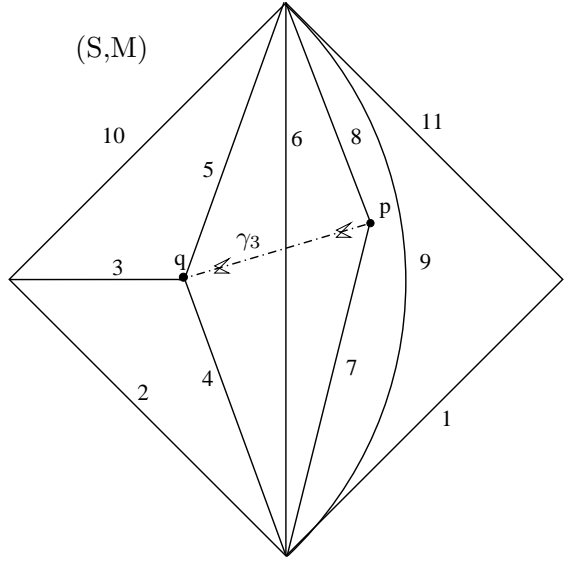}
\caption{Ideal triangulation $T^\circ$ and doubly-notched arc $\gamma_3$.} 
\label{DoubNotchEg}
\end{figure}
We build two 
graphs associated to the doubly-notched arc $\zg_3$ in Figure \ref{DoubNotchEg}: each graph 
corresponds to a loop $\ell_p$ or $\ell_q$
tracing out a once punctured monogon around an endpoint of $\zg_3$.
Note that in the planar embeddings of Figure \ref{DoubNotchEgSG}, the relative orientations of 
the first tiles are both $+1$.
So each  minimal matching uses the lowest edge
in  $\overline{G}_{T^\circ,\ell_p}$ and
$\overline{G}_{T^\circ,\ell_q}$, respectively. 
\begin{figure}
\scalebox{1.2}{\input{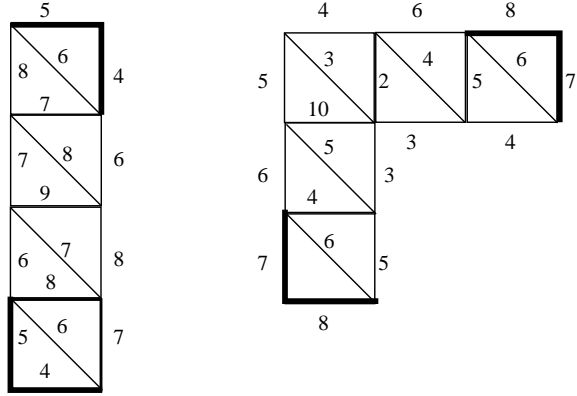}}
\caption{Graphs $\overline{G}_{T^\circ,\ell_p}$ and $\overline{G}_{T^\circ,\ell_q}$ corresponding to ideal arcs $\ell_p$, 
$\ell_q$. } 
\label{DoubNotchEgSG}
\end{figure}
To write down the Laurent expansion for $x_{\zg_3}$, we need to 
enumerate the $\gamma$-compatible pairs of perfect matchings of these graphs. 
There are $12$ pairs of $\gamma$-compatible perfect matchings in all, 
yielding the $12$ monomials in the expansion of $x_{\gamma_3}$:
\begin{eqnarray*}
[x_{\gamma_3}]_{\xx_{T}}^{\Acal}
&=&\frac{1}{x_3 x_4 x_5 x_6 x_7 x_8}\big(x_3 x_4 x_6^2 x_8 + {\color{red} y_5}~~ x_4^2 x_6 x_8 + {\color{red} y_7}~~ x_3 x_4 x_6 x_8 x_9  \\
&+&  {\color{red} y_3 y_5}~~ x_2 x_4 x_5 x_6 x_8 +  {\color{red} y_5 y_7}~~ x_4^2 x_8 x_9 +    {\color{red} y_3 y_5 y_7}~~ x_2 x_4 x_5 x_8 x_9 \\
&+&  {\color{red} y_5 y_6 y_7}~~ x_4 x_5 x_7 x_9 +  {\color{red} y_3 y_5 y_6 y_7}~~ x_2 x_5^2 x_7 x_9 +    {\color{red} y_5 y_6 y_7 y_8}~~ x_4 x_5 x_6 x_7 \\
&+&  {\color{red} y_3 y_4 y_5 y_6 y_7}~~ x_3 x_5 x_6 x_7 x_9 +  {\color{red} y_3 y_5 y_6 y_7 y_8 }~~ x_2 x_5^2 x_6 x_7 + {\color{red} y_3 y_4 y_5 y_6 y_7 y_8}~~
x_3 x_5 x_6^2 x_7\big).
\end{eqnarray*}

\subsection{Identities for cluster variables in the coefficent-free case}

\begin{remark}
Note that if 
we set all the $y_i$'s equal to $1$ in Section \ref{sec single}, then 
$x_{\gamma_2}$ factors as
$$\big(\frac{x_{10} {x_7} + x_6 {x_8} +  {x_9}  }{x_7 x_8 x_9}\big)
\big(\frac{x_6 x_7  + x_4 x_7 x_{10} + x_4 x_5 x_9}{x_5 x_6}\big).$$  
The first term depends only on the local configuration around the puncture $p$ 
(at which end $\gamma_2$ is notched).
The second term is exactly the coefficient-free cluster variable 
associated to the ordinary arc homotopic to $\gamma_2$.

Also, if we set all the $y_i$'s equal to $1$ in Section \ref{sec double},
then $x_{\gamma_3}$ factors as 
$$\big(\frac{x_3 x_6+x_4 + x_2 x_5}{x_3 x_4 x_5}\big) \big(\frac{x_6 + x_9}{x_7 x_8}\big) 
\big(\frac{x_4 x_8 + x_5 x_7}{x_6}\big).$$
The first and second terms correspond to horocycles around 
the punctures $q$ and $p$, respectively, and the third term
is exactly the coefficient-free cluster variable associated 
to the ordinary arc homotopic to $\gamma_3$.
\end{remark}

These examples hint at a general phenomenon 
in the coefficient-free case, as we now describe.

\begin{definition} \label{zpDef}
Fix a bordered surface $(S,M)$ and a tagged triangulation $T = \iota(T^\circ)$ of $S$.
For any puncture $p$ we construct a  Laurent polynomial with positive coefficients
that only depends on the local neighborhood of $p$.  
Let $\tau_{1}, \tau_{2},\dots, \tau_{h}$ denote the ideal arcs of $T^\circ$ incident to $p$ in 
clockwise order, assuming that $h\geq 2$.  
(If a loop is incident to $p$, it appears 
twice in this list, once for each end.)
Let $[\tau_i,\tau_{i+1}]$ denote the unique arc 
in an ideal triangle containing $\tau_i$ and $\tau_{i+1}$, such that 
$[\tau_i,\tau_{i+1}]$ 
is in 
the clockwise direction from $\tau_i$;
here the indices in $[\tau_i,\tau_{i+1}]$ are considered modulo $h$.
  We set 
$$z_p = \frac{\sum_{i=0}^{h-1} \sigma^i ( x_{[\tau_1,\tau_2]}  x_{\tau_3} x_{\tau_4} 
\cdots x_{\tau_h})}{x_{\tau_1} x_{\tau_2}\cdots  x_{\tau_h}},
$$ where $\sigma$ is the cyclic permutation $(1,2,3,\dots, h)$ acting on subscripts.

In the case that $p$ has exactly one ideal arc $r$ incident to it, 
then the tagged triangulation contains exactly two tagged arcs $r$ and $r^{(p)}$ 
(technically $\iota(r)$ and $\iota(r)^{(p)}$)
incident to $p$. In this case,
$$z_p = \frac{x_{r^{(p)}}}{x_r}.$$ 
\end{definition}

\begin{prop} \label{coeff-free}
Fix $(S,M)$ and $T$ as above, let $\A$ be the corresponding coefficient-free cluster algebra,
and let
$\gamma$ be an ordinary arc between distinct marked points $p$ and $q$, or a loop
which does not cut out a once-punctured monogon.
Then if $p \neq q$ and $p$ is a puncture, $$x_{\gamma^{(p)}} = z_p \cdot x_{\gamma},$$ and if both 
$p$ and $q$ are punctures, $$x_{\gamma^{(pq)}} = z_p z_q \cdot x_{\gamma.}$$
Finally if $\gamma$ is a loop so that $p=q$ and $\gamma^{(pp)}$ is a doubly-notched loop, then 
$$x_{\gamma^{(pp)}} = z_p^2 \cdot x_{\gamma}.$$
\end{prop}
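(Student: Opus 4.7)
The proof will reduce to a combinatorial identity involving perfect matchings of the ``middle portion'' of $G_{T^\circ, \ell_p}$ (resp.\ $G_{T^\circ, \ell_p}$ and $G_{T^\circ, \ell_q}$) around the puncture $p$ (resp.\ $p$ and $q$), using the explicit formulas from Theorems \ref{thm single} and \ref{thm double} in the coefficient-free specialization $y_i = 1$.

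For the singly-notched case, Theorem \ref{thm single} gives
\[
x_{\gamma^{(p)}} = \frac{1}{\mathrm{cross}(T^\circ, \gamma) \prod_\tau x_\tau} \sum_{P} \overline{x}(P),
\]
where the sum is over $\gamma$-symmetric matchings $P$ of $G_{T^\circ, \ell_p}$ and $\tau$ ranges over ends of arcs incident to $p$. I would first decompose $G_{T^\circ, \ell_p}$ into two ``end'' subgraphs $G_1, G_2$ (each isomorphic to $G_{T^\circ, \gamma}$) and a ``middle'' portion $M$ whose tiles have diagonals labeled by the arcs $\zeta_1, \ldots, \zeta_{e_p}$ incident to $p$. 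The $\gamma$-symmetric condition, together with the fact that the boundary vertices $v_i$ must be matched to $M$, forces $P$ to induce a common perfect matching $Q$ of $G_{T^\circ, \gamma}$ on both ends, together with a middle matching $M_P$; in the coefficient-free case one then has $\overline{x}(P) = x(Q) \cdot x(M_P)$.

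The central combinatorial claim is that for each fixed $Q$, the compatible middle matchings are in bijection with $\{0, 1, \ldots, h-1\}$ (where $h = e_p$), and their weights sum to
\[
\sum_{i=0}^{h-1} \sigma^i\bigl(x_{[\tau_1,\tau_2]} x_{\tau_3} \cdots x_{\tau_h}\bigr),
\]
the numerator of $z_p$, independently of $Q$. Combined with $x_\gamma = \mathrm{cross}(T^\circ, \gamma)^{-1} \sum_Q x(Q)$ and the definition of $z_p$, this yields $x_{\gamma^{(p)}} = z_p \cdot x_\gamma$. The edge case $\gamma \in T$ is handled via Remark \ref{rem:enough} by computing $x_{\ell_p}$ using Theorem \ref{thm main} and dividing by $x_\gamma$.

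The doubly-notched case proceeds in parallel: Theorem \ref{thm double} expresses $x_{\gamma^{(pq)}}$ as a sum over $\gamma$-compatible pairs $(P_p, P_q)$, which factor as (common end matching $Q$) $\times$ (middle matching $M_p$) $\times$ (middle matching $M_q$). The weight $\overline{\overline{x}}(P_p, P_q)$ simplifies to $x(Q) x(M_p) x(M_q)$ after cancellation of $x(Q)^3$, and the sum factors as a triple product yielding $x_{\gamma^{(pq)}} = z_p z_q \cdot x_\gamma$; the loop case ($p = q$) follows identically using the modified $\ell_p, \ell_q$ from Figure \ref{DNLoopReplacements}. The main obstacle will be verifying the combinatorial claim about middle matchings --- showing that for any boundary end matching $Q$, the sum over middle matchings equals the numerator of $z_p$. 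Special care is required when a self-folded triangle is incident to $p$ (or $q$), where the convention $x_\ell = x_r x_{r^{(p)}}$ produces the required monomials in the $z_p$ numerator.
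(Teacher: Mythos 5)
Your proposal takes a genuinely different route from the paper, and it both overshoots the paper's structural purpose and contains a local error that needs correction.

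The paper's proof (Section~\ref{sec quick}) is purely algebraic and deliberately avoids the matching combinatorics of Section~\ref{sec Finish}: it first shows that the ratio $z_p := x_{\gamma^{(p)}}/x_{\gamma}$ is independent of the arc $\gamma$ incident to $p$, via two Ptolemy relations in a bigon (giving $x_\gamma x_{\rho^{(p)}} = x_\alpha + x_\beta = x_{\gamma^{(p)}} x_\rho$), and then computes $z_p$ only once, for the simplest possible choice $\gamma = \tau_1 \in T$, where $G_{T^\circ,\ell}$ is an $(h-1)$-tile zig-zag and Theorem~\ref{thm main} gives the explicit expression in Definition~\ref{zpDef}. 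This is precisely the cheap shortcut promised in Step~8 of the outline: once positivity for ordinary arcs is known, positivity for notched arcs in the coefficient-free case ``follows easily,'' before the heavy lifting of Theorems~\ref{thm single} and~\ref{thm double}. By contrast, you invoke those two theorems directly and try to make the factorization visible in the matching sums. That is not circular (Section~\ref{sec Finish} does not cite Proposition~\ref{coeff-free}), but it trades a short Ptolemy argument for the full machinery of Lemmas~\ref{twoendsSymmetric}, \ref{DisjointPartition}, and \ref{secondthirdfactor}, which is where the factorization $\mathcal{SM}(\gamma^{(p)})|_{y=1} = \mathcal{M}(\gamma)|_{y=1}\cdot\mathcal{M}(\zeta^{(1)})|_{y=1}$ actually comes from.

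Two concrete issues with the proposal as written. First, the claim that $\gamma$-symmetry ``forces $P$ to induce a common perfect matching $Q$ of $G_{T^\circ,\gamma}$ on both ends'' is false: by Lemma~\ref{twoendsSymmetric}, a $\gamma$-symmetric matching restricts to a perfect matching of $G_{T^\circ,\gamma_p,i}$ on exactly \emph{one} of the two ends, not both; on the other end one only gets a matching of $J_{T^\circ,\gamma_p,j}$ (two vertices removed), and $\gamma$-symmetry only identifies the two restrictions to the $H$-subgraphs. Consequently the asserted identity $\overline{x}(P) = x(Q)\cdot x(M_P)$ does not hold as stated, since $\overline{x}(P)$ is already defined with $x(Q)$ divided out; the correct bookkeeping is the one carried out in Lemma~\ref{DisjointPartition}. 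Second, for the doubly-notched arc with $\gamma \in T$, Theorem~\ref{thm double} does not apply (it explicitly assumes $\gamma\notin T$), and Remark~\ref{rem:enough} only covers the singly-notched case; the paper handles $\gamma\in T$ separately via Proposition~\ref{prop double-special} (which gives positivity but not a matching formula), whereas the Ptolemy argument covers it uniformly. You also do not mention the standing hypothesis that $(S,M)$ is not a closed surface with exactly two marked points, needed for the proposition. If you prefer your combinatorial route, you should either prove a coefficient-free version of Lemma~\ref{DisjointPartition} directly or cite it, fix the end-decomposition to match Lemma~\ref{twoendsSymmetric}, and dispose of the $\gamma\in T$ doubly-notched case separately.
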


We will prove Proposition \ref{coeff-free} in Section \ref{sec quick}.


\section{Outline of the proof of the cluster expansion formulas}\label{sec outline}

As the proofs in this paper are rather involved, we present
here a detailed outline.
\begin{enumerate}
\item[Step 1.] Fix a bordered surface with marked points $(S,M)$.
The seeds of $\A=\A(S,M)$ are in bijection with tagged triangulations,
so to prove the positivity conjecture for $\A$, we must prove 
positivity with respect to every seed $\Sigma_T$ where $T$ is a 
tagged triangulation.
By Proposition \ref{reduce-to-ideal}, it is enough to prove
positivity with respect to every seed $\Sigma_T$
where $T=\iota(T^\circ)$ for some ideal triangulation.\\

\item[Step 2.]  Fix an ideal triangulation $T^\circ=(\tau_1,\dots,\tau_n)$
of $(S,M)$, with 
boundary segments denoted $\tau_{n+1},\dots,\tau_{n+c}$.  
Fix also an ordinary arc $\gamma$, which crosses $T$
$d$ times; we would like to understand the Laurent expansion of 
$x_{\gamma}$ with respect to $\Sigma_T$.
We build a triangulated polygon $\Sbar_{\gamma}$
which comes with a ``lift" $\tilde{\gamma}$ of $\gamma$.
The triangulation $\Tbar_{\gamma}$ of $\Sbar_{\gamma}$ 
has $d$ internal arcs labeled $\sigma_1,\dots,\sigma_d$,
and $d+3$ boundary segments labeled $\sigma_{d+1},\dots,\sigma_{2d+3}$.
We have a surjective map $\pi: \{\sigma_1,\dots,\sigma_{2d+3}\} \to 
\{\tau_1,\dots,\tau_{n+c}\}$.  This step will be addressed in 
Section \ref{sect Sbar}.  \\

\item[Step 3.]  We build a type $A_d$ cluster algebra $\Abar_{\gamma}$
associated to $\widetilde{S}_{\gamma}$, with a $(3d+3) \times d$
extended exchange matrix.  This is obtained from the $(2d+3) \times d$ extended exchange
matrix associated to 
$(\Sbar_{\gamma}, \Tbar_{\gamma})$ (with rows indexed by interior arcs
and boundary segments), and appending a $d \times d$
identity matrix below.  It is clear from the construction
that the initial cluster is {\it acyclic}.\\

\item[Step 4.] We construct a map 
$\phi_{\gamma}$ from $\Abar_{\gamma}$ to the fraction field $\Frac(\A)$, 
such that for each $\sigma \in \Tbar_{\gamma}$,
$\phi_{\gamma}(x_{\sigma}) = x_{\pi(\sigma)}$.  We check that 
$\phi_{\gamma}$ is a well-defined homomorphism,
using the fact that $\Abar_{\gamma}$ is acyclic,
and \cite[Corollary 1.21]{BFZ}.  Steps 3 and 4 will be addressed in 
Section \ref{sect Abar}.\\

\item[Step 5.] 
We identify a quadrilateral $Q$ in $S$ with simply-connected
interior containing $\gamma$ as a diagonal, whose other diagonal
and sides (denoted $\gamma',\alpha_1,\alpha_2,\alpha_3,\alpha_4$)
cross $T$ fewer times than $\gamma$ does.  To do so we use
(a slight generalization of) a lemma of \cite{ST}, which 
will be stated and proved in Section \ref{sec simply connected}. \\

\item[Step 6.] We check that $\phi_{\gamma}(x_{\tilde{\gamma}}) = x_{\gamma}$,
by induction on the number of crossings of $\gamma$ and $T$.
To do so, we use Step 5 to produce $Q$, which we lift
to a quadrilateral $\Qbar$ in a larger 
triangulated polygon $\widehat{S}$ 
containing $\Sbar_{\gamma}$. 
By induction, the cluster expansions of each of 
$x_{\gamma'}, x_{\alpha_1}, x_{\alpha_2}, x_{\alpha_3}$, and $x_{\alpha_4}$
are given by matching formulas using the combinatorics of $\widehat{S}$. 
By comparing the exchange relations corresponding to the flip
in $\Qbar$
and the flip in $Q$, and using the fact that cluster expansion formulas
are known in type A, we deduce
that $\phi_{\gamma}(x_{\tilde{\gamma}}) = x_{\gamma}$.\\

\item[Step 7.] 
In type A, the matching formula giving  
the Laurent expansion of $x_{\tilde{\gamma}}$ in $\Abar_{\gamma}$
with respect to $\Sigma_{\Tbar_{\gamma}}$ is known.
Since $\phi_{\gamma}(x_{\tilde{\gamma}}) = x_{\gamma}$,
and $\phi_{\gamma}$ is a homomorphism, we can compute the Laurent expansion
of $x_{\gamma}$ in terms of $\Sigma_T$.
Here we use the fact that for every arc 
$\sigma_i\in \Tbar_{\gamma}$, $\phi_{\gamma}(x_{\sigma_i}) = x_{\pi(\sigma_i)}$.
This proves our main theorem for cluster variables
corresponding to ordinary arcs {\it and} loops $\ell$ cutting out 
once-punctured monogons.
Steps 6 and 7 will be addressed in Section \ref{sec plain}.\\

\item[Step 8.] We prove our combinatorial formula
for a singly notched
arc by using the identity
 $x_{\ell} = x_{r} x_{r^{(p)}}$ (where $\ell$ cuts out
a once-punctured monogon with radius $r$ and puncture $p$),
and the fact that we now have proved our combinatorial formula
for $x_{\ell}$ and $x_{r}$.  For doubly-notched arcs we use
an analogous strategy, using a more complicated identity
(Theorem \ref{double-identity}). The proof for doubly-notched loops
is the same as for doubly-notched arcs, but 
we need to make sense of the cluster algebra element
corresponding to a singly-notched loop (see Definition \ref{SLoop}).  Step 8 is addressed in
Section \ref{sec Finish}.
\end{enumerate}

\section{Construction of a triangulated polygon and a lifted arc}\label{sect Sbar}

Let $T=\{\tau_1,\ldots,\tau_n,\tau_{n+1},\ldots,\tau_{n+c}\}$ be an ideal
triangulation of the surface $(S,M)$, where
$\tau_1,\ldots,\tau_n$ are arcs and $\tau_{n+1},\ldots,\tau_{n+c}$ are
boundary segments.
Let $\zg$ be an ordinary  (not notched) arc in $(S,M)$ that  crosses $T$ exactly $d$ times.
We now explain how to associate a triangulated polygon $\Sbar_{\gamma}$ 
to $\zg$,
as well as a lift $\tilde{\gamma}$ of $\gamma$, which
we shall use later to compute the cluster expansion of $x_\zg$.

We
fix an orientation for $\zg$ and we denote its starting point by $s$
and its endpoint by $t$, with $s,t\in M$. Let
$s=p_0,p_1,\ldots,p_d,p_{d+1}=t$
be the intersection  points of $\zg$ and $T$ in order of occurrence on
$\zg$, hence $p_0, p_{d+1}\in M$ and each $p_i$ with $1\le i\le d$
lies in the interior of $S$.
 Let $i_1,i_2,\ldots,i_d$ be such that $p_k$ lies on the arc
$\tau_{i_k}\in T$, for $k=1,2,\ldots,d$. Note that $i_k$ may be equal
to $i_j$ even if $k\ne j$.

For $k=0,1,\ldots,d$, let $\zg_k$ denote the segment of the path $\zg$
from  the point $p_k$ to the point $p_{k+1}$. Each $\zg_k$ lies in
exactly one ideal triangle $\zD_k$ in $T$. If $1\le k\le d-1$, then the triangle$\zD_k$ is formed by the arcs $\tau_{i_k},
\tau_{i_{k+1}}$ and a third arc that we denote by
$\tau_{[\zg_k]}$. If the triangle is self-folded then $\tau_{[\zg_k]}$ is equal to either  $\tau_{i_k}$ or $
\tau_{i_{k+1}}$. Note however, that   $\tau_{i_k}$ can't be equal to $
\tau_{i_{k+1}}$, since $\zg$ crosses them one after the other.

The idea now is to construct our triangulated polygon by glueing together
triangles which are modeled after  $\zD_0,\zD_1,\ldots,\zD_d$.
But some of $\zD_0, \zD_1,\dots, \zD_d$ may be self-folded, and we do not want to have self-folded triangles in the polygon. So we will unfold the self-folded triangles in a precise way, before glueing them back together.

Let $s_j$ denote the common endpoint of $\tau_{i_j}$ and $\tau_{i_{j+1}}$ such that
the triangle with vertices $s_j, p_j, p_{j+1}$ and with sides contained
in $\tau_{i_j}$, $\tau_{i_{j+1}}$, and $\gamma_j$ has simply connected 
interior, see Figure \ref{figs}.
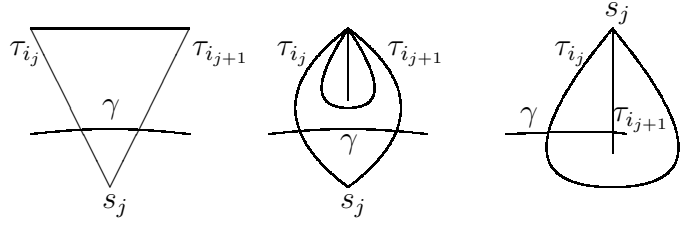
\begin{figure}\begin{center}
\setlength{\unitlength}{10pt}
\begin{picture}(10,15)(4,-4)
\qbezier(-3,2)(0,2.4)(3,2)
\put(-3,6){\line(1,0){6}}
\put(0,0){\line(-1,2){3}}
\put(0,0){\line(1,2){3}}
\put(-0.3,2.8){$\gamma$}
\put(-3.8,5){$\tau_{i_j}$}
\put(3.1,5){$\tau_{i_{j+1}}$}
\put(-0.3,-0.8){$s_j$}
\qbezier(9,0)(5,3)(9,6)
\qbezier(9,0)(13,3)(9,6)
\qbezier(9,6)(7,3)(9,3)
\qbezier(9,6)(11,3)(9,3)
\qbezier(9,6)(9,3.3)(9,3.3)
\qbezier(6,2)(9,2.4)(12,2)
\put(8.7,1.5){$\gamma$}
\put(6.3,5){$\tau_{i_j}$}
\put(10.5,5){$\tau_{i_{j+1}}$}
\put(8.7,-0.8){$s_j$}
\qbezier(19,6)(14,0)(19,0)
\qbezier(19,6)(24,0)(19,0)
\qbezier(19,6)(19,1.3)(19,1.3)
\qbezier(15,2)(19,2.2)(19.5,2)
\put(15.5,2.5){$\gamma$}
\put(16.8,5){$\tau_{i_j}$}
\put(19,2.5){$\tau_{i_{j+1}}$}
\put(18.7,6.5){$s_j$}
\end{picture}
\caption{Definition of the point $s_j$}\label{figs}\end{center}
\end{figure}
Let  $M(\zg)=\{s_j\mid 1 \leq j \leq d-1\}$.

We now partition the $s_j$'s into subsets of consecutive elements
which coincide.  That is, we define
integers $0=a_0< a_1<\ldots<a_{\ell-1}< a_\ell=d-1$, by requiring that
\[\begin{array}{ccccccccc}
 s_1&=&s_2&=&\cdots&=&s_{a_1}&\ne& s_{a_1+1}\\
s_{a_1+1}&=&s_{a_1+2}&=&\cdots & =& s_{a_2}&\ne& s_{a_2+1}\\
\vdots& &\vdots& && &\vdots& &\vdots\\
s_{a_{\ell-1}+1}&=&s_{a_{\ell-1}+2}&=&\cdots&=&s_{a_\ell}&=&s_{d-1}.
\end{array}\]
In the example in Figure \ref{figSbar}, we have
\begin{center}
  \begin{tabular}{ c | c | c | c | c | c | c | c | c  }
  \  $a_0$\ \  &\ \ $ a_1$ \ \   & \ \ $ a_2 $\ \ &\ \ $a_3$\ \ &\ \ $ a_4 $\ \ &\ \ $ a_5$\ \
  &\ \ $a_6$\ \ &\ \ $ a_7 $\ \ &\ \ $ a_8 $  \\ \hline
0&3&4&7&9&10&12&13&14.
\end{tabular}
\end{center}
We define $t_1=s_{a_1}$, $t_2=s_{a_2},\ldots,t_\ell=s_{d-1}$.
Note that $M(\zg)=\{t_1,t_2,\ldots,t_\ell\},$
and that $t_i$ may be equal to $t_j$ even if $i\ne j$.

We now construct a triangulated polygon $\Sbar_{\gamma}$ which is a union of
fans $F_1,\ldots,F_\ell$, where each $F_h$ consists of
$a_{h}-a_{h-1}+2$ triangles that all share the vertex $t_h$.
\begin{figure}
\scalebox{0.8}{\input{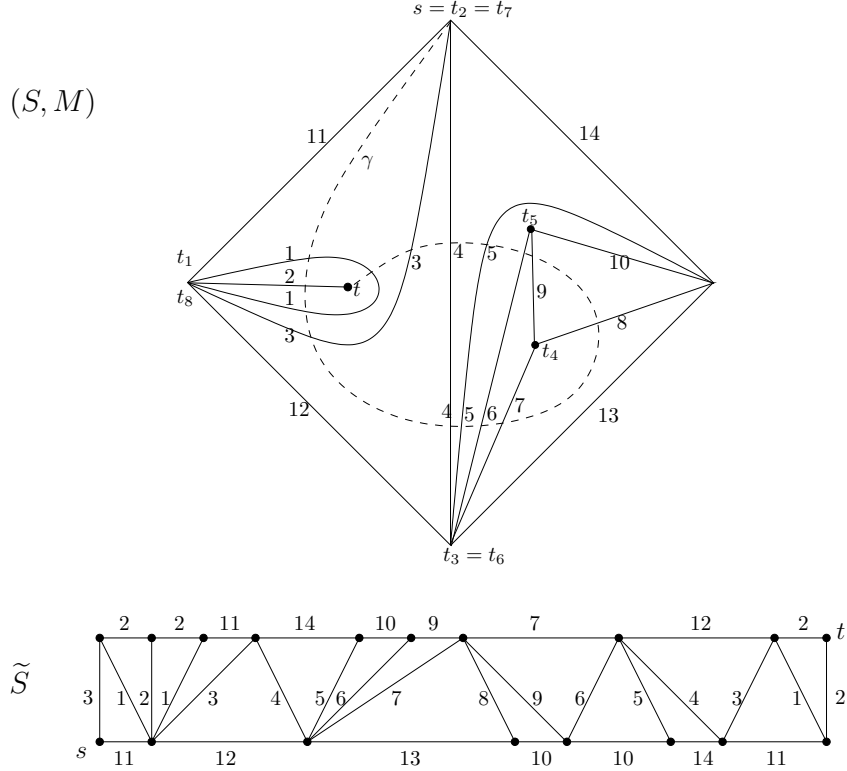}}
\caption{Construction of $\Sbar_{\gamma}$ in  a thrice-punctured square. The arcs of the triangulation $T$ are labeled 1,2,\ldots,14 for simplicity,
and the arcs of $\Tbar_{\gamma}$ are labeled
according to their images under $\pi$.
The arc $\zg$ is the dotted curve. There are $d=15$ crossings between $\zg$ and $T$, and $M(\zg)=\{t_1,\ldots,t_8\}$, where $t_1=t_8, t_2=t_7$ and $t_3=t_6$.}\label{figSbar}
\end{figure}
We will describe this precisely below;
see Figure \ref{figSbar}.
\begin{itemize}
\item[Step 1:] Plot a rectangle  with vertices $(0,0),(0,1),(d-1,1),(d-1,0)$.
\item[Step 2:] Label $(0,0)$, $(1,0)$, and $(0,1)$ by  $s$, $t_1$, and
$t_0$, respectively.
For $a_{2h}+1\le k\le a_{2h+1}$,
plot the points $(k,1)$ and label $(a_{2h+1},1)$ by $t_{2h+2}$.
For $a_{2h+1}+1 \leq k \leq a_{2h+2}$,
plot the points $(k,0)$,
and label $(a_{2h+2},0)$ by $t_{2h+3}$.
\item[Step 3:]
Connect $t_{2h}$ by a line segment with each point $(k,0)$
that lies between (and including) $t_{2h-1} $ and $t_{2h+1}$, for $1\le h<\ell/2$.  Connect $t_{2h+1}$ by a line segment with each point $(k,1)$ that lies between $t_{2h} $ and $t_{2h+2}$, for $1\le h<(\ell-1)/2$.
\item[Step 4:] If $\ell$ is odd, label $(d-1,0)$ by $t$
and otherwise label $(d-1,1)$ by $t$.
\item[Step 5:] Label the interior arcs of the polygon
by $\sigma_1,\dots,\sigma_d$,
in the order
that a curve from $s$ to $t$ (which intersects each only once) would
cross them.  Set $\pi(\sigma_1) = \tau_{i_1}, \dots, \pi(\sigma_d) =
\tau_{i_d}.$
Label the boundary segments of the polygon by $\sigma_{d+1},\dots,
\sigma_{2d+3}$, starting at $s$ and going counterclockwise
around the boundary of $\Sbar_{\gamma}.$
\item[Step 6:] Each boundary segment $\sigma_j$
not incident to $s$ or $t$ is the side of a unique triangle in the polygon, whose other sides project via $\pi$ to $\tau_{i_k},\tau_{i_{k+1}}$, for some $k$. 
If the ideal triangle $\zD_k$ has three distinct sides,
set $\pi(\sigma_j) = \tau_{[\zg_k]}$.  Otherwise
$\zD_k$ is self-folded: define $\pi(\sigma_j)$ to be
the label
of the radius in $\Delta_k$.
\item[Step 7:] If $\sigma_j$ is the segment between $s$ and $t_1$,
define $\pi(\sigma_j)$ to be
the label of the arc between $s$ and $t_1=s_1$ in $\zD_0$ in the surface $S$ (note
$s \ne s_1$).
If $\sigma_j$ is the other edge incident to $s$, define $\pi(\sigma_j)$
to be
the label of the third side of $\zD_0$  if $\zD_0$ is not self-folded;
and otherwise, define it to be the label of the radius in $\zD_0$.
\item[Step 8:] If $\sigma_j$ is  the
segment between $t$ and  $t_\ell$, define
$\pi(\sigma_j)$ to be the label of the arc between $t$ and  $t_\ell$
in $\zD_d$ in $S$ (note that $t_\ell\ne t$).
If $\sigma_j$ is the other edge incident to $t$, define $\pi(\sigma_j)$
to be
the label of the third side of $\zD_d$, if $\zD_d$ is not self-folded;
and otherwise define it to be the label
of the radius in $\zD_d$.
\item[Step 9:] Each of the triangles in this construction corresponds to an ideal
triangle in $T$.  If the ideal triangle is not self-folded, then the
constructed triangle may have the same orientation as the ideal triangle or the
opposite one, but if the orientations do not match for one such pair of
triangles then it does not match for any such pair of triangles.  In the latter case,
we reflect the whole polygon at the horizontal axis.
\item[Step 10:] We will use $\tilde{\gamma}$ to denote
the arc in $\Sbar_{\gamma}$ from $s$ to $t$; we call this the {\it lift} of $\gamma$.

\end{itemize}

The result is a polygon $\Sbar_{\gamma}$ 
with set of vertices $\Mbar$  and triangulation
$\Tbar_{\gamma}$.
Its internal arcs are labeled $\sigma_1,\dots,\sigma_d$, and
the boundary segments are labeled $\sigma_{d+1},\dots,\sigma_{2d+3}$.
Moreover, each triangle $\Dbar_i$ in $\Tbar_{\gamma}$ corresponds to an ideal triangle in $T$, and, if the ideal triangle is not self-folded, then the orientations of
the two triangles match.

\section{Construction of $\Abar_{\gamma}$ and the map $\phi_{\gamma}$}\label{sect Abar}

Let $(S,M)$ be a bordered surface with marked points, fix an ideal
triangulation $T$ with internal arcs
$\{\tau_1,\dots, \tau_n\}$ and boundary segments 
$\{\tau_{n+1},\dots,\tau_{n+c}\}$, and 
let $\mathcal A$ be the associated cluster algebra with principal coefficients.
Note that the initial cluster variables of $\mathcal A$ are 
$\{x_{\tau_i} \ \vert \ 1 \leq i \leq n \}$.
Using the construction of $\Sbar_{\gamma}$ and $\Tbar_{\gamma}$ in the previous section,
we will construct a related type A cluster algebra
$\Abar_{\gamma}$, 
and define a homomorphism $\phi_{\gamma}$
from $\Abar_{\gamma}$ to the fraction field $\Frac(\Acal)$.

\subsection{Construction of a type $A$ cluster algebra}

To this end,
let $\Sbar_{\gamma}$ be the polygon with triangulation $\Tbar_{\gamma}$ 
constructed in Section \ref{sect Sbar}. Recall that its internal arcs are labeled $\sigma_1,\dots,\sigma_d$, and its boundary segments 
are labeled
$\sigma_{d+1},\dots,\sigma_{2d+3}$.

We define a $(3d+3) \times d$  exchange matrix $\Bbar$ as follows.
The first $2d+3$ rows are the signed adjacency matrix of the triangulation 
$\Tbar_{\gamma}$ together with its boundary segments.
The bottom $d$ rows are a copy of the $d \times d$ identity matrix.
We let $\Abar_{\gamma} = \A(\Bbar)$, and denote the initial cluster by $\xx_{\Tbar_{\gamma}}$. 
We denote the coefficient variables by 
$\{x_{\sigma_{d+1}},\dots,x_{\sigma_{2d+3}}\} \cup 
 \{y_{\sigma_1}, \dots, y_{\sigma_d}\}$.  We let 
$\PP = \Trop(x_{\sigma_{d+1}},\dots,x_{\sigma_{2d+3}}, y_{\sigma_1},\dots, y_{\sigma_d})$
be the tropical semifield.  

The following lemma is obvious.

\begin{lemma}
The $2d+3$ coefficient variables of $\Abar_{\gamma}$ are encoded by both the boundary segments
of $\Sbar_{\gamma}$ and elementary laminations associated to the internal arcs of 
$\Sbar_{\gamma}$.
\end{lemma}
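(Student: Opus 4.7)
The plan is to unpack the construction of $\Bbar$ in Section~\ref{sect Abar} and invoke the proposition of \cite{FT} stated at the end of Section~\ref{sect surfaces} identifying principal coefficients with elementary laminations. By construction, $\Bbar$ is a $(3d+3)\times d$ matrix whose top $d$ rows form the signed adjacency matrix $B(\Tbar_\gamma)$ of the internal arcs, whose middle $d+3$ rows record the signed adjacencies between the internal arcs $\sigma_1,\dots,\sigma_d$ and the boundary segments $\sigma_{d+1},\dots,\sigma_{2d+3}$ of $\Sbar_\gamma$, and whose bottom $d$ rows form an identity block.

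First, I would identify the $d+3$ coefficient variables $x_{\sigma_{d+1}},\dots,x_{\sigma_{2d+3}}$ with the boundary segments of $\Sbar_\gamma$ via the middle block. This identification is immediate from the definition of the signed adjacency matrix of a triangulation together with its boundary segments, as recalled in Remark~\ref{rectangular}: the $x_{\sigma_j}$ for $j>d$ are precisely the tropical generators indexing the additional boundary rows.

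Second, the identity block at the bottom of $\Bbar$ is exactly the statement that $\Abar_\gamma$ has principal coefficients at $\Sigma_{\Tbar_\gamma}$, with coefficient variables $y_{\sigma_1},\dots,y_{\sigma_d}$. Applying the proposition of \cite{FT} quoted at the end of Section~\ref{sect surfaces}, these $d$ principal coefficient rows coincide with the shear-coordinate rows of the multi-lamination $L_{\Tbar_\gamma} = (L_{\sigma_1},\dots,L_{\sigma_d})$ of elementary laminations of the internal arcs of $\Tbar_\gamma$, so the variables $y_{\sigma_i}$ are encoded by the $L_{\sigma_i}$.

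Combining these two observations, the $d+3$ middle coefficient rows of $\Bbar$ are encoded by the boundary segments of $\Sbar_\gamma$, and the bottom $d$ rows are encoded by the elementary laminations of its internal arcs, accounting for all $2d+3$ coefficient variables of $\Abar_\gamma$. There is no real obstacle in the argument; the lemma is essentially a bookkeeping consequence of the construction of $\Bbar$ and the Fomin--Thurston identification of principal coefficients with elementary laminations.
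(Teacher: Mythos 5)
Your argument is correct and matches the paper's intent: the paper declares the lemma obvious and gives no proof, and what you've written is precisely the expected unpacking. You correctly decompose $\Bbar$ into the three row blocks (internal-arc adjacency, boundary-segment adjacency, identity), identify the $d+3$ variables $x_{\sigma_{d+1}},\dots,x_{\sigma_{2d+3}}$ with boundary segments by definition, and invoke the Fomin--Thurston identification (Proposition 16.3 of \cite{FT}, quoted at the end of Section~\ref{sect surfaces}) to encode the $d$ principal-coefficient rows as elementary laminations of the internal arcs of $\Tbar_\gamma$. The only minor thing worth making explicit is that $\Sbar_\gamma$ is a polygon with no punctures, so tagged arcs coincide with ordinary arcs and the cited proposition applies directly without any self-folded or notched subtleties.
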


For each $k=1,2,\ldots,d$, denote by ${x}'_{\,\sigma_k}$ the cluster variable
obtained by mutation from $\xx_{\Tbar_{\gamma}}$ in direction $k$. 

\begin{prop}\label{prop acyclic}
$\Abar_{\gamma}$ is a cluster algebra of type $A_d$, and its initial seed is  acyclic. It follows that  
$\Abar_{\gamma}$ is generated over $\mathbb{Z}\PP$ by the initial $d$ cluster variables and their first mutations, that is, 
the set $\{{x}^{}_{\sigma_1},\ldots,{x}^{}_{\sigma_d},{x}'_{\sigma_1},\ldots,{x}'_{\sigma_d}\}$.
Finally, the ideal of relations among these variables is generated by 
the $d$ exchange relations expressing
${x}_{\sigma_i} {x}'_{\sigma_i}$ in terms of other cluster variables.
\end{prop}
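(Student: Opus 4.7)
The plan is to break the proof into three parts: identify the type, check acyclicity of the initial seed, and appeal to the known structure of acyclic cluster algebras.

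First, I would establish that $\Abar_{\gamma}$ is of type $A_d$. By the construction in Section \ref{sect Sbar}, $\Sbar_{\gamma}$ is topologically an unpunctured disk with $d+3$ boundary marked points, triangulated by the $d$ internal arcs $\sigma_1,\dots,\sigma_d$. Applying Lemma \ref{Ideal-Tri} with $g=0$, $b=1$, $p=0$, $c=d+3$ confirms that the rank is $d$, and by \cite[Theorem 7.11]{FST} together with the Cartan--Killing classification of \cite{FZ2}, the cluster algebra associated to any triangulation of a convex polygon with $d+3$ vertices is of finite type $A_d$.

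Next, I would verify acyclicity of $\Sigma_{\Tbar_{\gamma}}$ by inspecting the zigzag-fan structure of $\Tbar_{\gamma}$. By Steps 2--3 of the construction, every internal arc $\sigma_i$ connects a top-row vertex (at height $y=1$) with a bottom-row vertex ($y=0$), because the fans are centered alternately at bottom and top apices and each fan's arcs extend to the opposite row. Consequently, no triangle of $\Tbar_{\gamma}$ has all three sides being internal arcs, since that would force an internal arc between two same-row vertices. Hence each triangle contributes at most one edge to the quiver of $\Tbar_{\gamma}$, and since $\tilde{\gamma}$ crosses $\sigma_1,\sigma_2,\ldots,\sigma_d$ in this linear order as it traverses the polygon, the underlying undirected quiver is precisely the $A_d$ Dynkin diagram (a path on $d$ vertices). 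Any orientation of a tree has no oriented cycle, so the initial seed is acyclic.

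Finally, I would invoke \cite[Corollary 1.21]{BFZ}: since the initial seed is acyclic, $\Abar_{\gamma}$ is generated as a $\mathbb{Z}\PP$-algebra by the $2d$ elements $x_{\sigma_1},\dots,x_{\sigma_d},x'_{\sigma_1},\dots,x'_{\sigma_d}$, and the ideal of relations among these generators is generated by the $d$ exchange relations expressing each $x_{\sigma_i} x'_{\sigma_i}$ as a binomial in the remaining cluster and coefficient variables. The main obstacle is the acyclicity verification; once this combinatorial fact is extracted from the zigzag-fan construction, the rest of the proposition reduces to citing the general results above.
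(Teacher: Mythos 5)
Your proof is correct and follows essentially the same route as the paper's: the paper's own argument is precisely that $\Sbar_{\gamma}$ is a polygon in which every triangle of $\Tbar_{\gamma}$ has at least one boundary side (a fact you verify in more detail via the top/bottom-row alternation in Steps 2--3), after which both the generation and the relations statements are obtained by citing \cite{BFZ}. The only cosmetic differences are that the paper invokes both Theorem~1.20 and Corollary~1.21 of \cite{BFZ} for the last two assertions, and that it records acyclicity directly rather than through your slightly stronger observation that the underlying quiver is a linear $A_d$ path.
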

\begin{proof}
$\Abar_{\gamma}$ is of type $A_d$ with acyclic initial seed, because $\Sbar_{\gamma}$ 
is a polygon with $d+3$ vertices, and  each triangle in $\Tbar_{\gamma}$ 
has at least one side on the boundary of $\Sbar_{\gamma}$.
 The last two statements now
  follows from  \cite[Theorem 1.20 and Corollary 1.21]{BFZ}.
\end{proof}

\subsection{The map $\phi_{\gamma}$}

We will now define a homomorphism $\phi_{\gamma}$ of $\mathbb{Z}$-algebras from the cluster algebra $\Abar_{\gamma}$ 
to the field of fractions $\Frac(\mathcal{A})$ of the cluster algebra $\mathcal{A}$. 
We will define $\phi_{\gamma}$ on a set of generators of $\Abar_{\gamma}$ and 
then show that it is a well-defined homomorphism, by checking that the image
of the $d$ exchange relations from Proposition \ref{prop acyclic} are 
relations in $\Frac(\mathcal{A})$.

\begin{subsubsection}{Definition of $\phi_{\gamma}$ on the variables corresponding
to arcs of $\Tbar_{\gamma}$} 

If $\sigma_j$ is an internal arc or boundary segment of $\Tbar_{\gamma}$ (so $1\le j\le 2d+3$),  define
\begin{equation}\label{eq def 1}
\phi_{\gamma}({x}_{\,\sigma_j})= x_{\pi(\sigma_j)}.
\end{equation}
We make the convention that if $\pi(\sigma_j)$ is a boundary segment of $S$, then 
$x_{\pi(\sigma_j)} = 1$.  
Also recall that if $\pi(\sigma_j)$ is  a loop in a self-folded triangle then the notation $x_{\pi(\sigma_j)}$ stands for the product $x_{ r} x_{r^{(p)}}$, where $r$ is the radius  and $p$ is the puncture in the self-folded triangle.
\end{subsubsection}

\begin{subsubsection}{Definition of $\phi_{\gamma}$ on the first mutations of the initial cluster variables}
Define

\begin{equation}\label{eq def 2}
\phi_{\gamma}({x}'_{\,\sigma_j})= \left\{\begin{array}{ll}
{x'_{\pi(\sigma_j)}}
& \textup{if $\pi(\sigma_j)$ is not a loop or a radius;}\\ \\
{x_e} & \textup{if $\pi(\sigma_j)$ is  a loop, where $e$ is obtained by flipping $\pi(\sigma_j)$;}\\ \\
\left(1+\dfrac{y_{r}}{y_{r^{(p)}}}
\right) {x_{r} x_{r^{(p)}}}
& \textup{if $\pi(\sigma_j)$ is a radius $r$ to a puncture $p$}.
\end{array}\right.
\end{equation}

\end{subsubsection}

\begin{subsubsection}{Definition of $\phi_{\gamma}$ on the coefficients $y_{\sigma_j}$} 

Define 

\begin{equation}\label{eq def 3}
\phi_{\gamma}({y}_{\,\sigma_j})= \left\{\begin{array}{ll}
y_{\pi(\sigma_j)}\
&\textup{if $\pi(\sigma_j)$ is not a loop or a radius;}\\ \\

\dfrac{y_{r}}{y_{r^{(p)}}}
&\textup{if $\pi(\sigma_j)$ is a radius $r$ to a puncture $p$;}\\ \\
y_{r^{(p)}}
&\textup{if $\pi(\sigma_j)$ is  a loop enclosing the radius $r$ and puncture $p$.}
\end{array}\right.
\end{equation}
\end{subsubsection}

\begin{subsubsection}{Definition of $\phi_{\gamma}$ on the whole cluster algebra} 
By Proposition \ref{prop acyclic}, defining $\phi_{\gamma}$ on 
the cluster variables and their first mutations, as well as on the 
generators of the coefficient group, is enough to define 
a homomorphism of $\ZZ$-algebras $\phi_{\gamma}$
from $\Abar_{\gamma}$, 
provided that $\phi_{\gamma}$ is well-defined.
Note that 
$\phi_{\gamma}$ is a map
from $\Abar_{\gamma}$ to the field of fractions
$\Frac(\mathcal{A})$ of $\mathcal{A}$, rather than a map to 
$\mathcal{A}$ itself.

\begin{prop}
The map $\phi_{\gamma}$ is a well-defined homomorphism of $\mathbb{Z}$-algebras
\[ \phi_{\gamma}:\,\Abar_{\gamma}\,\to \Frac(\mathcal{A}).\]
\end{prop}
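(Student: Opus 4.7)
By Proposition \ref{prop acyclic}, $\Abar_{\gamma}$ is generated as a $\ZZ\PP$-algebra by $\{x_{\sigma_1},\ldots,x_{\sigma_d},x'_{\sigma_1},\ldots,x'_{\sigma_d}\}$, with the ideal of relations generated by the $d$ first-mutation exchange relations. Since $\PP$ is a free abelian group on the coefficient generators $x_{\sigma_{d+1}},\ldots,x_{\sigma_{2d+3}},y_{\sigma_1},\ldots,y_{\sigma_d}$, and $\phi_{\gamma}$ is already defined consistently on these generators via \eqref{eq def 1} and \eqref{eq def 3}, it extends uniquely to $\ZZ\PP$. Hence the plan is, for each $i\in[1,d]$, to verify that the image under $\phi_{\gamma}$ of the $i$-th exchange relation
\[
x_{\sigma_i}\,x'_{\sigma_i} \;=\; y_{\sigma_i}\,x_{\sigma_{j_1}}x_{\sigma_{j_3}} \;+\; x_{\sigma_{j_2}}x_{\sigma_{j_4}}
\]
(where $\sigma_{j_1},\sigma_{j_2},\sigma_{j_3},\sigma_{j_4}$ are the cyclic sides of the quadrilateral $\bar Q_i\subset\Sbar_{\gamma}$ whose diagonal is $\sigma_i$) is a valid identity in $\Frac(\Acal)$. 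The argument proceeds by case analysis on how the two triangles $\Delta_L,\Delta_R\subset\Tbar_{\gamma}$ meeting along $\sigma_i$ project under $\pi$ to ideal triangles of $T$, with the conventions $x_\beta=1$ for boundary segments $\beta$ of $S$ and $x_\ell=x_r x_{r^{(p)}}$ for loops $\ell$ around once-punctured monogons.

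In the generic case where neither $\pi(\Delta_L)$ nor $\pi(\Delta_R)$ is self-folded, $\pi(\sigma_i)$ is neither a loop nor a radius (since these are the only non-boundary sides of self-folded triangles), the four arcs/segments $\pi(\sigma_{j_k})$ form the cyclic sides of a quadrilateral in $(S,M)$ with diagonal $\pi(\sigma_i)$, and \eqref{eq def 1}--\eqref{eq def 3} give $\phi_{\gamma}(x'_{\sigma_i})=x'_{\pi(\sigma_i)}$ and $\phi_{\gamma}(y_{\sigma_i})=y_{\pi(\sigma_i)}$; the image is exactly the generalized Ptolemy relation of Proposition \ref{Ptolemy} for the flip of $\pi(\sigma_i)$ in $T$. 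When an adjacent triangle is self-folded, necessarily $\pi(\sigma_i)\in\{\ell,r\}$. If $\pi(\sigma_i)=\ell$ is a loop enclosing radius $r$ and puncture $p$, then exactly one adjacent triangle projects to the self-folded triangle, contributing two sides of $\bar Q_i$ with $\pi$-image $r$; after substituting $x_\ell=x_r x_{r^{(p)}}$ on the left and canceling a common factor of $x_r$, the image identity reduces to $x_{r^{(p)}}x_e = y_{r^{(p)}}x_\alpha + x_\beta$ (where $\alpha,\beta$ are the non-$\ell$ sides of the other adjacent ideal triangle and $e$ is the flip of $\ell$), again the generalized Ptolemy relation in $\Acal$ for the tagged flip of $r^{(p)}$.

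The main obstacle is the radius case $\pi(\sigma_i)=r$. Here both adjacent triangles project to the same self-folded triangle, so every side of $\bar Q_i$ has $\pi$-image in $\{\ell,r\}$; the key geometric observation, following from the unfolding of self-folded triangles in Section \ref{sect Sbar}, is that the cyclic $\pi$-images around $\bar Q_i$ are $r,\ell,\ell,r$, with opposite sides pairing up as $(r,\ell)$ and $(\ell,r)$. Both monomials on the right of the exchange relation therefore map to scalar multiples of $x_r x_\ell = x_r^2 x_{r^{(p)}}$, and using $\phi_{\gamma}(y_{\sigma_i})=y_r/y_{r^{(p)}}$ the image identity collapses, after cancellation of a common factor of $x_r$, to
\[
\phi_{\gamma}(x'_{\sigma_i}) \;=\; \bigl(1+y_r/y_{r^{(p)}}\bigr)\,x_r x_{r^{(p)}},
\]
matching \eqref{eq def 2} exactly. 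Verifying this cyclic order is the heart of the proof: the seemingly ad hoc formula in \eqref{eq def 2} is precisely what is forced by the requirement that $\phi_{\gamma}$ respect the exchange relation in the radius case.
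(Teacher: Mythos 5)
Your proposal is correct and follows essentially the same route as the paper: reduce via the acyclic presentation to checking the $d$ exchange relations, then split into the three cases ($\pi(\sigma_i)$ not a loop or radius, a loop, a radius) and verify each by hand, with the radius case being the one where the definition of $\phi_\gamma(x'_{\sigma_i})$ is exactly what makes the identity close. The only point the paper treats a bit more carefully, and that you elide into the invocation of Proposition~\ref{Ptolemy}, is the sub-case of the generic case (Figure~\ref{quad-caseextra}) in which a \emph{side} $\pi(\sigma_{j\pm 1})$ of the quadrilateral, rather than its diagonal, is a loop cutting out a once-punctured monogon; the paper explicitly cites \cite[Proposition 6.5, Lemma 7.2, Definition 7.4]{FT} to confirm that $x_\ell = x_r x_{r^{(p)}}$ remains compatible with the generalized Ptolemy relation there.
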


\begin{proof}
By 
Proposition \ref{prop acyclic}, it suffices to show that  $\phi_{\gamma}$ maps
the $d$ exchange relations involving 
${x}_{\sigma_j} {x}'_{\sigma_j}$ to relations in 
$\mathcal{A}$.
We prove this by checking three cases:  $\pi(\sigma_j)$
is not a loop or radius;  $\pi(\sigma_j)$ is a loop enclosing a radius $r$; 
and  $\pi(\sigma_j)$ is a radius $r$.

In all cases, the exchange relation in $\Abar_{\gamma}$ is determined by 
the quadrilateral in $\Tbar_{\gamma}$ with diagonal $\sigma_j$, which projects
via $\pi$ to the quadrilateral in $T$ with diagonal $\pi(\sigma_j)$.  
Note that in all cases, the exchange relation in $\Abar_{\gamma}$ has the form 
\begin{equation} \label{er}
{x}_{\sigma_j} {x}'_{\sigma_j} = {y}_{\sigma_j} 
     \prod_b {x}_b + \prod_c {x}_c , 
\end{equation}
where $b$ ranges over all arcs in ${T}$ following $\sigma_j$
in clockwise order, and $c$ ranges over all arcs in ${T}$
following $\sigma_j$ in counterclockwise order.

In the first case (when $\pi(\sigma_j)$ is not a loop or radius),
the local configuration of the triangulation 
is either that of Figure \ref{figquad-case1a} or 
Figure \ref{figquad-case1b}.
\begin{figure}
\begin{center}
\input{figquad-case1a.pstex_t}
\end{center}
\caption{}
\label{figquad-case1a}
\end{figure}
\begin{figure}
\begin{center}
\input{figquad-case1b.pstex_t}
\end{center}
\caption{}
\label{figquad-case1b}
\end{figure}
The image of the exchange relation under $\phi_{\gamma}$ is 
$$x_{\pi(\sigma_j)} {x'_{\pi(\sigma_j)}} = 
y_{\pi(\sigma_j)}  \prod_b x_{\pi(b)} + \prod_c x_{\pi(c)}.$$  
This is exactly the corresponding exchange relation (``Ptolemy relation") 
in $\mathcal A$.

Note that in theory we also need to 
consider configurations such as that in 
Figure \ref{quad-caseextra}, where one or both of the arcs 
\begin{figure}
\begin{center}
\input{figquad-caseextra.pstex_t}
\end{center}
\caption{}
\label{quad-caseextra}
\end{figure}
$\pi(\sigma_{j-1})$ and $\pi(\sigma_{j+1})$ are loops cutting
out once-punctured monogons with puncture $p$ and radius $r$.  
If say $\pi(\sigma_{j-1})$ is such a 
loop, then 
the image of the exchange relation in ${\A}$ contains 
$x_{\pi(\sigma_{j-1})} = x_{r} x_r^{(p)}$.
However, the resulting relation will still be an exchange relation 
in $\mathcal A$ (a ``generalized Ptolemy relation"), by 
\cite[Proposition 6.5, Lemma 7.2, and Definition 7.4]{FT}. 

Now consider the case that $\pi(\sigma_j)$ is a loop 
enclosing the radius $r$ and puncture $p$.  See Figure \ref{quad-case2}
\begin{figure} \begin{center}
\input{figquad-case2.pstex_t}
\end{center}
\caption{}
\label{quad-case2}
\end{figure}
for the local configuration.  Note that 
$\pi(\sigma_{j-1})=\pi(\alpha_1) = r$.
In this case 
the exchange relation (\ref{er}) 
is equal to $${x}_{\sigma_j} {x}'_{\sigma_j} = 
{y}_{\sigma_j} {x}_{\sigma_{j-1}} x_{\alpha_2} +{x}_{\sigma_{j+1}}x_{\alpha_1}$$ and its image 
under $\phi_{\gamma}$ is 
$$x_{\pi(\sigma_j)} x_e = y_{r^{(p)}} 
  x_{\pi(\alpha_2)} x_r + x_{\pi(\sigma_{j+1})} x_r,$$  
where $e$ is the arc obtained by 
flipping $\pi(\sigma_j)$.
Since $x_{\pi(\sigma_j)} = x_r x_{r^{(p)}},$
dividing  by $x_r$ yields 
exactly the exchange relation for 
$x_{r^{(p)}} x_e$ in $\mathcal A$, 
see equation (7.1) of \cite{FT}.

Finally consider the case that $\pi(\sigma_j)$ is a radius $r$
to a puncture $p$;
let $\ell$ denote the corresponding loop around the puncture.
See Figure \ref{quad-case3} for the local configuration.
Note that the two boundary segments on the left-hand-side 
of the figure project to $\pi(\sigma_j)$.
\begin{figure} \begin{center}
\input{figquad-case3.pstex_t}
\end{center}
\caption{}
\label{quad-case3}
\end{figure}
In this case the image of the exchange relation (\ref{er})
under $\phi_{\gamma}$ is 
$$x_{\pi(\sigma_j)}(1+\frac{y_r}{y_{r^{(p)}}})
{x_r x_{r^{(p)}}} =
\frac{y_r}{y_{r^{(p)}}} x_{\ell} x_{\pi(\sigma_j)}+ 
x_{\ell} x_{\pi(\sigma_j)}.$$  
Since $x_{\ell} = x_r x_{r^{(p)}}$,
this is an identity.
This completes the proof.
\end{proof}
\end{subsubsection}


\section{Quadrilateral lemma}\label{sec simply connected}

\begin{lemma}\label{lem exchange}
Let $T=\{\tau_1,\ldots,\tau_{n+c}\}$  be an ideal triangulation of $(S,M)$, and let
  ${\somediagonalgreekletter}$ be an arc in $(S,M)$ which is not in $T$.
 Let $e({\somediagonalgreekletter},T)$ be the number of crossings
  between ${\somediagonalgreekletter}$ and $T$.
Then there exist five, not necessarily distinct, arcs or boundary arcs
  $\someothergreekletter_1,\,\someothergreekletter_2,\,\somegreekletter_3,\,\somegreekletter_4$ and ${\somediagonalgreekletter'}$ in $(S,M)$ such that
\begin{itemize}
\item[(a)] each of $\someothergreekletter_1,\,\someothergreekletter_2,\,\somegreekletter_3, \,\somegreekletter_4$  and ${\somediagonalgreekletter'}$ crosses $T$  fewer than $e({\somediagonalgreekletter},T)$ times,
\item[(b)] $\someothergreekletter_1,\,\someothergreekletter_2,\,\somegreekletter_3,\,\somegreekletter_4$ are the sides of an ideal
  quadrilateral with simply connected interior
in which ${\somediagonalgreekletter}$ and ${\somediagonalgreekletter'}$ are the diagonals.
\end{itemize}
\end{lemma}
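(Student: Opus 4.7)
The plan is to induct on $d = e(\gamma, T)$, handling the inductive step by exhibiting the quadrilateral inside the triangulated polygon $\Sbar_\gamma$ from Section \ref{sect Sbar} and then projecting it back to $S$ via the map $\pi$. In the base case $d = 1$, $\gamma$ crosses a single arc $\tau$ of $T$; the two ideal triangles of $T$ sharing $\tau$ (allowed to coincide when $\tau$ is the radius of a self-folded triangle) form an ideal quadrilateral with diagonals $\gamma$ and $\gamma' := \tau$, whose four sides $\alpha_1,\ldots,\alpha_4$ all lie in $T$ and so have zero crossings with $T$; simple-connectivity of the interior follows from the defining restrictions on an ordinary arc.

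For the inductive step $d \geq 2$, I use that $\tilde\gamma$ is a diagonal of $\Sbar_\gamma$ crossing each of $\sigma_1, \ldots, \sigma_d$ exactly once. Let $L_1, R_q$ be the two non-$s$ vertices of the initial triangle $\tilde\Delta_0 \subset \Tbar_\gamma$ (with $L_1$ on the ``left'' of $\tilde\gamma$ and $R_q$ on the ``right''), and $L_p, R_1$ the two non-$t$ vertices of the terminal triangle $\tilde\Delta_d$. I take $\tilde Q$ to be the quadrilateral in $\Sbar_\gamma$ with cyclic vertices $(s, L_1, t, R_1)$, having $\tilde\gamma$ as one diagonal and $\tilde\gamma' := L_1 R_1$ as the other; its sides are the polygon edges $sL_1, tR_1$ together with the chords $L_1 t$ and $R_1 s$. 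A direct count in $\Sbar_\gamma$ shows that each of the chords $L_1 t$, $R_1 s$, $L_1 R_1$ crosses a proper subset of $\{\sigma_1, \ldots, \sigma_d\}$: the key point is that $\sigma_1 = L_1 R_q$ shares the endpoint $L_1$ with $L_1 t$ and $L_1 R_1$ (so is not transversally crossed by either), and symmetrically $\sigma_d = L_p R_1$ shares the endpoint $R_1$ with $R_1 s$ and $L_1 R_1$.

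To conclude, I project $\tilde Q$ to $S$ via $\pi$: vertices go to marked points of $M$, the polygon edges $sL_1$ and $tR_1$ project to arcs or boundary segments of $T$ with zero crossings, and each of the chords $L_1 t$, $R_1 s$, $L_1 R_1$ projects to a unique arc in $S$ (up to isotopy) whose sequence of transverse intersections with $T$ corresponds via $\pi$ to the chord's intersections with $\Tbar_\gamma$. This yields arcs $\alpha_2, \alpha_4, \gamma'$, each with strictly fewer than $d$ crossings with $T$; and the projected quadrilateral's interior is simply connected because $\tilde Q$ is simply connected in $\Sbar_\gamma$ and $\pi$ restricts to a homeomorphism on each triangle of $\Tbar_\gamma$.

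The main obstacle will be handling degenerate cases in which $\pi$ identifies some vertices or edges of $\tilde Q$ --- for instance in the presence of self-folded triangles of $T$ near $\gamma$, or when distinct polygon vertices of $\Sbar_\gamma$ project to the same marked point of $M$. In such cases some of $\alpha_1, \ldots, \alpha_4, \gamma'$ may coincide, which the lemma's statement explicitly permits, but a case analysis based on the local combinatorics of $\Tbar_\gamma$ near $s$ and $t$ will be needed to confirm that both simple-connectivity of the projected interior and the strict crossing-number inequalities are preserved.
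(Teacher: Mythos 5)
The student's proof and the paper's proof share only the base case $d=1$; the inductive steps are genuinely different, and the student's has a real gap.

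\paragraph{What the paper does.} The paper's inductive step is a \emph{local} surgery. It picks the middle crossing $h = \lceil k/2\rceil$ of $\gamma$ with $T$, takes the arc $\tau\in T$ carrying that crossing, and orders the crossings $j_1,\dots,j_r$ of $\tau$ with $\gamma$ along $\tau$ itself. The five arcs $\gamma', \alpha_1,\dots,\alpha_4$ are then built by following pieces of $\gamma$ and $\tau$ and switching at the crossings $j_{\ell-1}, j_\ell = h, j_{\ell+1}$ (or at the endpoints of $\tau$ in boundary cases). Because all five arcs live in a small neighborhood of the crossing point $h$, the quadrilateral they bound retracts to that neighborhood and is a disk by construction; the choice of the \emph{middle} crossing, together with the inequalities $j_{\ell-1}\le h-2$, $j_{\ell+1}\ge h+2$, is exactly what makes the crossing-number bounds come out strictly less than $k$.

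\paragraph{Where your proposal breaks down.} You instead build a quadrilateral $\tilde Q$ with vertices $s, L_1, t, R_1$ inside the triangulated polygon $\Sbar_\gamma$ and project it down to $S$. The crossing-number counts in $\Sbar_\gamma$ do work out (each of $L_1 t$, $R_1 s$, $L_1 R_1$ misses at least one of $\sigma_1,\dots,\sigma_d$, and isotopy in $S$ can only decrease crossings). But the simple-connectivity claim --- ``the projected interior is simply connected because $\pi$ is a homeomorphism on each triangle'' --- does not follow. The map $\pi$ is a local homeomorphism but is generically non-injective: whenever $\gamma$ crosses the same arc of $T$ more than once, or whenever $\gamma$ travels around a handle or near a puncture, distinct triangles of $\Tbar_\gamma$ map to the same triangle of $T$, and the image of the simply connected set $\tilde Q$ (which is essentially the whole tube around $\gamma$) can wrap around and fail to bound a disk in $S$. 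In the same situation the projected diagonal $\pi(L_1 R_1)$ may cross $\gamma$ more than once after being put in minimal position, so one does not even get a quadrilateral in the sense needed for the Ptolemy relation. These failures are not the ``degenerate vertex/edge identifications'' you flag for later case analysis; they occur generically on surfaces of positive genus or with several punctures and are a structural obstruction to the projection argument. The paper sidesteps all of this precisely by staying local to one crossing rather than taking the global tube.
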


\begin{proof} We prove the lemma by induction on $k=e({\somediagonalgreekletter},T)$. If
  $k=1$, then let ${\somediagonalgreekletter'}\in T$ be the unique arc that crosses
  ${\somediagonalgreekletter}$.
  Then $\somediagonalgreekletter'$ is a side of exactly two triangles in $T$. We distinguish three cases according to how many of these triangles are self-folded, see Figure \ref{figtriangles}.
\begin{figure}
\begin{center}
\input{figtriangles.pstex_t}
\caption{Configurations of the ideal triangles incident to $\somediagonalgreekletter'$.}
\label{figtriangles}
\end{center}
\end{figure}
 \begin{enumerate}
\item If none of the two triangles is self-folded, then let $\someothergreekletter_1,\somegreekletter_1$ and $\somediagonalgreekletter'$ denote the three sides of one of them, and let $\someothergreekletter_3,\somegreekletter_4$ and $\somediagonalgreekletter'$ denote the three sides of the other, such that $\someothergreekletter_1$ and $\someothergreekletter_3$ (and hence also $\somegreekletter_2$ and $\somegreekletter_4$) are opposite sides in the quadrilateral formed by the union of the two triangles. Then these arcs satisfy (a) and (b), see the
left of Figure \ref{figtriangles}.
 \item If one of the two triangles is self-folded, then let $\somegreekletter_2$ and $\somediagonalgreekletter'$ denote the two sides of the self-folded triangle, and let $\someothergreekletter_1,\somegreekletter_3$ and $\somediagonalgreekletter'$ denote the three sides of the other triangle.  Since $\somediagonalgreekletter$ crosses $\somediagonalgreekletter'$ but not $ \somegreekletter_2$, it follows that $\somediagonalgreekletter'$ is the loop of the self-folded triangle and $  \somegreekletter_2$ is its radius. Setting $\someothergreekletter_2=\somegreekletter_2$, we obtain five arcs that satisfy conditions (a) and (b),
see the middle of Figure \ref{figtriangles}.

\item The case where both triangles are self-folded is actually impossible, because two self-folded triangles that share a side can only occur on the sphere with three punctures, but this surface is not allowed, see the right of Figure
\ref{figtriangles}.
\end{enumerate}

  Suppose that $k\ge 2$. Choose an orientation of ${\somediagonalgreekletter}$ and denote its
starting point by $a$ and its endpoint by $b$ (note that $a$ and $b$
may coincide). Label the $k$ crossing points of ${\somediagonalgreekletter}$ and $T$
by $1,2,\ldots,k$ according to their order on ${\somediagonalgreekletter}$, such that the
crossing point $1$ is the closest to $a$.
Let $h$ be the middle crossing point, more precisely, 
let $h=\lceil k/2 \rceil$. 
 Denote by $\tau$ the unique arc of the
triangulation $T$ that crosses $\somediagonalgreekletter$ at the point with label $h$, and let $r=e(\tau,\somediagonalgreekletter)$ be the number of crossings between $\tau$ and $\somediagonalgreekletter$. Choose an orientation of $\tau$ and denote its
starting point by $c$ and its endpoint by $d$ (note again that $c$ and $d$
may coincide).
As before with
${\somediagonalgreekletter}$, label the $r$ crossing
points of $\tau$ and $\somediagonalgreekletter$ by $j_1,j_2,\ldots,j_r$ according to
their order on $\tau$  (see Figure \ref{figtau}). Thus $r\le k$,
$\{j_1,j_2,\ldots,j_r\}\subset\{1,2,\ldots,k\}$. Note that $s<t$
does \emph{not} imply $j_s<j_t$.
Choose $\ell$ so that $j_\ell=h$ is the middle crossing point.

\begin{figure}
\centering
\input{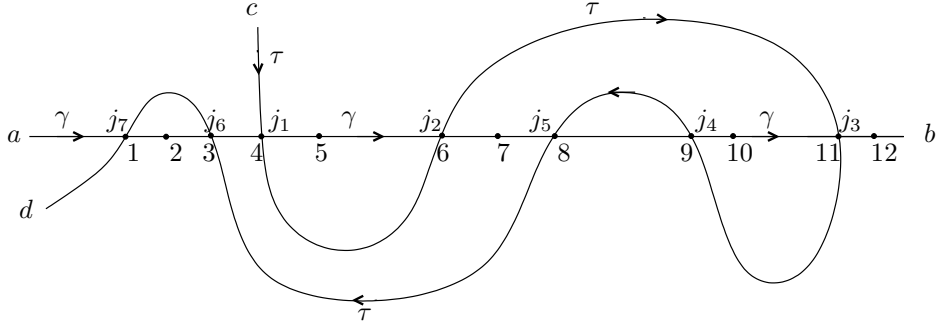}
\caption{Labeling of the crossing points of $\somediagonalgreekletter$ and $\tau$. Here $k=12$ and $j_\ell=j_2=h=6$.}\label{figtau}
\end{figure}

We will use $\tau$ and ${\somediagonalgreekletter}$ to construct the five arcs (or boundary arcs) of the
lemma. Let ${\somediagonalgreekletter}^-$ (respectively  $\tau^-$) denote the curve ${\somediagonalgreekletter}$
(respectively $\tau$) with the opposite orientation.
We will distinguish four cases:
\begin{enumerate}

\item  \emph{$(\ell = 1$ or $j_{\ell-1}<j_\ell)$ and $( \ell =r $ or $j_{\ell+1}> j_\ell)$.}
 We define the
  arcs below, and we illustrate them as the dashed arcs in Figure \ref{figrho'}, continuing  the example of Figure \ref{figtau}. Suppose first that $1<\ell < r$.

Let
\[{\somediagonalgreekletter'}=(a,j_{\ell-1},j_{\ell+1},b\mid {\somediagonalgreekletter},\tau,{\somediagonalgreekletter})
\]
be the arc
  that starts  at $a$ and is homotopic to ${\somediagonalgreekletter}$
  up to the crossing point $j_{\ell-1}$, then, from $j_{\ell-1}$ to
  $j_{\ell+1}$, ${\somediagonalgreekletter'} $ is homotopic to $\tau$, and from $j_{\ell+1}$ to
  $b$, ${\somediagonalgreekletter'}$ is homotopic to ${\somediagonalgreekletter}$. Note that ${\somediagonalgreekletter'}$ and ${\somediagonalgreekletter}$ cross
  exactly once, namely at  $j_\ell$.

\begin{figure}
\centering
\input{figrhoprime.pstex_t}

\input{figalpha.pstex_t}

\input{figbeta.pstex_t}
\caption{Construction of $\somediagonalgreekletter',\someothergreekletter_1,\someothergreekletter_2,\somegreekletter_3$ and $\somegreekletter_4$ in case (1).}\label{figrho'}
\end{figure}

In a similar way, we define
\[
\begin{array}{cc}
 \someothergreekletter_1=(a,j_{\ell-1},j_{\ell},a\mid {\somediagonalgreekletter},\tau,{\somediagonalgreekletter}^-)
&\someothergreekletter_3=(b,j_{\ell+1},j_{\ell},b\mid {\somediagonalgreekletter}^-,\tau^-,{\somediagonalgreekletter}) \\
 \somegreekletter_2=(a,j_{\ell},j_{\ell+1},b\mid {\somediagonalgreekletter},\tau,{\somediagonalgreekletter})
&\somegreekletter_4=(b,j_{\ell},j_{\ell-1},a\mid {\somediagonalgreekletter}^-,\tau^-,{\somediagonalgreekletter}^-).
\end{array}
\]
In the special case where $\ell=1$, (respectively $l=r$), we define
\[
\begin{array}{cc}
{\somediagonalgreekletter'}=(c,j_{\ell+1},b\mid \tau,{\somediagonalgreekletter}) & (\textup{respectively }
{\somediagonalgreekletter'}=(a,j_{\ell-1},d\mid {\somediagonalgreekletter},\tau)\\
\someothergreekletter_1=(c,j_{\ell},a\mid \tau,{\somediagonalgreekletter}^-) & (\textup{respectively }
\someothergreekletter_3=(d,j_{\ell},b\mid \tau^-,{\somediagonalgreekletter})\\
\somegreekletter_4=(b,j_{\ell},c\mid {\somediagonalgreekletter}^-,\tau^-) & (\textup{respectively }
\somegreekletter_2=(a,j_{\ell},d\mid {\somediagonalgreekletter},\tau),
\end{array}  \]
where $c$ is the starting point of $\tau $ and $d$ is its endpoint.  In particular, if $\ell=r=1$ then $\rho'=\tau$.

Then $\someothergreekletter_1,\somegreekletter_2,\someothergreekletter_3,\somegreekletter_4$ form a quadrilateral with simply connected
interior
such that
$\someothergreekletter_1$ and $\someothergreekletter_3$ are opposite sides, $\somegreekletter_2$ and $\somegreekletter_4$ are
opposite sides, and ${\somediagonalgreekletter}$ and ${\somediagonalgreekletter'}$ are the diagonals.
The topological type of this quadrilateral
is as in the left-hand-side of Figure \ref{quadtypes}.
\begin{figure}
\centering

\input{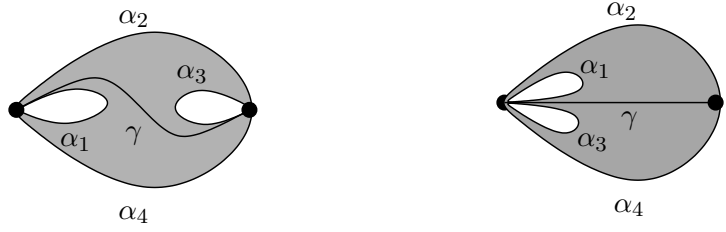}
\caption{Different topological types of quadrilaterals}\label{quadtypes}
\end{figure}
This shows (b).

 It remains to show (a). By hypothesis, we have
$j_{\ell-1}<j_\ell=h$ and $j_{\ell+1}>j_\ell=h$. Moreover, since the crossing points $j_{\ell-1}$, and $j_\ell$ both lie on the same arc $\tau$ of the ideal triangulation, the arc $\somediagonalgreekletter$ must cross some other arc between the two crossings at $j_{\ell-1} $ and $j_\ell$; in other words,  $j_{\ell-1}<j_\ell -1=h-1$. Similarly $j_{\ell+1}>j_\ell+1=h+1$. Also recall that $k\le 2h\le k+1$.
Then,

\[\begin{array}{ccccccc}
e({\somediagonalgreekletter'},T)&=&(j_{\ell-1}-1)+(k-j_{\ell+1}+1) &< & h-2+h+1&\le& k \\
e(\someothergreekletter_1,T)&=&(j_{\ell-1}-1)+j_{\ell} &<&h-2+h&\le &k \\
e(\someothergreekletter_3,T)&=&(k-j_{\ell+1})+(k-j_{\ell}+1) &<& k-h-1+k-h+1& \le& k\\
e(\somegreekletter_2,T)&=&(j_{\ell}-1)+(k-j_{\ell+1})& <&h-1+k-h-1&\le & k \\
e(\somegreekletter_4,T)&=&(k-j_{\ell})+(j_{\ell-1}-1) &<& k-h+ h-2 &\le& k .
\end{array}  \]
 In the case where $\ell=1$, we have
\[
\begin{array}{ccccl}
e({\somediagonalgreekletter'},T)&=& k-j_{\ell+1} &<& k\\
e(\someothergreekletter_1,T)&=& j_{\ell} -1 &<& k\\
e(\somegreekletter_4,T)&=& k-j_{\ell} &<& k,
\end{array}  \]
and in the case where $\ell=r$, we have
\[
\begin{array}{ccccl}
e({\somediagonalgreekletter'},T)&=& j_{\ell-1}-1 &<& k\\
e(\someothergreekletter_3,T)&=& k-j_{\ell}  &<& k\\
e(\somegreekletter_2,T)&=& j_{\ell}-1 &<& k.
\end{array}  \]

This shows (a).

\item \emph{$(\ell = 1$ or $j_{\ell-1}<j_\ell)$ and $( \ell =r $ or $j_{\ell+1}< j_\ell)$}. This case is illustrated in Figure \ref{figrho2}. Suppose first that $1<\ell < r$.

\begin{figure}
\centering
\input{figrho2.pstex_t}
\caption{Construction of $\somediagonalgreekletter',\someothergreekletter_1,\someothergreekletter_2,\somegreekletter_3$ and $\somegreekletter_4$ in case (2).}\label{figrho2}
\end{figure}

Let ${\somediagonalgreekletter'}=(a,j_{\ell-1},j_{\ell+1},a\mid {\somediagonalgreekletter},\tau,{\somediagonalgreekletter}^-)$  be the arc
  that starts at at $a$ and is homotopic to ${\somediagonalgreekletter}$
  up to the crossing point $j_{\ell-1}$, then, from $j_{\ell-1}$ to
  $j_{\ell+1}$, ${\somediagonalgreekletter'} $ is homotopic to $\tau$, and from $j_{\ell+1}$ to
  $a$, ${\somediagonalgreekletter'}$ is homotopic to ${\somediagonalgreekletter^-}$. Note that ${\somediagonalgreekletter'}$ and ${\somediagonalgreekletter}$ cross
  exactly once, namely at the point $j_\ell$.
In a similar way, let
\[
\begin{array}{cc}
 \someothergreekletter_1=(a,j_\ell,j_{\ell-1},a\mid {\somediagonalgreekletter},\tau^-,{\somediagonalgreekletter}^-)
&\someothergreekletter_3=(b,j_\ell,j_{\ell+1},a\mid {\somediagonalgreekletter}^-,\tau,{\somediagonalgreekletter}^-) \\
 \somegreekletter_2=(a,j_{\ell-1},j_{\ell},b\mid {\somediagonalgreekletter},\tau,{\somediagonalgreekletter})
&\somegreekletter_4=(a,j_{\ell+1},j_{\ell},a\mid {\somediagonalgreekletter},\tau^-,{\somediagonalgreekletter}^-)
\end{array}
\]
In the special case where $\ell=1$, (respectively $\ell=r$), we define
\[
\begin{array}{cc}
{\somediagonalgreekletter'}=(c,j_{\ell+1},a\mid \tau, {\somediagonalgreekletter}^-) & (\textup{respectively }
{\somediagonalgreekletter'}=(a,j_{\ell-1},d\mid {\somediagonalgreekletter}, \tau)\\
\someothergreekletter_1=(a,j_{\ell},c\mid {\somediagonalgreekletter},\tau^-) & (\textup{respectively }
\someothergreekletter_3=(b,j_{\ell},d\mid {\somediagonalgreekletter}^-,\tau)\\
\somegreekletter_2=(c,j_{\ell},b\mid \tau,{\somediagonalgreekletter}) & (\textup{respectively }
\somegreekletter_4=(d,j_{\ell},a\mid \tau^-,{\somediagonalgreekletter}^-),
\end{array}  \]
where $c$ is the starting point of $\tau $ and $d$ is its endpoint. Note again that $\rho'=\tau $ if $\ell=r=1$.

Then $\someothergreekletter_1,\somegreekletter_2,\someothergreekletter_3,\somegreekletter_4$ form a quadrilateral with simply connected
interior
such that
$\someothergreekletter_1$ and $\someothergreekletter_3$ are opposite sides, $\somegreekletter_2$ and $\somegreekletter_4$ are
opposite sides, and ${\somediagonalgreekletter}$ and ${\somediagonalgreekletter'}$ are the diagonals.
The topological type of this quadrilateral
is as in the right-hand-side of Figure \ref{quadtypes}.
This shows (b).
It remains to show (a).
By hypothesis, we have
$j_{\ell-1}<j_\ell=h$ and $j_{\ell+1}<j_\ell=h$.
As in case (1), the crossing points $j_{\ell-1}$, and $j_\ell$ both lie on the same arc $\tau$ of the ideal triangulation, and thus the arc $\somediagonalgreekletter$ must cross some other arc between the two crossings at $j_{\ell-1} $ and $j_\ell$; in other words,  $j_{\ell-1}<j_\ell -1=h-1$. Similarly $j_{\ell+1}<j_\ell-1=h-1$. Also recall that $k\le 2h\le k+1$. Then
\[\begin{array}{ccccccc}
e({\somediagonalgreekletter'},T)&=&(j_{\ell-1}-1)+(j_{\ell+1}-1) &<& h-2+h-2 &\le& k \\
e(\someothergreekletter_1,T)&=&(j_{\ell}-1) +j_{\ell-1}&<& h-1+h-1 &\le& k \\
e(\someothergreekletter_3,T)&=&(k-j_{\ell}) + (j_{\ell+1}-1)&<& k-h+h-2  &\le& k \\
e(\somegreekletter_2,T)&=&(j_{\ell-1}-1)+(k-j_{\ell}) &<& h-2 +k-h &\le& k \\
e(\somegreekletter_4,T)&=&(j_{\ell+1}-1)+j_{\ell} &<&  h-2+h &\le& k.
\end{array}  \]
 In the case where $\ell=1$, we have
\[
\begin{array}{ccccl}
e({\somediagonalgreekletter'},T)&=& j_{\ell+1}-1 &<& k\\
e(\someothergreekletter_1,T)&=& j_{\ell}-1 &< &k\\
e(\somegreekletter_2,T)&=& k-j_{\ell} &< &k,
\end{array}  \]
and in the case where $\ell=r$, we have
\[
\begin{array}{ccccl}
e({\somediagonalgreekletter'},T)&=& j_{\ell-1}-1 &<& k\\
e(\someothergreekletter_3,T)&=&k-j_{\ell} &<& k\\
e(\somegreekletter_4,T)&=&  j_{\ell}-1 &<& k .
\end{array}  \]

This shows (a).\\
\item \emph{$j_{\ell-1}>j_\ell$ and $j_{\ell+1}<j_\ell$.} This case
  follows from the case (1) by symmetry.

\item \emph{$j_{\ell-1}>j_\ell$ and $j_{\ell+1}>j_\ell$.} This case
  follows from the case (2) by symmetry.

\end{enumerate}

\end{proof}

\section{The proof of the expansion formula for ordinary arcs} \label{sec plain}

The main technical lemma we need in order to complete the proof of our expansion formula
for ordinary arcs is that $\phi_{\gamma}(x_{\tilde{\gamma}}) = x_{\gamma}$.
Once we have this, the proof of our expansion formula for ordinary arcs
will follow easily.

\subsection{The proof that $\phi_{\gamma}(x_{\tilde{\gamma}}) = x_{\gamma}$.}\label{sec phi-map}

In this section we show that the constructions of 
$\Sbar_{\gamma}$ and $\Tbar_{\gamma}$ in Section \ref{sect Sbar} are
compatible with the map $\phi_{\gamma}$ defined in Section \ref{sect Abar}
in a sense which we make precise in Theorem \ref{phi-map}.

Fix a bordered surface $(S,M)$, an ideal triangulation 
$T=(\tau_1,\dots,\tau_n)$, 
and let $\Acal$ be the corresponding cluster algebra with principal
coefficients with respect to $T$.
Also fix an arc $\gamma$ in $S$.  This gives
rise to a polygon
$\Sbar_{\gamma}$ with a triangulation $\Tbar_{\gamma} = 
(\sigma_1^{\gamma},\dots,\sigma_d^{\gamma})$, 
a lift $\tilde{\gamma}$ of $\gamma$
in $\Sbar_{\gamma}$,
a cluster algebra
$\Abar_{\gamma}$, a projection 
$\pi: \Tbar_{\gamma} \to T$, and a homomorphism 
$$\phi_{\gamma}: \Abar_{\gamma} \to \Frac(\Acal),$$ such that 
$\phi_{\gamma}(x_{\sigma_j^{\gamma}}) = x_{\pi(\sigma_j^{\gamma})}.$

\begin{theorem}\label{phi-map}
Using the notation of the previous paragraph, we have that 
$$\phi_{\gamma}(x_{\tilde{\gamma}}) = x_{\gamma}.$$
\end{theorem}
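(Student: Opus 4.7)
The plan is to prove Theorem \ref{phi-map} by induction on $d = e(\gamma, T)$, the number of intersections of $\gamma$ with the arcs of $T$. For the base case ($d = 0$), $\gamma$ is isotopic to an arc of $T$ or to a boundary segment; then $\Sbar_\gamma$ is a triangle, $\tilde\gamma$ is one of its boundary sides, and $\phi_\gamma(x_{\tilde\gamma}) = x_{\pi(\tilde\gamma)} = x_\gamma$ follows directly from (\ref{eq def 1}).

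For the inductive step ($d \geq 1$), I would first apply Lemma \ref{lem exchange} to obtain five arcs $\gamma', \alpha_1, \alpha_2, \alpha_3, \alpha_4$ in $(S,M)$, each crossing $T$ strictly fewer than $d$ times and forming (together with $\gamma$) an ideal quadrilateral $Q$ with simply-connected interior in which $\gamma, \gamma'$ are the diagonals and the $\alpha_i$ are the sides in cyclic order. The induction hypothesis then gives $\phi_{\gamma'}(x_{\widetilde{\gamma'}}) = x_{\gamma'}$ and $\phi_{\alpha_i}(x_{\widetilde{\alpha_i}}) = x_{\alpha_i}$ for each $i$. To put these on a common footing with $\phi_\gamma$, I would construct a larger triangulated polygon $(\widehat S, \widehat T)$ containing $\Sbar_\gamma$ as a sub-polygon, together with lifts $\widetilde{\gamma'}$ and $\widetilde{\alpha_i}$ inside $\widehat S$, arranged so that $\tilde\gamma, \widetilde{\gamma'}, \widetilde{\alpha_1}, \widetilde{\alpha_2}, \widetilde{\alpha_3}, \widetilde{\alpha_4}$ bound a quadrilateral $\Qbar$ whose projection under an extended map $\widehat\pi: \widehat T \to T$ is exactly $Q$. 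Applying the rules (\ref{eq def 1})--(\ref{eq def 3}) to $\widehat\pi$ yields a homomorphism $\widehat\phi$ from the type $A$ cluster algebra $\widehat\Acal$ attached to $\widehat S$ to $\Frac(\Acal)$ which restricts to $\phi_\gamma, \phi_{\gamma'}, \phi_{\alpha_i}$ on the respective sub-polygon subalgebras.

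In $\widehat\Acal$, the flip exchanging $\tilde\gamma$ for $\widetilde{\gamma'}$ is an exchange relation of the form
\begin{equation*}
x_{\tilde\gamma}\, x_{\widetilde{\gamma'}} = Z\, x_{\widetilde{\alpha_1}} x_{\widetilde{\alpha_3}} + Z'\, x_{\widetilde{\alpha_2}} x_{\widetilde{\alpha_4}}
\end{equation*}
for principal-coefficient monomials $Z, Z'$. Applying $\widehat\phi$ and invoking the induction hypothesis on $\widetilde{\gamma'}$ and the $\widetilde{\alpha_i}$ gives
\begin{equation*}
\widehat\phi(x_{\tilde\gamma})\, x_{\gamma'} = \widehat\phi(Z)\, x_{\alpha_1} x_{\alpha_3} + \widehat\phi(Z')\, x_{\alpha_2} x_{\alpha_4}.
\end{equation*}
I would then compare this to the generalized Ptolemy relation $x_\gamma\, x_{\gamma'} = Y\, x_{\alpha_1} x_{\alpha_3} + Y'\, x_{\alpha_2} x_{\alpha_4}$ supplied by Proposition \ref{Ptolemy} in $\Acal$. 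After verifying $\widehat\phi(Z) = Y$ and $\widehat\phi(Z') = Y'$, cancelling $x_{\gamma'}$ (a non-zero element of $\Frac(\Acal)$) yields $\widehat\phi(x_{\tilde\gamma}) = x_\gamma$, and since $\widehat\phi$ agrees with $\phi_\gamma$ on $\Abar_\gamma$ we conclude $\phi_\gamma(x_{\tilde\gamma}) = x_\gamma$.

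The hard part will be the construction of $(\widehat S, \widehat T)$ with the simultaneous property that all six lifts fit together as a quadrilateral projecting onto $Q$, and the verification that the principal-coefficient monomials $Z, Z'$ in the type $A$ exchange relation on $\widehat S$ map under $\widehat\phi$ exactly onto the coefficients $Y, Y'$ of the generalized Ptolemy relation in $\Acal$. This requires careful bookkeeping of shear coordinates (equivalently, principal coefficients) across all six arcs, with particular attention to configurations involving self-folded triangles, where the substitution rule (\ref{eq def 3}) on $y$-variables is nontrivial and where the identification $x_{\ell} = x_r x_{r^{(p)}}$ for a loop enclosing a once-punctured monogon must be used to match coefficient exponents on the two sides of the Ptolemy relation.
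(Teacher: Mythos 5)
Your architecture matches the paper's: induction on $e(\gamma,T)$ via Lemma \ref{lem exchange}, assembly of a larger triangulated polygon $\widehat{S}$ holding lifts of all six arcs, comparison of the type $A$ exchange relation for the flip of $\tilde\gamma$ with the generalized Ptolemy relation in $\Acal$, and a final verification that the coefficient monomials correspond (this is exactly the paper's Lemma \ref{lemma-project}). You correctly single out the coefficient verification as the technical core, and your plan to cancel $x_{\gamma'}$ in $\Frac(\Acal)$ is what the paper does.

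The one assertion that needs real work is that $\widehat\phi$ ``restricts to $\phi_\gamma,\phi_{\gamma'},\phi_{\alpha_i}$ on the respective sub-polygon subalgebras.'' This is not automatic and is not literally a restriction of ring maps: $\Abar_{\gamma'}$ and $\widehat{\Acal}$ carry \emph{different} principal-coefficient systems (indexed by different sets of internal arcs, and with different arcs playing the role of frozen boundary segments), so $\Abar_{\gamma'}$ does not sit inside $\widehat{\Acal}$ as a subalgebra, and the cluster variable labeled $\tilde{\gamma'}$ has a priori a different Laurent expansion in the two cluster algebras. What your argument actually uses is $\widehat\phi(x_{\tilde{\gamma'}}) = x_{\gamma'}$ and $\widehat\phi(x_{\widetilde{\alpha_i}}) = x_{\alpha_i}$, and these do \emph{not} follow formally from the inductive statements $\phi_{\gamma'}(x_{\tilde{\gamma'}}) = x_{\gamma'}$, $\phi_{\alpha_i}(x_{\widetilde{\alpha_i}}) = x_{\alpha_i}$. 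The paper closes this gap by appealing to the already-proven type $A$ matching formula of \cite{MS}: the expansion $\Match_{S,T,\tilde{\gamma'}}(\cdot,\cdot)$ is local to the tiles traversed by $\tilde{\gamma'}$, so the Laurent expansions of $x_{\tilde{\gamma'}}$ computed in $\Abar_{\gamma'}$ and in $\widehat{\Acal}$ produce the \emph{same} expression after the substitutions $x_{\sigma_j}\mapsto x_{\pi(\sigma_j)}$, $y_{\sigma_j}\mapsto\phi(y_{\sigma_j})$, and so both collapse to $x_{\gamma'}$ by induction. Without invoking the matching formula (or some equivalent locality principle for type $A$ Laurent expansions), your transfer step from the individual $\phi$-maps to $\widehat\phi$ is unjustified; adding it would complete the argument and bring it in line with the paper's proof.
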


\begin{proof}
We prove Theorem \ref{phi-map} by induction on the number of crossings of $\gamma$
and $T$.  When this number is zero, there is nothing to prove.  
Otherwise,
by Lemma \ref{lem exchange}, 
there exists a quadrilateral $Q$ in $S$ with simply-connected interior,
which has diagonals $\gamma$ and $\gamma'$, and sides $\alpha_1, \alpha_2, \alpha_3, \alpha_4$.
Moreover, each of $\gamma'$ and the four sides
crosses $T$ fewer times than $\gamma$ does.
See Figure \ref{SurfaceQuad} for an example of a surface, an arc
$\gamma$ (bold), and the corresponding quadrilateral with sides 
$\alpha_1, \dots, \alpha_4$ and diagonals
$\gamma$ and $\gamma'$, as well as the triangulated polygons
$\Sbar_{\gamma}$ and $\Sbar_{\gamma'}$.
\begin{figure}
\centering
\input{Surface-Quad.pstex_t} \hspace{3em} \input{Tri-Polygons.pstex_t}
\caption{}\label{SurfaceQuad}
\end{figure}

By induction, we have five triangulated polygons 
$\Sbar_{\gamma'}$, $\Sbar_{\alpha_1}, \dots ,\Sbar_{\alpha_4}$,
five lifts
$\tilde{\gamma'}$, $\tilde{\alpha_1},\dots ,\tilde{\alpha_4}$, 
in the respective
polygons, five associated cluster algebras, 
and five different homomorphisms
$\phi_{\gamma'}$, 
$\phi_{\alpha_1},\dots,
\phi_{\alpha_4}$, 
such that 
\begin{equation}\label{inductive-step}
\phi_{\gamma'}(x_{\gamma'}) = x_{\gamma'}, \
\phi_{\alpha_i}(x_{\alpha_i}) = x_{\alpha_i},\ 
\text{ for }i=1\dots4.
\end{equation}

We let 
$\Tbar_{\gamma'}=\{\sigma_j^{\gamma'}\}_j$
and $\Tbar_{\alpha_i}=\{\sigma_j^{\alpha_i}\}_j$ denote
the triangulations of 
$\Sbar_{\gamma'}$ and $\Sbar_{\alpha_i}$ 
for $1\leq i \leq 4$.  Because 
$\Sbar_{\gamma}$, $\Sbar_{\gamma'}$, and $\Sbar_{\alpha_i}$ 
are polygons, the corresponding cluster algebras 
$\Abar_{\gamma}, \Abar_{\gamma'}$ and $\Abar_{\alpha_i}$ 
are of type A.  In type A, the cluster expansion formulas
(in terms of weights and heights of matchings) of 
Section \ref{sect cluster expansion formula} are already known
to be true \cite{MS}.  Therefore,
letting $\Match_{S,T,\gamma}(x_{\tau},y_{\tau})$ denote the formula
given by Theorem \ref{thm main}, 
 for $i=1,2,3,4$ we have
\begin{align}
[x_{\tilde{\gamma}}]_{\xx_{\Tbar_{\gamma}}}^{\Abar_{\gamma}} &= 
\Match_{\Sbar_{\gamma}, \Tbar_{\gamma}, \tilde{\gamma}} 
(x_{\sigma_j^{\gamma}}, 
y_{\sigma_j^{\gamma'}}) \\
[x_{\tilde{\gamma'}}]_{\xx_{\Tbar_{\gamma'}}}^{\Abar_{\gamma'}} &= 
\Match_{\Sbar_{\gamma'}, \Tbar_{\gamma'}, \tilde{\gamma'}} 
(x_{\sigma_j^{\gamma'}}, 
y_{\sigma_j^{\gamma'}}) \\
[x_{\tilde{\alpha_i}}]_{\xx_{\Tbar_{\alpha_i}}}^{\Abar_{\alpha_i}} &= 
\Match_{\Sbar_{\alpha_i}, \Tbar_{\alpha_i}, \tilde{\alpha_i}} 
(x_{\sigma_j^{\alpha_i}}, 
y_{\sigma_j^{\alpha_i}}).
\end{align}

Using  (\ref{inductive-step}) and the fact that 
$\phi_{\gamma}$, $\phi_{\gamma'}$ and
$\phi_{\alpha_i}$
are homomorphisms, we have
\begin{align}\label{align1}
 \phi_{\gamma}(x_{\tilde{\gamma}}) &=
\Match_{\Sbar_{\gamma}, \Tbar_{\gamma}, \tilde{\gamma}} 
(x_{\pi(\sigma_j^{\gamma})}, 
\phi_{\gamma}(y_{\sigma_j^{\gamma}})) \\
x_{\gamma'} = \phi_{\gamma'}(x_{\tilde{\gamma'}}) &=
\Match_{\Sbar_{\gamma'}, \Tbar_{\gamma'}, \tilde{\gamma'}} 
(x_{\pi(\sigma_j^{\gamma'})}, 
\phi_{\gamma'}(y_{\sigma_j^{\gamma'}})) \\
x_{\alpha_i} = \phi_{\alpha_i}(x_{\tilde{\alpha_i}}) &=\label{align2}
\Match_{\Sbar_{\alpha_i}, \Tbar_{\alpha_i}, \tilde{\alpha_i}} 
(x_{\pi(\sigma_j^{\alpha_i})}, 
\phi_{\alpha_i}(y_{\sigma_j^{\alpha_i}})) 
\end{align}

The matching formulas above use the combinatorics of 
six different triangulated polygons.  We would like to view them 
all  inside one polygon.  To this end, consider
the triangulated polygons $\Sbar_{\gamma}$ and $\Sbar_{\gamma'}$.
Because $\gamma$ and $\gamma'$ intersect (exactly once) in $S$,
the local neighborhoods around the corresponding points
in $\Sbar_{\gamma}$ and $\Sbar_{\gamma'}$ coincide (there are at least
two triangles in common and perhaps more).  Therefore we can glue
$(\Sbar_{\gamma}, \Tbar_{\gamma})$ and $(\Sbar_{\gamma'}, \Tbar_{\gamma'})$
together at this point, getting a larger polygon
$\widehat{S}$ with triangulation $\widehat{T}=\{\sigma_j\}_j$.
Figure \ref{SurfaceQuad} shows 
the triangulated polygons 
$\Sbar_{\gamma}$ and $\Sbar_{\gamma'}$
associated to $\gamma$ and $\gamma'$, and Figure \ref{Big-Polygon}
shows the polygon $\widehat{S}$ obtained by glueing them together.
\begin{figure}
\centering
\input{Big-Polygon.pstex_t}
\caption{}\label{Big-Polygon}
\end{figure}
Abusing notation, we denote the projection 
$\widehat{T} \to T$ by $\pi$.

Up to isotopy, there are now four unique arcs which form the boundary of a 
quadrilateral $\tilde{Q}$ with diagonals
$\tilde{\gamma}$ and $\tilde{\gamma'}$, and these four arcs together with 
the set of triangles they pass through are
isomorphic to $\tilde{\alpha_i}$ in $\Sbar_{\alpha_i}$. 
Therefore we can view the polygons
$\Sbar_{\gamma'}$ and $\Sbar_{\alpha_i}$ 
and the arcs $\tilde{\gamma'}$ and $\tilde{\alpha_i}$
as sitting inside $\widehat{S}$.
Using this observation together with 
equations (\ref{align1}) to (\ref{align2}), we get
\begin{align}
\phi_{\gamma}(x_{\tilde{\gamma}})  &=
\Match_{\widehat{S},\widehat{T}, \tilde{\gamma}} \label{eq useful}
(x_{\pi(\sigma_j)}, 
\phi(y_{\sigma_j})) \\
x_{\gamma'}  &= \label{align4}
\Match_{\widehat{S},\widehat{T}, \tilde{\gamma'}} 
(x_{\pi(\sigma_j)}, 
\phi(y_{\sigma_j})) \\
x_{\alpha_i}   &=\label{align5}
\Match_{\widehat{S}, \widehat{T}, \tilde{\alpha_i}} 
(x_{\pi(\sigma_j)}, 
\phi(y_{\sigma_j})), 
\end{align}
where $\phi$ denotes the specialization of $y$-variables from 
equation (\ref{eq def 3}).

Because $\gamma, \gamma'$ and the $\alpha_i$ form a quadrilateral
in $S$, we have a generalized Ptolemy relation in $\Acal$ of the form
\begin{equation} \label{S-Ptolemy}
x_{\gamma} x_{\gamma'} = Y_+ x_{\alpha_1} x_{\alpha_3} + Y_- x_{\alpha_2} x_{\alpha_4},
\end{equation}
where $Y_+$ and $Y_-$ can be computed using the elementary laminations
associated to the arcs of the triangulation $T$.

On the other hand, since $\tilde{\gamma}, \tilde{\gamma'}, 
\tilde{\alpha_i}$ form a quadrilateral in 
$\widehat{S}$, we have a generalized Ptolemy relation 
in $\Aprin(\widehat{S})$ of the form 
\begin{equation} \label{Lift-Ptolemy}
x_{\tilde{\gamma}} x_{\tilde{\gamma'}} = \tilde{Y}_+ x_{\tilde{\alpha_1}} x_{\tilde{\alpha_3}} + \tilde{Y}_- x_{\tilde{\alpha_2}} x_{\tilde{\alpha_4}},
\end{equation}
where again $\tilde{Y}_+$ and $\tilde{Y}_-$ can be computed using 
the elementary laminations associated to the arcs of the 
triangulation $\widehat{T}$.

Because the cluster expansion formulas hold in type A, we can 
rewrite  (\ref{Lift-Ptolemy}) as 
\begin{align*}
\Match_{\widehat{S}, \widehat{T}, \tilde{\gamma}} 
(x_{\sigma_j}, y_{\sigma_j}) 
{\Match_{\widehat{S}, \widehat{T}, \tilde{\gamma'}}
(x_{\sigma_j}, y_{\sigma_j})} &=
\tilde{Y}_+ \Match_{\widehat{S}, \widehat{T}, \tilde{\alpha_1}}
(x_{\sigma_j}, y_{\sigma_j})
\Match_{\widehat{S}, \widehat{T}, \tilde{\alpha_3}}(x_{\sigma_j}, y_{\sigma_j})\\
&+ \tilde{Y}_- \Match_{\widehat{S}, \widehat{T}, \tilde{\alpha_2}}
(x_{\sigma_j}, y_{\sigma_j})
\Match_{\widehat{S}, \widehat{T}, \tilde{\alpha_4}}(x_{\sigma_j}, y_{\sigma_j}).
\end{align*} 

Now we specialize variables in the above formula, substituting 
$x_{\pi(\sigma_j)}$ for each $x_{\sigma_j}$ and 
substituting $\phi(y_{\sigma_j})$ for each 
$y_{\sigma_j}$ as 
in equation (\ref{eq def 3}). Using equations
(\ref{align4}) to (\ref{align5}), this gives us
\begin{equation}\label{almost}
\Match_{\widehat{S}, \widehat{T}, \tilde{\gamma}} 
(x_{\pi(\sigma_j)}, \phi(y_{\sigma_j})) \ x_{\gamma'} = 
\phi(\tilde{Y}_+) x_{\alpha_1} x_{\alpha_3} + 
\phi(\tilde{Y}_-) x_{\alpha_2} x_{\alpha_4}.
\end{equation}

We now state a lemma, which we will use immediately and prove 
momentarily.
\begin{lemma}\label{lemma-project}
$\phi(\tilde{Y_+}) = Y_+$ and 
$\phi(\tilde{Y_-}) = Y_-$. 
\end{lemma} 

Using Lemma \ref{lemma-project} and comparing equation (\ref{almost}) to equation (\ref{S-Ptolemy}), we see that 
\begin{equation*} 
x_{\gamma} = 
\Match_{\widehat{S}, \widehat{T}, \tilde{\gamma}} 
(x_{\pi(\sigma_j)}, \phi(y_{\sigma_j})). 
\end{equation*}
But now by equation (\ref{eq useful}), we get the desired result
$$\phi_{\gamma}(x_{\tilde{\gamma}}) = x_{\gamma}.$$ 
\end{proof}

We now turn to the proof of Lemma \ref{lemma-project}.

\begin{proof}
The monomials $Y_{\pm}$ and $\widetilde{Y}_{\pm}$ are defined by
equations (\ref{S-Ptolemy}) and (\ref{Lift-Ptolemy})
and are computed by analyzing how the laminations associated 
to the arcs of $T$ and $\widehat{T}$ cut across the quadrilaterals
$Q \subset S$ and $\tilde{Q} \subset \widehat{S}$.

By the definition of shear coordinate, the only laminations
which can make a contribution to the $Y$'s (respectively,
$\widetilde{Y}$'s) are those intersecting $\gamma$ and two opposite
sides of $Q$
(respectively, $\tilde{\gamma}$ and two opposite sides of $\Qbar$).
In particular, these laminations must be a subset of the laminations 
$L_{\tau_{i_1}},\dots,L_{\tau_{i_d}}$ (respectively, $L_{\sigma_1},\dots,L_{\sigma_d}$, where $\sigma_1,\dots,\sigma_d$ are arcs of 
the triangulation of $\Sbar_{\gamma} \subset 
\widehat{S}$).

We claim that for every such arc $\tau_{i_k}$ in $T$
which is {\it not} a loop or radius, the lamination 
$L_{\tau_{i_k}}$ has the same local configuration in $Q$
as $L_{\sigma_k}$ does in $\Qbar$.  (Recall that 
$\pi(\sigma_k) = \tau_{i_k}$.)   To see why this is true,
recall that 
$\Sbar_{\gamma}$ is constructed by following $\gamma$ in 
$S$, keeping track of which arcs it is crossing, and glueing
together a sequence of triangles accordingly.  In $S$, 
we can imagine applying an isotopy to $\gamma'$, so that it follows
$\gamma$ as long as possible without introducing unnecessary crossings
with arcs of the triangulation, 
before leaving $\gamma$ to travel
along a different side of the quadrilateral. Recall that each 
elementary laminate
$L_{\tau_{i_k}}$ is obtained by taking the corresponding arc $\tau_{i_k}$
and simply rotating its endpoints a tiny amount counterclockwise.
So a laminate 
$L_{\tau_{i_k}}$ will make a nonzero contribution 
to the shear coordinates if and only if 
it crosses a side of $Q$ (say $\alpha_2$), then
 $\gamma$ and $\gamma'$, then the opposite side $\alpha_4$ of $Q$, 
without crossing $\alpha_1$ or $\alpha_3$ in between.
(The corresponding arc $\tau_{i_k}$ will either have exactly
the same crossings with $Q$, or it may have an endpoint coinciding
with an endpoint of $\alpha_2$.)
In this case the lift $\sigma_k$ of $\tau_{i_k}$ will be an arc
of $\widehat{S}$ which is an internal arc 
common to both $\Sbar_{\gamma}$ and $\Sbar_{\gamma'}$;
it is clear by inspection that it will cut across the two
opposite sides $\tilde{\alpha}_2$ and $\tilde{\alpha}_4$ of 
$\Qbar$, see Figure \ref{Big-Polygon}.
 
Therefore the corresponding contributions to the shear
coordinates will be the same from both the arc 
$\tau_{i_k}$ and 
its lift $\sigma_k$.  
Since 
$\phi(y_{\sigma_j}) = y_{\pi(\sigma_j)}$ if 
$\pi(\sigma_j)$ is not a loop or radius, we can henceforth 
ignore the contributions to the $Y$-monomials which come 
from such arcs $\tau_{i_k}$ and their lifts $\sigma_k$.

What remains is to 
analyze the contribution to the shear coordinates
from a self-folded triangle in $T$, and the contributions
to the shear coordinates from its lift in $\Tbar$.  
We will carefully analyze a  representative example, and 
then explain what happens in the remaining cases.

\begin{figure}
\begin{center}
\input{Lamin.pstex_t}
\end{center}
\caption{}
\label{fig lamin}
\end{figure}

The leftmost figure in Figure \ref{fig lamin} shows the quadrilateral
$Q$ in $S$; $\gamma$ is the arc bisecting it.  We've also displayed a
self-folded triangle in $T$ 
with a loop $\tau_{i_1}$ and radius $\tau_{i_2}$ to a puncture $p$.  
Just to the right of 
this is the same quadrilateral, and the elementary laminations 
$L_{\tau_{i_1}}$ and $L_{\tau_{i_2}}$.  To the right of that is 
the quadrilateral $\Qbar$, bisected by the arc $\tilde{\gamma}$.  
Here, $\sigma_1$, $\sigma_2$, and $\sigma_3$ are the lifts of 
$\tau_{i_1}$ and $\tau_{i_2}$ in $\Qbar$;
$\pi(\sigma_1) = \pi(\sigma_3) = \tau_{i_1}$ and 
$\pi(\sigma_2) = \tau_{i_2}$.
The rightmost figure in Figure \ref{fig lamin} shows $\Qbar$ together with the elementary 
laminations $L_{\sigma_1}$, $L_{\sigma_2}$, and $L_{\sigma_3}$.

Computing shear coordinates, we get 
$b_{\gamma}(T, L_{\tau_{i_1}}) = 
b_{\gamma}(T,L_{\tau_{i_2}}) = -1$, and also 
$b_{\tilde{\gamma}}(\Tbar, L_{\sigma_1}) = 
b_{\tilde{\gamma}}(\Tbar, L_{\sigma_2}) = 
b_{\tilde{\gamma}}(\Tbar, L_{\sigma_3}) = -1$.
Therefore the $Y_-$ monomial in $R$ gets a contribution of 
$y_{\tau_{i_1}} y_{\tau_{i_2}} = 
y_{\tau_{i_2}^{(p)}} y_{\tau_{i_2}}$, and the 
$\widetilde{Y}_-$ monomial in $\Rbar$ gets a contribution of 
$y_{\sigma_1} y_{\sigma_2} y_{\sigma_3}$.  
Applying $\phi$ to this gives
$\phi(y_{\sigma_1} y_{\sigma_2} y_{\sigma_3}) = 
y^2_{\tau_{i_2}^{(p)}} \frac{y_{\tau_{i_2}}}{y_{\tau_{i_2}^{(p)}}} = 
y_{\tau_{i_2}} y_{\tau_{i_2}^{(p)}}$, as desired. 

\begin{figure}
\begin{center}
\input{Lamin2.pstex_t}
\end{center}
\caption{}
\label{fig lamin2}
\end{figure}
Figure \ref{fig lamin2} shows two more ways that a self-folded triangle
from $T$ might interact with the quadrilateral $Q$.  Each row of the figure
displays the self-folded triangle and  the corresponding elementary 
laminations, and the lift of the self-folded triangle in $\Tbar$ and the 
corresponding elementary laminations.   
In the example of the top row, we have 
$b_{\gamma}(T, L_{\tau_{i_1}}) = 0$,
$b_{\gamma}(T,L_{\tau_{i_2}}) = -1$,  
$b_{\tilde{\gamma}}(\Tbar, L_{\sigma_1}) = 0$,
$b_{\tilde{\gamma}}(\Tbar, L_{\sigma_2}) = -1$,
and $b_{\tilde{\gamma}}(\Tbar, L_{\sigma_3}) = -1$.
In the example of the second row, we have 
$b_{\gamma}(T, L_{\tau_{i_1}}) = -1$,
$b_{\gamma}(T,L_{\tau_{i_2}}) = 0$,  
$b_{\tilde{\gamma}}(\Tbar, L_{\sigma_1}) = -1$,
$b_{\tilde{\gamma}}(\Tbar, L_{\sigma_2}) = 0$,
and $b_{\tilde{\gamma}}(\Tbar, L_{\sigma_3}) = 0$.

All other configurations of a self-folded triangle from $T$ either
make no contribution to the shear coordinates indexed by $\gamma$
(in which case the same is true for the lift of that self-folded triangle),
or come from either
rotating or reflecting one of the configurations from Figure \ref{fig lamin2}.
We leave it as an exercise for the reader to check that just as in the example
of Figure \ref{fig lamin}, the monomials corresponding to the shear coordinate
$b_{\tilde{\gamma}}(\Tbar, L_{\sigma_1} \cup L_{\sigma_2} \cup L_{\sigma_3})$
map via $\phi$ to the monomials corresponding to the shear coordinate
$b_{\gamma}(T, L_{\tau_{i_1}} \cup L_{\tau_{i_2}})$.

It may seem that  our arguments and figures rely on the assumption that 
the quadrilateral $Q$ has four distinct edges and four distinct vertices.
However, one can always slightly deform a quadrilateral with some identified
vertices or edges to get a quadrilateral with four distinct edges and vertices;
see Figure \ref{opening}.
It is not hard to see that the shear coordinates of a lamination 
with respect to a given arc are unchanged if we work instead with this deformation,
so our arguments extend to this situation.

\begin{figure}
\includegraphics[height=1.6in]{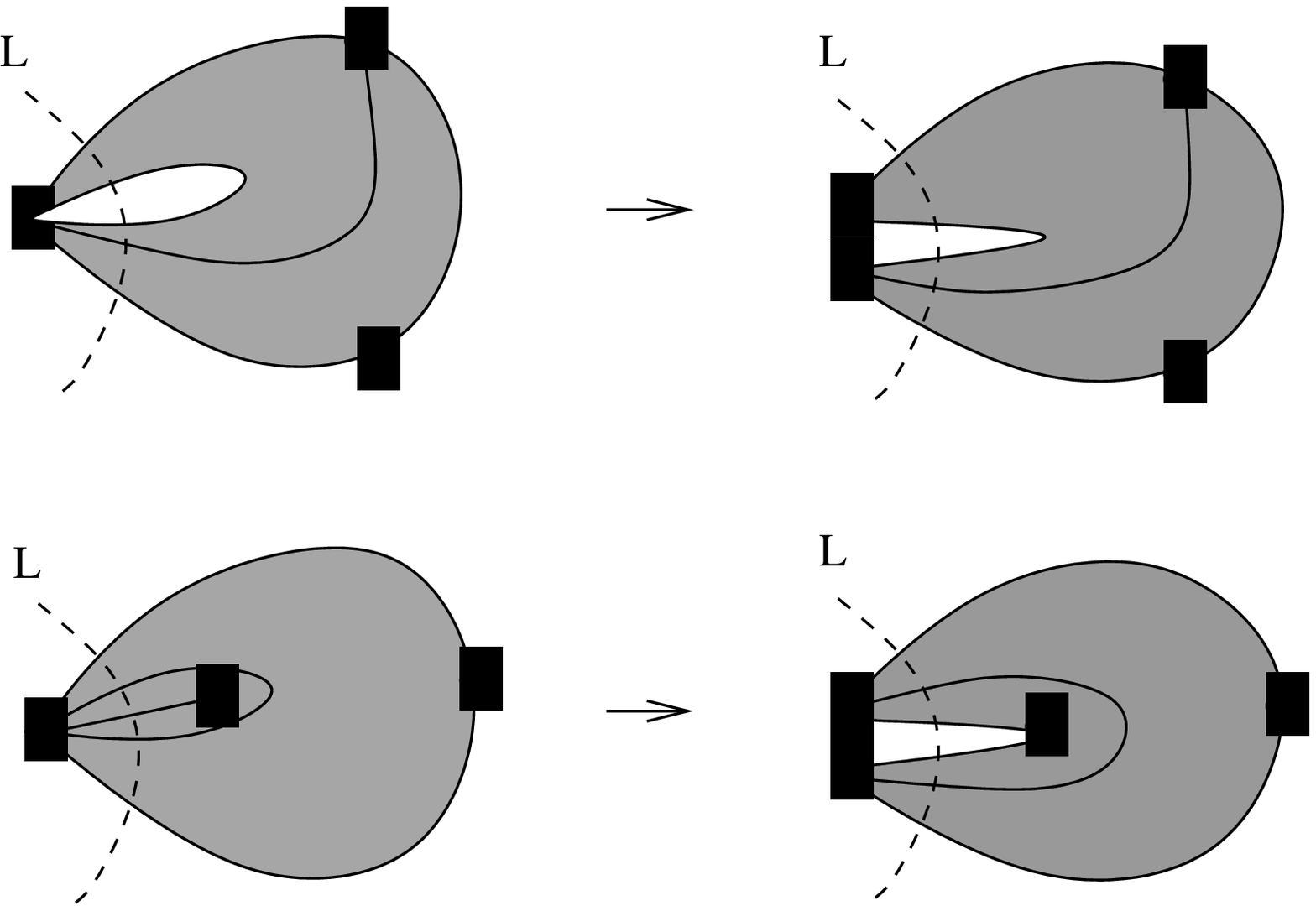}
\caption{}
\label{opening}
\end{figure}

This completes the proof of the claim, and hence the theorem.
\end{proof}

We are now ready to prove Theorem \ref{thm main}.

\begin{proof}
We have fixed a bordered surface $(S,M)$, an ordinary arc
$\gamma$, and an ideal 
triangulation $T$ with internal arcs $\tau_1,\dots,\tau_n$
and boundary segments $\tau_{n+1},\dots,\tau_{n+c}$.
This determines a cluster algebra $\A$ with principal coefficients
with respect to $\Sigma_T$.
From $(S,M)$, $T$, and $\gamma$ we have built a polygon
$\Sbar_{\gamma}$ with a ``lift" $\tilde{\gamma}$ of $\gamma$,
together with a triangulation 
$\Tbar_{\gamma}$ with internal arcs 
$\sigma_1, \dots, \sigma_d$ and boundary segments
$\sigma_{d+1}, \dots, \sigma_{2d+3}.$
We have a surjective map 
$\pi:\{\sigma_1,\dots,\sigma_{2d+3}\} \to \{\tau_1,\dots,\tau_{n+c}\}$.
Furthermore, 
we have associated a type $A_d$ cluster algebra
$\Abar_{\gamma}$ to $\Sbar_{\gamma}$, and a homomorphism
$\phi_{\gamma}$ from $\Abar_{\gamma}$ to $\Frac(\A)$.
This map has the property that
for each $\sigma_i \in \{\sigma_1,\dots, \sigma_{2d+3}\}$, $\phi_{\gamma}(x_{\sigma_i}) = x_{\pi(\sigma_i)}$.
Additionally, by Theorem \ref{phi-map}, 
$\phi_{\gamma}(x_{\tilde{\gamma}}) = x_{\gamma}$.

Since $\Abar_{\gamma}$ is a type A cluster algebra, we can compute the Laurent expansion
of $x_{\tilde{\gamma}}$ with respect to $\Sigma_{\Tbar_{\gamma}}$.
More specifically, \cite{MS} proved  Theorem \ref{thm main} for unpunctured
surfaces, which in particular includes polygons.  At this point the reader may worry 
that Theorem \ref{thm main} cannot be applied to $\Abar_{\gamma}$, as
$\Abar_{\gamma}$ is not simply a cluster algebra with principal coefficients associated to 
a triangulation -- it has 
extra coefficient variables corresponding to 
the boundary segments of $\Tbar_{\gamma}$.  However, consider the triangulated polygon
$(\Sbar'_{\gamma}, \Tbar'_{\gamma})$ that we obtain from 
$(\Sbar_{\gamma}, \Tbar_{\gamma})$ by adding $c$ triangles around
the boundary, each one with an edge at a boundary segment, and 
consider the corresponding cluster algebra $\Abar'_{\gamma}$ with principal coefficients.  
This is still a type A cluster algebra so we can use the result of \cite{MS} to 
apply Theorem \ref{thm main} to expand the cluster variable corresponding to 
$\tilde{\gamma}$ with respect to  $\Sigma_{\Tbar'_{\gamma}}$ in 
$\Abar'_{\gamma}$.  And clearly the formula giving the Laurent expansion of 
$x_{\tilde{\gamma}}$ with respect to 
$\Sigma_{\Tbar_{\gamma}}$ in $\Abar_{\gamma}$ is identical to the formula giving the 
Laurent expansion of the cluster variable corresponding to $\tilde{\gamma}$
with respect to 
$\Sigma_{\Tbar'_{\gamma}}$ in $\Abar'_{\gamma}$. 

Therefore we can apply Theorem \ref{thm main} to get the cluster expansion of 
$x_{\tilde{\gamma}}$ with respect to $\Sigma_{\Tbar_{\gamma}}$ in $\Abar_{\gamma}$: in other words,
we build a graph $G_{\Tbar_{\gamma}, \tilde{\gamma}}$, and the cluster expansion
is given as a generating function for perfect matchings of this graph.
The variables in the expansion are $x_{\sigma_1}, \dots, x_{\sigma_{2d+3}}$
and $y_{\sigma_1},\dots, y_{\sigma_d}$. Therefore, since 
$\phi_{\gamma}$ is a homomorphism  
such that 
$\phi_{\gamma}(x_{\sigma_i}) = x_{\pi(\sigma_i)}$ for $1 \leq i \leq 2d+3$, and 
$\phi_{\gamma}(x_{\sigma}) = x_{\pi(\sigma)}$, 
computing the Laurent expansion for $x_{\gamma}$ with respect to $T$
in $\A$ amounts to applying a specialization of variables 
to the generating function for matchings in $G_{\Tbar_{\gamma}}$.

It follows from the construction in Section \ref{sect Sbar} that 
the unlabeled graph $G_{\Tbar_{\gamma}, \tilde{\gamma}}$ is equal to the 
unlabeled graph $G_{T,\gamma}$: this is because the 
triangulated polygon $(\Sbar_{\gamma}, \Tbar_{\gamma})$ is built so that the local 
configuration of triangles that $\tilde{\gamma}$ passes through is the same
as the local configuration of triangles that $\gamma$ passes through in $T$.
Additionally, an edge of $G_{\Tbar_{\gamma},\tilde{\gamma}}$ labeled 
$\sigma_i$ corresponds to an edge of $G_{T, \gamma}$ labeled $\pi(\sigma_i)$.

Comparing the definition of $\phi_{\gamma}$ on the coefficients
$y_{\sigma_j}$ (Equation \ref{eq def 3}) to the definition of 
the specialized height monomial (Definition \ref{height}), we see now 
that applying $\phi_{\gamma}$ to the generating function for matchings
in $G_{\Tbar_{\gamma}, \tilde{\gamma}}$ yields exactly the formula of Theorem \ref{thm main}
applied to $S$, $T$, and $\gamma$.  This completes the proof of the theorem.
\end{proof}

\section{Positivity for notched arcs in the coefficient-free case}\label{sec quick}

In this section we will use Proposition \ref{coeff-free} together
with our positivity result for ordinary arcs to prove the positivity result 
for notched arcs (in the coefficient-free case).
We begin by proving Proposition \ref{coeff-free}.
\begin{proof}
Fix a bordered surface $(S,M)$ and an ideal triangulation $T$ of $S$.
Let $\A$ be the associated coefficient-free cluster algebra.
Consider a puncture $p$, a different marked point $q$, and an ordinary 
arc $\gamma$ between $p$ and $q$.  Consider a third marked point $s$
and an ordinary arc $\rho$ between $p$ and $s$.  Let $\alpha$ and $\beta$
be the two ordinary arcs between $q$ and $s$ which are sides of a bigon
so that the triangles with sides $\alpha, \gamma, \rho$ and 
$\beta, \gamma, \rho$ have simply-connected interior.  See the left-hand-side of 
Figure \ref{fig quadrilaterals}.

\begin{figure}
\input{TwoQuads.pstex_t} 
\caption{}
\label{fig quadrilaterals}
\end{figure}

Then in $\A$, $x_{\gamma} x_{\rho^{(p)}} = x_{\alpha} + x_{\beta} = x_{\gamma^{(p)}} x_{\rho}$, 
which implies that $\frac{x_{\gamma^{(p)}}}{x_{\gamma}} = \frac{x_{\rho^{(p)}}}{x_{\rho}}.$
In other words, the ratio 
$\frac{x_{\gamma^{(p)}}}{x_{\gamma}}$ is an invariant which 
we will call $z_p$, which 
depends only on $p$, and not the choice of ordinary arc $\gamma$ incident to $p$.

If we take the same bigon with sides $\alpha$ and $\beta$ and notch all three arcs emanating
from $q$, we get
$x_{\gamma^{(pq)}} x_{\rho} = x_{\alpha^{(q)}} + x_{\beta^{(q)}} = 
   z_q x_{\alpha} + z_q x_{\beta} = z_q (x_{\gamma^{(p)}} x_{\rho}).$
Therefore $x_{\gamma^{(pq)}} = z_q x_{\gamma^{(p)}} = z_p z_q x_{\gamma}$.

So far we have treated the case where $\gamma$ has two distinct endpoints.
Now suppose that $\gamma$ is an ordinary loop based at $p$
which does not cut out a once-punctured monogon.
Then choose a marked point $q$ in the interior of $\gamma$, another marked 
point $s$, and an ordinary arc from $q$ to $s$ which crosses $\gamma$ once.
Let $\alpha_1$ and $\alpha_2$ be ordinary arcs between $p$ and $s$,
and $\alpha_3$ and $\alpha_4$ be ordinary arcs between $p$ and $q$
so that the $\alpha_i$ form the sides of a quadrilateral with simply-connected
interior.  See the right-hand-side of Figure \ref{fig quadrilaterals}.  Then 
$x_{\gamma} x_{\rho} = x_{\alpha_1} x_{\alpha_3} + x_{\alpha_2} x_{\alpha_4}$ and  
$x_{\gamma^{(pp)}} x_{\rho} = x_{\alpha_1^{(p)}} x_{\alpha_3^{(p)}} + 
x_{\alpha_2^{(p)}} x_{\alpha_4^{(p)}}$, where $x_{\alpha_i^{(p)}} = z_p x_{\alpha_i}$.
It follows that $x_{\gamma^{(pp)}} = z_p^2 x_{\gamma}$.

What remains is to give an explicit expression for the quantity $z_p$. 
For $\gamma$ an ordinary arc with distinct endpoints, we know that  
$z_p = \frac{x_{\gamma^{(p)}}}{x_{\gamma}}$ does not depend on the choice of $\gamma$, 
so we make the simplest possible choice.
Choose $\tau_1$ to be any arc of $T$ which is incident to $p$, so that 
$x_{\tau_1}$ is in the initial cluster associated to $T$.  Let 
$q$ denote the other endpoint of $\tau_1$, and let $\ell$ be the loop based at $q$
cutting out 
a once-punctured monogon around $p$.  Then $x_{\ell} = x_{\tau_1} x_{\tau_1^{(p)}}$, so 
$z_p = \frac{x_{\tau_1^{(p)}}}{x_{\tau_1}} = \frac{x_{\ell}}{x_{\tau_1}^2}$.  
The variable $x_{\tau_1}$ is an initial cluster variable 
and we can compute the Laurent expansion of $x_{\ell}$ using 
Theorem \ref{thm main}. 

It is easy to see that the graph $\overline{G}_{\ell,T}$ consists of 
$h-1$ tiles with diagonals $\tau_2,\dots,\tau_h$, where 
$\tau_1, \tau_2,\dots, \tau_h$ are the arcs of $T$ emanating from $p$
(say in clockwise order around $p$).  The tiles are glued in an alternating fashion
so as to form a ``zig-zag" shape, see Figure \ref{zig-zag}.  Also, 
$\tau_1$ is the label of the two outer edges of $\overline{G}_{\ell,T}$.  
Now a straightforward induction on $h$ shows that applying Theorem \ref{thm main} to 
$\ell$ gives 
$$x_{\ell} = \frac{x_{\tau_1} \sum_{i=0}^{h-1} \sigma^i ( x_{[\tau_1,\tau_2]}  x_{\tau_3} x_{\tau_4} 
\cdots x_{\tau_h})}{x_{\tau_2}\cdots  x_{\tau_h}},
$$ where $\sigma$ is the cyclic permutation $(1,2,3,\dots, h)$ acting on subscripts.
Dividing this expression by $x_{\tau_1}^2$ gives the desired expression for $z_p$.
\begin{figure}
\input{zigzag.pstex_t} 
\caption{}
\label{zig-zag}
\end{figure}
This completes the proof.
\end{proof}

\begin{Cor}
Fix a bordered surface $(S,M)$, a tagged triangulation $T$ of the form
$\iota(T^\circ)$ where $T^\circ$ is an ideal triangulation,  and let $\A$ be the 
corresponding coefficient-free cluster algebra.  Then
the Laurent expansion of a cluster variable corresponding to a notched arc
with respect to $\Sigma_T$ is positive.
\end{Cor}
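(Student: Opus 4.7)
The plan is to deduce this corollary directly from Proposition \ref{coeff-free} combined with the coefficient-free specialization of Theorem \ref{thm main}, together with the manifestly positive form of $z_p$ given in Definition \ref{zpDef}. The key observation is that every notched tagged arc falls into exactly one of three shapes, and in each shape the cluster variable factors as a product of quantities whose positivity is already known.

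First I would enumerate the cases. Let $\tau$ be a tagged arc with at least one notched end in the tagged triangulation $T = \iota(T^\circ)$. Since both ends of a loop must be tagged the same way, and since a tagged arc is not allowed to be an arc that cuts out a once-punctured monogon, $\tau$ must be one of the following: (a) a singly-notched arc $\gamma^{(p)}$ between distinct marked points $p$ and $q$ with $\gamma$ an ordinary arc from $p$ to $q$; (b) a doubly-notched arc $\gamma^{(pq)}$ with $p \neq q$ and $\gamma$ an ordinary arc from $p$ to $q$; or (c) a doubly-notched loop $\gamma^{(pp)}$ based at a puncture $p$ with $\gamma$ an ordinary loop that does not cut out a once-punctured monogon. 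In every case the underlying ordinary arc $\gamma$ satisfies the hypotheses of Proposition \ref{coeff-free}.

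Next I would invoke Proposition \ref{coeff-free} to obtain the identities
\[
x_{\gamma^{(p)}} = z_p\, x_\gamma, \qquad x_{\gamma^{(pq)}} = z_p\, z_q\, x_\gamma, \qquad x_{\gamma^{(pp)}} = z_p^2\, x_\gamma,
\]
and check that each factor on the right is already known to be a positive Laurent polynomial in $\xx_T$. For $x_\gamma$ this is immediate from Theorem \ref{thm main}, whose matching expansion is a sum of monomials in $\xx_T$ divided by the crossing monomial (after specializing all $y$'s to $1$). For $z_p$ this is immediate from Definition \ref{zpDef}: either $z_p = x_{r^{(p)}}/x_r$ with both $x_r$ and $x_{r^{(p)}}$ in $\xx_T$, or $z_p$ is an explicit sum of products of initial cluster variables divided by a monomial in initial cluster variables, with every coefficient equal to $+1$. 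Since the product of positive Laurent polynomials is a positive Laurent polynomial, each of the three identities above exhibits the expansion of $x_\tau$ as a non-negative integer combination of Laurent monomials in $\xx_T$.

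Because Proposition \ref{coeff-free} will be proved earlier in Section \ref{sec quick} (its proof uses only the generalized Ptolemy relations and Theorem \ref{thm main}), the argument above is essentially a one-line corollary, requiring no further technical machinery. The only point demanding any care is verifying that the three shapes above really do exhaust the notched tagged arcs, and in particular that loops cutting out once-punctured monogons do not need to be handled separately; this is the content of the combinatorial observation in the first paragraph and is not a genuine obstacle.
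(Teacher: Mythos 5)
Your proposal is correct and takes essentially the same route as the paper: the paper's proof of this corollary is the one-line remark that it ``follows immediately from our positivity result for cluster variables corresponding to ordinary arcs, together with Proposition \ref{coeff-free}.'' What you have done is unpack that sentence — enumerate the three possible shapes of a notched tagged arc, check that the underlying ordinary arc $\gamma$ satisfies the hypotheses of Proposition \ref{coeff-free} (in particular, that $\gamma$ cannot cut out a once-punctured monogon, since tagged arcs are by definition built from arcs avoiding that shape), and verify that each factor $z_p$, $z_q$, and $x_\gamma$ is a positive Laurent polynomial in $\xx_T$, the last by the coefficient-free specialization of Theorem \ref{thm main} and the former two by inspection of Definition \ref{zpDef} (noting that $x_{[\tau_i,\tau_{i+1}]}$ and $x_{\tau_j}$ are either $1$, an initial cluster variable, or, for a loop of a self-folded triangle, a product $x_r x_{r^{(p')}}$ of two initial cluster variables). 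Nothing is missing.
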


\begin{proof}
This follows immediately from our positivity result for cluster variables
corresponding to ordinary arcs, together with Proposition \ref{coeff-free}.
\end{proof}


\section{Proofs of Laurent expansions for notched arcs} \label{sec Finish}

In this section, we prove the results of Section \ref{sect notched}, giving cluster expansion formulas for cluster variables corresponding to tagged arcs. We use algebraic identities for cluster variables to reduce the proofs of Theorem \ref{thm single} and Theorem \ref{thm double} to combinatorial statements about perfect matchings, $\zg$-symmetric matchings, and $\zg$-compatible pairs of matchings.

In particular, for the case of a tagged arc $\zg^{(p)}$ with a single notch at puncture $p$ (Theorem \ref{thm single}), we use the equation $x_{\ell_p} = x_{\zg} x_{\zg^{(p)}}$ and the fact that Theorem \ref{thm main} gives us matching formulas for two out of three of these terms.  For the case of a tagged arc $\zg^{(pq)}$ with a notch at both ends, punctures $p$ and $q$ (Theorem \ref{thm double}), we use an identity (described in Section \ref{subsect algdouble}) involving $x_{\zg^{(pq)}}$ and three other cluster variables, where all other terms except $x_{\zg^{(pq)}}$ have matching formulas from Theorems \ref{thm main} and \ref{thm single}.  In both of these cases, the fact that the desired matching formulas for $x_{\zg^{(p)}}$ and $x_{\zg^{(pq)}}$ satisfy combinatorial identities analogous to  the algebraic identities coming from the cluster algebra completes the proofs of Theorems \ref{thm single} and \ref{thm double}.  Before giving these proofs, we introduce some notation and auxiliary lemmas.   We begin by describing the shape of the graph $G_{T^\circ,\ell_p}$ in more detail.   

\begin{definition} 
Let $H_{\zeta}$ be the connected subgraph of $G_{T^\circ, \ell_p}$ consisting of the union of the tiles  $G_{\zeta_1}$ through $G_{\zeta_{e_p}}$  
(see the notation of Section \ref{sect notched} and Figure \ref{LoopSnakeGraph}).
\end{definition}

\begin{remark}\label{Hzeta}
It follows from the construction of $G_{T^\circ,\ell_p}$ in Section \ref{sect graph} and the fact that $\zeta_1$ through $\zeta_{e_p}$ all share a single endpoint, that $H_{\zeta}$ contains no consecutive triple of tiles all of which lie in the same row or column.  
\end{remark}

\begin{remark} \label{BendingAssumpt} Since the arcs $\tau_{i_d}, \zeta_1, \zeta_{e_p}$ are the sides of a triangle in $T^\circ$, and $\tau_{i_{d-1}}$
and  $\tau_{i_d}$ share a vertex, it follows that in the graph $G_{T^\circ,\ell_p}$ either the three tiles $G_{\tau_{i_{d-1}}}, G_{\tau_{i_d}}$, and $G_{\zeta_1}$ or the three tiles $G_{\tau_{i_{d-1}}}, G_{\tau_{i_d}}$ and $G_{\zeta_{e_p}}$ lie in a single row or column.
Thus, we may assume without loss of generality that tiles $G_{\tau_{i_{d-1}}}$, $G_{\tau_{i_d}}$, and $G_{\zeta_1}$ lie in a single row and tiles $G_{\tau_{i_{d-1}}}$, $G_{\tau_{i_d}}$, and $G_{\zeta_{e_p}}$ do not.  See Figure \ref{LoopSnakeGraphWV}. 
\end{remark}

\begin{lemma} \label{twoends} If $P$ is a perfect matching of $G_{T^\circ,\ell_p}$ then $P$ restricts to a perfect matching on at least one of its two ends.  More precisely,  
$P|_{G_{T^\circ,\gamma_p,1}}$ is a perfect matching
of  
${G_{T^\circ,\gamma_p,1}}$,
or the analogous condition must hold for $P|_{G_{T^\circ,\gamma_p,2}}$.
\end{lemma}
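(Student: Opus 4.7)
My plan is to prove the contrapositive: assuming neither $P|_{G_{T^\circ,\gamma_p,1}}$ nor $P|_{G_{T^\circ,\gamma_p,2}}$ is a perfect matching, I will derive a contradiction. First observe that each end $G_{T^\circ,\gamma_p,i}$ meets $H_\zeta$ in a single edge $e_i$ (the shared edge with $H_\zeta$), and the only vertices of $G_{T^\circ,\gamma_p,i}$ that can be matched by $P$ to something outside the end are the two endpoints $v_i, w_i$ of $e_i$. Since $G_{T^\circ,\gamma_p,i}\cong G_{T^\circ,\gamma}$ has an even number of vertices, counting the vertices of $G_{T^\circ,\gamma_p,i}$ covered by $P|_{G_{T^\circ,\gamma_p,i}}$ modulo $2$ forces a dichotomy: either both of $v_i,w_i$ are matched inside $G_{T^\circ,\gamma_p,i}$ (in which case $P|_{G_{T^\circ,\gamma_p,i}}$ is a perfect matching) or both are matched through edges of $H_\zeta\setminus\{e_i\}$. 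Under my hypothesis the second case holds for both $i=1,2$, so $P$ uses neither $e_1$ nor $e_2$ and $P\cap H_\zeta$ is a perfect matching of $H_\zeta^*:=H_\zeta\setminus\{e_1,e_2\}$.

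I then show that $H_\zeta^*$ admits no perfect matching. In $H_\zeta^*$ each of $v_1, w_1, v_2, w_2$ has degree one, so each is forced to be matched through its unique remaining incident edge. Inspecting the tile descriptions of $G_{\zeta_1}$ and $G_{\zeta_{e_p}}$ from Section \ref{sect tiles}, these four forced edges together cover all eight vertices of $G_{\zeta_1}$ and $G_{\zeta_{e_p}}$ and ``consume'' the four graph vertices corresponding to $q_{\zeta_1}, q_{\zeta_2}, q_{\zeta_{e_p-1}}, q_{\zeta_{e_p}}$ (the non-$p$ opposite vertices of the relevant triangles, which by Remark \ref{Hzeta} are identified across the three consecutive tiles of the zig-zag in which each one appears). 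The forcing then propagates inward: in each remaining tile $G_{\zeta_i}$ with $2\le i\le e_p-1$, two of the four vertices of the tile are already consumed, so the $p$-vertex $p_i$ of that tile must be matched to whichever of $q_{\zeta_{i-1}}, q_{\zeta_{i+1}}$ is still available, through the unique corresponding edge. If $k$ steps of end-$1$ propagation and $\ell$ steps of end-$2$ propagation are carried out, end $1$ matches $p_2,\ldots,p_{k+1}$ and consumes $q_{\zeta_3},\ldots,q_{\zeta_{k+2}}$, while end $2$ matches $p_{e_p-1},\ldots,p_{e_p-\ell}$ and consumes $q_{\zeta_{e_p-2}},\ldots,q_{\zeta_{e_p-\ell-1}}$. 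Covering every interior $p$-vertex exactly once then forces $k+\ell=e_p-2$, whereas disjointness of the consumed $q$-vertices forces $k+\ell\le e_p-4$; these are incompatible, so $H_\zeta^*$ has no perfect matching, contradicting Step $1$ and completing the proof.

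The main obstacle is the propagation analysis in the second step: one must use Remark \ref{Hzeta} carefully to identify which graph vertices of $H_\zeta$ are shared (and thus ``consumed'') across consecutive tiles of the zig-zag, so that at each stage the forced matching on the next tile is genuinely uniquely determined. Once this tile-by-tile bookkeeping is in place, the final index arithmetic closing the contradiction is immediate, and the few small cases ($e_p=2, 3$) reduce to a direct enumeration of the perfect matchings of $H_\zeta$ that implement the same inequality.
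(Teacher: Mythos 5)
Your proof is correct and its skeleton matches the paper's: argue by contradiction, use a parity count to conclude that if both restrictions fail then $v_i,w_i$ are both matched into $H_\zeta$ (away from $e_i$), and then show the resulting four forced edges cannot coexist in a perfect matching of $G_{T^\circ,\ell_p}$. Where you genuinely diverge is in the last step. The paper disposes of it in a single sentence by invoking Remark~\ref{Hzeta} (no three consecutive tiles of $H_\zeta$ aligned) with no further explanation; you instead make the conflict explicit with the $q_{\zeta_i}$/$p_i$ bookkeeping and the $k+\ell$ counting. That bookkeeping is sound — the zigzag property is exactly what guarantees each ``$p$-vertex'' $p_i$ of $G_{\zeta_i}$ (for $2\le i\le e_p-1$) is a vertex of $G_{\zeta_i}$ alone, with precisely two incident edges in $H_\zeta$, both going to the shared ``$q$-vertices'' $q_{i-1}, q_{i+1}$ — but it is more elaborate than needed. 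A shorter closing: the four forced edges consume the four $q$-vertices $q_1,q_2,q_{e_p-1},q_{e_p}$ together with the four $G_{\zeta_1}$/$G_{\zeta_{e_p}}$-local corners, leaving $e_p-2$ free $p$-vertices but only $e_p-4$ free $q$-vertices; since every $p$-vertex must be matched to a $q$-vertex, pigeonhole gives the contradiction. Your $k,\ell$ propagation is really just this count restated. One small caution: for $e_p=2,3$ the contradiction is not a count shortfall but an outright overlap of the four forced edges (the tiles $G_{\zeta_1}$ and $G_{\zeta_{e_p}}$ share vertices), so the ``direct enumeration'' you defer to should make that explicit rather than re-running the same inequality.
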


\begin{proof}
See Figure \ref{LoopSnakeGraphWV}.  
We let $w_1$ (respectively $w_2$) denote the other vertex
of the edge labeled $\zeta_{e_p}$ (respectively $\zeta_1$) 
incident to $v_1$ (respectively $v_2$).
Suppose that $P$ is a perfect matching of $G_{T^\circ,\ell_p}$ whose 
restriction to each of the subgraphs $G_{T^\circ,\gamma_p,i}$ is not a perfect matching.  The restriction of $P$ to $G_{T^\circ,\gamma_p,1}$ is not a perfect matching if and only if $P$ contains the edge labeled $\zeta_2$ incident to vertex $v_1$.  Then $P$ must also contain the edge labeled $\tau_{i_{d}}$ on the same tile because otherwise the vertex $w_1$ could only be covered by the edge labeled $\tau_{i_{d-1}}$ and this would leave a connected component with an odd number of vertices to match together.

\begin{figure}
\input{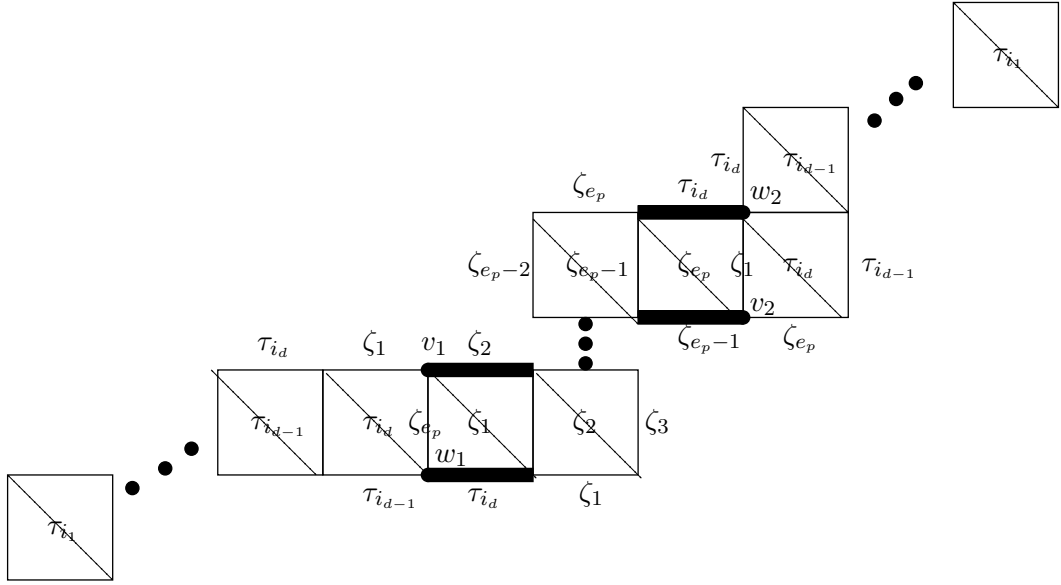}
\caption{
The graph $G_{T^\circ,\zLp}$. 
There exists no perfect matching of $G_{T^\circ,\ell_p}$ containing the highlighted edges.  Here $e_p$ is even.}
\label{LoopSnakeGraphWV}
\end{figure}

Similarly, the restriction of $P$ to $G_{T^\circ,\gamma_p,2}$ is not a perfect matching if and only if $P$ contains the edge labeled $\zeta_{e_p-1}$ incident to vertex $v_2$.  Then $P$ must also contain the edge labeled $\tau_{i_d}$ incident to $w_2$ on this same tile.  However, no perfect matching $P$ can contain all four of these edges since by Remark \ref{Hzeta}, $H_\zeta$ contains no consecutive triple of tiles lying in a single row or column.
Thus we have a contradiction.
\end{proof}

\subsection{Proof of the expansion formula for arcs with a single notch} \label{SingleNotchedProofs}

For the proof of Theorem \ref{thm single}, we also need the following fact.

\begin{lemma}  \label{yheight}
The minimal matching $P_-$ 
of $G_{T^\circ,\ell_p}$ 
is a $\gamma$-symmetric matching.
\end{lemma}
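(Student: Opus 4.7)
The plan is to exploit the intrinsic palindromic symmetry of $\ell_p$. Viewed as a curve, $\ell_p$ starts at its basepoint, travels along $\gamma$ toward the puncture $p$, wraps once around $p$ through the triangle $\Delta_{d+1}$ and the arcs $\zeta_1,\ldots,\zeta_{e_p}$, and returns along $\gamma$. Thus the ordered sequence of arcs of $T^\circ$ crossed by $\ell_p$ is palindromic, with center lying in the wrap-around region. At the level of the graph, this palindrome lifts to a candidate order-reversing involution $\iota$ on the tiles of $\overline{G}_{T^\circ,\ell_p}$ that interchanges the two ends $G_{T^\circ,\gamma_p,1}$ and $G_{T^\circ,\gamma_p,2}$ and fixes $H_\zeta$ setwise.

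First, I would verify that $\iota$ extends to a graph automorphism of $\overline{G}_{T^\circ,\ell_p}$ (and hence of $G_{T^\circ,\ell_p}$) that preserves the set of boundary edges. This uses the glueing rules of Section~\ref{sect graph} together with Remark~\ref{BendingAssumpt}: because each pair of adjacent ordinary tiles is glued so that the relative orientations alternate, the palindrome structure of the tile sequence forces the relative orientations of mirrored tiles to agree (up to an overall flip), so that the assignment of labels and positions of boundary edges on each tile is intertwined correctly by $\iota$.

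Given that $\iota$ is a boundary-preserving automorphism, the rest of the proof is essentially immediate. By construction $G_{T^\circ,\ell_p}$ has exactly two perfect matchings consisting entirely of boundary edges, namely $P_-$ and $P_+$, and they are distinguished by their behavior at the first tile. Since $\iota(P_-)$ is again a boundary-edge perfect matching, $\iota(P_-)\in\{P_-,P_+\}$. A direct check at the first tile $G_1$ rules out $\iota(P_-)=P_+$: the matching $P_-$ is characterized by avoiding the two edges $e_1,e_2$ in the specified direction from the diagonal of $\tilde G_1$, and under $\iota$ these edges are mapped to the analogous edges of the first tile of the other end, which are also avoided by $P_-$. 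Consequently $P_-|_{G_{T^\circ,\gamma_p,1}}$ and $P_-|_{G_{T^\circ,\gamma_p,2}}$ are identified under $\iota$. Lemma~\ref{twoends} confirms that these restrictions really are perfect matchings of the ends, and restricting further to the subgraphs $H_{T^\circ,\gamma_p,i}$ yields the required $\gamma$-symmetry.

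The main obstacle is pinning down the involution $\iota$ precisely inside the middle block $H_\zeta$. The number and arrangement of tiles in $H_\zeta$ depend on the local configuration of arcs at the puncture $p$ (cf.\ Figure~\ref{Bicycle}), and one must verify that the natural identification of the two ends through $\iota$ is consistent with the zig-zag structure of $H_\zeta$, which by Remark~\ref{Hzeta} contains no three consecutive tiles in a single row or column. Once this combinatorial bookkeeping is carried out, the symmetry argument above closes the proof.
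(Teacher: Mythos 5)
Your proposal hinges on the existence of an order-reversing involution $\iota$ of the snake graph $G_{T^\circ,\ell_p}$ that swaps the two ends $G_{T^\circ,\gamma_p,1}$ and $G_{T^\circ,\gamma_p,2}$ while fixing $H_\zeta$ setwise. Such an involution does not exist in general, so the argument does not go through. The sequence of arcs crossed by $\ell_p$ is
$\tau_{i_1},\ldots,\tau_{i_d},\zeta_1,\ldots,\zeta_{e_p},\tau_{i_d},\ldots,\tau_{i_1}$;
while the two outer blocks are reversals of one another, the inner block $\zeta_1,\ldots,\zeta_{e_p}$ is generically not palindromic (indeed $\zeta_1\neq\zeta_{e_p}$ by Remark~\ref{assumption}). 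More decisively, the \emph{shape} of the snake graph is asymmetric at the two junctions: as recorded in Remark~\ref{BendingAssumpt}, exactly one of the triples $G_{\tau_{i_{d-1}}}, G_{\tau_{i_d}}, G_{\zeta_1}$ and $G_{\zeta_{e_p}}, G_{\tau_{i_d}}, G_{\tau_{i_{d-1}}}$ lies in a single row or column (the two conditions would force $\tau_{[\gamma_{d-1}]}$ to be opposite both $\zeta_1$ and $\zeta_{e_p}$ on the tile $G_{\tau_{i_d}}$, which is impossible). Since an order-reversing automorphism would have to carry one junction to the other and would have to preserve the straight-vs.-bend configuration of consecutive triples, no such automorphism exists. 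What you dismiss as ``combinatorial bookkeeping'' at the end is actually the heart of the problem, and it cannot be carried out.

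The paper's proof avoids appealing to any global symmetry of the graph. It instead observes that, once one decides which of the two boundary-only matchings $P_\pm$ contains the boundary edge labeled $\tau_{i_{d-1}}$ at the tile $G_{\tau_{i_d}}$ carrying $v_1$, the rest of that matching is forced to alternate along the boundary. Since each end of $G_{T^\circ,\ell_p}$ is a copy of $G_{T^\circ,\gamma}$, propagating this alternating pattern around the boundary places matching restrictions on $H_{T^\circ,\gamma_p,1}$ and $H_{T^\circ,\gamma_p,2}$ that agree under the natural identification of the two ends, so $P_\epsilon$ is $\gamma$-symmetric, and hence so is its boundary complement. If you want to repair your argument, you should replace the putative global involution by the natural labeled isomorphism between the two ends (which does exist), and then reproduce the alternating-pattern propagation argument through $H_\zeta$ rather than trying to rotate or reflect the whole graph.
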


\begin{figure}
\input{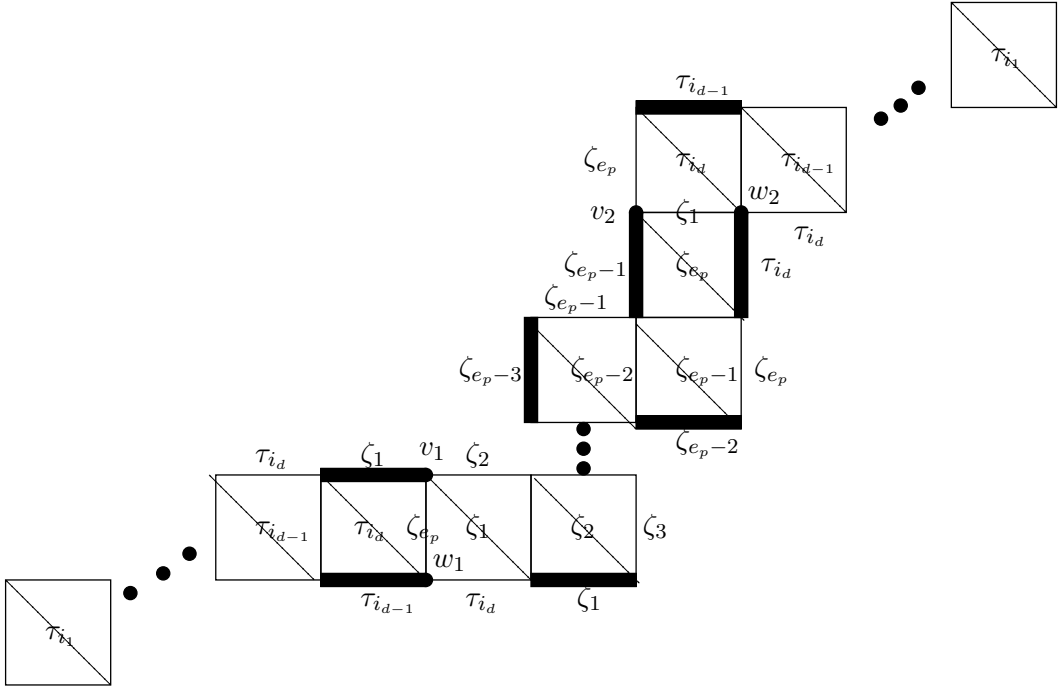}
\caption{One of the matchings $P_-$ and $P_+$ of $G_{T^\circ,\ell_p}$ must contain the highlighted edges and is therefore $\gamma$-symmetric.  Here  $e_p$ 
is odd.}
\label{figMinM}
\end{figure}

\begin{proof}
Since $P_-$ and $P_+$ are the unique perfect matchings of $G_{T^\circ,\ell_p}$ using only boundary edges, it follows that exactly one out of $\{P_-, \, P_+\}$, say $P_\epsilon$, contains the edge labeled $\tau_{i_{d-1}}$ on the tile $G_{\tau_{i_d}}$ containing $v_1$.  The perfect matching $P_\epsilon$ cannot contain the edge labeled $\tau_{i_d}$ on the adjacent tiles. As shown in Figure \ref{figMinM}, the perfect matching $P_\epsilon$ contains other edges on the boundary in an alternating fashion.
Since the two ends of $G_{T^\circ,\ell_p}$ are isomorphic, continuing along the boundary in an alternating pattern, we obtain that $P_\epsilon$ is $\gamma$-symmetric.  Its complement must 
 also be $\gamma$-symmetric,  so  both $P_-$ and $P_+$ are $\gamma$-symmetric.
\end{proof}

We need to introduce a few more definitions before proving Theorem \ref{thm single}.

\begin{definition}
Let $H_{\zeta}^{(i)}$ 
denote the induced subgraph obtained 
after deleting the vertices $v_i$, $w_i$ 
of $H_{\zeta}$ and all edges incident to those vertices.
Let $G_{\zeta}^{(1)}$ (resp. $G_{\zeta}^{(2)}$) denote the subgraph of $G_{T^\circ,\ell_p}$ which is the union $H_{\zeta}^{(1)} \cup G_{T^\circ,\zg_p,2}$ (resp. $G_{T^\circ,\zg_p,1} \cup H_{\zeta}^{(2)}$).
That is, we use a superscript $(i)$ to denote the removal of the $i$th side of a graph.
\end{definition}

\begin{definition} [\emph{Symmetric completion}] Fix a perfect matching $P$ of $G_{T^\circ,\ell_p}$, and by Lemma \ref{twoends}, assume without loss of generality that $P$ restricts to a perfect matching on $G_{T^\circ,\gamma_p,1}$.
Therefore $P$ also restricts to a perfect matching on the complement, graph
$G_{\zeta}^{(1)}$.
We define the \emph{symmetric completion} $\overline{P}=\overline{P|_{G_{\zeta}^{(1)}}}$ of $P|_{G_{\zeta}^{(1)}}$ to be the unique extension of $P|_{G_{\zeta}^{(1)}}$ to $G_{T^\circ,\ell_p}$ such that $\overline{P}|_{H_{T^\circ,\gamma_p,1}} \cong \overline{P}|_{H_{T^\circ,\gamma_p,2}}$.  
(Note that after adding edges to $H_{T^\circ,\gamma_p,1}$, only vertex $v_1$ is not covered.  We 
add an edge incident to $v_1$ based on whether the edge incident to $w_1$ labeled $\tau_{i_{d-1}}$
is included so far.)  
It follows from this construction that the restriction $\overline{P}|_{G_{T^\circ,\zg_p,1}}$ is a perfect matching. \end{definition}

\begin{definition} [\emph{Sets $\mathcal{P}(\zg)$ and $\mathcal{SP}(\zg^{(p)})$}] \label{ppANDsp} For an ordinary arc $\zg$ (including loops 
cutting out once-punctured monogons)
we let $\mathcal{P}(\gamma)$ denote the set of perfect matchings of $G_{T^\circ,\gamma}$, and let $\mathcal{SP}(\gamma^{(p)})$ denote the set of $\gamma$-symmetric matchings of $G_{T^\circ,\ell_p}$.
\end{definition}

We now prove Theorem \ref{thm single} by constructing a bijection $\psi$ between pairs $(P_1,P_2)$ in $\mathcal{P}(\zg) \times \mathcal{SP}(\zg^{(p)})$ and perfect matchings $P_3$ in $\mathcal{P}(\zLp)$.  
This bijection will be weight-preserving and height-preserving, in the sense that
if $\psi(P_1,P_2) = P_3$, then $x(P_1)\overline{x}(P_2) = x(P_3)$ and $h(P_1)\overline{h}(P_2) = h(P_3)$. 
This gives
\begin{equation}\label{preserve}
\sum_{P_3 \in \mathcal{P}(\zLp)} x(P_3) h(P_3)
= 
\left(\sum_{P_1 \in \mathcal{P}(\zg)} x(P_1) h(P_1)\right) 
\, \left(\sum_{P_2 \in \mathcal{SP}(\zg^{(p)})} \overline{x}(P_2) \overline{h}(P_2)\right).
\end{equation}
After applying $\Phi$, the left-hand-side and first term on the right are the numerators for $x_{\ell_p}$ and $x_\zg$ given by Theorem \ref{thm main}, which allows us to express 
$x_{\zg^{(p)}} = \frac{x_{\ell_p}}{x_\zg}$ in terms of 
$\sum_{P_2 \in \mathcal{SP}(\zg^{(p)})} \overline{x}(P_2) \overline{h}(P_2)$.

\begin{proof} [Proof of Theorem \ref{thm single}]
As indicated above, we define a 
map $$\psi : \mathcal{P}(\gamma) \times \mathcal{SP}(\zg^{(p)}) \rightarrow \mathcal{P}(\zLp) \mathrm{~~by}$$
$$\psi(P_1,P_2) = \begin{cases}
P_1 ~\bigcup~ P_2|_{G_{\zeta}^{(1)}} \mathrm{~~~~if~}P_2|_{G_{T^\circ,\zg_p,1}} \mathrm{~is~a~perfect~matching} \\
P_2|_{G_{\zeta}^{(2)}}
~\bigcup~ P_1 \mathrm{~otherwise~}
\end{cases}$$
where the edges of $P_1$ are placed on the subgraph $G_{T^\circ,\zg_p,1}$ or $G_{T^\circ,\zg_p,2}$, respectively.
In words, $\psi$ removes all of the edges from one of the two ends of the $\zg$-symmetric matching $P_2$, 
and replaces those edges with edges from the perfect matching $P_1$, thereby constructing a perfect matching $P_3$ of $\mathcal{P}(\zLp)$ that it is not necessarily $\zg$-symmetric.
By Lemma \ref{twoends}, either $P_2|_{G_{T^\circ,\zg_p,1}}$ or
$P_2|_{G_{T^\circ,\zg_p,2}}$ is a perfect matching and so $\psi$ is well-defined.
Thus $\psi(P_1,P_2)$ is a perfect matching of $\mathcal{P}(\zLp)$.

We show that $\psi$ is a bijection by exhibiting its inverse.  For $P_3 \in \mathcal{P}(\zLp)$, define
$$\varphi(P_3) = \begin{cases}  ( P_3|_{G_{T^\circ,\zg_p,1}}~~, ~~\overline{P_3|_{G_{\zeta}^{(1)}}}) \mathrm{~~~if~}P_3|_{G_{T^\circ,\zg_p,1}} \mathrm{~is~a~perfect~matching}, \\
( P_3|_{G_{T^\circ,\zg_p,2}}~~, ~~\overline{P_3|_{G_{\zeta}^{(2)}}}) \mathrm{~otherwise.}
\end{cases}$$
A little thought shows that these two maps are inverses.

We now show that the bijection $\psi$ is weight-preserving.  
Without loss of generality, $P_2|_{G_{T^\circ,\zg_p,1}}$ is a perfect matching. 
If $\psi(P_1,P_2) = P_3$, then 
$P_3 = P_1 ~\cup~ P_2|_{G_{\zeta}^{(1)}}$.  
We  obtain $$x(P_3) = x(P_1) \, x(P_2|_{G_{\zeta}^{(1)}})
= x(P_1)
\, \frac{x(P_2)}{x(P_2|_{G_{T^\circ,\zg_p,1}})}.$$ 
Since $\overline{x}(P_2)$ is defined to be
$\frac{x(P_2)}{x(P_2|_{G_{T^\circ,\zg_p,1}})}$, we conclude that $\psi$ is weight-preserving.

To see that $\psi$ is height-preserving, we use Lemma \ref{yheight}, which states that $P_-(G_{T^\circ,\zLp})$ is a $\zg$-symmetric matching.  Consequently, using the same partitioning that showed that $\psi$ was weight-preserving, we can consider the following equation describing the symmetric difference of $P_3$ and $P_-(G_{T^\circ,\zLp})$:
$$P_3 \ominus P_-(G_{T^\circ,\zLp}) =
(P_1 \ominus P_-(G_{T^\circ,\zg})) \cup
(P_2 \ominus P_-(G_{T^\circ,\zLp})|_{G_{\zeta}^{(1)}}).$$
Since the cycles appearing in the symmetric difference determine the height monomials, this decomposition implies that
$$h(P_3) = h(P_1) \, h(P_2|_{G_{\zeta}^{(1)}})
= h(P_1)
\, \frac{h(P_2)}{h(P_2|_{G_{T^\circ,\zg_p,1}})} = h(P_1) \, \overline{h}(P_2),$$hence $\psi$ is height-preserving.

Because $\phi$ is weight- and height-preserving, we have
(\ref{preserve}).  Applying $\Phi$ gives
\begin{eqnarray}
\label{almost-single-proof2} 
\sum_{P \in \mathcal{P}(\zLp)}
x(P) y(P) 
&=& \left(
\sum_{P_1 \in \mathcal{P}(\gamma)}
x(P_1) y(P_1)
\right)\left(
\sum_{P_2 \in \mathcal{SP}(\gamma^{(p)}) }
\overline{x}(P_2)
\overline{y}(P_2)
\right).
\end{eqnarray} 

We now use the identity $x_{\zLp} = x_{\gamma} x_{\gamma^{(p)}}$ and obtain
\begin{eqnarray} \label{almost-single-proof} 
x_{\gamma^{(p)}} &=&
\frac{
\mathrm{cross}(T^\circ,\gamma) \sum_{P \in \mathcal{P}(\zLp)}
x(P) y(P)}
{\mathrm{cross}(T^\circ, \zLp)
\sum_{P  \in \mathcal{P}(\zg)} x(P) y(P)}.
\end{eqnarray}

Comparing (\ref{almost-single-proof}) and (\ref{almost-single-proof2})
yields the desired formula.
\end{proof}

\subsection{An algebraic identity for arcs with two notches}\label{subsect algdouble}

We now give the algebraic portion of the proof of Theorem \ref{thm double}.  For the purpose of computing the Laurent expansion of $x_{\rho^{(pq)}}$ with respect to $T$, we can assume that no tagged arc in $T$ is notched at either $p$ or $q$, see Remark \ref{assumption}.  In the statement below, the notation $\chi$ indicates $1$ or $0$, 
based on whether it's argument is true or false.

\begin{theorem}\label{double-identity}
Fix a tagged triangulation $T$ of $(S,M)$ which comes from
an ideal triangulation, and
let $\Acal$ be the cluster algebra associated
to $(S,M)$ with principal coefficients with respect to $T$.
Let $p$ and $q$ be punctures in $S$, and let
$\rho$ be an ordinary arc between $p$ and $q$.  Assume that
no tagged arc in $T$ is notched at either $p$ or $q$. Then
\begin{equation*}
x_{\rho} x_{\rho^{(pq)}} - x_{\rho^{(p)}} x_{\rho^{(q)}}  y_{\rho}^{\chi(\rho \in T)} =
(1-\prod_{\tau\in T} y_{\tau}^{e_p(\tau)})
(1-\prod_{\tau\in T} y_{\tau}^{e_q(\tau)})
\prod_{\tau \in T} y_{\tau}^{e(\tau,\rho)}.
\end{equation*}
\end{theorem}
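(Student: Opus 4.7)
The identity is a disguised form of a generalized Ptolemy relation for tagged arcs, in which the cluster variables for the four tagged arcs obtained from $\rho$ by notching (or not) each endpoint are tied together by a specific $y$-monomial correction term. The plan is to prove it in three main steps: reduction, geometric translation, and inductive algebraic verification.

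First, I would reduce to the case in which no tagged arc of $T$ is notched at $p$ or $q$, using Proposition \ref{tag-change}. Indeed, changing the tagging of $T$ at a puncture $p'$ yields an isomorphism of cluster algebras that substitutes $x_\tau, y_\tau$ by $x_{\tau^{(p')}}, y_{\tau^{(p')}}$; if $p' \notin \{p,q\}$ both sides of the identity transform formally, while if $p' \in \{p,q\}$ the substitution interchanges $\rho \leftrightarrow \rho^{(p')}$ and $\rho^{(\{p,q\}\setminus\{p'\})} \leftrightarrow \rho^{(pq)}$, with the $y_\rho^{\chi(\rho\in T)}$ factor precisely compensating for the resulting shift between the two sides. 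Thus we are reduced to the assumed setting.

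Second, I would interpret the identity geometrically using the $\lambda$-length framework of \cite[Section 7]{FT}. Cluster variables for surface cluster algebras are $\lambda$-lengths on the decorated Teichm{\"u}ller space, and for a tagged arc $\gamma^{(p)}$ notched at $p$, $\lambda(\gamma^{(p)})$ is computed using the conjugate horocycle $\bar h_p$ at $p$. The $y$-variables correspond to shear coordinates of the elementary laminations $L_\tau$, and the monomials $\prod_\tau y_\tau^{e_p(\tau)}$ and $\prod_\tau y_\tau^{e_q(\tau)}$ encode the horocycle data at $p$ and $q$ that relate the two choices of horocycle.

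Then I would prove the resulting identity by induction on $e(\rho,T)$, in the spirit of the argument in Section \ref{sec phi-map}. The base case $e(\rho,T) = 0$ (so $\rho \in T$ and $\chi = 1$) is a direct computation: each of $\rho^{(p)}, \rho^{(q)}, \rho^{(pq)}$ is obtained from $\rho$ by one or two tagged flips, and the corresponding exchange relations, combined with the convention $x_{\ell_{p'}} = x_\rho x_{\rho^{(p')}}$ of \cite[Lemma 7.2]{FT}, allow one to write down the four cluster variables explicitly and verify the identity by a finite algebraic check. For the inductive step, Lemma \ref{lem exchange} produces a quadrilateral $Q$ around $\rho$ whose other diagonal $\rho'$ and four sides $\alpha_1,\dots,\alpha_4$ each cross $T$ fewer times than $\rho$; multiplying the target identity by $x_{\rho'}$ and applying the generalized Ptolemy relation $x_\rho x_{\rho'} = \sum Y_{\pm}\, x_{\alpha_i}x_{\alpha_j}$ reduces the claim to the analogous identities for the arcs $\alpha_i$, each of which holds by induction.

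The hard part will be the bookkeeping of the $y$-monomials in the inductive step. One must verify that the shear-coordinate contributions of the elementary laminations $L_\tau$ for arcs $\tau$ incident to $p$ or $q$ combine, through the Ptolemy relation, into exactly the factor $(1-\prod_\tau y_\tau^{e_p(\tau)})(1-\prod_\tau y_\tau^{e_q(\tau)})\prod_\tau y_\tau^{e(\tau,\rho)}$ on the right-hand side. This is analogous in spirit to Lemma \ref{lemma-project} but considerably more intricate, since the tagging at $p$ and $q$ interacts nontrivially with how the elementary laminations spiral into these punctures and with how the Ptolemy correction terms attach to the two diagonals of $Q$.
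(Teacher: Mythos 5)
Your reduction step and $\lambda$-length framing are reasonable, but your central inductive strategy is not the paper's, and more importantly it has a gap that I don't see how to close.

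The paper proves the identity by a single, non-inductive algebraic manipulation. It fixes one quadrilateral $Q$ with diagonals $\rho,\rho'$ and sides $\alpha,\beta,\gamma,\delta$ (two sides incident to $p$, two to $q$, and the other two vertices $v,w$ being arbitrary marked points) and writes down \emph{four} Ptolemy relations, one for each of $x_\rho x_{\rho'}$, $x_{\rho^{(p)}}x_{\rho'}$, $x_{\rho^{(q)}}x_{\rho'}$, $x_{\rho^{(pq)}}x_{\rho'}$, with $y$-monomials $Y^\pm, Y_p^\pm, Y_q^\pm$ sorted by whether the contributing laminates spiral into $p$, $q$, or neither. Then it computes $(\mathrm{P0})\cdot(\mathrm{P3}) - (\mathrm{P1})\cdot(\mathrm{P2})$; an enormous cancellation leaves $(x_{\rho'})^2$ times the desired left-hand side, factored as a product of two differences. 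The remaining step is to introduce \emph{auxiliary arcs} $\epsilon$ and $\eta$ between $v$ and $w$, giving two bigons with internal points $p$ and $q$ respectively, whose Ptolemy relations identify those differences with $Y_1 x_{\rho'}(\prod y_\tau^{e_p(\tau)}-1)$ and $Y_2' x_{\rho'}(\prod y_\tau^{e_q(\tau)}-1)$. The $y$-monomials assemble into $\prod y_\tau^{e(\tau,\rho)}$ because $Y^+Y^-Y_1Y_2'$ accounts precisely for all arcs of $T$ crossing $\rho$.

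Your proposed induction on $e(\rho,T)$ cannot run as described. After multiplying by $x_{\rho'}$ and applying the Ptolemy relation, the sides $\alpha_i$ you would want to invoke the inductive hypothesis on are arcs from $p$ or $q$ to $v$ or $w$ -- they do \emph{not} connect two punctures, so Theorem \ref{double-identity} does not apply to them, and there is no ``analogous identity'' available unless you formulate and prove a separate statement for arcs with at most one endpoint at a puncture. You have not stated such a generalization, and the paper does not need one. Moreover, your base case $\rho\in T$ (so $\chi=1$) is handled in the paper only as a minor variant: the quadrilateral Ptolemy relations (P0) and (P3) acquire an extra factor $y_\rho$ from the laminate $L_\rho$, and otherwise the same direct computation goes through. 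It is not a genuine base case from which the general statement is recovered by induction. Your ``hard part'' paragraph is also underselling the difficulty: the key nontrivial move is the introduction of the bigon arcs $\epsilon,\eta$ and the matching $Y_p^+Y_p^- = Y_4$, $Y_q^+Y_q^- = Y_3'$, not merely bookkeeping of shear coordinates inside $Q$. Finally, your first reduction step (to the no-notches-at-$p,q$ case) is vacuous, since that is already a standing hypothesis of the theorem.
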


\begin{proof}
For simplicity, we assume for now that $\rho \notin T$.
(Later we will lift the assumption.)
Choose a quadrilateral in $S$ with simply connected interior
such that one of its diagonals is $\rho$.  (This involves
the choice of two more marked points, say $v$ and $w$.) 
Label the (ordinary) arcs of the quadrilateral by
$\alpha, \beta, \gamma, \delta$ and the other diagonal by
$\rho'$, as in Figure \ref{quad}.  Note that there are four ways of changing the taggings around $p$ and $q$, and for each
we get a Ptolemy relation.
\begin{figure}
\input{quad.pstex_t}
\caption{}
\label{quad}
\end{figure}
\begin{align}\label{P0}
x_{\rho} x_{\rho'} &= Y^+ Y_q^+ Y_p^+ x_{\beta} x_{\delta} +
                       Y^- x_{\alpha} x_{\gamma}  \\ \label{P1}
x_{\rho^{(p)}} x_{\rho'} &= Y^+ Y_q^+  x_{\beta} x_{\delta^{(p)}} +
                       Y^- Y_p^- x_{\alpha^{(p)}} x_{\gamma} \\ \label{P2}
x_{\rho^{(q)}} x_{\rho'} &= Y^+ Y_p^+  x_{\beta^{(q)}} x_{\delta} +
                       Y^- Y_q^- x_{\alpha} x_{\gamma^{(q)}} \\ \label{P3}
x_{\rho^{(pq)}} x_{\rho'} &= Y^+  x_{\beta^{(q)}} x_{\delta^{(p)}} +
                       Y^- Y_q^- Y_p^- x_{\alpha^{(p)}} x_{\gamma^{(q)}}
\end{align}
Here, $Y^+$ (respectively $Y^-$) is the monomial (in coefficient
variables) coming from all
laminations which do not spiral into $p$ or $q$ and which
give a shear coordinate of $1$ (respectively $-1$) with $\rho$,
as in Figure \ref{partition}.  We use Definition 12.1 of \cite{FT} to compute shear coordinates with respect to tagged arcs $\rho^{(p)}$, $\rho^{(q)}$, and $\rho^{(pq)}$.  

$Y_p^{\pm}$ and $Y_q^{\pm}$ are monomials coming from
laminations which spiral into either the puncture $p$ or $q$, respectively.
Since we have assumed that $T$ does not contain arcs with a notch
at $p$ or $q$,  all laminates which spiral into
$p$ or $q$ spiral counterclockwise.
$Y_p^+$ is the monomial coming from laminations that spiral into
$p$ giving a shear coordinate of $1$ to $\rho$ (equivalently,
a shear coordinate of $1$ to $\rho^{(q)}$).
$Y_q^+$ is the monomial coming from laminations that spiral into
$q$ giving a shear coordinate of $1$ to $\rho$ (equivalently,
a shear coordinate of $1$ to $\rho^{(p)}$).
$Y_p^-$ is the monomial coming from laminations that spiral into
$p$ giving a shear coordinate of $-1$ to
$\rho^{(p)}$ (equivalently, to $\rho^{(pq)}$).
Finally, $Y_q^-$ is the monomial coming from laminations that
spiral into $q$ giving a shear coordinate of $-1$ to
$\rho^{(q)}$ (equivalently, to $\rho^{(pq)}$).
See Figure \ref{laminate-spiral}.
\begin{figure}
\input{laminate-spiral.pstex_t}
\caption{}
\label{laminate-spiral}
\end{figure}

When we multiply equations (\ref{P0}) and (\ref{P3})
and subtract the product of (\ref{P1}) and (\ref{P2}),
some terms cancel.  Factoring the remaining
terms, we find that
\begin{equation*}
(x_{\rho'})^2 (x_{\rho} x_{\rho^{(pq)}} - x_{\rho^{(p)}} x_{\rho^{(q)}}) =
Y^+ Y^- (Y_p^+ Y_p^- x_{\alpha^{(p)}} x_{\delta} - x_{\alpha} x_{\delta^{(p)}})
(Y_q^+ Y_q^- x_{\gamma^{(q)}} x_{\beta} - x_{\beta^{(q)}} x_{\gamma}).
\end{equation*}

We now want to interpret each of the terms
$x_{\alpha^{(p)}} x_{\delta}$,
$x_{\alpha} x_{\delta^{(p)}}$,
$x_{\gamma^{(q)}} x_{\beta}$, and
$x_{\beta^{(q)}} x_{\gamma}$
as the left-hand-side of a Ptolemy relation.
To this end, let $\epsilon$ be the arc between $v$ and $w$
which is homotopic to the concatenation of $\alpha$ and $\delta$,
so that $\epsilon$ and $\rho'$ are opposite sides of a bigon with
vertices $v$ and $w$ and internal vertex $p$.  See Figure \ref{bigon}.
\begin{figure}
\input{bigon.pstex_t}
\caption{}
\label{bigon}
\end{figure}

The Ptolemy relations concerning this bigon are
\begin{align*}
x_{\alpha} x_{\delta^{(p)}} &= Y_2 Y_4 x_{\epsilon} + Y_1 x_{\rho'} \\
x_{\delta} x_{\alpha^{(p)}} &=Y_1 Y_3 x_{\rho'} + Y_2 x_{\epsilon}.
\end{align*}

Here $Y_1, Y_2, Y_3$, and $Y_4$ are monomials coming from laminations
that intersect $\alpha$ and $\delta$ as in Figure \ref{bigon-shear}.
\begin{figure}
\input{bigon-sheer.pstex_t}
\caption{}
\label{bigon-shear}
\end{figure}
(See also \cite[Figure 32]{FT}.)    Note that by our assumptions on $T$,
we do not have to worry about laminations that spiral clockwise
into $p$.

A laminate crossing $\rho'$ and spiraling to $p$ must cross $\rho$, so $Y_p^+ Y_p^- = Y_4$.  Therefore
\begin{align*}
Y_p^+ Y_p^- x_{\alpha^{(p)}} x_{\delta} -x_{\alpha} x_{\delta^{(p)}}&=
         Y_4(Y_1 Y_3 x_{\rho'} + Y_2 x_{\epsilon}) -
            ( Y_2 Y_4 x_{\epsilon} + Y_1 x_{\rho'}) \\
   &= Y_1 x_{\rho'}(Y_3 Y_4 - 1) \\
   &= Y_1 x_{\rho'}(\prod_{\tau \in T} y_{\tau}^{e_p(\tau)} - 1),
\end{align*}
since laminates spiraling to $p$ correspond to tagged arcs incident to $p$.

Similarly, letting $\eta$ be the arc between $v$ and $w$ homotopic
to the concatenation of $\beta$ and $\gamma$, so that
$\rho'$ and $\eta$ are opposite sides of a bigon with the interior
point $q$, we get the following Ptolemy relations.
\begin{align*}
x_{\beta} x_{\gamma^{(q)}} &= Y_2' Y_4' x_{\rho'} + Y_1' x_{\eta} \\
x_{\gamma} x_{\beta^{(q)}} &= Y_1' Y_3' x_{\eta} + Y_2' x_{\rho'}.
\end{align*}
Here, $Y_1', Y_2', Y_3', Y_4'$ are defined just as
$Y_1, Y_2, Y_3, Y_4$ were, with $q$ replacing $p$.

Similar to above,  $Y_q^+ Y_q^- = Y_3'$, and
\begin{equation*}
Y_q^+ Y_q^- x_{\gamma^{(q)}} x_{\beta} - x_{\beta^{(q)}} x_{\gamma} =
Y_2' x_{\rho'} (\prod_{\tau \in T} y_{\tau}^{e_q(\tau)} - 1).
\end{equation*}

We now have that
\begin{equation*}
(x_{\rho'})^2 (x_{\rho} x_{\rho^{(pq)}} - x_{\rho^{(p)}} x_{\rho^{(q)}}) =
Y^+ Y^-
 Y_1 x_{\rho'}(\prod_{\tau \in T} y_{\tau}^{e_p(\tau)} - 1)
Y_2' x_{\rho'} (\prod_{\tau \in T} y_{\tau}^{e_q(\tau)} - 1),
\end{equation*}
so
\begin{equation*}
x_{\rho} x_{\rho^{(pq)}} - x_{\rho^{(p)}} x_{\rho^{(q)}} =
Y^+ Y^-
 Y_1 Y_2' (\prod_{\tau \in T} y_{\tau}^{e_p(\tau)} - 1)
 (\prod_{\tau \in T} y_{\tau}^{e_q(\tau)} - 1).
\end{equation*}

Since the monomials $Y^{\pm}, Y_1$ and $Y_2'$ are defined by
laminates crossing the quadrilateral as in Figure \ref{partition}
(which in turn come from tagged arcs of $T$ that have the same
local configuration), it follows that
$$Y^+ Y^- Y_1 Y_2' = \prod_{\tau \in T} y_{\tau}^{e(\tau,\rho)}.$$
This completes the proof when $\rho \notin T$.
\begin{figure}
\input{partition.pstex_t}
\caption{}
\label{partition}
\end{figure}

If $\rho \in T$, the proof is nearly the same.  In this case,
one gets a contribution to the shear coordinates from the laminate
$L_{\rho}$ associated to $\rho$, see Figure \ref{quad}.  Equations
(\ref{P1}) and (\ref{P2}) remain the same,
and equations (\ref{P0}) and (\ref{P3}) become
\begin{align} \label{P0''}
x_{\rho} x_{\rho'} &= Y^+ Y_q^+ Y_p^+ y_{\rho} x_{\beta} x_{\delta} +
                       Y^- x_{\alpha} x_{\gamma}  \\ \label{P3''}
x_{\rho^{(pq)}} x_{\rho'} &= Y^+  x_{\beta^{(q)}} x_{\delta^{(p)}} +
                       Y^- Y_q^- Y_p^- y_{\rho} x_{\alpha^{(p)}} x_{\gamma^{(q)}}.
\end{align}
Using the four Ptolemy relations, i.e. (\ref{P0''})(\ref{P3''}) $- y_\rho$(\ref{P1})(\ref{P2}), we get
\begin{equation*}
x_{\rho'}^2 (x_{\rho} x_{\rho^{(pq)}} - y_\rho x_{\rho^{(p)}} x_{\rho^{(q)}}) =
Y^+ Y^- (y_{\rho} Y_p^+ Y_p^- x_{\alpha^{(p)}} x_{\delta} - x_{\alpha} x_{\delta^{(p)}})
(y_{\rho} Y_q^+ Y_q^- x_{\gamma^{(q)}} x_{\beta} - x_{\beta^{(q)}} x_{\gamma}).
\end{equation*}
In this case $y_{\rho} Y_p^+ Y_p^- = Y_4$ and
$y_{\rho} Y_q^+ Y_q^- = Y_3'$, and the proof continues as before.
\end{proof}

There is a version of Theorem \ref{double-identity}
which makes no assumptions on the notching of arcs in $T$
at $p$ or $q$.  Although we won't use it later, we record
the statement.

\begin{theorem}
Fix a tagged triangulation $T$ of $(S,M)$ which comes from
an ideal triangulation, and
let $\Acal$ be the cluster algebra associated
to $(S,M)$ with principal coefficients with respect to $T$.
Let $p$ and $q$ be punctures in $S$, and let
$\rho$ be an ordinary arc between $p$ and $q$.
Then $$x_{\rho} x_{\rho^{(pq)}}
y_{\rho^{(p)}}^{\chi(\rho^{(p)}\in T)}
y_{\rho^{(q)}}^{\chi(\rho^{(q)}\in T)}
- x_{\rho^{(p)}} x_{\rho^{(q)}}
y_{\rho}^{\chi(\rho \in T)}
y_{\rho^{(pq)}}^{\chi(\rho^{(pq)} \in T)}$$ is equal to
$$\prod_{\tau \in T} y_{\tau}^{e(\tau,\rho)}
(\prod_{\tau\in T} y_{\tau}^{e_p^{\bowtie}(\tau)} -\prod_{\tau\in T} y_{\tau}^{e_p(\tau)})
(\prod_{\tau\in T} y_{\tau}^{e_q^{\bowtie}(\tau)} -\prod_{\tau\in T} y_{\tau}^{e_q(\tau)}),$$ where
$e_p(\tau)$ (respectively, $e_p^{\bowtie}(\tau)$) is the number of ends
of $\tau$ that are incident to the puncture $p$ with an ordinary (respectively, notched) tagging.
\end{theorem}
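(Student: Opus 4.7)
My plan is to deduce this more general identity from Theorem~\ref{double-identity} via the tag-change machinery of Proposition~\ref{tag-change}. By Definition~\ref{GeneralB} and Remark~\ref{tagged-ideal}, a simultaneous tag-change at $p$ produces a tagged triangulation $T^p$ with $B_{T^p}=B_T$, and Proposition~\ref{tag-change} promotes the substitution $x_\tau\leftrightarrow x_{\tau^p}$, $y_\tau\leftrightarrow y_{\tau^p}$ to a cluster algebra isomorphism $\phi_p\colon\Acal(B_T)\to\Acal(B_{T^p})$ sending $x_\gamma\mapsto x_{\gamma^p}$ for every tagged arc $\gamma$; an analogous $\phi_q$ exists for $q$. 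The first step is to verify that $\phi_p$ carries the sought identity for $T$ to $-1$ times the sought identity for $T^p$, and similarly for $\phi_q$.

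The verification runs as follows. Under $\phi_p$ the four tagged arcs $\rho,\rho^{(p)},\rho^{(q)},\rho^{(pq)}$ are permuted by $\rho\leftrightarrow\rho^{(p)}$ and $\rho^{(q)}\leftrightarrow\rho^{(pq)}$, so the two products $x_\rho x_{\rho^{(pq)}}$ and $x_{\rho^{(p)}}x_{\rho^{(q)}}$ interchange; the same interchange occurs for the accompanying $y$-monomials because $\sigma\in T$ is equivalent to $\sigma^p\in T^p$ for every tagged arc $\sigma$. Thus the left-hand side flips sign. On the right-hand side, the factor $\prod_\tau y_\tau^{e_p^{\bowtie}(\tau)}-\prod_\tau y_\tau^{e_p(\tau)}$ becomes its negative, because $e_p(\tau)=e_p^{\bowtie}(\tau^p)$ and $e_p^{\bowtie}(\tau)=e_p(\tau^p)$ for every $\tau\in T$, while the other two factors are preserved (the crossing number $e(\tau,\rho)$ ignores tagging, and $\phi_p$ leaves $q$-tags untouched). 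Iterating, at most two applications of $\phi_p$ and $\phi_q$ produce a tagged triangulation $T^*$ with no arc notched at either $p$ or $q$. In that base case each of $\rho^{(p)},\rho^{(q)},\rho^{(pq)}$ has a notched end at a puncture where $T^*$ is entirely plain, so the three corresponding $\chi$-exponents in the LHS vanish and $\prod_\tau y_\tau^{e_p^{\bowtie}(\tau)}=\prod_\tau y_\tau^{e_q^{\bowtie}(\tau)}=1$; the asserted identity then collapses exactly to Theorem~\ref{double-identity}, completing the deduction.

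The main obstacle is the exceptional configuration of Remark~\ref{tagged-ideal}, in which a puncture sits inside a bigon and $T$ already contains one plain and one notched arc there. In that case $T^p=T$ as a set, so $\phi_p$ is merely a nontrivial automorphism of $\Acal(B_T)$ and the iterative reduction does not strictly eliminate the notches at $p$. To treat it, I would either absorb the bigon configuration into the normalization $T=\iota(T^0)$ so that the reduction still applies, or fall back on a direct repetition of the four-Ptolemy-relations calculation proving Theorem~\ref{double-identity}, this time carefully tracking the clockwise-spiraling laminates $L_\tau$ contributed by arcs $\tau\in T$ notched at $p$ or $q$. The essential technical check in the latter route is that, upon taking the same linear combination of Ptolemy relations as in the proof of Theorem~\ref{double-identity}, the cumulative effect of those clockwise laminates on the shear coordinates $b_\rho, b_{\rho^{(p)}}, b_{\rho^{(q)}}, b_{\rho^{(pq)}}$ converts the base-case factors $1-\prod_\tau y_\tau^{e_p(\tau)}$ and $1-\prod_\tau y_\tau^{e_q(\tau)}$ into $\prod_\tau y_\tau^{e_p^{\bowtie}(\tau)}-\prod_\tau y_\tau^{e_p(\tau)}$ and $\prod_\tau y_\tau^{e_q^{\bowtie}(\tau)}-\prod_\tau y_\tau^{e_q(\tau)}$, and simultaneously produces the extra $y_{\rho^{(p)}}^{\chi(\rho^{(p)}\in T)}$, $y_{\rho^{(q)}}^{\chi(\rho^{(q)}\in T)}$, $y_{\rho^{(pq)}}^{\chi(\rho^{(pq)}\in T)}$ factors on the LHS.
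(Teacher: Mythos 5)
The paper records this theorem without proof (``Although we won't use it later, we record the statement''), so there is no paper argument to compare against; I evaluate your proposal on its own merits.

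Your sign-flip computation is correct: the substitution $\sigma_p\colon x_\tau\mapsto x_{\tau^p}$, $y_\tau\mapsto y_{\tau^p}$ does carry the identity for $T$ to $-1$ times the identity for $T^p$, since it swaps the two terms on the left, negates the middle factor on the right (via $e_p(\tau)=e_p^{\bowtie}(\tau^p)$), and preserves $\prod_\tau y_\tau^{e(\tau,\rho)}$ and the $q$-factor. The base case is also correctly identified: if $T^*$ has no notched arcs at $p$ or $q$, all the new $\chi$-exponents and $e^{\bowtie}$-products degenerate and one recovers Theorem~\ref{double-identity} exactly.

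The gap is the step ``Iterating, at most two applications of $\phi_p$ and $\phi_q$ produce a tagged triangulation $T^*$ with no arc notched at either $p$ or $q$.'' That sentence is false in precisely the only case where anything remains to be proved. For $T=\iota(T^\circ)$, the only way $T$ can contain a notched arc at $p$ is for $T^\circ$ to have a self-folded triangle around $p$ with radius $r$, and then $T$ contains both $r$ and $r^{(p)}$ at $p$ (one plain, one notched). The tag-change gives $T^p=T$ as a set of tagged arcs, so $\phi_p$ is a nontrivial automorphism of $\Acal(B_T)$ (swapping $x_r\leftrightarrow x_{r^{(p)}}$, $y_r\leftrightarrow y_{r^{(p)}}$) but does not reduce the number of notches. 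All it shows is that the difference $\Delta$ between the two sides of the asserted identity is anti-invariant under $\phi_p$, which does not force $\Delta=0$. You correctly flag this as ``the main obstacle,'' but neither of your two proposed repairs closes it: ``absorbing the bigon configuration into the normalization'' is not explained in a way that escapes this vacuousness, and the second route (repeating the four-Ptolemy-relation computation while tracking the clockwise-spiraling laminates from the notched arc $r^{(p)}\in T$ and from any of $\rho^{(p)},\rho^{(q)},\rho^{(pq)}$ that lie in $T$) is stated only as the goal the calculation should achieve, not carried out. Since that calculation is exactly the content of the theorem in the self-folded case, this is a genuine gap in the proof.
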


\begin{remark}
In the degenerate case of
a bordered surface with two punctures $p$ and $q$ and only one other  marked 
point $v$, Theorem \ref{double-identity} still holds and the proof
is analogous.  Here
we let $\rho'$ be a loop based at $v$ crossing $\rho$ exactly once, 
and define $\alpha$, $\beta=\gamma$, and $\delta$ as in Figure \ref{figquad-mark3}.
Note that we can view $\alpha, \beta,\gamma,$ and $\delta$ as the four sides
of a degenerate quadrilateral with diagonals $\rho$ and $\rho'$.
We then obtain the analogues of equations 
(\ref{P0})-(\ref{P3''}), replacing all instances of vertex $w$ with $v$, $\gamma$ with $\beta$, $x_{\beta}x_{\beta^{(p)}}$ with $x_{\rho'}$, $Y_2'$ with $1$, and $Y_q^+ Y_q^-$ with $\prod_{\tau \in T} y_{\tau}^{e_q(\tau)}$.
\end{remark}

\begin{figure}
\input{figquad-mark3.pstex_t}
\caption{}
\label{figquad-mark3}
\end{figure}

\begin{remark}
In the degenerate case when $p=q$, Theorem \ref{double-identity} also holds,
but we need to make sense of notation such as $x_{\rho^{(p)}}$ when $\rho$
is a loop.  See Section \ref{sec DNL}.
\end{remark}

\subsection{Combinatorial identities satisfied by $\zg$-compatible pairs of matchings}

\label{DoubleNotchedProofs}

We now use Theorem \ref{double-identity}
to prove Proposition \ref{prop double-special}, where $\zg \in T^\circ$, and then Theorem \ref{thm double}, where $\zg \not \in T^\circ$.  
In both  proofs, we will use Theorems \ref{thm main} and \ref{thm single} to replace appearances of cluster variables $x_\zg$, $x_{\zg^{(p)}}$, and $x_{\zg^{(q)}}$ with generating functions of perfect (and $\zg$-symmetric) matchings of graphs $G_{T^\circ,\zg}$, $G_{T^\circ,\ell_p}$ and $G_{T^\circ,\ell_q}$.  We are then reduced to proving combinatorial identities concerning these sets of matchings.

\begin{lemma} \label{MinMaxProdInT} Assume that the ideal triangulation $T^\circ$ contains the arc $\zg$ between the punctures $q$ and $p$ ($p\not = q$).  
Let $\ell_p$ denote the loop based at puncture $q$ enclosing the arc $\gamma$ and puncture $p$, but no other marked points. Let $P_-(\ell_p)$ and $P_+(\ell_p)$ denote the minimal and maximal matchings of $G_{T^\circ, \ell_p}$, respectively.  Define $\ell_q$, $P_-(\ell_q)$, and $P_+(\ell_q)$ analogously.  Assume the local configuration around arc $\zg$ and punctures $p$ and $q$ is as in Figure \ref{figLocalQuad}.  Let $\zeta_1 = \zg$ and $\zeta_2$ through $\zeta_{e_p}$ label the arcs that $\ell_p$ crosses as we follow it clockwise around puncture $p$.  Analogously, let $\eta_1 = \zg$ and $\eta_2$ through $\eta_{e_q}$ label the arcs that $\ell_q$ crosses as we follow it clockwise around puncture $q$.  Then we have the following.

\begin{figure}
\input{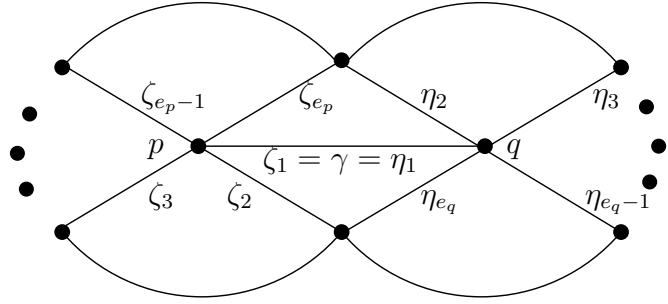}
\caption{The local configuration around arc $\zg$ between  $p$ and $q$.} \label{figLocalQuad}
\end{figure}

\begin{eqnarray}
\label{loopMinP}
x(P_-(\ell_p)) \, h(P_-(\ell_p))  &=&
x_{\zg} \big( \prod_{j=2}^{e_p-1} x_{\zeta_j}\big) x_{\eta_{2}},
\\
\label{loopMaxP}
x(P_+(\ell_p)) \, h(P_+(\ell_p))    &=&
x_{\eta_{e_q}} \big( \prod_{j=3}^{e_p} x_{\zeta_j}\big) x_{\zg}
\big(\prod_{j=2}^{e_p} h_{\zeta_j}\big).
\end{eqnarray}
There are analogous identies for $\ell_q$, which we get by
replacing $p$ with $q$
and switching the $\eta$'s and $\zeta$'s.
\end{lemma}

\begin{proof}
The minimal and maximal matchings are precisely those that
contain only boundary edges.  We distinguish between the two based on the fact that arcs $\zeta_{e_p}$, $\zg$, and $\zeta_2$ are assumed to be given in clockwise order, as are $\eta_{e_q}$, $\zg$, and $\eta_2$.  The edges of the minimal and maximal matchings both have a regular alternating pattern on the interior of $H_\zeta$ (resp. $H_\eta$).  See Figure \ref{figInT} for the verification of equation (\ref{loopMinP}).  The weights in the other  equation are analogous.

\begin{figure}
\input{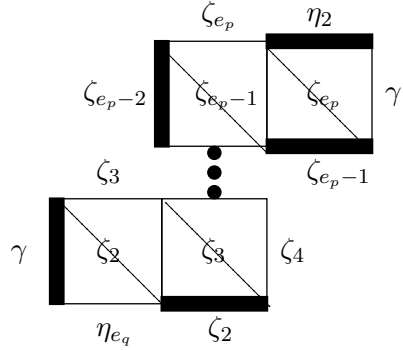}
\caption{The graph $G_{T^\circ,\ell_p}$ with the minimal matching $P_-(\ell_p)$ highlighted.} \label{figInT}
\end{figure}

The height monomial of a minimal matching is $1$, and the height monomial of a maximal matching of a graph is the product of $h_{\tau_i}$'s, one for each label of a tile in the graph.  Thus, looking at the diagonals (i.e. labels) of the tiles in $G_{T^\circ,\ell_p}$ and $G_{T^\circ,\ell_q}$ completes the proof.
\end{proof}

\begin{proof} [Proof of Proposition \ref{prop double-special}]
We define $\zeta_1$ through $\zeta_{e_p}$ and $\eta_1$ through $\eta_{e_q}$ 
as in Lemma \ref{MinMaxProdInT}.  Based on Lemma \ref{MinMaxProdInT}, it follows that
\begin{eqnarray*}
x(P_-(\ell_q))\, h(P_-(\ell_q)) \, x(P_+(\ell_p)) \, h(P_+(\ell_p)) \, h_{\zg} &= &
x_{\zg}^2 h_{\zg} \prod_{j=2}^{e_p} (x_{\zeta_j} h_{\zeta_j})  \prod_{j=2}^{e_q} x_{\eta_j}
\hspace{1.5em}\mathrm{~and} \\
x(P_-(\ell_p))\, h(P_-(\ell_p)) \, x(P_+(\ell_q)) \, h(P_+(\ell_q)) \, h_{\zg} &= &
x_{\zg}^2 h_{\zg} \prod_{j=2}^{e_p} x_{\zeta_j}  \prod_{j=2}^{e_q} (x_{\eta_j} h_{\eta_j}).
\end{eqnarray*}
\begin{eqnarray} \label{matchInT}
&&\hspace{-3em}\text{Therefore }\frac{\sum_{P_p \in \mathcal{P}(\ell_p)} x(P_p) \, h(P_p)}{x_{\zg}x_{\zeta_2}x_{\zeta_3} \cdots x_{\zeta_{e_p}}}
\cdot
\frac{ \sum_{P_q \in \mathcal{P}(\ell_q)} x(P_q) \, h(P_q)}{x_{\zg}x_{\eta_2}x_{\eta_3} \cdots x_{\eta_{e_q}}}
\cdot h_{\zg} \\
\nonumber &-&
h_\zg \big(\prod_{j=2}^{e_p} h_{\zeta_j}\big) - h_\zg \big(\prod_{j=2}^{e_q}
h_{\eta_j}\big)
+ 1 +
h_\zg^2 \big(\prod_{j=2}^{e_p} h_{\zeta_j}\big)\big( \prod_{j=2}^{e_q}
h_{\eta_j}\big)
\end{eqnarray}
is positive,
since $P_\pm(\ell_p) \in \mathcal{P}(\ell_p)$, $P_\pm(\ell_q) \in \mathcal{P}(\ell_q)$, and thus the two negative terms cancel with terms coming from the product of Laurent polynomials.

Since we assumed that 
$T$ does not contain any arcs with notches at $p$ or $q$, it follows that $\Phi(h_\zg) = y_\zg$,  $\Phi(h(P_p)) = y(P_p)$, 
$\prod_{j=2}^{e_p} \Phi(h_{\zeta_j}) = \prod_{\tau \in T} y_\tau^{e_p(\tau)}$, and $\prod_{j=2}^{e_q} \Phi(h_{\eta_j}) = \prod_{\tau \in T} y_\tau^{e_q(\tau)}$. Applying $\Phi$ to (\ref{matchInT}), we obtain
that
\begin{equation}\label{matchInT2}
\frac{\sum_{P_p \in \mathcal{P}(\ell_p)} x(P_p) \, y(P_p)}{x_{\zg}
x_{\zeta_2}x_{\zeta_3} \cdots x_{\zeta_{e_p}}}
\cdot
\frac{ \sum_{P_q \in \mathcal{P}(\ell_q)} x(P_q) \, y(P_q)}{x_{\zg}  
x_{\eta_2}x_{\eta_3} \cdots x_{\eta_{e_q}}}
\cdot y_{\zg} 
+ (1- \prod_{\tau \in T} y_\tau^{e_p(\tau)})(1 - \prod_{\tau \in T} y_\tau^{e_q(\tau)})
\end{equation}
is positive.  Since $\zg \in T$, $x_{\zg}$ is an initial cluster variable and the left-hand-side of (\ref{matchInT2}) can be rewritten using Remark \ref{rem:enough}.  Theorem \ref{double-identity} then gives
\begin{eqnarray*}
x_{\zg^{(pq)}}
\label{TurnToMatchings} &=& \frac{x_{\zg^{(p)}}x_{\zg^{(q)}} y_\zg + \big(1- \prod_{\tau \in T} y_\tau^{e_p(\tau)}\big)\big(1 - \prod_{\tau \in T} y_\tau^{e_q(\tau)}\big)}{x_\zg}.
\end{eqnarray*}  
It follows that the cluster expansion of $x_{\zg^{(pq)}}$ is
positive.
\end{proof}
For the remainder of this section, we assume that $\zg$ (as well as $\zg^{(p)}, \zg^{(q)}$ and $\zg^{(pq)}$) is not in the tagged triangulation $T$.  We use the notation of Definition \ref{ppANDsp}, and additionally we let
$\mathcal{CP}(\zg^{(pq)})$ denote the set of pairs of $\zg$-compatible matchings $(P_p,P_q)$ of $G_{T^\circ,\zLp} \sqcup G_{T^\circ,\zLq}$, and let $\mathcal{P}(\zeta)$ (resp. $\mathcal{P}(\zeta^{(i)})$, $\mathcal{P}(\eta)$, and $\mathcal{P}(\eta^{(i)})$) denote the set of perfect matchings of $H_\zeta$ (resp. $H_{\zeta}^{(i)}$, $H_\eta$, and $H_{\eta}^{(i)}$).
Following Section \ref{sect notched}, we label the tiles of $G_{T^\circ,\ell_p}$ so that they match the labels of the arcs crossed as we travel along $\ell_p$ in clockwise order:
$$G_{\tau_{i_1}}, G_{\tau_{i_2}}, \dots, G_{\tau_{i_d}}, G_{\zeta_1}, G_{\zeta_2},\dots, G_{\zeta_{e_p-1}},G_{\zeta_{e_p}},G_{\tau_{i_d}},G_{\tau_{i_{d-1}}},\dots, G_{\tau_{i_2}}, G_{\tau_{i_1}}.$$
See Figure \ref{LoopSnakeGraph}.
Analogously, the tiles of $G_{T^\circ, \ell_q}$ are labeled so that they match the arcs crossed as we travel along $\ell_q$ in clockwise order:
$$G_{\tau_{i_d}}, G_{\tau_{i_{d-1}}}, \dots, G_{\tau_{i_1}}, G_{\eta_1}, G_{\eta_2},\dots, G_{\eta_{e_q-1}},G_{\eta_{e_q}},G_{\tau_{i_1}},G_{\tau_{i_2}},\dots, G_{\tau_{i_{d-1}}}, G_{\tau_{i_d}}.$$
It follows that the tiles $G_{\tau_{i_d}}$ in both $G_{T^\circ, \ell_p}$ and $G_{T^\circ, \ell_q}$ have two adjacent sides labeled $\zeta_1$ and $\zeta_{e_p}$, and the tiles $G_{\tau_{i_1}}$ contain two adjacent sides labeled $\eta_1$ and $\eta_{e_q}$.

We let $J_{T^\circ,\zg_p,i}$ denote 
the induced subgraph obtained from $G_{T^\circ,\zg_p,i}$ by deleting vertices $v_i$ and $w_i$, and all edges incident to either of these two vertices.

\begin{lemma} \label{twoendsSymmetric} If $P$ is a $\gamma$-symmetric matching of $G_{T^\circ,\ell_p}$ then $P$ can be partitioned into three perfect matchings of subgraphs in exactly one of the two following ways:
\begin{enumerate}
\item $P = P|_{G_{T^\circ, \zg_p, 1}} \sqcup
P|_{H_\zeta^{(1)}}
\sqcup P|_{J_{T^\circ,\zg_p,2}}$, or
\item $P = P|_{J_{T^\circ, \zg_p, 1}} \sqcup
P|_{H_\zeta^{(2)}}
\sqcup P|_{G_{T^\circ,\zg_p,2}}$.
\end{enumerate}
\end{lemma}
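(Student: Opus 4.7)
The plan is to combine Lemma \ref{twoends} with the $\gamma$-symmetry hypothesis to force $P$ into a unique decomposition pattern. First I would record the structural observation that $G_{T^\circ,\ell_p}$ is the union of the two ends $G_{T^\circ,\gamma_p,1}, G_{T^\circ,\gamma_p,2}$ and the central piece $H_\zeta$, with end~$i$ meeting $H_\zeta$ in exactly the two vertices $v_i,w_i$ (and the single shared edge $v_iw_i$ between them). In particular, the vertex sets of $G_{T^\circ,\gamma_p,1}$, $H_\zeta^{(1)}$, and $J_{T^\circ,\gamma_p,2}$ partition $V(G_{T^\circ,\ell_p})$, and the only edges of $G_{T^\circ,\ell_p}$ not contained in one of these three induced subgraphs are the end-$2$ edges at $v_2$ or $w_2$ that travel to an interior vertex of end~$2$ (with a symmetric statement for (b)).

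By Lemma \ref{twoends}, at least one of $P|_{G_{T^\circ,\gamma_p,i}}$ ($i=1,2$) is a perfect matching. Swapping the two ends interchanges the labels (a) and (b), so I would assume without loss of generality that $P|_{G_{T^\circ,\gamma_p,1}}$ is a perfect matching and aim to deduce (a). Under this assumption, $v_1, w_1$ are matched by $P$ within end~$1$, so $P$ contains no $H_\zeta$-edge at $v_1$ or $w_1$; it remains only to show that $v_2, w_2$ are likewise matched by $P$ either to each other via the shared edge $v_2w_2$ or via edges of $H_\zeta$, rather than via end-$2$ edges running into the interior of end~$2$. This is where the $\gamma$-symmetry hypothesis enters: the isomorphism $P|_{H_{T^\circ,\gamma_p,1}}\cong P|_{H_{T^\circ,\gamma_p,2}}$, taken under the canonical identification of the two ends as copies of $G_{T^\circ,\gamma}$, transports the end-$1$ matching information over to end~$2$; deleting $v_1$ and its two incident end-$1$ edges leaves exactly one vertex $z_1$ uncovered (the $P$-partner of $v_1$ in end~$1$), and the isomorphism forces the analogous vertex $z_2$ to be uncovered in $P|_{H_{T^\circ,\gamma_p,2}}$, from which the global perfect-matching condition on $P$ determines the $P$-edges at $v_2,w_2$ in the desired way.

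Once that is established, every edge of $P$ lies inside exactly one of the three induced subgraphs in (a); since their vertex sets partition $V(G_{T^\circ,\ell_p})$ and $P$ is a perfect matching of the whole, each restricted matching is automatically a perfect matching of its subgraph. Mutual exclusivity is then immediate: if (a) holds then $P|_{J_{T^\circ,\gamma_p,2}}$ is a perfect matching, so $v_2,w_2$ are not matched via end-$2$ edges to interior vertices, hence $P|_{G_{T^\circ,\gamma_p,2}}$ fails to be a perfect matching and (b) does not hold. The main obstacle is the symmetry-transport step above: making precise how the $\gamma$-symmetry condition (which only bears on the restricted matchings with $v_i$ removed, and thus a priori says nothing about the edges at $v_i$ themselves) combined with the global perfect-matching condition on $P$ rules out the ``mixed'' configuration in which $v_1,w_1$ are matched inside end~$1$ while $v_2$ or $w_2$ is matched via an end-$2$ edge into the interior of end~$2$.
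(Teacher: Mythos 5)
Your approach founders at the ``without loss of generality'' step, and the ``main obstacle'' you flag at the end is in fact not an obstacle that can be removed. The dichotomy ``$P|_{G_{T^\circ,\gamma_p,1}}$ is a perfect matching'' versus ``$P|_{G_{T^\circ,\gamma_p,2}}$ is a perfect matching'' does not line up with (1) versus (2): when $P$ contains the edge $v_1 w_1$, both restrictions are perfect matchings, yet $P$ falls into case (2), not case (1). So the mixed configuration you describe --- $v_1,w_1$ matched inside end~$1$ while $v_2$ is matched via an end-$2$ edge into the interior of end~$2$ --- genuinely occurs, and when it does the correct decomposition is (2). Trying to rule it out is the wrong move.

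The reason the transport step cannot deliver (1) here is that the canonical identification of the two ends does \emph{not} send $w_1$ to $w_2$. The identification is label-preserving, so it carries $w_1$, which is the endpoint of the $\zeta_{e_p}$-labeled side of end~$1$'s tile $G_{\tau_{i_d}}$ (the edge along which end $1$ is glued to $H_\zeta$), to the vertex of end $2$'s $G_{\tau_{i_d}}$ incident to its $\zeta_{e_p}$- and $\tau_{i_{d-1}}$-labeled sides. That vertex is a degree-two corner of $G_{T^\circ,\ell_p}$ on the opposite side of $v_2$ from $w_2$, because on end $2$ the gluing edge to $H_\zeta$ is the $\zeta_1$-labeled one, not $\zeta_{e_p}$. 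Hence if $v_1$ is matched to $w_1$, then the $\tau_{i_{d-1}}$-edge at $w_1$ is absent, $\gamma$-symmetry forces the $\tau_{i_{d-1}}$-edge of end $2$'s $G_{\tau_{i_d}}$ to be absent as well (both lie in the respective $H_{T^\circ,\gamma_p,i}$), and then the image of $w_1$ can only be covered by the $\zeta_{e_p}$-labeled edge to $v_2$ --- an end-$2$ edge, so (1) fails. The paper's proof avoids this trap by not invoking Lemma~\ref{twoends} at all: it tests directly whether $P$ contains the $\tau_{i_{d-1}}$-labeled edges on the two $G_{\tau_{i_d}}$ tiles (both or neither, by $\gamma$-symmetry), and this single bit separates (1) from (2) cleanly.
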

\begin{proof}
See Figures \ref{LoopSnakeGraphWV} and \ref{figMinM}.  We will divide the set of $\gamma$-symmetric matchings  of $G_{T^\circ,\ell_p}$ into two classes, depending on whether or not they contain one of the edges labeled $\tau_{i_{d-1}}$ on the tiles containing vertex $v_1$ and $v_2$.  
By definition,
 a $\gamma$-symmetric matching must contain both of these edges or neither.

(1) If $P$ contains the specified edges, then $P$ must also contain the edge labeled $\zeta_1$ that is incident to vertex $v_1$.  (Otherwise, vertex $v_1$ could only be covered by the edge labeled $\zeta_2$ and this would leave a connected component with an odd number of vertices to match together.)  Filling in the rest of the edges on tiles $G_{\tau_{i_1}}$ through $G_{\tau_{i_{d-1}}}$, we see that  $P|_{G_{T^\circ,\gamma_p,1}}$ is a perfect matching.  Such a $P$ does not contain the edge labeled $\tau_{i_d}$ on the tile 
$G_{\tau_{i_{d-1}}}$ since that would also leave a connected component with an odd number of vertices.  Consequently, vertices $v_2$ and $w_2$ must be covered by edges from the tile $G_{\zeta_{e_p}}$.  We conclude that the remainder of the set $P$ can be decomposed disjointly as the perfect matchings $P|_{H_\zeta^{(1)}}$ and $P|_{J_{T^\circ,\gamma_p,2}}$.

(2) If $P$ does not contain the specified edges, then $P$ must contain the edge labeled $\zeta_{e_p}$ that is incident to $v_2$. (Otherwise the vertex where edges labeled $\zeta_{e_p}$ and $\tau_{i_{d-1}}$ meet on that tile would not be covered by an edge of $P$.)  Filling in the rest of $P$, we see that it restricts to a perfect matching on $G_{T^\circ,\gamma_p,2}$.  Since the edge labeled $\tau_{i_{d-1}}$ incident to $w_1$
is not in $P$, the edge $\zeta_1$ incident to $v_1$ cannot be contained in $P$. (Otherwise vertex $w_1$ could only be covered by the edge labeled $\tau_{i_d}$ and this also leaves an odd number of vertices to match together.)  We conclude that the rest of the set $P$ can be decomposed disjointly as the perfect matchings $P|_{J_{T^\circ,\gamma_p,1}}$ and $P|_{H_\zeta^{(2)}}$.

As $P$ either contains or does not contain the specified edges, the proof is complete.
\end{proof}

\begin{remark}\label{rem:partition}
By Lemma \ref{twoendsSymmetric}, 
it is impossible for both the edge labeled $\zeta_1$ incident
to $v_1$ (resp. $v_2$) {\it and} the edge labeled
$\zeta_{e_p}$ incident to $v_2$ (resp. $v_1$) to appear in a
$\zg$-symmetric matching of $G_{T^\circ,\ell_p}$.  
Furthermore Case (1) of Lemma \ref{twoendsSymmetric} corresponds to the case where $P$ contains one edge labeled $\zeta_1$ incident to $v_1$ or $v_2$, but does not contain either edge labeled $\zeta_{e_p}$ incident to $v_1$ or $v_2$.  Case (2) is the reverse, and analogous statements hold for edges labeled $\eta_1$ and $\eta_{e_q}$ in $G_{T^\circ,\ell_q}$. 
\end{remark}
We use this observation to 
partition various sets of matchings into  disjoint sets.
\begin{definition} [\emph{$\mathcal{P}_{a,b}(\zg)$, $\mathcal{SP}_{a,b}(\zg^{(p)})$, and $\mathcal{CP}_{a,b}(\zg^{(pq)})$}] \label{partss}
For $a \in \{1,e_p\}$ and $b \in \{1,e_q\}$, let
$\mathcal{P}_{a,b}(\zg)$ (resp. $\mathcal{SP}_{a,b}(\zg^{(p)})$ and $\mathcal{CP}_{a,b}(\zg^{(pq)}))$ denote the set of matchings 
in $\mathcal{P}(\zg)$ (resp. $\mathcal{SP}(\zg^{(p)})$ and $\mathcal{CP}(\zg^{(pq)})$) that contains at least one edge labeled $\zeta_a$ and at least one edge labeled $\eta_b$.
\end{definition}

By Remark \ref{rem:partition}, we have the following.
\begin{align} \label{eq-part}
\mathcal{P}(\zg) &=
\mathcal{P}_{1,1}(\zg) \sqcup
\mathcal{P}_{1,e_q}(\zg) \sqcup
\mathcal{P}_{e_p,1}(\zg) \sqcup
\mathcal{P}_{e_p,e_q}(\zg), \\
\label{eq-part2}
\mathcal{SP}(\zg^{(p)}) &=
\mathcal{SP}_{1,1}(\zg^{(p)}) \sqcup
\mathcal{SP}_{1,e_q}(\zg^{(p)}) \sqcup
\mathcal{SP}_{e_p,1}(\zg^{(p)}) \sqcup
\mathcal{SP}_{e_p,e_q}(\zg^{(p)}),\\
\label{eq-part3}
\mathcal{CP}(\zg^{(pq)}) &=
\mathcal{CP}_{1,1}(\zg^{(pq)}) \sqcup
\mathcal{CP}_{1,e_q}(\zg^{(pq)}) \sqcup
\mathcal{CP}_{e_p,1}(\zg^{(pq)}) \sqcup
\mathcal{CP}_{e_p,e_q}(\zg^{(pq)}).
\end{align}

\noindent We let $\mathcal{P}_1(\zeta)$ (resp. $\mathcal{P}_{e_p}(\zeta)$) denote the subset of perfect matchings of $H_{\zeta}$ that contains the edge labeled $\zeta_1$ (resp. $\zeta_{e_p}$) on the tile $G_{\zeta_{e_p}}$ (resp. $G_{\zeta_1})$.  We define $\mathcal{P}_b(\eta)$, $\mathcal{P}_a(\zeta^{(i)})$, and $\mathcal{P}_{b}(\eta^{(i)})$ analogously for graphs $H_\eta$, $H_\zeta^{(i)}$, and $H_\eta^{(i)}$.  We also define the following.
\begin{eqnarray} 
\mathcal{M}_{a,b}(\zg) &=& \sum_{P \in \mathcal{P}_{a,b}(\zg)}
x(P)h(P), \\
\mathcal{M}(\zg) &=&  \sum_{P \in \mathcal{P}(\zg)} x(P) h(P), \\
\mathcal{SM}(\zg^{(p)}) &=& \sum_{P_p \in \mathcal{SP}(\zg^{(p)})} \overline{x}(P_p)\overline{h}(P_p), \\
\mathcal{CM}(\zg^{(pq)}) &=& \sum_{(P_p,P_q) \in \mathcal{CP}(\zg^{(pq)})}
\overline{\overline{x}}(P_p,P_q)\overline{\overline{h}}(P_p,P_q), \\
\label{eqZ1} \mathcal{M}_a(\zeta) &=& \sum_{P \in \mathcal{P}_a(\zeta)} x(P)h(P), \\
\label{eqZ2} \mathcal{M}(\zeta^{(1)}) &=&
\frac{\sum_{P \in \mathcal{P}(\zeta)} x(P)h(P)}{x_{\zeta_1}}, \mathrm{~and} \\
\label{eqZ3} \mathcal{M}(\zeta^{(2)}) &=&
\frac{h_{\zeta_{e_p}}\, \sum_{P \in \mathcal{P}(\zeta)} x(P)h(P)}{x_{\zeta_{e_p}}}.
\end{eqnarray}
We define $\mathcal{M}_b(\eta)$, $\mathcal{M}(\eta^{(1)})$, and $\mathcal{M}(\eta^{(2)})$ analogously.  In equations (\ref{eqZ1})-(\ref{eqZ3}), $h(P)$ is the height monomial with respect to the relevant subgraph.

\begin{lemma} \label{DisjointPartition}
By Remark  \ref{BendingAssumpt}, we can assume without loss
of generality that the tiles $G_{\tau_{i_{d-1}}}, G_{\tau_{i_d}}$, and $G_{\zeta_1}$ (resp. $G_{\tau_{i_2}}, G_{\tau_{i_1}}$, and $G_{\eta_1}$) all lie in a single row or column, while the tiles $G_{\zeta_{e_p}}, G_{\tau_{i_d}}$, and $G_{\tau_{i_{d-1}}}$(resp. $G_{\eta_{e_q}}, G_{\tau_{i_1}}$, and $G_{\tau_{i_2}}$) do not.
Then
\begin{align*}
\mathcal{M}(\zg)& =
\mathcal{M}_{1,1}(\zg) +
\mathcal{M}_{1,e_q}(\zg) +
\mathcal{M}_{e_p,1}(\zg) +
\mathcal{M}_{e_p,e_q}(\zg); \\
\mathcal{SM}(\zg^{(p)})
&=
(\mathcal{M}_{1,1}(\zg) +
\mathcal{M}_{1,e_q}(\zg))\mathcal{M}(\zeta^{(1)}) 
+ (\mathcal{M}_{e_p,1}(\zg) +
\mathcal{M}_{e_p,e_q}(\zg))\mathcal{M}(\zeta^{(2)}); \\
\mathcal{SM}(\zg^{(q)})
&=
(\mathcal{M}_{1,1}(\zg) + \mathcal{M}_{e_p,1}(\zg))
\mathcal{M}(\eta^{(1)}) 
+(\mathcal{M}_{1,e_q}(\zg) + \mathcal{M}_{e_p,e_q}(\zg))
\mathcal{M}(\eta^{(2)}); \\
\mathcal{CM}(\zg^{(pq)}) &=
\mathcal{M}_{1,1}(\zg)\mathcal{M}(\zeta^{(1)})\mathcal{M}(\eta^{(1)}) +
\mathcal{M}_{1,e_q}(\zg)\mathcal{M}(\zeta^{(1)})\mathcal{M}(\eta^{(2)})  \\ 
\nonumber
&+
\mathcal{M}_{e_p,1}(\zg)\mathcal{M}(\zeta^{(2)})\mathcal{M}(\eta^{(1)})  +
\mathcal{M}_{e_p,e_q}(\zg)\mathcal{M}(\zeta^{(2)})\mathcal{M}(\eta^{(2)}) .
\end{align*}
\end{lemma}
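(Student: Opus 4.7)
The plan is to apply Lemma \ref{twoendsSymmetric} at the puncture $p$ (and, for the fourth identity, also the analogous statement at $q$) to split each matching into three pieces whose generating function contributions multiply independently. The first identity is immediate: since equation (\ref{eq-part}) partitions $\mathcal{P}(\gamma)$ into four disjoint subsets, summing $x(P)h(P)$ distributes across the partition, yielding $\mathcal{M}(\gamma) = \mathcal{M}_{1,1}(\gamma) + \mathcal{M}_{1,e_q}(\gamma) + \mathcal{M}_{e_p,1}(\gamma) + \mathcal{M}_{e_p,e_q}(\gamma)$ with no further work.

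For the second identity, I would first partition $\mathcal{SP}(\gamma^{(p)})$ according to which of the two cases of Lemma \ref{twoendsSymmetric} the matching falls into; by Remark \ref{rem:partition}, case (1) corresponds precisely to $\mathcal{SP}_{1,1}(\gamma^{(p)}) \sqcup \mathcal{SP}_{1,e_q}(\gamma^{(p)})$ and case (2) to $\mathcal{SP}_{e_p,1}(\gamma^{(p)}) \sqcup \mathcal{SP}_{e_p,e_q}(\gamma^{(p)})$. In case (1), each $\gamma$-symmetric matching decomposes uniquely as $P_p = P_1 \sqcup P_c \sqcup P_1^*$ with $P_1 \in \mathcal{P}(G_{T^\circ,\gamma_p,1})$, $P_c \in \mathcal{P}(H_\zeta^{(1)})$, and $P_1^*$ determined by $P_1$ through the $\gamma$-symmetry isomorphism $P_p|_{H_{T^\circ,\gamma_p,1}} \cong P_p|_{H_{T^\circ,\gamma_p,2}}$. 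Since $\overline{x}(P_p)\overline{h}(P_p) = x(P_p)h(P_p)/\bigl(x(P_1)h(P_1)\bigr)$, the $P_1$-weight cancels and the remaining contribution factors as a product indexed by $P_c$ and by $P_1^*$. Summing over $P_c \in \mathcal{P}(H_\zeta^{(1)})$ produces $\mathcal{M}(\zeta^{(1)})$ once we identify this sum with the definition (\ref{eqZ2}), while summing over the allowed $P_1$ gives $\mathcal{M}_{1,1}(\gamma) + \mathcal{M}_{1,e_q}(\gamma)$. Case (2) treats $\mathcal{M}(\zeta^{(2)})$ analogously, and the third identity follows from the second by interchanging the roles of $p$ and $q$.

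For the fourth identity, I would apply Lemma \ref{twoendsSymmetric} independently at both punctures. A $\gamma$-compatible pair $(P_p, P_q)$ is characterized by a single shared perfect matching $P_1$ of $G_{T^\circ,\gamma}$ (arising from the compatibility condition $P_p|_{G_{T^\circ,\gamma_p,1}} \cong P_q|_{G_{T^\circ,\gamma_q,1}}$) together with independently chosen middle matchings $P_c^\zeta \in \mathcal{P}(H_\zeta^{(i)})$ and $P_c^\eta \in \mathcal{P}(H_\eta^{(j)})$, where $i \in \{1,2\}$ and $j \in \{1,2\}$ are dictated by which end-edges $P_1$ uses. The weight $\overline{\overline{x}}(P_p,P_q)\overline{\overline{h}}(P_p,P_q)$ splits as a triple product, and summing across the four possible $(a,b)$-types of $P_1 \in \mathcal{P}_{a,b}(\gamma)$ produces the four-term expression exactly as stated.

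The main obstacle I anticipate is the careful identification between the purely combinatorial generating function $\sum_{P_c \in \mathcal{P}(H_\zeta^{(i)})} x(P_c)h(P_c)$ and the quantity $\mathcal{M}(\zeta^{(i)})$ defined in (\ref{eqZ2})-(\ref{eqZ3}). The factors $1/x_{\zeta_1}$ in (\ref{eqZ2}) and $h_{\zeta_{e_p}}/x_{\zeta_{e_p}}$ in (\ref{eqZ3}) must be shown to precisely compensate for the boundary edges at $v_i, w_i$ that are present in $H_\zeta$ but absent in $H_\zeta^{(i)}$, while simultaneously tracking how $P_1^*$ (a matching of $J_{T^\circ,\gamma_p,2}$ rather than of the full end) relates by $\gamma$-symmetry back to $P_1$. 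Verifying this local boundary bookkeeping, using the concrete configuration in Figure \ref{LoopSnakeGraphWV} and the normalization assumption in Remark \ref{BendingAssumpt}, is the one step that is genuinely computational rather than formal.
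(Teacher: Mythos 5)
Your plan is the paper's own proof: the decomposition comes from Lemma \ref{twoendsSymmetric} together with Remark \ref{rem:partition}, and the four identities follow once the weights and heights of the three pieces are shown to multiply and to match the definitions of $\mathcal{M}_{a,b}(\gamma)$ and $\mathcal{M}(\zeta^{(i)})$. The bookkeeping you defer is exactly where the paper's argument spends its effort; in particular the factor $h_{\zeta_{e_p}}$ appearing in $\mathcal{M}(\zeta^{(2)})$ is accounted for by noting that the minimal matching $P_-(\ell_p)$ lies in $\mathcal{SP}_{1,1}(\gamma^{(p)}) \sqcup \mathcal{SP}_{1,e_q}(\gamma^{(p)})$, so the specialized height of any matching in the other two classes is shifted by $h_{\zeta_{e_p}}$ relative to the naive product of subgraph heights.
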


\begin{proof}
The identity for $\mathcal{M}(\zg)$ follows directly from 
(\ref{eq-part}).  We use (\ref{eq-part2}) to get 
\begin{align}
\label{MMstar}
\mathcal{SM}(\zg^{(p)}) &= \sum_{P_p \in \mathcal{SP}_{1,1}(\zg^{(p)})} \overline{x}(P_p)\overline{h}(P_p) +
\sum_{P_p \in \mathcal{SP}_{e_p,1}(\zg^{(p)})} \overline{x}(P_p)\overline{h}(P_p) \\
\nonumber
&+
\sum_{P_p \in \mathcal{SP}_{1,e_q}(\zg^{(p)})} \overline{x}(P_p)\overline{h}(P_p) +
\sum_{P_p \in \mathcal{SP}_{e_p,e_q}(\zg^{(p)})} \overline{x}(P_p)\overline{h}(P_p).
\end{align}
By Lemma \ref{twoendsSymmetric}, a $\zg$-symmetric matching $P_p$ of $G_{T^\circ, \zLp}$ restricts to the disjoint union 
of  perfect matchings of $$G_{T^\circ,\zg_p,1} \sqcup H_{\zeta}^{(1)} \sqcup J_{T^\circ,\zg_p,2} \hspace{1em} \mathrm{~or~}
\hspace{1em}
 J_{T^\circ,\zg_p,1} \sqcup H_{\zeta}^{(2)} \sqcup G_{T^\circ,\zg_p,2}
,$$
depending on whether $P_p$ contains an edge labeled $\zeta_1$ or $\zeta_{e_p}$, respectively.  
Accordingly, the weight 
\begin{align*}
\overline{x}(P_p) &= 
\frac{x(P_p)}{x(P_p|_{G_{T^\circ,\zg_p,1}})} = 
x(P_p|_{H_{\zeta}^{(1)}}) \, x(P_p|_{J_{T^\circ,\zg_p,2}})
= 
x(P_p|_{H_{\zeta}^{(1)}}) \, \frac{x(P_p|_{G_{T^\circ,\zg_p,2}})}{x_{\zeta_1}} \hspace{1em} \mathrm{~or} \\ 
&= 
\frac{x(P_p)}{x(P_p|_{G_{T^\circ,\zg_p,2}})} = 
x(P_p|_{H_{\zeta}^{(2)}}) \, x(P_p|_{J_{T^\circ,\zg_p,1}})
= 
x(P_p|_{H_{\zeta}^{(2)}}) \, \frac{x(P_p|_{G_{T^\circ,\zg_p,2}})}{x_{\zeta_{e_p}}}
,
\end{align*}
respectively.

To calculate the height, we note that the minimal matching $P_-(\ell_p)$ appears in the subset $\mathcal{SP}_{1,1}(\ell_p) \sqcup \mathcal{SP}_{1,e_q}(\ell_p)$,
so $$\overline{h}(P_p) = 
\frac{h(P_p)}{h(P_p|_{G_{T^\circ,\zg_p,1}})} = 
h(P_p|_{H_{\zeta}^{(1)}}) \, h(P_p|_{J_{T^\circ,\zg_p,2}})
= 
h(P_p|_{H_{\zeta}^{(1)}}) \, h(P_p|_{G_{T^\circ,\zg_p,2}})$$ in the case that $P_p \in 
\mathcal{SP}_{1,1}(\ell_p) \sqcup \mathcal{SP}_{1,e_q}(\ell_p)$.
On the other hand, any $\zg$-symmetric matching in 
$\mathcal{SP}_{e_p,1}(\zg^{(p)}) \sqcup \mathcal{SP}_{e_p,e_q}(\zg^{(p)})$ 
has a height monomial scaled by a factor of $h_{\zeta_{e_p}}$.
Thus
$$\overline{h}(P_p) = 
\frac{h(P_p)}{h(P_p|_{G_{T^\circ,\zg_p,2}})} = 
h(P_p|_{H_{\zeta}^{(2)}}) \, h(P_p|_{J_{T^\circ,\zg_p,1}})
= 
h_{\zeta_{e_p}} \, h(P_p|_{H_{\zeta}^{(2)}}) \, h(P_p|_{G_{T^\circ,\zg_p,1}})$$ in the case that $P_p \in 
\mathcal{SP}_{e_p,1}(\ell_p) \sqcup \mathcal{SP}_{e_p,e_q}(\ell_p)$.  We thus can rewrite (\ref{MMstar}) as 
\begin{align}
\mathcal{SM}(\gamma^{(p)}) &= 
\sum_{P_1 \in \mathcal{P}_{1,1}(\gamma)\sqcup \mathcal{P}_{1,e_q}(\gamma)} \, \, \sum_{P_2 \in \mathcal{P}_{1}(\zeta^{(1)})}  
\frac{x(P_1)}{x_{\zeta_1}}h(P_1) x(P_2) h(P_2)
\\
\nonumber &+ \sum_{P_1 \in \mathcal{P}_{e_p,1}(\gamma)\sqcup \mathcal{P}_{e_p,e_q}(\gamma)}\,\,\sum_{P_2 \in \mathcal{P}_{e_p}(\zeta^{(2)})} 
\frac{x(P_1)}{x_{\zeta_{e_p}}}h(P_1) x(P_2) h(P_2) h_{\zeta_{e_p}},
\end{align}
thus showing the identity for $\mathcal{SM}(\zg^{(p)})$ (and $\mathcal{SM}(\zg^{(q)})$).

The formula for $\mathcal{CM}(\zg^{(pq)})$ follows by similar logic since specifying the four ends of a $\zg$-compatible pair of matchings of $G_{T^\circ, \zLp}$ and $G_{T^\circ, \zLq}$ also specifies which of the two cases of Lemma \ref{twoendsSymmetric} we are in for both $G_{T^\circ,\zLp}$ and $G_{T^\circ,\zLq}$.
\end{proof}

Lemma \ref{DisjointPartition} immediately implies the following.

\begin{lemma} \label{mainLemma}
The expression $\mathcal{CM}(\zg^{(pq)}) \mathcal{M}(\zg) - \mathcal{SM}(\zg^{(p)})
\mathcal{SM}(\zg^{(q)})$ equals
\begin{equation*}
\label{threefactors}
\big(\mathcal{M}_{1,1}(\zg) \mathcal{M}_{e_p,e_q}(\zg) -
\mathcal{M}_{1,e_q}(\zg) \mathcal{M}_{e_p,1}(\zg) \big) 
 \big(\mathcal{M}(\zeta^{(1)})-\mathcal{M}(\zeta^{(2)})\big)
\, \big(\mathcal{M}(\eta^{(1)})-\mathcal{M}(\eta^{(2)})\big).
\end{equation*}

\end{lemma}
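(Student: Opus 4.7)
The plan is to derive the lemma by a direct algebraic expansion using the four explicit formulas provided by Lemma \ref{DisjointPartition}. Since those formulas express $\mathcal{M}(\zg)$, $\mathcal{SM}(\zg^{(p)})$, $\mathcal{SM}(\zg^{(q)})$, and $\mathcal{CM}(\zg^{(pq)})$ as linear combinations of the four quantities
\[
A := \mathcal{M}_{1,1}(\zg),\ B := \mathcal{M}_{1,e_q}(\zg),\ C := \mathcal{M}_{e_p,1}(\zg),\ D := \mathcal{M}_{e_p,e_q}(\zg),
\]
with coefficients in $\mathcal{M}(\zeta^{(i)})$ and $\mathcal{M}(\eta^{(j)})$, everything reduces to polynomial identity manipulation. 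Set $Z_i := \mathcal{M}(\zeta^{(i)})$ and $E_j := \mathcal{M}(\eta^{(j)})$ for $i,j \in \{1,2\}$.

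First I would rewrite the four terms in Lemma \ref{DisjointPartition} compactly:
\[
\mathcal{M}(\zg) = A+B+C+D,\qquad \mathcal{SM}(\zg^{(p)}) = (A+B)Z_1 + (C+D)Z_2,
\]
\[
\mathcal{SM}(\zg^{(q)}) = (A+C)E_1 + (B+D)E_2,\qquad \mathcal{CM}(\zg^{(pq)}) = AZ_1E_1 + BZ_1E_2 + CZ_2E_1 + DZ_2E_2.
\]
Next I would compute $\mathcal{CM}(\zg^{(pq)})\mathcal{M}(\zg)$ and $\mathcal{SM}(\zg^{(p)})\mathcal{SM}(\zg^{(q)})$ as polynomials in $Z_1,Z_2,E_1,E_2$, and extract the coefficient of each monomial $Z_i E_j$ in their difference. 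A short calculation shows that the coefficient of $Z_1E_1$ equals $A(A+B+C+D) - (A+B)(A+C) = AD - BC$, the coefficient of $Z_2E_2$ equals $D(A+B+C+D) - (C+D)(B+D) = AD - BC$, and the coefficients of $Z_1E_2$ and $Z_2E_1$ each equal $-(AD-BC)$.

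Finally, I would factor the resulting expression as
\[
(AD - BC)\bigl(Z_1 E_1 - Z_1 E_2 - Z_2 E_1 + Z_2 E_2\bigr) = (AD - BC)(Z_1 - Z_2)(E_1 - E_2),
\]
which, upon substituting back the definitions of $A,B,C,D,Z_i,E_j$, is exactly the claimed formula. There is no genuine obstacle here: the content of the lemma is entirely absorbed into Lemma \ref{DisjointPartition}, and the remaining task is a bookkeeping exercise that amounts to noticing that the cross-terms in the product $(A+B)(A+C)$-style expansions are precisely what cancel the diagonal terms of $\mathcal{CM}(\zg^{(pq)})\mathcal{M}(\zg)$ to leave a rank-one $2\times 2$ determinant $AD-BC$ times the two rank-one differences $Z_1-Z_2$ and $E_1-E_2$.
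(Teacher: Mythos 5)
Your proof is correct and takes exactly the approach the paper intends: the paper explicitly states that Lemma \ref{mainLemma} follows immediately from Lemma \ref{DisjointPartition}, and your polynomial expansion in $A,B,C,D,Z_i,E_j$ is precisely the bookkeeping that justifies that remark.
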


The next two results describe how to simplify the three factors in (\ref{threefactors}).

\begin{lemma} \label{firstfactor} We have
\begin{equation} \label{eqnfirstfactor} \Phi\big(\mathcal{M}_{1,1}(\zg) \mathcal{M}_{e_p,e_q}(\zg) -
\mathcal{M}_{1,e_q}(\zg) \mathcal{M}_{e_p,1}(\zg) \big)
=
x_{\tau_{i_1}} x_{\tau_{i_d}} x_{\zeta_1}x_{\zeta_{e_p}}x_{\eta_1}x_{\eta_{e_q}}\prod_{j=2}^{d-1} x_{\tau_{i_j}}^2
\prod_{\tau \in T} y_\tau^{e(\tau,\zg)}.
\end{equation}
\end{lemma}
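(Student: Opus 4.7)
The plan is a sign-reversing, weight- and height-preserving involution on ordered pairs of matchings in the signed sum, leaving only one surviving pair which I identify as $(P_-,P_+)$ and evaluate directly. First observe that the vertex $v_p$ of $G_{T^\circ,\gamma}$ corresponding to the puncture $p$ lies only in the last tile $G_{\tau_{i_d}}$ and has exactly two incident edges, labeled $\zeta_1$ and $\zeta_{e_p}$; similarly $v_q$ in the first tile has only two incident edges, labeled $\eta_1$ and $\eta_{e_q}$. Hence every matching uses exactly one edge from each pair, which accounts for the partition (\ref{eq-part}); moreover, for any ordered pair $(P_1,P_2)$ appearing in either product, both $v_p$ and $v_q$ have degree two in $P_1\ominus P_2$ and so lie on unique cycles $C_p$, $C_q$ of that symmetric difference (possibly coincident). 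Recasting the signed sum over unordered pairs, each diagonal pair (one matching in $\mathcal{P}_{1,1}$, the other in $\mathcal{P}_{e_p,e_q}$) contributes $+x(P_1)x(P_2)h(P_1)h(P_2)$, and each anti-diagonal pair contributes the same with a minus sign.

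For pairs with $C_p\neq C_q$, define the involution $\{P_1,P_2\}\mapsto\{P_1\ominus C_p,\,P_2\ominus C_p\}$. This swap interchanges which of $\{\zeta_1,\zeta_{e_p}\}$ each matching uses while leaving the $\eta$-choices intact, converting diagonal pairs into anti-diagonal ones and vice versa. The multiset $P_1\cup P_2$ is preserved, so $x(P_1)x(P_2)$ is unchanged; moreover, because cycles in $P_1\ominus P_2$ cannot nest inside $C_p$ in the snake graph $G_{T^\circ,\gamma}$ (any putative nested cycle would be forced to share an interior edge with $C_p$), every tile enclosed by $C_p$ contributes exactly one factor of $h_{\tau_k}$ to the combined height $h(P_1)h(P_2)$ both before and after the flip, so $h(P_1)h(P_2)$ is preserved. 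All such pairs cancel. The fixed pairs satisfy $C_p=C_q$; since $v_p$ lies only in the last tile and $v_q$ only in the first, and cycles in $G_{T^\circ,\gamma}$ bound connected unions of tiles, such a common cycle must enclose the full range $\{1,\dots,d\}$ and therefore equals the outer boundary cycle of length $2(d+1)$. Combined with $|P_1\ominus P_2|\le 2(d+1)$, this forces $P_1\ominus P_2$ to be exactly the outer boundary, whence $P_1,P_2$ partition the boundary edges and $\{P_1,P_2\}=\{P_-,P_+\}$. Under the orientation convention of Remark \ref{BendingAssumpt} adapted to $G_{T^\circ,\gamma}$, this pair is diagonal and contributes $+x(P_-)x(P_+)h(P_-)h(P_+)$.

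It remains to compute the surviving contribution and apply $\Phi$. The boundary edges of $G_{T^\circ,\gamma}$, enumerated tile-by-tile, are $\{\eta_1,\eta_{e_q},\tau_{i_2}\}$ from the first tile, $\{\tau_{i_{j-1}},\tau_{i_{j+1}}\}$ from each interior tile $G_{\tau_{i_j}}$ with $2\le j\le d-1$, and $\{\tau_{i_{d-1}},\zeta_1,\zeta_{e_p}\}$ from the last tile. Multiplying and observing that each interior $x_{\tau_{i_j}}$ appears twice (once from tile $j-1$ and once from tile $j+1$) yields $x(P_-)x(P_+)=x_{\tau_{i_1}}x_{\tau_{i_d}}x_{\zeta_1}x_{\zeta_{e_p}}x_{\eta_1}x_{\eta_{e_q}}\prod_{j=2}^{d-1}x_{\tau_{i_j}}^2$. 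Since $h(P_-)=1$ and $h(P_+)=\prod_{j=1}^d h_{\tau_{i_j}}$, applying $\Phi$ yields $\prod_{\tau\in T}y_\tau^{e(\tau,\gamma)}$: for each self-folded triangle with loop $\ell$ and radius $r$ to puncture $p'$, the identity $\Phi(h_r)\Phi(h_\ell)=(y_r/y_{r^{(p')}})\cdot y_{r^{(p')}}=y_r$ together with the tagged crossing convention $e(r^{(p')},\gamma)=e(\ell,\gamma)-e(r,\gamma)$ reconciles the ideal and tagged exponents. Multiplying gives the right-hand side. The main technical obstacle is the cycle-structure step in the middle paragraph: rigorously verifying, using the planarity and snake structure of $G_{T^\circ,\gamma}$ (possibly with bends), that cycles in the symmetric difference of two perfect matchings cannot nest and that any cycle through both $v_p$ and $v_q$ must equal the full outer boundary.
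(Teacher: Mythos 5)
Your proof is correct and follows essentially the same route as the paper's, just repackaged as a sign-reversing involution on unordered pairs rather than a count of decompositions of a fixed superposition. The paper fixes the multiset $P_1+P_2$ and shows it contributes with multiplicity $2^k$ to both the diagonal and anti-diagonal products (with the lone exception of $P_-+P_+$, which is a single cycle and so admits no anti-diagonal decomposition), whereas you pair each $\{P_1,P_2\}$ with $\{P_1\ominus C_p, P_2\ominus C_p\}$; these two bookkeeping devices cancel the same terms, and both isolate the boundary-cycle case via the same topological observation that a cycle through both $v_p$ and $v_q$ must be the full outer boundary. Your explicit identification of $C_p$ and the observation that swapping along it toggles only the $\zeta$-choice is, if anything, marginally cleaner than the paper's "$2^k = 2^k$" tally, which implicitly relies on $C_p\neq C_q$; and your height-preservation argument (each tile inside $C_p$ lies in exactly one of $S(P_1)$, $S(P_2)$) is the right reason, though the "no nesting" remark is a slight detour—the relevant fact is simply that tiles enclosed by $C_p$ sit in $S(P_1)\triangle S(P_2)$. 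The concluding evaluation of $x(P_-)x(P_+)$, $h(P_-)h(P_+)$, and the $\Phi$-specialization through self-folded triangles matches the paper's.
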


\begin{proof}
The main idea is that a superposition of two matchings 
corresponding to the first term on the left-hand-side of (\ref{eqnfirstfactor}) 
can be decomposed into a superposition of two matchings 
corresponding to the second term on the left-hand-side of (\ref{eqnfirstfactor}) in all cases except for one. This  case corresponds to the right-hand-side of (\ref{eqnfirstfactor}).

We denote by $P_1 + P_2$ the multigraph given by the superposition of $P_1$ and $P_2$.  Let $P_1$ be an element of $\mathcal{P}_{1,1}(\zg)$ and $P_2$ an element of $\mathcal{P}_{e_p,e_q}(\zg)$.
Since $G_{T^\circ,\zg}$ is bipartite, it follows that $P_1+P_2$ consists of a disjoint union of cycles of even length (including doubled edges which we treat as cycles of length two).

By definition, $P_1$ contains the edge labeled $\zeta_{1}$ on the tile $G_{\tau_{i_d}}$ while $P_2$ contains the edge labeled $\zeta_{e_p}$ on $G_{\tau_{i_d}}$.  Similarly, $P_1$ contains the edge labeled $\eta_1$ on $G_{\tau_{i_1}}$ while $P_2$ contains the edge labeled $\eta_{e_q}$ on $G_{\tau_{i_1}}$.  
Consequently, the superposition $P_1+P_2$ contains at least one cycle of length greater than two, and one such cycle must contain the edges labeled $\zeta_1$ and $\zeta_{e_p}$ on the tile $G_{\tau_{i_d}}$, and one must contain the edges labeled $\eta_1$ and $\eta_{e_q}$ on the tile $G_{\tau_{i_1}}$.

Let $k$ be the number of cycles in $P_1+P_2$ of length greater than $2$ which do not involve edges on tiles $G_{\tau_{i_d}}$ or $G_{\tau_{i_1}}$.  Then there are $2^k$ ways of decomposing $P_1+P_2$ into the superposition of two matchings, one from $\mathcal{P}_{1,1}(\zg)$ and one from $\mathcal{P}_{e_p,e_q}(\zg)$.
When $P_1+P_2$ has at least two cycles of length greater than $2$,
there are also $2^k$ ways to decompose $P_1+P_2$ into the superposition of two matchings with
one from set 
$\mathcal{P}_{e_p,1}(\zg)$ and one from $\mathcal{P}_{1,e_q}(\zg)$.
Thus we have a weight-preserving and height-preserving bijection
between such superpositions.

The superposition of the minimal matching $P_-(\zg) \in \mathcal{P}_{1,1}(\zg)$ and the maximal matching $P_+(\zg) \in \mathcal{P}_{e_p,e_q}(\zg)$ is of the form $P_1+P_2$, but consists of a single cycle including all edges on the boundary of $G_{T^\circ,\zg}$.
Recall that the sets  $\mathcal{P}_{1,1}(\zg), \mathcal{P}_{e_p,1}(\zg), \mathcal{P}_{1,e_q}(\zg),$ and $\mathcal{P}_{e_p,e_q}(\zg)$ are disjoint.  Accordingly, a single cycle cannot decompose into a superposition of an element of $\mathcal{P}_{e_p,1}(\zg)$ and an element of $\mathcal{P}_{1,e_q}(\zg)$ because $P_-(\zg)$ and $P_+(\zg)$ are the unique two perfect matchings of a single cycle including all edges on the boundary of $G_{T^\circ,\zg}$.  It follows that any superposition of an element in $\mathcal{P}_{e_p,1}(\zg)$ and an element in $\mathcal{P}_{1,e_q}(\zg)$ must contain at least two cycles, and 
is also of the form $P_1+P_2$, with $P_1 \in \mathcal{P}_{1,1}(\zg)$ and $P_2 \in \mathcal{P}_{e_p,e_q}(\zg)$.

In conclusion, the only monomial not canceled in the difference on the left-hand-side of (\ref{eqnfirstfactor}) corresponds to the superposition of  $P_-(\zg)$ and $P_+(\zg)$, which includes all edges on the boundary.  
To calculate the weight, we note that on every tile $G_{\tau_{i_j}}$, for $2 \leq j \leq d-1$, there are exactly two adjacent tiles that include edges on the boundary with weight $x_{\tau_{i_j}}$, see Figure \ref{figglue}. 
On the other hand, the tiles $G_{\tau_{i_1}}$ and $G_{\tau_{i_d}}$ only have one adjacent tile each with an edge on the boundary with weight $x_{\tau_{i_1}}$ (resp. $x_{\tau_{i_d}}$).  The remaining two boundary edges of $G_{\tau_{i_1}}$ have weights $x_{\eta_1}$ and $x_{\eta_{e_q}}$, while the other two boundary edges of $G_{\tau_{i_d}}$ have weights $x_{\zeta_1}$ and $x_{\zeta_{e_p}}$.

The product of heights is $1\cdot \prod_{j=1}^{d} h_{\tau_{i_j}}$, the height monomial for the minimal matching multiplied by the height monomial for the maximal matching.  This specializes to $\prod_{\tau \in T} y_\tau^{e(\tau,\zg)}$ under the map $\Phi$.
\end{proof}

\begin{lemma} \label{secondthirdfactor}
We have the following two identities:
$$\Phi\big(\mathcal{M}(\zeta^{(1)})-\mathcal{M}(\zeta^{(2)})\big) =
x_{\tau_{i_d}}\big(\prod_{j=2}^{e_p-1} x_{\zeta_j}\big)
\big(1- \prod_{\tau \in T} y_\tau^{e_p(\tau)}\big) \text{ and }$$
$$\Phi\big(\mathcal{M}(\eta^{(1)})-\mathcal{M}(\eta^{(2)})\big) =
x_{\tau_{i_1}}\big(\prod_{j=2}^{e_q-1} x_{\eta_j}\big)
\big(1- \prod_{\tau \in T} y_\tau^{e_q(\tau)}\big).$$
\end{lemma}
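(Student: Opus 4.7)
I focus on the $\zeta$ identity; the $\eta$ identity follows by the symmetric argument with $p$ and the fan around $p$ replaced by $q$ and the fan around $q$. My first step is to interpret \eqref{eqZ2}--\eqref{eqZ3} combinatorially as weight-times-height sums over perfect matchings of the reduced snake graphs $H_\zeta^{(1)}$ and $H_\zeta^{(2)}$. Here $H_\zeta$ is the snake graph whose tiles $G_{\zeta_1}, \ldots, G_{\zeta_{e_p}}$ come from the fan of ideal arcs incident to $p$, and each $H_\zeta^{(i)}$ is obtained by deleting the two corner vertices $v_i, w_i$ from the extremal tiles $G_{\zeta_1}$ and $G_{\zeta_{e_p}}$. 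The central claim is that \emph{each} of these reduced graphs admits a unique perfect matching: by Remark~\ref{Hzeta}, $H_\zeta$ is a strict zigzag, so the removal of $v_i$ and $w_i$ at the two ends pins down a boundary edge on each extremal tile, and the zigzag condition then propagates this forcing tile-by-tile across $H_\zeta$. The resulting unique matching of $H_\zeta^{(1)}$ is the restriction of the minimal matching $P_-(H_\zeta)$, while that of $H_\zeta^{(2)}$ is the restriction of the maximal matching $P_+(H_\zeta)$.

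Next I would compute the weight and height of each unique matching directly. In both cases the $x$-weight is $x_{\tau_{i_d}} \prod_{j=2}^{e_p-1} x_{\zeta_j}$: the factor $x_{\tau_{i_d}}$ comes from the boundary edge of $G_{\zeta_1}$ inherited from the triangle $\Delta_{d+1}$, each interior tile $G_{\zeta_j}$ contributes $x_{\zeta_j}$, and the remaining edges of the forced matching are carried by labels specializing to $1$ under the boundary-segment convention. The two matchings differ only in their heights: the restriction of $P_-$ has height $1$, while the restriction of $P_+$ to $H_\zeta^{(2)}$, combined with the prefactor $h_{\zeta_{e_p}}$ from \eqref{eqZ3}, produces total height $\prod_{j=1}^{e_p} h_{\zeta_j} = h(P_+(H_\zeta))$. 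Applying $\Phi$ and using Definition~\ref{height} to handle the loop/radius cases when some $\zeta_j$ belongs to a self-folded triangle at $p$, one gets $\Phi\bigl(\prod_{j=1}^{e_p} h_{\zeta_j}\bigr) = \prod_{\tau \in T} y_\tau^{e_p(\tau)}$, and the claimed factorization of $\Phi(\mathcal{M}(\zeta^{(1)}) - \mathcal{M}(\zeta^{(2)}))$ drops out.

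The main technical obstacle will be the uniqueness-by-forcing step. I would need to trace the construction of Section~\ref{sect graph} carefully in order to place $v_i$ and $w_i$ within the correct bipartition class of $H_\zeta$, to verify that the two forcing fronts (one propagating from each end) meet consistently in the middle of the zigzag, and to identify the forced matching as the restriction of $P_\pm(H_\zeta)$. A secondary subtlety is that, although the matching has $e_p$ edges, only the labels of tiles that are ``flipped through'' by the zigzag produce nontrivial $x$-factors; the interior gluing edges carry weight $1$. Once uniqueness and this weight count are in hand, the $\Phi$-specialization is routine bookkeeping.
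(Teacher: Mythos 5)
Your central claim---that each reduced graph $H_\zeta^{(i)}$ admits a unique perfect matching---is false, and this breaks the proof at its core. After deleting $v_1, w_1$ from the extremal tile $G_{\zeta_1}$, the graph $H_\zeta^{(1)}$ is (essentially) a zigzag snake graph with $e_p - 1$ tiles, which has roughly $e_p$ perfect matchings, not one. For a concrete counterexample, take $e_p = 3$ with tiles $G_{\zeta_1}, G_{\zeta_2}, G_{\zeta_3}$ placed at $(0,0), (1,0), (1,1)$ (so that $G_{\tau_{i_{d-1}}}, G_{\tau_{i_d}}, G_{\zeta_1}$ are in a row, as Remark~\ref{BendingAssumpt} requires, and $G_{\zeta_1}, G_{\zeta_2}, G_{\zeta_3}$ are not all in a row, as Remark~\ref{Hzeta} requires). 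Then $v_1, w_1 = (0,0), (0,1)$, and $H_\zeta^{(1)}$ is the two-tile snake graph $G_{\zeta_2} \cup G_{\zeta_3}$ on the six vertices $(1,0),(2,0),(1,1),(2,1),(1,2),(2,2)$. This graph has \emph{three} perfect matchings, so the ``unique matching'' $x$-weight and height bookkeeping you propose in the next paragraph does not even get started. Notice also that the quantities $\mathcal{M}(\zeta^{(1)})$ and $\mathcal{M}(\zeta^{(2)})$ are genuinely multi-term sums, so something more than a direct weight count is needed.

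The zigzag forcing intuition you invoke is real, but you have applied it in the wrong direction. The correct and useful statement is: there is a \emph{unique} perfect matching of the full graph $H_\zeta$ that does \emph{not} contain the extremal boundary edge $v_i w_i$ (labeled $\zeta_{e_p}$ on $G_{\zeta_1}$, respectively $\zeta_1$ on $G_{\zeta_{e_p}}$). Forbidding that boundary edge forces the two endpoints to be covered by the other available edges, and the zigzag structure then does propagate tile-by-tile and pins everything down---this is exactly the forcing cascade you had in mind, but for the complement. In the example above, the unique matching of $H_\zeta$ avoiding $(0,0)$-$(0,1)$ is $P_+$, and the unique one avoiding the opposite extremal boundary edge is $P_-$. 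The paper's actual argument proceeds by exactly this route: the sets $\mathcal{P}_1(\zeta)$ and $\mathcal{P}_{e_p}(\zeta)$ are compared inside the full $H_\zeta$, each has a one-element complement (namely $P_+$ and $P_-$ respectively), and so $\mathcal{M}_{e_p}(\zeta) - \mathcal{M}_1(\zeta)$ telescopes to $x(P_-)y(P_-) - x(P_+)y(P_+)$. Your weight and $\Phi$-specialization computations at the end would be reusable once the combinatorics is repaired, but as written the argument rests on a false lemma and does not go through.
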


\begin{proof} It suffices to prove the first identity.
The idea is to show that almost all terms on the left-hand-side cancel except for two, which correspond to the two monomials on the right.  Recall the notation preceding Lemma \ref{DisjointPartition}.

The union of a perfect matching of $H_{\zeta}^{(1)}$ (resp. $H_{\zeta}^{(2)}$) and the edge labeled $\zeta_{e_p}$ (resp. $\zeta_1$) on $G_{\zeta_1}$ (resp. $G_{\zeta_{e_p}}$) is an element of the set $\mathcal{P}_1(\zeta)$ (resp. $\mathcal{P}_{e_p}(\zeta)$).  The minimal height of a matching in $\mathcal{P}_1(\zeta)$ is $h_{\zeta_{e_p}}$ while subset $\mathcal{P}_{e_p}(\zeta)$ contains the perfect matching of $H_\zeta$ with a height monomial of $1$.  We accordingly obtain the identities 
\begin{eqnarray*}
\mathcal{M}_1(\zeta) &=& x_{\zeta_1} (x_{\zeta_{e_p}}
\mathcal{M}(\zeta^{(2)})) \text{ and }
\mathcal{M}_{e_p}(\zeta) = x_{\zeta_{e_p}} (x_{\zeta_1} \mathcal{M}(\zeta^{(1)})), \text{ and so } \end{eqnarray*}  
\begin{eqnarray} \label{PhiPhi} 
\hspace{1em} &&
\Phi\big(\mathcal{M}(\zeta^{(1)})-\mathcal{M}(\zeta^{(2)})\big) =
\frac{\Phi\big(\mathcal{M}_{e_p}(\zeta) - \mathcal{M}_1(\zeta)\big)}
{x_{\zeta_1} x_{\zeta_{e_p}}} 
= \frac{\Phi\big( \sum_P x(P) h(P)\big)}
{x_{\zeta_1} x_{\zeta_{e_p}}},
\end{eqnarray}
where the sum is over $P \in (\mathcal{P}_1(\zeta) \cup \mathcal{P}_{e_p}(\zeta)) \setminus 
(\mathcal{P}_1(\zeta) \cap \mathcal{P}_{e_p}(\zeta))$.
There are exactly two perfect matchings of $H_\zeta$ not in this intersection,
therefore (\ref{PhiPhi}) equals
$$\frac{x(P_-(H_\zeta))y(P_-(H_\zeta)) -
x(P_+(H_\zeta))y(P_+(H_\zeta))}{x_{\zeta_1} x_{\zeta_{e_p}}}.$$
By inspection (see the central subgraphs of Figures \ref{LoopSnakeGraphWV} and \ref{figMinM}), 
$x(P_-(H_\zeta)) = x(P_+(H_\zeta)) =
x_{\tau_{i_d}}\big(\prod_{j=1}^{e_p} x_{\zeta_j}\big)$, $y(P_-(H_\zeta))=1$, and $y(P_+(H_\zeta)) =
\prod_{\tau \in T} y_\tau^{e_p(\tau)}$.
\end{proof}

We can now prove Theorem \ref{thm double}.

\begin{proof} [Proof of Theorem \ref{thm double}]
We conclude from Lemmas \ref{mainLemma}, \ref{firstfactor}, and \ref{secondthirdfactor} that
\begin{eqnarray}
\label{LemmaStar}
&& \hspace{2em} \Phi\big(\mathcal{CM}(\zg^{(pq)})\mathcal{M}(\zg) -
\mathcal{SM}(\zg^{(p)})\mathcal{SM}(\zg^{(q)})\big) =
\\ \nonumber
&& 
\big(\prod_{j=1}^{d} x_{\tau_{i_j}}^2\big)
\big(\prod_{j=1}^{e_p} x_{\zeta_j}\big)
\big(\prod_{j=1}^{e_q} x_{\eta_j}\big)
 \big(\prod_{\tau \in T}
y_\tau^{e(\tau,\zg)}\big)
 \big(1-\prod_{\tau \in T}
y_\tau^{e_p(\tau)}\big)
 \big(1-\prod_{\tau \in T}
y_\tau^{e_q(\tau)}\big).
\end{eqnarray}

Using Theorem \ref{thm single}, we have that
$x_{\zg} = \frac{\Phi(\mathcal{M}(\zg))}
{\prod_{j=1}^d x_{\tau_{i_j}}}$ and 
$x_{\zg^{(p)}} x_{\zg^{(q)}}$ is equal to  
$$\frac{\Phi(\mathcal{SM}(\zg^{(p)}))}
{\prod_{\tau \in T} x_{\tau}^{e(\zg,\tau) ~+~ e_p(\tau)}} \cdot
\frac{\Phi(\mathcal{SM}(\zg^{(q)}))}
{\prod_{\tau \in T} x_{\tau}^{e(\zg,\tau) ~+~ e_q(\tau)}}
= \frac{\Phi(\mathcal{SM}(\zg^{(p)}))}
{\prod_{j=1}^d x_{\tau_{i_j}} \prod_{j=1}^{e_p} x_{\zeta_j} } \cdot
\frac{\Phi(\mathcal{SM}(\zg^{(q)}))}
{\prod_{j=1}^d x_{\tau_{i_j}}\prod_{j=1}^{e_q} x_{\eta_j}}.$$

Using (\ref{LemmaStar}), we obtain
\begin{equation*}
\label{almost-there}
\frac{\Phi(\mathcal{CM}(\zg^{(pq)})) \,\Phi(\mathcal{M}(\zg))}
{\prod_{j=1}^d x_{\tau_{i_j}}^2\prod_{j=1}^{e_p} x_{\zeta_j}\prod_{j=1}^{e_q} x_{\eta_j}}
- x_{\zg^{(p)}} \,x_{\zg^{(q)}} 
= (1-\prod_{\tau \in T}
y_\tau^{e_p(\tau)})
 (1-\prod_{\tau \in T}
y_\tau^{e_q(\tau)})
\prod_{\tau \in T}
y_\tau^{e(\tau,\zg)}.
\end{equation*}

Comparing this to 
Theorem \ref{double-identity} 
and using $x_{\zg} = \frac{\Phi(\mathcal{M}(\zg))}
{\prod_{j=1}^d x_{\tau_{i_j}}}$ 
yields 
$$x_{\zg^{(pq)}} = \frac{\Phi(\mathcal{CM}(\zg^{(pq)}))}
{\prod_{j=1}^d x_{\tau_{i_j}}\prod_{j=1}^{e_p} x_{\zeta_j}\prod_{j=1}^{e_q} x_{\eta_j}}.$$
\end{proof}


\subsection{The case of a doubly-notched loop}\label{sec DNL}

The previous section proved our formula for cluster expansions
of cluster variables corresponding to doubly-notched arcs between
two distinct punctures $p$ and $q$.   It remains to understand
the cluster variables
corresponding to doubly-notched loops.

In fact we will use the same strategy for doubly-notched loops
as we used for doubly-notched arcs between two punctures, namely,
we will show that our combinatorial formula for doubly-notched loops
satisfies the identity of Theorem
\ref{double-identity}.  However, we need to explain how to
interpret Theorem \ref{double-identity} when $\rho$ is
a loop, namely an arc between points $p$ and $q$ where $p$ and $q$
happen to coincide.  In this case it is not immediately clear
how to interpret the symbols $x_{\rho^{(p)}}$ and $x_{\rho^{(q)}}$;
a ``singly-notched loop" does not represent a cluster variable.

Before defining the symbol $x_{\rho^{(p)}}$, we need to introduce
a simple operation on bordered surfaces and triangulations.

\begin{definition}
[\emph{Augmentation}]
Fix a bordered surface $(S,M)$, an ideal triangulation $T^\circ$ of $S$,
a puncture $p$, and a loop $\rho$ based at $p$ with a choice of
orientation.
Let $\Delta$ be the first triangle of $T^\circ$ which $\rho$ crosses.
We assume that $\Delta$ is not self-folded, so we
can denote the arcs of $\Delta$ by $a, b$ and $c$ (in clockwise order),
with $a$ and $c$ incident to $p$, $a$ and $b$ incident to a marked
point $u$, and $b$ and $c$ incident to a marked point $v$.
We then define the
{\it augmented bordered surface} $(\widehat{S},\widehat{M})$ by
adding a single puncture $q$ to $(S,M)$, placing it inside $\Delta$.
And we construct the
{\it augmented triangulation} $\widehat{T}^\circ$
from $T^\circ$ by adding three new arcs inside
$\Delta$: an arc $\hat{a}$ from $q$ to $u$,
an arc $\hat{c}$ from $q$ to $v$, and an arc $\hat{b}$ from $u$ to $v$
(so that $\hat{b}$ and $b$ form a bigon with the puncture $q$
inside).    See Figure \ref{Augment}.
\end{definition}
\begin{figure}
\begin{center}
\input{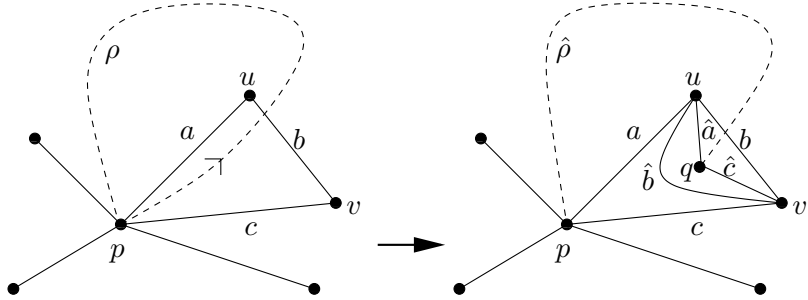}
\end{center}
\caption{Augmenting a bordered surface and triangulation}
\label{Augment}
\end{figure}

\begin{definition}
[\emph{The cluster algebra element corresponding to a singly-notched loop}]
\label{SLoop}
Fix a bordered surface $(S,M)$ and a tagged triangulation
$T = \iota(T^\circ)$ corresponding to an ideal
triangulation $T^\circ$, and
let $\Acal$ be the corresponding cluster algebra with
principal coefficients
with respect to $T$.
Let $\rho$ be an ordinary loop based at $p$,
with a choice of orientation,
and let $\rho^{(p)}$ denote the ``tagged arc" obtained from $\rho$ by
notching at the final end of $\rho$.
We represent this ``tagged arc" by the curve (with a self-intersection) $\ell_p$  obtained by
following the loop $\rho$ along its orientation, but then looping around
the puncture $p$ and doubling back, again following $\rho$.
See Figure \ref{DNLoopReplacements}.
Let $G_{T^\circ, \ell_p}$ be the graph associated to $\ell_p$
in Section \ref{sect graph}.
Then we define $x_{\rho^{(p)}}$  to be
$$\frac{1}{\mathrm{cross}({T}^\circ,{\rho}^{(p)})}
\sum_P \overline{x}(P)\, \overline{y}(P),$$ where the sum is over all
$\rho$-symmetric matchings $P$ of $G_{{T}^\circ,\ell_p}.$
\end{definition}

\begin{prop}\label{augment-specialize}
Using the notation of Definition \ref{SLoop},  let
$\widehat{T}^\circ$ denote the augmented triangulation
corresponding to $T^\circ$ and $\rho$, and let $\hat{\rho}$ denote the arc
in $\widehat{T}^\circ$ from $q$ to $p$ which is equal to $\rho$ after  identification of
$p$ and $q$.   We set $x_{\hat{a}}=x_a$, $x_{\hat{b}}=x_b$,
$x_{\hat{c}}=x_c$, $y_{\hat{a}}=y_a$, $y_{\hat{b}}=y_b$, and $y_{\hat{c}}=y_c$.
Let $\hat{\ell}_p$ denote the loop which is the ideal arc
representing $\hat{\rho}^{(p)}$.
Then $x_{\rho^{(p)}}$ is equal to
$$\frac{1}{\mathrm{cross}(\widehat{T}^\circ,\hat{\rho}^{(p)})}
\sum_P \overline{x}(P)\, \overline{y}(P),$$ where the sum is over all
$\hat{\rho}$-symmetric matchings $P$ of $G_{\widehat{T}^\circ,\hat{\ell}_p}.$
\end{prop}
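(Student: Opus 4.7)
The strategy is to show that, after the specialization $x_{\hat{a}}=x_a$, $x_{\hat{b}}=x_b$, $x_{\hat{c}}=x_c$ and $y_{\hat{a}}=y_a$, $y_{\hat{b}}=y_b$, $y_{\hat{c}}=y_c$, the matching formula for $G_{\widehat{T}^\circ,\hat{\ell}_p}$ coincides termwise with the matching formula that defines $x_{\rho^{(p)}}$ via $G_{T^\circ,\ell_p}$. Since both quantities are combinatorial expressions over a set of $\gamma$-symmetric matchings, the proposition will reduce to producing a weight- and height-preserving bijection between these matching sets (after specialization), together with the matching observation that the crossing monomials $\mathrm{cross}(\widehat{T}^\circ,\hat{\rho}^{(p)})$ and $\mathrm{cross}(T^\circ,\rho^{(p)})$ agree after specialization (modulo the extra factors contributed by the new arcs near $q$).

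First I would analyze the crossing sequences of the two curves. The loop $\hat{\ell}_p$ based at $q$ cuts out a once-punctured monogon around $p$ containing $\hat{\rho}$, so as one traces $\hat{\ell}_p$ one sees a short sequence of crossings with arcs in $\{\hat{a},\hat{b},\hat{c},b\}$ near $q$, then the crossings of $\hat{\rho}$ with $\widehat{T}^\circ$ (which agree with the crossings of $\rho$ with $T^\circ$ outside $\Delta$), then the wrap-around crossings at $p$, and finally the mirror-image return to $q$. The self-intersecting curve $\ell_p$ in $T^\circ$ has the same intersection pattern away from $\Delta$, but the analog of the ``near $q$'' portion is played by the tiles in $G_{T^\circ,\ell_p}$ coming from the self-intersection of $\ell_p$ near $p$.

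Second, I would build the two graphs using the tile-glueing procedure of Section \ref{sect graph}, and observe that $G_{\widehat{T}^\circ,\hat{\ell}_p}$ is obtained from $G_{T^\circ,\ell_p}$ by inserting a small, prescribed collection of extra tiles (labeled by $\hat{a},\hat{b},\hat{c}$) at each end, corresponding to the extra crossings with the new arcs. Under the specialization $x_{\hat{\ast}}\leftarrow x_\ast$, the extra edges carry the same weights as the corresponding original edges, so tiles labeled by the hatted arcs become indistinguishable from tiles labeled by the unhatted arcs. Using Lemmas \ref{twoends} and \ref{twoendsSymmetric}, the restriction of a $\hat{\rho}$-symmetric matching of $G_{\widehat{T}^\circ,\hat{\ell}_p}$ to the ``core'' subgraph isomorphic to $G_{T^\circ,\ell_p}$ is a $\rho$-symmetric matching, and the restriction to the extra tiles is uniquely forced by the symmetry condition together with the rigid local geometry of the augmented region; reading the construction in reverse produces the inverse map. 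The weight and (specialized) height monomials factor tile-by-tile, and the specialization of the $y$-variables matches the specialization of the $x$-variables, so the bijection is weight- and height-preserving.

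The main obstacle is carrying out this extension/restriction procedure explicitly around the new puncture $q$, where the local geometry of $\widehat{T}^\circ$ has the bigon bounded by $b$ and $\hat{b}$ subdivided by $\hat{a},\hat{c}$ into two triangles meeting at $q$. One must check case by case (depending on which side of $\hat{a}\cup \hat{c}$ the curve $\hat{\ell}_p$ enters and exits, and on which of $a,c$ is crossed by $\rho$ first in $\Delta$) that the forced extension on the extra tiles produces exactly the boundary edges expected by the symmetric matching definition, and that the product of the extra $x$- and $y$-weights equals the prefactor $\prod_{\tau}x_\tau$ implicit in the crossing monomial for the hatted side. Once this bookkeeping is settled, the bijection together with the specialization yields the identity of the two matching sums and hence the proposition.
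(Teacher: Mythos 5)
Your proposal is built on a premise that is exactly backwards: you assume that $\hat{\ell}_p$ has additional crossings with $\hat{a},\hat{b},\hat{c}$ near $q$, and therefore that $G_{\widehat{T}^\circ,\hat{\ell}_p}$ contains \emph{extra tiles} beyond those in $G_{T^\circ,\ell_p}$, so that a nontrivial weight- and height-preserving bijection is needed. In fact the augmentation is engineered specifically so that this does not happen. Since $q$ is placed in the bigon bounded by $b$ and $\hat{b}$, and $\hat{\rho}$ (and both strands of $\hat{\ell}_p$) immediately exits through $b$, the loop $\hat{\ell}_p$ crosses none of $\hat{a},\hat{b},\hat{c}$: the once-punctured monogon it cuts out contains only $p$, so $u,v$ lie outside it and the arcs $\hat a,\hat b,\hat c$ stay outside as well. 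Moreover the augmentation does not change the triangulation in a neighborhood of $p$, so the wrap-around crossings are also unchanged. Hence the sequence of arcs crossed by $\hat{\ell}_p$ in $\widehat{T}^\circ$ is \emph{identical} to the sequence crossed by $\ell_p$ in $T^\circ$: the two graphs have the same number of tiles, the same tile labels, and the same gluing pattern.

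The only discrepancy between $G_{\widehat{T}^\circ,\hat{\ell}_p}$ and $G_{T^\circ,\ell_p}$ is that some \emph{edge labels} read $\hat{a},\hat{b},\hat{c}$ where the other graph reads $a,b,c$ (namely, on the tiles adjacent to the old triangle $\Delta$, since in $\widehat{T}^\circ$ the triangle $\{a,b,c\}$ has been replaced by the two triangles $\{b,\hat{a},\hat{c}\}$ and $\{a,c,\hat{b}\}$). Once you apply the substitution $x_{\hat a}=x_a$, $x_{\hat b}=x_b$, $x_{\hat c}=x_c$ (and the analogous $y$-substitution), the labeled graphs literally coincide, the subgraphs defining $\hat{\rho}$-symmetric versus $\rho$-symmetric matchings coincide, and the crossing monomials coincide with \emph{no} extra factors from new arcs (since there are no extra crossings). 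There is no bijection to construct, no case analysis around $q$, and no ``core subgraph'' distinct from the whole graph; the two matching sums are identical term by term. Your ``main obstacle'' paragraph is describing work that is not needed, and the extension/restriction procedure and Lemmas~\ref{twoends}/\ref{twoendsSymmetric} do not enter at all. Before proceeding, you need to verify the crossing-sequence claim; without it, the whole point of the augmentation construction is missed.
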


\begin{remark}
In other words,
$x_{\rho^{(p)}}$ can be obtained by taking the formula for
$x_{\hat{\rho}}$ given by Theorem \ref{thm single}, and making a simple substitution of
variables.
\end{remark}

\begin{proof}
By Remark \ref{assumption}, we can assume that the first triangle
which $\ell_p$ crosses is not self-folded; therefore we can 
augment $T^\circ$.
We defined the augmented triangulation $\widehat{T}^\circ$ so that the
the sequence of diagonals crossed by the loop $\hat{\ell}_p$ in $\widehat{S}$
is identical
to the sequence of diagonals crossed by the curve $\ell_p$
in $S$.
Moreover, the local configurations of all triangles crossed
is the same for both $\hat{\ell}_p$ and $\ell_p$, and after the substitution
$\hat{a}=a$,
$\hat{b}=b$, and
$\hat{c}=c$, even their labels coincide.
(Note that it was essential for us to define
$\widehat{T}^\circ$ so that in the neighborhoods around $p$ in both $S$
and $\widehat{S}$, the two triangulations coincide.)
Therefore, after this substitution, the labeled graphs
$G_{\widehat{T}^\circ,\hat{\ell}_p}$
and $G_{{T}^\circ,\ell_p}$ are equal.
Additionally, the notions of $\hat{\rho}$-symmetric and $\rho$-symmetric matchings
coincide, as do the crossing monomials.
This proves the proposition.
\end{proof}

\begin{prop}\label{Ptolemy-DLoopspecialize}
Fix a bordered surface $(S,M)$ and a tagged triangulation $T=\iota(T^\circ)$,
and let $\Acal$ be the corresponding cluster algebra.
Let $\rho$ be a loop based at a puncture $p$ in $S$.
Choose a marked point $w$ enclosed by $\rho$ and a marked point
$v$ which is not enclosed by $\rho$.  Choose arcs
$\alpha$ and $\beta$ between $p$ and $v$, and arcs $\gamma$ and $\delta$
between $p$ and $w$, so that $\alpha, \beta, \gamma,$
and $\delta$ are the
four sides (in clockwise order) of a quadrilateral with simply-connected interior.
Let $\rho'$ denote the arc between $v$ and $w$ so that $\rho'$ and $\rho$
are the two diagonals of this quadrilateral.
Choose the orientation for $\rho$ which starts at the corner
of the quadrilateral between $\beta$ and $\gamma$, and ends
at the corner between $\alpha$ and $\delta$, and define $x_{\rho^{(p)}}$
as in Definition \ref{SLoop}.
Define $x_{\rho^{(q)}}$ in the same way, but using 
the opposite orientation for $\rho$.  
Let $Y_q^{\pm}$, $Y_p^{\pm}$, and $Y^\pm$ be the monomials of shear coordinates
coming from laminations
as in  Figure \ref{Bigon-Laminate} (which shows a degeneration
of Figure \ref{laminate-spiral} and \ref{partition}).
Then we have (\ref{P1}) and (\ref{P2}).
\end{prop}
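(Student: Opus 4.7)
The plan is to prove the proposition by reducing to the non-degenerate case already treated in Theorem \ref{double-identity} via the augmentation construction introduced before Proposition \ref{augment-specialize}. Given the loop $\rho$ with its chosen orientation, form the augmented surface $(\widehat{S},\widehat{M})$ by inserting the puncture $q$ inside the first triangle crossed by $\rho$. Then $\rho$ lifts to an honest ordinary arc $\hat{\rho}$ between the \emph{distinct} punctures $q$ and $p$ of $\widehat{S}$. Because the added puncture sits near the corner of the quadrilateral between $\beta$ and $\gamma$, the surrounding quadrilateral with sides $\alpha,\beta,\gamma,\delta$ and diagonals $\rho,\rho'$ lifts to a quadrilateral in $\widehat{S}$ with sides $\hat{\alpha}$ (from $p$ to $v$), $\hat{\beta}$ (from $v$ to $q$), $\hat{\gamma}$ (from $q$ to $w$), $\hat{\delta}$ (from $w$ to $p$), and diagonals $\hat{\rho},\hat{\rho}'$.

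In the augmented cluster algebra $\Aprin(B_{\widehat{T}})$, both $\hat{\rho}^{(p)}$ and $\hat{\rho}^{(q)}$ are ordinary singly-notched tagged arcs, and the identities (\ref{P1}) and (\ref{P2})---evaluated in the augmented algebra with all hatted cluster variables and hatted coefficient monomials $\widehat{Y}^{\pm}$, $\widehat{Y}_p^{\pm}$, $\widehat{Y}_q^{\pm}$---are genuine Ptolemy relations, namely the very ones appearing at the start of the proof of Theorem \ref{double-identity} (note that the derivation of these two particular relations used only the non-degenerate hypothesis $p\neq q$, which is satisfied in $\widehat{S}$).

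Next, I specialize by imposing $x_{\hat{a}}=x_a$, $x_{\hat{b}}=x_b$, $x_{\hat{c}}=x_c$, $y_{\hat{a}}=y_a$, $y_{\hat{b}}=y_b$, $y_{\hat{c}}=y_c$. Proposition \ref{augment-specialize} gives $x_{\hat{\rho}^{(p)}}\big|_{\mathrm{spec}} = x_{\rho^{(p)}}$; for the analogous statement $x_{\hat{\rho}^{(q)}}\big|_{\mathrm{spec}} = x_{\rho^{(q)}}$, I would apply the same augmentation construction starting from the opposite orientation of $\rho$ (this is the orientation used in the definition of $x_{\rho^{(q)}}$) and re-run Proposition \ref{augment-specialize}. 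The remaining cluster variables appearing in (\ref{P1}) and (\ref{P2})---namely $x_{\hat{\alpha}},x_{\hat{\beta}},x_{\hat{\gamma}},x_{\hat{\delta}},x_{\hat{\rho}'}$ and their singly-notched counterparts $x_{\hat{\alpha}^{(p)}},x_{\hat{\delta}^{(p)}},x_{\hat{\beta}^{(q)}},x_{\hat{\gamma}^{(q)}}$---specialize to $x_\alpha,\ldots,x_{\rho'}$ and $x_{\alpha^{(p)}},x_{\delta^{(p)}},x_{\beta^{(q)}},x_{\gamma^{(q)}}$ by direct application of Theorems \ref{thm main} and \ref{thm single}, since each of the arcs in $\widehat{S}$ crosses $\widehat{T}^{\circ}$ in exactly the same sequence as its original crosses $T^{\circ}$, under the identification of $\hat{a},\hat{b},\hat{c}$ with $a,b,c$.

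The main obstacle is verifying that the augmented shear-coordinate monomials specialize correctly: namely, that $\widehat{Y}^{\pm}\big|_{\mathrm{spec}} = Y^{\pm}$, $\widehat{Y}_p^{\pm}\big|_{\mathrm{spec}} = Y_p^{\pm}$, and $\widehat{Y}_q^{\pm}\big|_{\mathrm{spec}} = Y_q^{\pm}$ in the sense of Figure \ref{Bigon-Laminate}. This requires a case-by-case inspection of the elementary laminations $L_{\hat{\tau}}$ for $\hat{\tau}\in\widehat{T}^{\circ}$: those coming from arcs of $T^{\circ}$ outside $\Delta$ are essentially unchanged, while the three new laminations $L_{\hat{a}},L_{\hat{b}},L_{\hat{c}}$ introduced by the augmentation must be checked to contribute shear coordinates with respect to $\hat{\rho}$ and $\hat{\rho}'$ that, after the $y$-variable identification, reproduce exactly the contributions of $L_a,L_b,L_c$ (together with the relevant spiralling laminations at $p$) in the original picture. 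Once this bookkeeping is carried out, specializing the augmented Ptolemy relations yields (\ref{P1}) and (\ref{P2}) term by term, finishing the proof.
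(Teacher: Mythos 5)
Your proposal takes essentially the same route as the paper: augment near the starting end of $\rho$, lift the quadrilateral and its Ptolemy relations to the non-degenerate setting of $\widehat{S}$, and specialize back via Proposition \ref{augment-specialize}. The paper is slightly more economical in observing that by symmetry it suffices to establish (\ref{P1}) (so one runs the whole argument once, rather than juggling a second augmentation for (\ref{P2}) as you do), and it likewise leaves the shear-coordinate bookkeeping you flag as a remark that the augmentation preserves the neighborhood of $p$, rather than carrying out the case check explicitly.
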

\begin{figure}
\begin{center}
\input{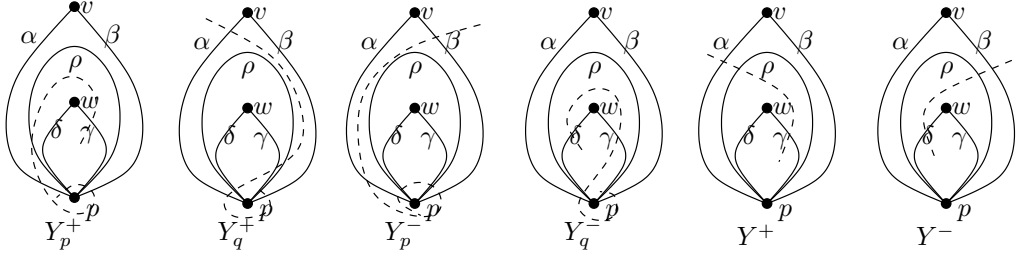}
\end{center}
\caption{Laminations for a quadrilateral in a bigon.}
\label{Bigon-Laminate}
\end{figure}

\begin{proof}
It suffices to prove (\ref{P1}).
We augment $S$ and $T^\circ$, and define arcs
$\hat{\alpha}, \hat{\beta}, \hat{\gamma},$
$\hat{\delta}, \hat{\rho}$, and $\hat{\rho'}$ in $\widehat{S}$,
so that they are the same as  the corresponding arcs
in $S$ except that the endpoints of $\hat{\beta}, \hat{\gamma}$
and $\hat{\rho}$ are moved from $p$ to $q$.  See Figure \ref{Open}.
The underlying triangulations are indicated by
thin  lines, and the sides of the quadrilateral 
are bold.

By Proposition \ref{augment-specialize}, after a simple specialization of variables
(obtained by equating $\hat{a}$, $\hat{b}$, $\hat{c}$ with $a$, $b$, $c$),
$x_{\rho^{(p)}}$ is equal to $x_{\hat{\rho}}$.  Similarly,
$x_{\rho'} = x_{\hat{\rho'}}$, $x_{\beta}=x_{\hat{\beta}}$,
$x_{\delta^{(p)}}=x_{\hat{\delta}^{(p)}}$,
                       $x_{\alpha^{(p)}}=x_{\hat{\alpha}^{(p)}}$,
and $x_{\gamma}=x_{\hat{\gamma}}$.  Note that we are using the special
nature of the augmentation $\widehat{T^\circ}$ of $T^\circ$ and the fact
that it preserves the neighborhood around the puncture $p$.
Finally, we know that in $\widehat{S}$ the equation (\ref{P1}) holds, so 
after the
simple specialization above, the proposition holds.
\end{proof}

\begin{figure}
\begin{center}
\input{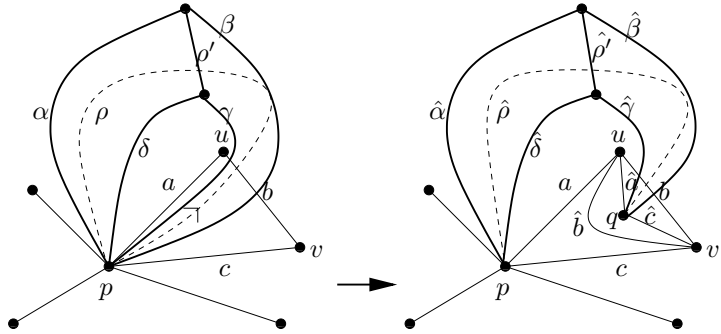}
\end{center}
\caption{Opening the quadrilateral in the bigon}
\label{Open}
\end{figure}

The proof of Theorem \ref{thm double} for doubly-notched arcs 
can now be extended to loops.

\begin{proof}
We've now defined $x_{\rho^{(p)}}$ and 
 $x_{\rho^{(q)}}$, so   
the statement of Theorem \ref{double-identity} makes sense.
(Here $e_p(\tau)=e_q(\tau)$ is the number of ends of arcs 
of $T$ which are incident to $p$.)
Moreover by Proposition \ref{Ptolemy-DLoopspecialize}, equations
(\ref{P1}) and (\ref{P2}) hold, and the proof of Theorem
\ref{double-identity} goes through with minimal modifications.
We now have an algebraic counterpart for
a singly-notched loop given by Proposition \ref{SLoop},
which is analogous to our formula for singly-notched loops.
Using this, the proofs of Section \ref{DoubleNotchedProofs}
go through for doubly-notched loops just as they did for
doubly-notched arcs, with no changes necessary.
This proves our combinatorial formula for cluster variables
of doubly-notched loops as sums over $\gamma$-compatible pairs of matchings.
\end{proof}

\begin{question}
When $\rho$ is a loop,
is $x_{\rho^{(p)}}$ an element of the cluster algebra $\Acal$, or merely an element of
$\Frac(\Acal)$?
\end{question}


\section{Applications to F-polynomials, g-vectors, Euler characteristics}\label{sect applications}
\subsection{$F$-polynomials and $g$-vectors}\label{sect fg}

Fomin and Zelevinsky showed \cite{FZ4} that the Laurent expansions of  
cluster variables can be computed from the somewhat simpler $F$-polynomials and 
$g$-vectors.  In this section we invert this line of thought and compute the $F$-polynomials
and $g$-vectors from our Laurent expansion formulas.  
$F$-polynomials are obtained from Laurent expansions of cluster variables with principal coefficients  by setting all cluster variables equal to $1$. Thus the $F$-polynomial $F_\zg$ 
of a tagged arc $\zg$ is obtained from Theorems \ref{thm main}, \ref{thm single} and \ref{thm double} by deleting the weight and crossing monomials,
 and summing up only the specialized height monomials.
For example, if $\zg$ is an ordinary arc then
\[ F_\zg =\sum_P y(P),\]
where the sum is over all perfect matchings $P$ of $G_{T^\circ,\zg}$.

Note that this shows that $F$-polynomials have constant term $1$, since the minimal matching $P_-$ is the only matching for which the 
specialized height monomial is $1$.

It has been shown \cite{FZ4} that
the  Laurent expansion of any cluster variable $x_\zg$ with respect to a seed 
$(\mathbf{x},\mathbf{y},B)$ is homogeneous
 with respect to the grading given by
$\textup{deg}(x_i)=\mathbf{e}_i$ and
$\textup{deg}(y_i)=B\mathbf{e}_i$, where
$\mathbf{e}_i=(0,\ldots,0,1,0,\ldots,0) \in\mathbb{Z}^n$ with $1$ at
position $i$. By
definition, the \emph{$g$-vector} $g_\zg$ of a cluster variable $x_\zg$ is the
degree of its Laurent expansion  with respect to this grading.
Using the fact that the minimal matching $P_-$ is of height $1$, we see from Theorem \ref{thm main}
that the $g$-vector is given by
$$g_\zg = 
\textup{deg} \displaystyle\left(\frac{{x}(P_-)}{\textup{cross}(T^\circ,\zg)}\right),$$
if $\zg $ is an ordinary arc.  
The same formula works for arcs with one or two notches, replacing
$x(P_-)$ by 
$\overline{x}(P_-)$ or 
$\overline{\overline{x}}(P_-)$, respectively.

\subsection{Euler-Poincar\'e characteristics}\label{sect EPC}

In this section we compare our cluster expansion formula to the formulas
of \cite{DWZ}, which apply to all skew-symmetric cluster algebras, hence in particular, to all cluster algebras that are associated to surfaces.

Let $\mathcal{A} =\mathcal{A}(\mathbf{x},\mathbf{y},B)$ be a rank $n$ cluster algebra 
with principal coefficients associated to a surface. 
We associate to the skew-symmetric matrix 
$B=(b_{ij})$ a quiver $Q(B)$ 
without loops or oriented 2-cycles, by letting $\{1,2,\ldots,n\}$ be the vertex set and 
by drawing $b_{ij}$ arrows from $i$ to $j$ if and only 
if $b_{ij}>0$.

Let $S$ be a potential on $Q(B)$, and consider the corresponding Jacobian algebra. This algebra is the quotient of the complete path algebra of $Q(B)$ by the Jacobian ideal, which is the closure of the ideal generated by the partial cyclic derivatives of the potential. In \cite{DWZ}, the authors associate to any cluster variable $x_\zg$ in $\mathcal{A} $ a finite-dimensional module $M_\zg$ over the Jacobian algebra (thus $M_\zg$ is a representation of the quiver $Q(B)$ whose maps satisfy the relations given by the Jacobian ideal).  Furthermore, they prove that the
$F$-polynomial of $x_{\zg}$ is given by the formula

\[ F_\zg= \sum_e \chi(Gr_e(M_\zg) )\,\prod_{i=1}^n y_i^{e_i}, \]
where the sum is over all dimension vectors $e=(e_1,e_2,\ldots,e_n)$,
$\chi$ denotes the Euler-Poincar\'e characteristic, 
and $Gr_e(M_\zg)$ is the $e$-Grassmannian of
$M_\zg$, that is, the variety of subrepresentations of dimension vector $e$.
Comparing this formula to 
our combinatorial formulas for $F_\zg$,
we get the following results.

\begin{theorem}\label{thm EPC}
\begin{enumerate}
\item For an ordinary  arc $\zg$, the Euler-Poincar\'e characteristic $ \chi(Gr_e(M_\zg))$ is the number of
perfect matchings $P$ of $G_{T^\circ,\zg}$ such that
 the specialized height monomial $y(P)$ is equal to $\prod_{i=1}^n y_i^{e_i}$.
 \item For an arc $\zg=\zg^{(p)}$ with one notched  end,  $\chi(Gr_e(M_\zg))$ is the number of $\zg$-symmetric matchings $P$ of $G_{T^\circ,\ell_p}$ such that
 $\overline y(P)=\prod_{i=1}^n y_i^{e_i}$.
 \item For an arc $\zg=\zg^{(pq)}$ with two notched ends, $ \chi(Gr_e(M_\zg))$ is the number of $\zg$-compatible pairs  $(P_1,P_2)$ of $G_{T^\circ,\ell_p}\sqcup G_{T^\circ,\ell_q}$ such that
 $\overline{\overline{y}}(P_1,P_2)=\prod_{i=1}^n y_i^{e_i}$.
\end{enumerate}
\end{theorem}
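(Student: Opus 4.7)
The plan is to equate the two different formulas for the $F$-polynomial $F_\gamma$: the Euler-characteristic formula of \cite{DWZ}, namely
\[
F_\gamma \;=\; \sum_e \chi(Gr_e(M_\gamma))\,\prod_{i=1}^n y_i^{e_i},
\]
and the combinatorial formula obtained from Theorems \ref{thm main}, \ref{thm single}, and \ref{thm double} by specializing all cluster variables to $1$, as discussed in Section \ref{sect fg}. Since the coefficient tuple $(y_1,\dots,y_n)$ of the principal coefficient system is a free generating set of the tropical semifield, the $y_i$ are algebraically independent and comparing coefficients of the monomials $\prod_i y_i^{e_i}$ on both sides will immediately yield the three cases of the theorem.

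First I would carry out the specialization $x_\tau \mapsto 1$ in each of the three cluster-expansion theorems, which eliminates the crossing monomials and the weights $x(P)$, $\overline x(P)$, and $\overline{\overline x}(P_1,P_2)$, leaving only the specialized height monomials. This yields:
\[
F_\gamma \;=\; \sum_P y(P) \quad\text{(ordinary $\gamma$)},
\]
\[
F_{\gamma^{(p)}} \;=\; \sum_P \overline y(P) \quad\text{(singly notched, sum over $\gamma$-symmetric matchings of $G_{T^\circ,\ell_p}$)},
\]
\[
F_{\gamma^{(pq)}} \;=\; \sum_{(P_1,P_2)} \overline{\overline y}(P_1,P_2) \quad\text{(doubly notched, sum over $\gamma$-compatible pairs)}.
\]
At this point I would identify the indexing set of $y$-variables in our formulas with the indexing set used in \cite{DWZ}: both are indexed by the tagged arcs $\tau_1,\dots,\tau_n$ of $T = \iota(T^\circ)$, which label the rows of the principal part of the extended exchange matrix. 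The matching of the two formulas then reduces to observing that the monomial $y(P)$ (respectively $\overline y(P)$, $\overline{\overline y}(P_1,P_2)$) has the form $\prod_i y_i^{e_i}$ for a unique vector $e = e(P) \in \mathbb{Z}_{\ge 0}^n$, so that each side is a polynomial in $y_1,\dots,y_n$ whose coefficients can be read off.

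The main subtlety, and the only step requiring care, concerns the specialization map $\Phi$ used in Definition \ref{height} to convert formal height monomials $h_{\tau_i}$ into monomials in the $y_{\tau}$ associated with tagged arcs: in the presence of a self-folded triangle with loop $\ell$, radius $r$, and enclosed puncture $p$, the arcs $\ell$ and $r$ of $T^\circ$ do not directly correspond to tagged arcs of $T$, but the rule $\Phi(h_\ell)=y_{r^{(p)}}$ and $\Phi(h_r) = y_r/y_{r^{(p)}}$ is exactly what makes the specialization consistent with the matrix $B_T$ (cf. Definition \ref{GeneralB} and Remark \ref{tagged-ideal}). One should check that after this specialization each monomial $y(P)$ (resp. $\overline y(P)$, $\overline{\overline y}(P_1,P_2)$) is indeed a genuine monomial with nonnegative exponents in $y_1,\dots,y_n$ -- this is where the definitions of $\overline y$ and $\overline{\overline y}$ via quotients by the restrictions $y(P|_{G_{T^\circ,\gamma,1}})$ (resp. its cube) are crucial, and it is the only step I expect to require a nontrivial verification.

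Once this consistency is verified, the three equalities of $F$-polynomials, together with algebraic independence of $y_1,\dots,y_n$, immediately give
\[
\chi(Gr_e(M_\gamma)) \;=\; \#\{P : y(P) = \textstyle\prod_i y_i^{e_i}\},
\]
and analogously for the singly and doubly notched cases, proving all three parts of Theorem \ref{thm EPC}. The hard part of the work has already been done in the previous sections; this final theorem is essentially a reading-off of the combinatorial information from our Laurent expansion formulas.
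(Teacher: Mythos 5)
Your proposal is correct and matches the paper's own argument, which is precisely the one-line observation (just before the theorem statement in Section \ref{sect EPC}) that the Euler-characteristic formula for $F_\gamma$ from \cite{DWZ} can be compared coefficient-by-coefficient with the combinatorial $F$-polynomial formulas obtained from Theorems \ref{thm main}, \ref{thm single}, and \ref{thm double} by setting all cluster variables to $1$. The subtlety you flag — that the specialized height monomials $y(P)$, $\overline{y}(P)$, $\overline{\overline{y}}(P_1,P_2)$ are genuine monomials with nonnegative exponents despite the $\Phi$-specialization and the quotients in the notched cases — is real but is already guaranteed by the fact that Theorems \ref{thm main}--\ref{thm double} produce Laurent expansions in the principal-coefficient cluster algebra, whose $F$-polynomials are honest polynomials in $y_1,\dots,y_n$ by \cite{FZ4}.
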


\begin{cor}\label{cor EPC} For any cluster variable $x_\zg$ in a cluster algebra associated to a surface, the Euler-Poincar\'e characteristic $ \chi(Gr_e(M_\zg))$ is a non-negative integer.
\end{cor}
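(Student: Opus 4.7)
The proof is essentially immediate from Theorem \ref{thm EPC}. The plan is to observe that each of the three cases of Theorem \ref{thm EPC} expresses $\chi(Gr_e(M_\gamma))$ as the cardinality of a finite, explicitly described set of combinatorial objects: a set of perfect matchings of $G_{T^\circ,\gamma}$ (when $\gamma$ is ordinary), a set of $\gamma$-symmetric matchings of $G_{T^\circ,\ell_p}$ (when $\gamma$ has one notched end), or a set of $\gamma$-compatible pairs of matchings of $G_{T^\circ,\ell_p}\sqcup G_{T^\circ,\ell_q}$ (when $\gamma$ has two notched ends). In each case the filtering condition is that the specialized height monomial (respectively $y(P)$, $\overline{y}(P)$, or $\overline{\overline{y}}(P_1,P_2)$) equals the prescribed monomial $\prod_{i=1}^n y_i^{e_i}$.

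First I would note that by Theorem \ref{clust-surface}, every cluster variable of a cluster algebra associated to a surface is of the form $x_\gamma$ for some tagged arc $\gamma$, so the three cases of Theorem \ref{thm EPC} cover every cluster variable. Second, I would point out that each of the underlying graphs $G_{T^\circ,\gamma}$, $G_{T^\circ,\ell_p}$, and $G_{T^\circ,\ell_q}$ is a finite graph (it is built out of finitely many tiles indexed by the finitely many crossings of the relevant curve with $T^\circ$), and hence has only finitely many perfect matchings. Consequently, the sets of $\gamma$-symmetric matchings and of $\gamma$-compatible pairs of matchings are also finite.

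Therefore the cardinalities appearing in Theorem \ref{thm EPC} are non-negative integers, and the corollary follows. There is no main obstacle here; the entire content of the corollary is already packaged in Theorem \ref{thm EPC}, and the proof amounts to the remark that cardinalities of finite sets lie in $\mathbb{Z}_{\ge 0}$. The nontrivial input is Theorem \ref{thm EPC} itself, which in turn rests on comparing the Derksen--Weyman--Zelevinsky formula for $F$-polynomials with the combinatorial formulas we derived in Section \ref{sect fg}.
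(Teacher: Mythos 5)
Your proof is correct and is exactly the argument the paper intends; indeed the paper treats the corollary as immediate from Theorem \ref{thm EPC}, and your observation that each case expresses $\chi(Gr_e(M_\gamma))$ as the cardinality of a finite set of matchings (hence a non-negative integer) is precisely the content.
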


\begin{remark} In the case where $Q(B)$ has no oriented cycles, Corollary \ref{cor EPC} was already proved in \cite{CalRein}, and for unpunctured surfaces in \cite{S2}.
\end{remark}

{} 

\end{document}